\newcommand{\cE}{\mathcal{E}}
\newcommand{\cO}{\mathcal{O}}
\newcommand{\cS}{\mathcal{S}}
\newcommand{\cU}{\mathcal{U}}\newcommand{\cV}{\mathcal{V}}
\newcommand{\cX}{\mathcal{X}}
\newcommand{\bR}{\mathbb{R}}
\newcommand{\bZ}{\mathbb{Z}}
\newtheorem{theorem}{Theorem}[section]
\newtheorem{lemma}[theorem]{Lemma}
\newtheorem{proposition}[theorem]{Proposition} 
\newtheorem{corollary}[theorem]{Corollary}
\newtheorem*{corollary*}{Corollary}
\newtheorem{atheorem}{Theorem}
\theoremstyle{definition}
\newtheorem{definition}[theorem]{Definition}
\newtheorem{convention}[theorem]{Convention}
\theoremstyle{remark}
\newtheorem{example}[theorem]{Example} 
\newtheorem{remark}[theorem]{Remark}
\newcommand{\cat}[1]{\mathsf{#1}}
\newcommand{\mr}[1]{{\rm #1}}
\newcommand{\lra}{\longrightarrow}
\newcommand{\inj}{\hookrightarrow}
\newcommand{\rel}{\,\mr{rel}\,}
	\def\MR#1{}
\title{The homotopy type of the topological cobordism category}
\author{Mauricio Gomez Lopez}
\address{Department of Mathematics \\
	Lafayette College, Pardee Hall\\
	Easton, PA, 18042 \\USA}
\email{gomezlom@lafayette.edu}
\author{Alexander Kupers}
\address{Department of Computer and Mathematical Sciences, University of Toronto Scarborough, 1265 Military Trail, Toronto, ON M1C 1A4, Canada}
\email{a.kupers@utoronto.ca}
\date{\today}
\begin{document}

\begin{abstract}We define a cobordism category of topological manifolds and prove that if $d \neq 4$ its classifying space is weakly equivalent to $\Omega^{\infty -1} MT\mr{Top}(d)$, where $MT\mr{Top}(d)$ is the Thom spectrum of the inverse of the canonical bundle over $B\mr{Top}(d)$. We also give versions for manifolds with tangential structures and/or boundary. The proof uses smoothing theory and excision in the tangential structure to reduce the statement to the computation of the homotopy type of smooth cobordism categories due to Galatius-Madsen-Tillman-Weiss.\end{abstract}

\maketitle 

\tableofcontents

\section{Introduction} This paper extends to topological manifolds a result of Galatius, Madsen, Tillmann and Weiss \cite{gmtw}. Let $\mr{Top}(d)$ denote the topological group of homeomorphisms of $\bR^d$ fixing the origin, in the compact open topology. Our main statement involves the Thom spectrum  $MT\mr{Top}(d)$ of the inverse of the canonical $\bR^d$-bundle over $B\mr{Top}(d)$, and a topological cobordism category $\smash{\cat{Cob}^\mr{Top}}(d)$. Its objects are given by a simplicial set of closed $(d-1)$-dimensional topological manifolds and its morphisms are given by a simplicial set of compact $d$-dimensional topological cobordisms.

\begin{atheorem}\label{thm.mainwithout}Let $d \neq 4$, then $B\cat{Cob}^\mr{Top}(d) \simeq \Omega^{\infty-1}MT\mr{Top}(d)$.\end{atheorem}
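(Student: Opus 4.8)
The plan is to deduce Theorem~\ref{thm.mainwithout} from a more general statement allowing tangential structures. For a map $\theta\colon B \to B\mr{Top}(d)$, let $\cat{Cob}^\mr{Top}_\theta(d)$ denote the variant of $\cat{Cob}^\mr{Top}(d)$ whose objects and morphisms additionally carry a lift along $\theta$ of the classifying map of the (stabilised) tangent microbundle, and let $MT\theta$ be the Thom spectrum of the inverse of $\theta^*\gamma_d$, where $\gamma_d$ is the canonical $\bR^d$-bundle over $B\mr{Top}(d)$. Following \cite{gmtw} in the Galatius--Randal-Williams reformulation, there is a natural ``scanning'' map $\alpha_\theta\colon B\cat{Cob}^\mr{Top}_\theta(d) \to \Omega^{\infty-1}MT\theta$, and the goal is to show it is a weak equivalence for all $\theta$ when $d \neq 4$. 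Theorem~\ref{thm.mainwithout} is then the special case $\theta = \mr{id}_{B\mr{Top}(d)}$, for which $MT\theta = MT\mr{Top}(d)$.

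The first case to treat is when $\theta$ factors as $B \xrightarrow{\theta_0} BO(d) \to B\mr{Top}(d)$. Here I would invoke smoothing theory: for $d \neq 4$, parametrised Kirby--Siebenmann theory (Kirby--Siebenmann, Lashof, Burghelea--Lashof, Morlet) identifies, compatibly in families, the space of smooth structures on a topological $d$-manifold with the space of vector-bundle reductions of its tangent microbundle. Implemented at the level of the simplicial sheaves defining the cobordism categories --- keeping track of collars, the height function, and boundary conditions --- this should yield a weak equivalence of topological categories between $\cat{Cob}^\mr{Top}_\theta(d)$ and the \emph{smooth} $\theta_0$-cobordism category $\cat{Cob}_{\theta_0}(d)$, since a smooth $\theta_0$-manifold is exactly a topological manifold carrying both a vector-bundle reduction of its tangent microbundle and a compatible $\theta$-structure. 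As $\gamma_d$ restricts to the universal vector bundle over $BO(d)$ one has $MT\theta \simeq MT\theta_0$, so the theorem of Galatius--Madsen--Tillmann--Weiss gives $B\cat{Cob}_{\theta_0}(d) \simeq \Omega^{\infty-1}MT\theta_0$; compatibility of the scanning maps then shows $\alpha_\theta$ is an equivalence in this case. In particular it is an equivalence whenever the source of $\theta$ is a point.

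To pass to a general $\theta$ I would use excision in the tangential structure. Since objects and morphisms of $\cat{Cob}^\mr{Top}_\theta(d)$ are compact, both $\theta \mapsto B\cat{Cob}^\mr{Top}_\theta(d)$ and $\theta \mapsto \Omega^{\infty-1}MT\theta$ should turn filtered homotopy colimits of spaces over $B\mr{Top}(d)$ into homotopy colimits (for the latter using that the spectra $MT\theta$ are uniformly bounded below), so one may assume $B$ is a finite CW complex and induct on a cell decomposition. The inductive step attaches a cell, i.e.\ forms a homotopy pushout of spaces over $B\mr{Top}(d)$ one of whose terms is a point; the content of ``excision in the tangential structure'' is a Mayer--Vietoris/descent principle showing that ``$\alpha$ is an equivalence'' is inherited along such pushouts, which then reduces every step to the previous case. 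Applying this with $\theta = \mr{id}_{B\mr{Top}(d)}$, presented as the colimit of its skeleta and these built from points by attaching cells, completes the argument.

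I expect the real work to lie in two places. The first is running smoothing theory uniformly over the simplicial sheaves defining $\cat{Cob}^\mr{Top}(d)$, rather than for a single manifold, while respecting all the collar and boundary bookkeeping that makes the comparison a map of cobordism \emph{categories}; the hypothesis $d \neq 4$ is precisely what gives the relevant smoothing spaces their expected homotopy type, and it fails already for $\bR^4$. The second, and more delicate, is the excision step: because $\Omega^{\infty-1}$ does not preserve homotopy colimits of spectra, one cannot naively ``glue'' the two sides of $\alpha$ along a pushout of tangential structures, so the inductive step must be argued degreewise --- using that for each $k$ only boundedly many cells of $B\mr{Top}(d)$ affect $\pi_k$ of either side, while higher cells contribute only in increasingly high degrees --- or reorganised through the Galatius--Randal-Williams spaces of manifolds, where the local-to-global (homotopy-sheaf) property is cleaner and smoothing theory need only be fed in on Euclidean balls. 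Establishing that the topological cobordism category behaves excisively in $\theta$ in a usable form is the crux.
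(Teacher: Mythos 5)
Your overall architecture matches the paper's: reduce to the tangential--structure statement (Theorem \ref{thm.mainwith}), use smoothing theory (valid only for $d \neq 4$) to import the Galatius--Madsen--Tillmann--Weiss theorem for structures coming from vector bundles, and then bootstrap to arbitrary $\theta$ by ``excision in the tangential structure.'' But as written there are two genuine gaps. First, your induction needs a usable handle on $B\cat{Cob}^{\mr{Top}}_\theta(d)$ for \emph{general} $\theta$, where no smooth comparison is available; you also posit a natural scanning map $\alpha_\theta \colon B\cat{Cob}^{\mr{Top}}_\theta(d) \to \Omega^{\infty-1}MT\theta$, but constructing such a map directly is exactly what the absence of tubular neighbourhoods for topological manifolds obstructs, and the paper deliberately does not do it. Instead the paper first proves, for every topological tangential structure $\xi$, that $B\cat{Cob}^{\mr{Top},\xi}(d) \simeq \Omega^{\infty-1}\Psi^{\mr{Top},\xi}(d)$ via microflexibility of $\Psi^{\mr{Top},\xi}_d(-)$ and Gromov's $h$-principle (Corollary \ref{cor.topbordism}), a step resting on topological isotopy extension and gluing of submersion charts that is entirely absent from your proposal; the comparison with the Thom spectrum is then a map $MT^{\mr{Top}}\xi \to \Psi^{\mr{Top},\xi}(d)$ (Corollary \ref{cor.topspectrum}), i.e.\ a zigzag rather than a scanning map out of the cobordism category.

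Second, the excision step --- which you correctly identify as the crux --- is only named, not proved, and the specific CW-induction formulation is problematic. Neither side of your $\alpha_\theta$ is known to send homotopy pushouts of spaces over $B\mr{Top}(d)$ to homotopy pushouts: on the right $\Omega^{\infty-1}$ destroys pushouts (as you note), and on the left you have no spectrum-level description of $B\cat{Cob}^{\mr{Top}}_\theta(d)$ until the $h$-principle step above is in place; the proposed degreewise connectivity patch is not substantiated for the cobordism-category side. The paper's resolution is precisely your second parenthetical suggestion, carried out at the level of spectra: resolve $\xi$ by a levelwise \emph{trivial} semisimplicial bundle $\xi_\bullet$ and prove that both $\Psi^{\mr{Top},-}(d)$ and $MT^{\mr{Top}}(-)$ turn $||\xi_\bullet|| \simeq \xi$ into weak equivalences after thick geometric realization (Theorem \ref{thm.excisiontangent} and its Thom-spectrum analogue). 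This is not formal descent: its proof requires the geometric argument that tangential structures can be homotoped to be fiberwise constant (the topological analogue of Lemma \ref{lem.tangentconstsmooth}, via an ANR/partition-of-unity homotopy and a ``zoom in'' trick), plus microbundle homotopy covering. Until you supply an argument of this kind --- or an equivalent Mayer--Vietoris statement proved at spectrum level --- the inductive step does not go through. Your first case (comparing with the smooth category when $\theta$ factors through $BO(d)$) is plausible and slightly different from the paper, which does the smoothing comparison on the spectra $\Psi^{\mr{Diff},\upsilon}(d) \to \Psi^{\mr{Top},\upsilon}(d)$ (Theorem \ref{thm.smoothspectra}) and only needs it for trivial bundles $\epsilon^d \times B$; but even there the parametrized smoothing input (connectivity of embedded smoothing comparisons, the $d=5$ sum-stabilization issue) is substantial and is what forces $d \neq 4$.
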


The proof of this theorem extends to cobordism categories of topological manifolds with tangential structure. For us a tangential structure is a numerable fibre bundle $\xi$ with fibre $\bR^{d}$ and structure group $\mr{Top}(d)$, and given this we define a cobordism category $\smash{\cat{Cob}^\mr{Top,\xi}}(d)$ of $d$-dimensional topological manifolds with $\xi$-structure, i.e.\ a map $TM \to \xi$ of $d$-dimensional topological microbundles, as well as the Thom spectrum $MT^\mr{Top}\xi$ for the inverse of $\xi$. By its definition, up to weak equivalence this spectrum only depends on the stable spherical fibration associated to $\xi$.

\begin{atheorem}\label{thm.mainwith} Let $d \neq 4$ and $\xi$ be a tangential structure, then $B\cat{Cob}^\mr{Top,\xi}(d) \simeq \Omega^{\infty-1}MT^\mr{Top}\xi$. \end{atheorem}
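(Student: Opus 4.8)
The plan is to deduce Theorem~\ref{thm.mainwith} from the theorem of Galatius--Madsen--Tillmann--Weiss \cite{gmtw} for smooth cobordism categories --- that is, from the special case in which the structure group $\mr{Top}(d)$ is everywhere replaced by $O(d)$ --- by comparing the two via smoothing theory. (Theorem~\ref{thm.mainwithout} is itself the special case $\xi = \mr{id}_{B\mr{Top}(d)}$ of Theorem~\ref{thm.mainwith}, so it suffices to treat the latter.) The argument would have two halves: a ``soft'' half that replaces the classifying space of the cobordism category by a space of topological manifolds, and a ``rigid'' half that identifies the latter with $\Omega^{\infty-1}MT^\mr{Top}\xi$ by reduction to the smooth case.

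For the soft half I would, following the pattern of \cite{gmtw}, attach to each tangential structure $\xi$ a spectrum $\Psi^\mr{Top}_\xi$ whose $n$-th space is a suitable space of $d$-dimensional topological submanifolds of $\bR^n$ carrying a $\xi$-structure, and construct a natural weak equivalence $B\cat{Cob}^{\mr{Top},\xi}(d) \simeq \Omega^{\infty-1}\Psi^\mr{Top}_\xi$. The ingredients are the existence of collars for topological manifolds, the local triviality of topological submersions onto $\bR$ (so that a cobordism may be cut at ``regular values'' of its height function), parametrised gluing and isotopy extension for families over simplices, and the group-completion and delooping arguments of \cite{gmtw}; crucially none of these require the hard topological transversality of Kirby--Siebenmann, so this half should go through essentially as in the smooth case. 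It reduces Theorem~\ref{thm.mainwith} to the assertion that $\Psi^\mr{Top}_\xi \simeq MT^\mr{Top}\xi$, naturally in $\xi$.

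For the rigid half I would view both $\xi \mapsto \Psi^\mr{Top}_\xi$ and $\xi \mapsto MT^\mr{Top}\xi$ as functors from tangential structures to spectra and prove that each carries homotopy colimits of tangential structures to homotopy colimits of spectra. For $MT^\mr{Top}$ this is immediate, being a Thom-spectrum construction. For $\Psi^\mr{Top}$ this is the substantive ``excision in the tangential structure'' input: it should follow from the fact that a $\xi$-structure on a topological manifold is a section of the associated bundle over the base of $\xi$, so that the space of such structures can be cut apart and reassembled over a decomposition of that base. Granting this, one writes $B\mr{Top}(d)$ as a CW complex and induces up its skeleta; the homotopy pushouts attaching cells reduce the comparison to the case in which $\xi$ is the trivial $\bR^d$-bundle over a point, i.e.\ to topological manifolds equipped with a trivialisation of their tangent microbundle. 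Here smoothing theory finishes the job: for $d \neq 4$ a topological $d$-manifold together with a microbundle trivialisation is smoothable with a contractible space of compatible smoothings --- this is where the hypothesis $d \neq 4$ enters, via the product structure theorem and the concordance classification of smooth structures of Kirby--Siebenmann (and Lashof--Rothenberg) --- so $\Psi^\mr{Top}_{\mr{fr}}$ is weakly equivalent to the smooth space of framed manifolds, which by \cite{gmtw} is $\Omega^{\infty-1}$ of the $(-d)$-fold shift of the sphere spectrum; and $MT^\mr{Top}(\mr{fr}) \simeq \bS^{-d}$ as well, so the two agree on this base case and hence, by the inductive excision argument, for every $\xi$.

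I expect the main obstacle to be the excision property for $\Psi^\mr{Top}$ --- both pinning down its precise statement and proving that this functor of $\xi$ is sufficiently colimit-preserving. This is exactly the place where, in the smooth setting, one invokes parametrised transversality and the submersion h-principle to identify the space of manifolds with a section space; the point of the present strategy is to sidestep topological transversality, and doing so forces one to work with a model of $\Psi^\mr{Top}_\xi$ in which the $\xi$-structure genuinely decouples from the underlying topological submanifold and is manipulated locally over the base of $\xi$, and then to check that this decoupled model still computes $B\cat{Cob}^{\mr{Top},\xi}(d)$. A secondary but pervasive difficulty is keeping every smoothing-theoretic comparison natural for families of topological manifolds parametrised over simplices, so that it yields weak equivalences of moduli spaces and not merely bijections on concordance classes.
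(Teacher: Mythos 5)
Your overall architecture is the one the paper actually follows: pass from $B\cat{Cob}^{\mr{Top},\xi}(d)$ to $\Omega^{\infty-1}$ of a spectrum $\Psi^{\mr{Top},\xi}(d)$ of topological submanifolds, prove excision in the tangential structure for both $\Psi^{\mr{Top},-}(d)$ and $MT^{\mr{Top}}(-)$ (the paper resolves $\xi$ by a levelwise trivial semisimplicial bundle, Theorem \ref{thm.excisiontangent}, rather than inducting over a CW structure on $B\mr{Top}(d)$, but this is only a difference in packaging), and reduce to trivial bundles $\epsilon^d\times B$, where smoothing theory and \cite{gmtw} finish the argument and $d\neq 4$ enters. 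However, two of your ``soft'' claims hide the genuinely hard steps.

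First, the claim that the comparison $B\cat{Cob}^{\mr{Top},\xi}(d)\simeq \Omega^{\infty-1}\Psi^{\mr{Top},\xi}(d)$ avoids topological transversality is not tenable. Local triviality of topological submersions concerns the parameter direction of a family, not slicing a long manifold in its $\bR$-direction, and there is no notion of a regular value of a continuous height function on a topological manifold. To cut a (family of) long manifold(s) into composable cobordisms one must produce levels near which the manifold is a product, locally constantly in the family; in the paper this is exactly Proposition \ref{prop.longworestrictionstocob}, whose proof rests on Quinn's microbundle transversality theorem (Theorem \ref{thm.microbundletransversality}) together with the ``enduring transversality'' Lemmas \ref{lem.slicedeform} and \ref{lem.slicedeformtangent}, and a chromatic-number bookkeeping to perform the deformations simultaneously over a disk. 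Some such input (or a replacement for it) is unavoidable with these definitions, and your proposal supplies none.

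Second, the framed base case is not a formal consequence of Kirby--Siebenmann smoothing theory. The statement that a topological $d$-manifold with trivialized tangent microbundle has a ``contractible space of compatible smoothings'' is not the correct statement (the relevant space of smoothings with compatible vector-bundle framings is governed by $\mr{Map}(M,\mr{Top}(d)/O(d))$, not a point); what one needs, and what the paper proves, is that the map of moduli spaces $\bigsqcup_{[\sigma]}B\mr{Diff}^\upsilon_\partial(M_\sigma)\to B\mr{Top}^\upsilon_\partial(M)$ is an equivalence (Proposition \ref{prop.smoothingtangential}). Moreover the spectra are built from submanifolds of $\bR^n$ at finite $n$, so one must compare smooth and topological \emph{embedded} moduli with a connectivity estimate growing like $2n$: this uses Lashof's comparison of embedding spaces \cite{lashofembeddings} (Proposition \ref{prop.embeddedsmoothing}), a quantitative Whitney embedding theorem for topological manifolds (Theorem \ref{thm.catwhitney}, proved in the appendix), and a radial-slice semisimplicial resolution (Lemma \ref{lem.sphericaltop}) which again invokes topological transversality and, for $d=5$, the Freedman--Quinn sum-stable smoothing theorem \cite{freedmanquinn} because the slices are $4$-dimensional. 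This is the content of Theorem \ref{thm.smoothspectra} and is the heart of the proof, not a ``secondary'' naturality issue; together with Proposition \ref{prop.thomcomparisontrivial} (which itself needs the connectivity of $\mr{Top}(n+d)/\mr{Top}(n+d,d)$ to identify $MT^{\mr{Top}}$ of a trivial bundle) it yields $MT^{\mr{Top}}\xi\simeq\Psi^{\mr{Top},\xi}(d)$ and hence the theorem.
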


Theorem \ref{thm.mainwithout} follows from Theorem \ref{thm.mainwith} by Lemma \ref{lem.universaltangentialtop}. The proof of Theorem \ref{thm.mainwith} can be summarized by the following sequence of weak equivalences (the terms will be defined later):
\begin{align*}B\cat{Cob}^\mr{Top,\xi}(d) &\simeq \Omega^{\infty-1}(\Psi^\mr{Top,\xi}(d)) \\
&\simeq \Omega^{\infty-1}(\Psi^\mr{Top,||\xi_\bullet||}(d)) \\
&\simeq \Omega^{\infty-1}(||\Psi^{\mr{Top},\epsilon^{d} \times B_\bullet}(d)||) && (1)\\
&\simeq \Omega^{\infty-1}(||MT^\mr{Top}(\epsilon^d \times B_\bullet)||) && (2)\\
&\simeq \Omega^{\infty-1}MT^\mr{Top}||\xi_{\bullet}|| \\
&\simeq \Omega^{\infty-1}MT^\mr{Top}\xi.\end{align*}
It is worth pointing out that we do \emph{not} repeat the scanning argument that is used for smooth manifolds in the setting of topological manifolds, but use (1) an excision result for the dependence of spectra of topological manifolds on the tangential structure, and (2) smoothing theory for spectra of topological manifolds. We believe these results are of independent interest. In particular, these methods imply that the classifying space of the framed topological bordism category is equivalent to that of the framed smooth bordism category in dimensions $\neq 4$, as suggested by \cite[Remark 2.4.30]{lurietft}.

\begin{remark}It is only in the smoothing theory of (2) that we use $d \neq 4$. To remove this condition in our argument, one needs a parametrized version of stable smoothing theory for topological 4-manifolds generalizing \cite{lashofshaneson} or \cite[Section 8.6]{freedmanquinn}.\end{remark}

We also generalize to topological manifolds a result of Genauer \cite{genauer} on cobordism categories of manifolds with boundary, in Section \ref{sec.boundary}. These results are used in \cite{raptissteimle} to reprove the Dwyer--Weiss--Williams theorem. The matter of parametrized surgery theory and its applications will be discussed in a sequel \cite{gomezlopezkupers2}.

\begin{remark}Apart from minor technical modifications, the results of this paper should also hold for PL manifolds (in fact, without the condition $d \neq 4$). The main obstacle is that \cite{siebenmannstratified} claims but does not prove respectful versions of isotopy extension and gluing of submersions charts for PL manifolds. However, the step where these are used can be replaced by an iterated delooping argument, as was done by the first author in \cite{mauriciopaper}. In the forthcoming paper  \cite{mauriciopaper2}, the first author will prove the PL analogue of the main theorem of the present paper. More precisely, in  \cite{mauriciopaper2} the first author will show that there is a weak equivalence $B\cat{Cob}^{\mr{PL}}(d) \simeq \Omega^{\infty-1}MT\mr{PL}(d)$, where $\cat{Cob}^{\mr{PL}}(d)$ is the PL cobordism category as defined in \cite{mauriciopaper} and $MT\mr{PL}(d)$ is a PL Madsen--Tillmann spectrum.
\end{remark}

\subsection{Conventions}

\begin{convention}All our manifolds will be second countable, paracompact and Hausdorff.\end{convention}

\begin{convention}Our preferred category of topological spaces is that of compactly generated weakly Hausdorff spaces.\end{convention}

\subsection{Acknowledgements}  The first author would like to thank Oscar Randal-Williams for introducing him to the field of cobordism categories and for stimulating discussions during the author's graduate studies which led to some of the arguments of the present paper. The second author would like to thank S\o ren Galatius, Oscar Randal-Williams, Manuel Krannich, George Raptis, and Wolfgang Steimle for interesting discussions regarding this project. MGL was supported by the Department of Mathematics of the University of Oregon during the development of this project. AK was supported by the Danish National Research Foundation through the Centre for Symmetry and Deformation (DNRF92), by the European Research Council (ERC) under the European Union's Horizon 2020 research and innovation programme (grant agreement No.\ 682922), by NSF grant DMS-1803766, by the Natural Sciences and Engineering Research Council of Canada (NSERC) [funding reference number 512156 and 512250], as well as the Research Competitiveness Fund of the University of Toronto at Scarborough.

\section{Recollection on smooth cobordism categories} Though our interest is the case of topological manifolds, our proofs use comparisons to the smooth case. Hence we recall the results in \cite{gmtw} on the homotopy type of smooth cobordism categories, though we shall follow the streamlined approach of \cite{grwmonoids}. We also prove a new result about these cobordism categories (\cref{sec.smoothexcision}), an analogue of which will play an important role in our discussion of topological cobordism categories. 

\subsection{Spaces of smooth manifolds} \label{sec.smoothmanifolds} 

\subsubsection{Spaces of smooth submanifolds} \label{sec.smoothsubmfds} In \cite[Section 2]{grwmonoids}, a topological space of smooth submanifolds is defined; we recall their definition here. Note their notation is $\Psi_d(-)$ instead of $\Psi^\mr{Diff}_d(-)$: we add the superscript to indicate that we are dealing with smooth submanifolds.

\begin{definition}Let $M$ be a smooth manifold. The underlying set of $\Psi^\mr{Diff}_d(M)$ is given by the set of closed subsets $X \subset M$ that are smooth $d$-dimensional submanifolds (without boundary). This is topologized as in \cite[Section 2.1]{grwmonoids}.\end{definition}

\begin{figure}[t]
	\begin{tikzpicture}
		\begin{scope}
		\draw [dotted] (0,0) rectangle (3,3);
		\draw [thick] (1,0) to[out=100,in=-80] (2,3);
		\draw [thick] (2.3,1.5) circle (.5cm);
		\node at (1.5,0) [below] {$t=0$};
		\end{scope}

		\begin{scope}[xshift=4cm]
		\begin{scope}
		    \clip(0,0) rectangle (3,3);
		    \draw [thick] (2.9,1.4) circle (.8cm);
		 \end{scope}
		\draw [dotted] (0,0) rectangle (3,3);
		\draw [thick] (1,0) to[out=85,in=-90] (2,3);
		\node at (1.5,0) [below] {$t=\nicefrac{1}{2}$};
		\end{scope}
		
		\begin{scope}[xshift=8cm]
		\draw [dotted] (0,0) rectangle (3,3);
		\draw [thick] (1,0) to[out=70,in=-100] (2,3);
		\node at (1.5,0) [below] {$t=1$};
		\end{scope}
	\end{tikzpicture}
	\label{fig:psi1path}
	\caption{A path in $\Psi_1^\mr{Diff}(\bR^2)$. Note that parts of manifolds may ``disappear at infinity.''}
\end{figure}
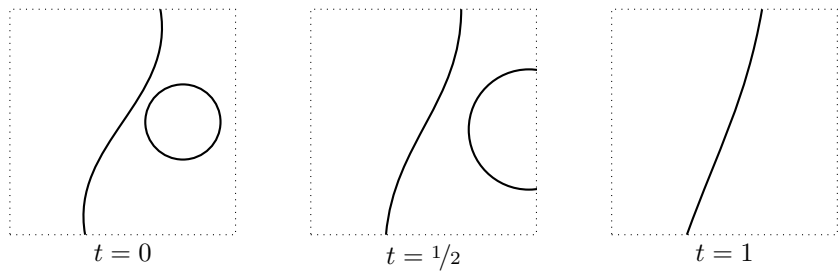

We forego a precise definition of the topology, because this section is a summary of the results in the cited papers (though we will need the details in the proof of Theorem \ref{thm.excisiontangentsmooth}). Heuristically the topology on $\Psi^\mr{Diff}_d(M)$ is one of convergence on compact subsets, and the $X'$ that are close to $X$ near some compact $K \subset M$ are those that can be obtained as the image of a graph of a section of the normal bundle to $X$ under a tubular neighbourhood. 

We next discuss the dependence of $\Psi^\mr{Diff}_d(M)$ on $M$, but first introduce some notation:

\begin{definition} \
	\begin{itemize}
		\item Let $\mr{Top}$ denote the category with class of objects given by compactly generated weakly Hausdorff topological spaces, and set of morphisms from $X$ to $Y$ given by the continuous maps.
		\item Let $\cat{Top}$ denote the $\mr{Top}$-enriched category with class of objects given by compactly generated weakly Hausdorff topological spaces, and topological space of morphisms from $X$ to $Y$ given by the mapping space $\mr{Map}(X,Y)$ in the compact-open topology.
		\item Let $\cat{Mfd}^\mr{Diff}$ denote the $\mr{Top}$-enriched category with class of objects given by smooth manifolds and topological space of morphisms from $P$ to $Q$ given by the space of smooth codimension zero embeddings $\mr{Emb}^\mr{Diff}(P,Q)$ in the (weak) $C^\infty$-topology.
	\end{itemize}
\end{definition}

Observe that $\cat{Mfd}^\mr{Diff}$ is a disjoint union of its subcategories $\cat{Mfd}^\mr{Diff}_n$ of $n$-dimensional manifolds, as there are only codimension zero embeddings between manifolds of the same dimension.

\begin{definition}An \emph{invariant topological sheaf on smooth manifolds} is a functor of $\mr{Top}$-enriched categories $\Phi \colon (\cat{Mfd}^\mr{Diff})^\mr{op} \to \cat{Top}$  such that for any open cover $\{U_i\}_{i \in I}$ of a smooth manifold $M$, the following is an equalizer diagram in $\mr{Top}$:
		\[\begin{tikzcd} \Phi(M) \ar[r] & \prod_i \Phi(U_i) \rar[shift left=.5 ex] \rar[shift left=-.5 ex]& \prod_{i,j} \Phi(U_i \cap U_j) .\end{tikzcd}\]\end{definition}

In \cite[Section 2.2]{grwmonoids} and \cite[Section 3]{rwembedded}, it is proven that $M \mapsto \Psi^\mr{Diff}_d(M)$ is an invariant topological sheaf on smooth manifolds.

\subsubsection{Spaces of smooth submanifolds with tangential structure} \label{sec.smoothtangentialstruc}
The definitions of \cref{sec.smoothsubmfds} may be generalized to include tangential structures. 

\begin{definition}A $d$-dimensional \emph{tangential structure} is a numerable $d$-dimensional vector bundle $\upsilon$.\end{definition}

\begin{remark}
This definition is equivalent to the standard definition as a map $B \to B\mr{O}(d)$. See \cref{sec.asidebundles} for more on this perspective.
\end{remark} 

For us, a map of vector bundles is an isomorphism on fibres and thus there can only be maps between vector bundles of the same dimension. The objects of the cobordism category are of a different dimension than its morphisms, and thus for it is helpful to allow the tangential structure to be of higher dimension than the manifolds. However, as we explain in Section \ref{sec.asidebundles}, for any $d'$-dimensional tangential structure $\upsilon$ with $d' \geq d$, there exists a $d$-dimensional tangential structure $\overline{\upsilon}$ such that for any $d$-dimensional manifold $X$ the space of maps $TX \oplus \epsilon^{d'-d} \to \upsilon$ of $d'$-dimensional vector bundles is weakly equivalent to the space of maps $TX \to \overline{\upsilon}$ of $d$-dimensional vector bundles. Here $\epsilon$ denotes the trivial 1-dimensional vector bundle and $\epsilon^k$ its $k$-fold direct sum.

\begin{definition}Let $M$ be a smooth manifold and $\upsilon$ be a $d'$-dimensional tangential structure with $d' \geq d$. The underlying set of $\Psi^{\mr{Diff},\upsilon}_d(M)$ is given by the set of pairs $\overline{X} = (X,\varphi_X)$ of an element $X \in \Psi^\mr{Diff}_d(M)$ and a map  $\varphi_X \colon TX \oplus \epsilon^{d'-d} \to \upsilon$ of $d'$-dimensional vector bundles. This is topologized as in \cite[Section 2.3]{grwmonoids}.\end{definition}

The functor $M \mapsto \Psi^{\mr{Diff},\upsilon}_d(M)$ is again an invariant topological sheaf on smooth manifolds, cf.~\cite[Section 2.3]{grwmonoids}.

\subsubsection{Simplicial sets of smooth submanifolds}\label{sec.ssetsmooth} By taking the singular simplicial set, we obtain from the topological space $\Psi^\mr{Diff,\upsilon}_d(M)$ a simplicial set which has the same homotopy type. However, it is a consequence of \cite[Section 2.4]{grwmonoids} that this singular simplicial set is weakly equivalent to the subsimplicial set of smooth simplices:

\begin{definition}\label{def.smoothsimplices} The simplicial set $\mr{SmSing}(\Psi^{\mr{Diff},\upsilon}_d(M))$ of \emph{smooth singular simplices} has $k$-simplices given by the maps $\Delta^k \to \Psi^{\mr{Diff},\upsilon}_d(M)$ satisfying the following conditions: 
	\begin{enumerate}[(i)]
		\item the graph $X \subset \Delta^k \times M$ is a smooth neat submanifold with corners and the map $\pi \colon X \hookrightarrow \Delta^k \times M \to \Delta^k$ is a smooth submersion of relative dimension $d$, 
		\item the tangent maps assemble into a map $\varphi_X \colon T_\pi X \oplus \epsilon^{d'-d} \to \upsilon$ of $d'$-dimensional vector bundles, where $T_\pi X$ is the vertical tangent bundle.
	\end{enumerate}
\end{definition}

This is not Kan; this might be fixed by a careful normalization near the faces of the simplices (as in \cite[Appendix 1]{burgheleaaut}), but we shall not need to do this. Instead, for us it will suffice that the smooth approximation techniques of \cite[Section 2.4]{grwmonoids} imply that any element of the $i$th homotopy group of $\Psi^{\mr{Diff},\upsilon}_d(M)$ may be represented by the following data: a smooth submanifold $X \subset \partial D^{i+1} \times M$ such that $X \to \partial D^{i+1}$ is a smooth submersion of relative dimension $d$, together with a map of $d'$-dimensional vector bundles $T_\pi X \oplus \epsilon^{d'-d} \to \upsilon$, where $T_\pi X$ denotes the vertical tangent bundle.

\subsubsection{The universal tangential structure} \label{sec.smoothuniversal} It is well-known that there exist universal numerable $d$-dimensional bundles; examples include the Grassmannian $\mr{Gr}^\mr{Diff}_d(\bR^{\infty})$ of $d$-dimensional planes in $\bR^\infty$ \cite[\S 5]{milnorstasheff}, or the associated $d$-dimensional vector bundle to the universal numerable principal $\mr{O}(d)$-bundle over the bar construction on $\mr{O}(d)$ \cite[Section 8]{mayclassifying}.

The classifying property of such a universal numerable $d$-dimensional vector bundle $\upsilon^\mr{univ}$, with underlying map $p^\mr{univ} \colon E^{\mr{univ}} \to B^\mr{univ}$, is as follows: for every topological space $B$, homotopy classes of maps $[B,B^\mr{univ}]$ are in bijection with isomorphism classes of numerable $d$-dimensional vector bundles over $B$. The bijection is given in one direction by pullback. In \cite[\S IV.8]{kirbysiebenmann} it is shown that for a topological manifold $B$, it also satisfies the following relative classifying property: for every germ of a numerable $d$-dimensional vector bundle $\upsilon_A$ on some open neighbourhood $\cO(A)$ of a subset $A \subset M$, classified by $f_A \colon \cO(A) \to B^\mr{univ}$, the homotopy classes of extensions of the germ of $f_A$ are in bijection with the isomorphism classes of extensions of the germ of $\upsilon_A$. We shall use this to prove the following result:

\begin{lemma}\label{lem.universaltangential} The natural transformation $\Psi^\mr{Diff,\upsilon^{univ}}_d(-) \to \Psi^\mr{Diff}_d(-)$ which forgets the $\upsilon^\mr{univ}$-structure is a natural weak equivalence.\end{lemma}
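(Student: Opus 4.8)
The plan is to show that for every smooth manifold $M$ the forgetful map $\Psi^{\mr{Diff},\upsilon^{\mr{univ}}}_d(M) \to \Psi^\mr{Diff}_d(M)$ is a bijection on path components and an isomorphism on all higher homotopy groups, for every choice of basepoint; naturality in $M$ is automatic, since forgetting a tangential structure commutes with restriction along codimension zero embeddings. The heuristic is that the ``fibre'' of this map over $X\in\Psi^\mr{Diff}_d(M)$ is the space of bundle maps $T_\pi X\oplus\epsilon^{d'-d}\to\upsilon^{\mr{univ}}$, which is weakly contractible because $\upsilon^{\mr{univ}}$ is universal; I would make this precise on homotopy groups using the smooth representatives recalled at the end of Section~\ref{sec.ssetsmooth} together with the (relative) classifying property of $\upsilon^{\mr{univ}}$.

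For surjectivity, I would take an element of $\pi_i(\Psi^\mr{Diff}_d(M))$ represented by a smooth submanifold $X\subset\partial D^{i+1}\times M$ with $\pi\colon X\to\partial D^{i+1}$ a submersion of relative dimension $d$ that agrees with the chosen basepoint near a basepoint of $\partial D^{i+1}$. Since $X$ is paracompact, the numerable bundle $T_\pi X\oplus\epsilon^{d'-d}$ admits a classifying map to $B^\mr{univ}$; applying the relative classifying property near the (closed) locus of $X$ lying over the basepoint region, I can choose this classifying map, equivalently a bundle map $\varphi_X\colon T_\pi X\oplus\epsilon^{d'-d}\to\upsilon^{\mr{univ}}$, so that it restricts there to the structure of the basepoint. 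Then $(X,\varphi_X)$ is a smooth representative — condition (ii) of Definition~\ref{def.smoothsimplices} holds for free, since $\varphi_X$ is a genuine bundle map — of a based class mapping to the given one. Taking $i=0$ gives surjectivity on $\pi_0$.

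For injectivity, suppose $(X_0,\varphi_{X_0})$ and $(X_1,\varphi_{X_1})$ are based smooth representatives whose images in $\Psi^\mr{Diff}_d(M)$ represent the same class. Smoothing the connecting homotopy yields a smooth submanifold $W\subset\partial D^{i+1}\times[0,1]\times M$ with $W\to\partial D^{i+1}\times[0,1]$ a submersion of relative dimension $d$, restricting to $X_0$ and $X_1$ over the two ends and to the basepoint over the basepoint region. Extending $\varphi_{X_0}$ and $\varphi_{X_1}$ across collars of the ends and using that both agree with the basepoint structure over the basepoint region, I obtain a bundle map defined on a germ of a neighbourhood of a closed subset of $W$; since $W$ is a smooth, hence topological, manifold (possibly with corners), the relative classifying property of \cite[\S III.8]{kirbysiebenmann} provides an extension $\varphi_W\colon T_\pi W\oplus\epsilon^{d'-d}\to\upsilon^{\mr{univ}}$. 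The pair $(W,\varphi_W)$ is the desired based smooth homotopy, so $(X_0,\varphi_{X_0})$ and $(X_1,\varphi_{X_1})$ represent the same class; the case $i=0$ gives injectivity on $\pi_0$.

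The main obstacle I anticipate is not conceptual but a matter of carefully matching the germ-theoretic relative classifying property with the smooth-submersion representatives: one has to check that the graphs $X$ and $W$ are genuinely topological manifolds (they are smooth with corners, so this is harmless, after rounding corners if one prefers), that the partial $\upsilon^{\mr{univ}}$-structures extend across collars to honest open neighbourhoods of the relevant closed subsets, and that the various pieces agree on their overlaps; passing between a $d'$-dimensional universal bundle and the $d$-dimensional case, or enlarging by trivial summands, costs nothing. Beyond this bookkeeping, the argument uses nothing about the topology of $\Psi^\mr{Diff}_d(-)$ except the existence of the smooth representatives.
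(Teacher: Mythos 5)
Your proposal is correct and uses essentially the same ingredients as the paper's proof: smooth representatives of homotopy classes (and of the connecting homotopies) provided by the smoothing techniques of Section~\ref{sec.ssetsmooth}, combined with the relative classifying property of $\upsilon^{\mr{univ}}$ from \cite[\S III.8]{kirbysiebenmann} to extend the bundle map from a germ near a closed subset. The only difference is packaging: the paper solves the single relative lifting problem for $(\partial D^i \subset D^i)$ represented by a neat submanifold of $D^i\times M$ with structure over a collar of $\partial D^i$, which handles your surjectivity and injectivity steps simultaneously.
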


\begin{proof}Given a commutative diagram
	\[\begin{tikzcd} \partial D^i \rar \dar & \Psi^\mr{Diff,\upsilon^{univ}}_d(M) \dar \\[-2pt]
	D^i \rar & \Psi^\mr{Diff}_d(M),\end{tikzcd}\]
	we need to provide a lift after a homotopy through commutative diagrams. By \cref{sec.ssetsmooth}, we may assume this commutative diagram is represented by a smooth neat submanifold $X \subset D^i \times M$ such that $X \hookrightarrow D^i \times M \to D^i$ is a smooth submersion of relative dimension $d$, together with an $\upsilon^\mr{univ}$-structure on the vertical tangent bundle over $\partial D^i$. We may assume that this $\upsilon^\mr{univ}$-structure extends over an open collar of $\partial D^i$ in $D^i$. Using the relative classifying property described above we can extend this to the entire vertical tangent bundle.
\end{proof}

\subsubsection{An aside on spaces of bundle maps}\label{sec.asidebundles} Given $\upsilon$ as above, the space $\mr{Bun}(TX\oplus \epsilon^{d'-d},\upsilon)$ of maps $TX \oplus \epsilon^{d'-d} \to \upsilon$ of $d'$-dimensional vector bundles, admits a weakly equivalent construction of a more homotopy-theoretic flavor.

There is a map $f \colon B \to B\mr{O}(d')$, unique up to homotopy, classifying $\upsilon$ in the sense that $f^* \upsilon^\mr{univ} \cong \upsilon$ where $\upsilon^\mr{univ}$ is the universal $d'$-dimensional vector bundle over $B\mr{O}(d')$. A choice of such an isomorphism is a covering of $f$ by a bundle map $\psi$. We can replace $f$ with a homotopy equivalent map $f' \colon B' \to B\mr{O}(d')$ that is a Hurewicz fibration;
\[\begin{tikzcd} B \rar{\simeq} \arrow{rd}[swap]{f} & B' \dar{f'} \\
& B\mr{O}(d').\end{tikzcd}\]
By an argument similar to that in Lemma \ref{lem.vectorweq}, $\mr{Bun}(TX\oplus \epsilon^{d'-d},\upsilon)$ is weakly equivalent to the space of vector bundle maps from $TX \oplus \epsilon^{d'-d}$ to $(f')^* \upsilon^\mr{univ}$, so we may as well assume that $f$ is a Hurewicz fibration.

The space $\mr{Bun}(TX \oplus \epsilon^{d'-d},\upsilon)$ is homeomorphic to the space $\cX(X)$ of commutative diagrams of vector bundle maps (recall $\psi$ is fixed)
\[\begin{tikzcd} & \upsilon \dar{\psi} \\
TX \oplus \epsilon^{d'-d} \rar{\omega} \arrow{ru}{\varphi} & \upsilon^\mr{univ}. \end{tikzcd}\]
The identification is given by sending $\varphi \in \mr{Bun}(TX \oplus \epsilon^{d'-d},\upsilon)$ to the pair $(\varphi,\psi \circ \varphi) \in \cX(X)$. 

Let us now fix a classifying map $\varpi \colon TX \oplus \epsilon^{d'-d} \to \upsilon^\mr{univ}$ with underlying map $g \colon X \to B\mr{O}(d')$. There is a map $r \colon \cX(X) \to \mr{Bun}(TX \oplus \epsilon^{d'-d},\upsilon^\mr{univ})$ sending $(\varphi,\omega)$ to $\omega$. The target of $r$ is weakly contractible by the relative classifying property for the universal bundle. Furthermore, $r$ is a Serre fibration because $f$ was a Serre fibration; lift the underlying map, and using the homotopy covering property to extend the map of vector bundles. We conclude that $\cX(X)$ is weakly equivalent to the subspace $\cX(X,\varpi)$ of $\cX(X)$ of pairs of the form $(\varphi,\varpi)$. 

We claim $\cX(X,\varpi)$ in turn can be identified with the space of lifts $\tilde{g}$ of $g$ along $f$. There is a forgetful map from $\cX(X,\varpi)$ to this space of lifts, sending $(\varphi,\varpi)$ to the underlying map $\tilde{g}$ of $\varphi$. One can uniquely recover the map $\varphi$ by identifying the fibre $(TX \oplus \epsilon^{d'-d})_x$ over $x$ with $\upsilon_{\tilde{g}(x)}$ using the map $(\psi_{\tilde{g}(x)})^{-1} \circ \varpi_{g(x)}$. 

In conclusion, we have explained the well-known weak equivalence
\[\mr{Bun}(TX \oplus \epsilon^{d'-d},\upsilon) \simeq \mr{Lift}(X,B \to B\mr{O}(d')),\]
where the right hand side is the space of lifts along $f$ of a fixed classifying map $g$. This uses the assumption that $f$ is a Serre fibration.

\smallskip

We can replace $B\mr{O}(d')$ by $B\mr{O}(d)$ here by taking a different tangential structure. If we take the (homotopy) pullback of $f'$ along the map $B\mr{O}(d) \to B\mr{O}(d')$ induced by the inclusion, we obtain another Hurewicz fibration $\overline{f} \colon \overline{B} \to B\mr{O}(d)$:
\[\begin{tikzcd}\overline{B} \dar[swap]{\overline{f}} \rar & B' \dar{f'} \\
B\mr{O}(d) \rar & B\mr{O}(d'),\end{tikzcd}\]	
and a $d$-dimensional vector bundle $\overline{\upsilon} \coloneqq \overline{f}^* \upsilon^\mr{univ}$. Then the canonical map $\mr{Bun}(TX,\overline{\upsilon}) \to \mr{Bun}(TX \oplus \epsilon^{d'-d},\upsilon)$ is a weak equivalence.

\subsubsection{Spectra of smooth submanifolds} \label{sec.smoothspectra}
From the spaces $\Psi_d^{\mr{Diff},\upsilon}(-)$ we can produce a spectrum. Our notion of spectrum is quite naive, even though the spectrum is visibly an orthogonal spectrum: for us a \emph{spectrum} $E$ is a sequence $\{E_n\}_{n \geq 0}$ of pointed topological spaces with pointed maps $\Sigma E_n \to E_{n+1}$. We call $E_n$ the \emph{$n$th level}.

\begin{definition}The spectrum $\Psi^{\mr{Diff},\upsilon}(d)$ has $n$th level given by $\Psi^{\mr{Diff},\upsilon}_d(\bR^n)$, with basepoint $\varnothing$. The structure maps $S^1 \wedge \smash{\Psi^{\mr{Diff},\upsilon}_d}(\bR^n) \to \smash{\Psi^{\mr{Diff},\upsilon}_d}(\bR^{n+1})$ are given by identifying $S^1$ with the one-point compactification of $\bR$, and defining it to be
	\[(t,\overline{X}) \longmapsto \begin{cases} \varnothing & \text{if $(t,\overline{X}) = (\infty,\overline{X})$,}\\
	\iota (\overline{X}) + t \cdot e_1 & \text{otherwise,} \end{cases}\]
	 with $\iota \colon \bR^n \to \bR^{n+1}$ the inclusion on the last $n$ coordinates.\end{definition}

\subsection{Smooth cobordism categories} \label{sec.smoothcobordism} 

\subsubsection{The definition of the smooth cobordism category} \label{sec.smoothcobordismdef}
The definition of the smooth cobordism category uses long manifolds, as below:

\begin{definition}For $0 \leq p \leq n$, we let $\psi^{\mr{Diff},\upsilon}(d,n,p)$ be the subspace of $\Psi^{\mr{Diff},\upsilon}_d(\bR^n)$ consisting of those $\overline{X} = (X,\varphi_X)$ such that $X \subset \bR^p \times (0,1)^{n-p}$.\end{definition}

The smooth cobordism category will be a non-unital category internal to the category $\mr{Top}$ of topological spaces, i.e.\ have spaces of objects and morphisms, but no specified identity morphisms.

\begin{definition}\label{def.smoothcob} The \emph{smooth cobordism category} $\cat{Cob}^{\mr{Diff},\upsilon}(d,n)$ is the non-unital category internal to $\mr{Top}$ with
	\begin{itemize}
	\item object space is given by $\psi^{\mr{Diff},\upsilon}(d-1,n-1,0)$, 
	\item morphism space is given by the subspace of $(0,\infty) \times \psi^{\mr{Diff},\upsilon}(d,n,1)$ of those $(t,\overline{X})$ such that there exists an $\epsilon>0$ so that 
	\begin{align*}\qquad &\overline{X} \cap ((-\infty,\epsilon) \times (0,1)^{n-1}) = (-\infty,\epsilon) \times (\overline{X} \cap (\{0\} \times (0,1)^{n-1})) \quad \text{and} \\
	&\overline{X} \cap ((t-\epsilon,\infty) \times (0,1)^{n-1}) = (t-\epsilon,\infty)\times (\overline{X} \cap (\{t\} \times (0,1)^{n-1})),\end{align*}
	\item the source map sends $(t,\overline{X})$ to $\overline{X} \cap (\{0\} \times (0,1)^{n-1})$ and the target map sends it to $\overline{X} \cap (\{t\} \times (0,1)^{n-1})$,
	\item the composition map sends a pair of composable morphisms $(t_0,\overline{X}_0)$ and $(t_1,\overline{X}_1)$ to $(t_0+t_1,\overline{X})$ with $\overline{X}$ defined by
	\[\qquad \overline{X} \coloneqq \begin{cases}\overline{X}_0 \cap ((-\infty,t_0] \times (0,1)^{n-1}) & \text{in $((-\infty,t_0] \times (0,1)^{n-1})$,} \\
	\overline{X}_1 \cap ([0,\infty) \times (0,1)^{n-1})+t_0 \cdot e_1 & \text{in $([t_0,\infty) \times (0,1)^{n-1})$.}\end{cases}\]
\end{itemize}\end{definition}

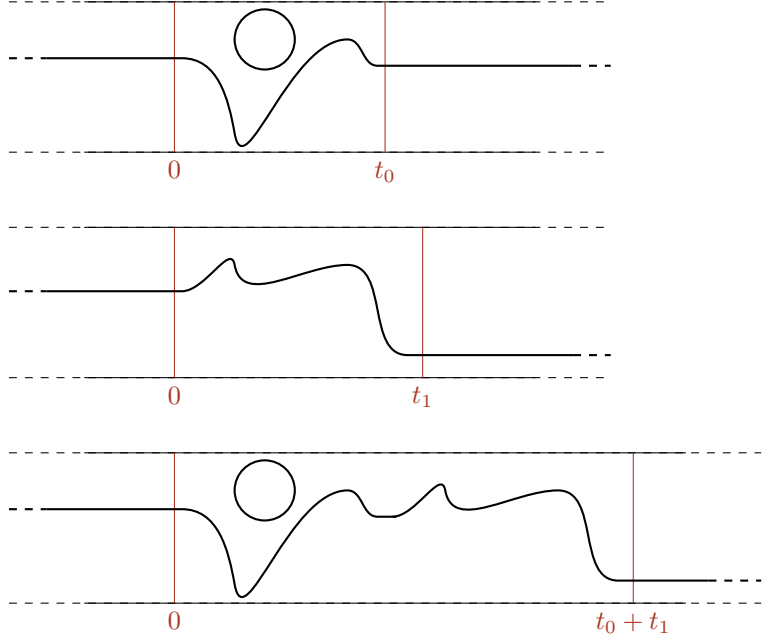
\begin{figure}
	\centering
	\begin{tikzpicture}
	\begin{scope}
	\draw [dashed] (-4,0) -- (4,0);
	\draw [dashed] (-4,2) -- (4,2);	
	\draw (-3,0) -- (3,0);
	\draw (-3,2) -- (3,2);
	\draw [Mahogany] (-1.8,0) -- (-1.8,2);
	\draw [Mahogany] (1,0) -- (1,2);
%	\draw [thick,Mahogany] (1.45,0) -- (1.45,2);
	
	\node at (-1.8,0) [below,Mahogany] {$0$};
	\node at (1,0) [below,Mahogany] {$t_0$};
	
%	\node at (0,-.5) [below,Mahogany] {$t_0<t_1 \in \bR$};
	
	\draw [thick,dashed] (-4,1.25) -- (-3.5,1.25);
	\draw [thick,dashed] (3.5,1.15) -- (4,1.15);
	\draw [thick] (-3.5,1.25) -- (-1.7,1.25) to[out=0,in=100] (-1,0.25) to[out=-80,in=180] (.5,1.5) to[out=0,in=180] (.9,1.15) -- (1.55,1.15) to[out=0,in=180] (3.5,1.15);
	\draw [thick] (-.6,1.5) ellipse (.4 cm and .4 cm);
	\end{scope}
	
	\begin{scope}[yshift=-3cm]
	\draw [dashed] (-4,0) -- (4,0);
	\draw [dashed] (-4,2) -- (4,2);	
	\draw (-3,0) -- (3,0);
	\draw (-3,2) -- (3,2);
	\draw [Mahogany] (-1.8,0) -- (-1.8,2);
	\draw [Mahogany] (1.5,0) -- (1.5,2);
	%	\draw [thick,Mahogany] (1.45,0) -- (1.45,2);
	
	\node at (-1.8,0) [below,Mahogany] {$0$};
	\node at (1.5,0) [below,Mahogany] {$t_1$};
	
	%	\node at (0,-.5) [below,Mahogany] {$t_0<t_1 \in \bR$};
	
	\draw [thick,dashed] (-4,1.15) -- (-3.5,1.15);
	\draw [thick,dashed] (3.5,.3) -- (4,.3);
	\draw [thick] (-3.5,1.15) -- (-1.7,1.15) to[out=0,in=100] (-1,1.5) to[out=-80,in=180] (.5,1.5) to[out=0,in=180] (1.3,.3) -- (3.5,.3);
	\end{scope}
	
	\begin{scope}[yshift=-6cm]
	\draw [dashed] (-4,0) -- (6,0);
	\draw [dashed] (-4,2) -- (6,2);	
	\draw (-3,0) -- (5,0);
	\draw (-3,2) -- (5,2);
	\draw [Mahogany] (-1.8,0) -- (-1.8,2);
	\draw [Mahogany] (4.3,0) -- (4.3,2);
%	\draw [thick,Mahogany] (1.45,0) -- (1.45,2);

	\node at (-1.8,0) [below,Mahogany] {$0$};
	\node at (4.3,0) [below,Mahogany] {$t_0+t_1$};

%	\node at (0,-.5) [below,Mahogany] {$t_0<t_1 \in \bR$};

	\draw [thick,dashed] (-4,1.25) -- (-3.5,1.25);
	\draw [thick] (-3.5,1.25) -- (-1.7,1.25) to[out=0,in=100] (-1,0.25) to[out=-80,in=180] (.5,1.5) to[out=0,in=180] (.9,1.15) -- (1.15,1.15);
	\draw [thick,xshift=2.8cm] (-1.7,1.15) to[out=0,in=100] (-1,1.5) to[out=-80,in=180] (.5,1.5) to[out=0,in=180] (1.3,.3) -- (2.5,.3);
	\draw [thick,dashed,xshift=2.8cm] (2.5,.3) -- (3.2,.3);
	\draw [thick] (-.6,1.5) ellipse (.4 cm and .4 cm);
	\end{scope}	
	\end{tikzpicture}
	\caption{Two morphisms in $\mr{Cob}^\mr{Diff}(1,2)$ and their composition.}
	\label{fig:xppoint}
\end{figure}

We are most interested in the case where $n$ is arbitrarily large. The inclusion of $\bR^n$ into $\bR^{n+1}$ on the last $n$ coordinates gives a continuous functor $\cat{Cob}^{\mr{Diff},\upsilon}(d,n) \to \cat{Cob}^{\mr{Diff},\upsilon}(d,n+1)$.

\begin{definition}We define $\cat{Cob}^{\mr{Diff},\upsilon}(d)$ to be $\mr{colim}_{n \to \infty}\, \cat{Cob}^{\mr{Diff},\upsilon}(d,n)$.\end{definition}

\subsubsection{The classifying space of the smooth cobordism category} \label{sec.smoothcobclass}
The nerve of this non-unital topological category is a semisimplicial space $N_\bullet \cat{Cob}^{\mr{Diff},\upsilon}(d,n)$, and its thick geometric realisation as in \cite[Section 1.2]{rwebertsemi} will be denoted $B\cat{Cob}^{\mr{Diff},\upsilon}(d,n)$.

By iterated delooping arguments \cite{grwmonoids} or by an application of Gromov's $h$-principle \cite{rwembedded}, there is a zigzag of weak equivalences between $B\cat{Cob}^{\mr{Diff},\upsilon}(d,n)$ and $\Omega^{k-1} \psi^{\mr{Diff},\upsilon}(d,n,k)$ for $1 \leq k \leq n$. In particular, taking $k=n$, we get a zigzag of weak equivalences
\[B\cat{Cob}^{\mr{Diff},\upsilon}(d,n) \simeq \Omega^{n-1} \Psi^{\mr{Diff},\upsilon}_d(\bR^n).\] Tracing through the arguments, one proves that the following diagram commutes
	\[\begin{tikzcd} B\cat{Cob}^{\mr{Diff},\upsilon}(d,n) \dar & \cdots \lar[swap]{\simeq} \rar{\simeq} \dar & \Omega^{n-1} \Psi^{\mr{Diff},\upsilon}_d(\bR^n) \dar\\
	B\cat{Cob}^{\mr{Diff},\upsilon}(d,n+1) & \cdots \lar[swap]{\simeq} \rar{\simeq} & \Omega^{n} \Psi^{\mr{Diff},\upsilon}_d(\bR^{n+1}),\end{tikzcd}\]
with the right vertical map the adjoint of the structure map. Letting $n \to \infty$, one obtains \cite[Theorem 3.13]{grwmonoids}, originally proven in \cite{gmtw}.

\begin{theorem}[Galatius-Madsen-Tillmann-Weiss] \label{thm.cobinfiniteloop} There is a zigzag of weak equivalences
	\[B\cat{Cob}^{\mr{Diff},\upsilon}(d) \simeq \Omega^{\infty-1} \Psi^{\mr{Diff},\upsilon}(d).\]
\end{theorem}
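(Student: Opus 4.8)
The plan is to obtain the statement by passing to the colimit over $n$ in the family of weak equivalences
\[B\cat{Cob}^{\mr{Diff},\upsilon}(d,n) \simeq \Omega^{n-1}\Psi^{\mr{Diff},\upsilon}_d(\bR^n)\]
recalled above. For fixed $n$ this equivalence is the zigzag
\[B\cat{Cob}^{\mr{Diff},\upsilon}(d,n) \simeq \Omega^{k-1}\psi^{\mr{Diff},\upsilon}(d,n,k)\]
specialized at $k=n$ (so that the constraint $X \subset \bR^n \times (0,1)^{0}$ is vacuous and $\psi^{\mr{Diff},\upsilon}(d,n,n) = \Psi^{\mr{Diff},\upsilon}_d(\bR^n)$), and is proved either by the iterated delooping arguments of \cite{grwmonoids} or by Gromov's $h$-principle \cite{rwembedded}. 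Now $\Omega^{\infty-1}\Psi^{\mr{Diff},\upsilon}(d)$ is by definition $\mr{colim}_n\, \Omega^{n-1}\Psi^{\mr{Diff},\upsilon}_d(\bR^n)$ along the maps adjoint to the spectrum structure maps $S^1 \wedge \Psi^{\mr{Diff},\upsilon}_d(\bR^n) \to \Psi^{\mr{Diff},\upsilon}_d(\bR^{n+1})$, and $B\cat{Cob}^{\mr{Diff},\upsilon}(d) = \mr{colim}_n\, B\cat{Cob}^{\mr{Diff},\upsilon}(d,n)$ since thick geometric realization commutes with the sequential colimit (everything is compactly generated) and the stabilization functors are levelwise cofibrations on nerves. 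So it suffices to produce the commuting ladder displayed just before the statement and take its colimit.

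First I would set up, for each fixed $n$, the zigzag $B\cat{Cob}^{\mr{Diff},\upsilon}(d,n) \leftarrow \cdots \rightarrow \Omega^{n-1}\Psi^{\mr{Diff},\upsilon}_d(\bR^n)$ together with all of its intermediate spaces, and track how each term and each map depends on $n$. The inclusion $\bR^n \hookrightarrow \bR^{n+1}$ on the last $n$ coordinates induces compatible maps on every term of the zigzag, so that the square
\[\begin{tikzcd} B\cat{Cob}^{\mr{Diff},\upsilon}(d,n) \dar & \cdots \lar[swap]{\simeq} \rar{\simeq} \dar & \Omega^{n-1}\Psi^{\mr{Diff},\upsilon}_d(\bR^n) \dar \\ B\cat{Cob}^{\mr{Diff},\upsilon}(d,n+1) & \cdots \lar[swap]{\simeq} \rar{\simeq} & \Omega^{n}\Psi^{\mr{Diff},\upsilon}_d(\bR^{n+1}) \end{tikzcd}\]
commutes, after the standard cofibrant replacements; the right vertical map is then the adjoint of the spectrum structure map. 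This is the step I expect to be the main obstacle: it is the ``tracing through the arguments'' alluded to in the text, and it forces one to revisit the proof of the $n$-fixed statement rather than cite it as a black box, since what is needed is the compatibility of the equivalences with stabilization, not merely their existence for each $n$.

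Finally I would take the colimit of this ladder as $n \to \infty$. The vertical maps on the left are cofibrations by the remark on the stabilization functor, and those on the right are (or can be replaced by) cofibrations, since the stabilization of $\Psi$ is an inclusion of closed subsets followed by a translation; hence the naive sequential colimit agrees with the homotopy colimit on both sides, and a sequential colimit of a zigzag of weak equivalences between sequences of cofibrations is a weak equivalence. Along the way one checks that $\Omega^{n-1}$ commutes with the colimit on the right, which holds because $S^{n-1}$ is compact and the maps are closed cofibrations of compactly generated spaces. Identifying the two colimits as above yields the desired zigzag of weak equivalences $B\cat{Cob}^{\mr{Diff},\upsilon}(d) \simeq \Omega^{\infty-1}\Psi^{\mr{Diff},\upsilon}(d)$.
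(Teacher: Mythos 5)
Your proposal is correct and follows essentially the same route as the paper: the paper likewise takes the fixed-$n$ zigzag $B\cat{Cob}^{\mr{Diff},\upsilon}(d,n) \simeq \Omega^{n-1}\Psi^{\mr{Diff},\upsilon}_d(\bR^n)$ (via iterated delooping or the $h$-principle, with $k=n$ so the constraint is vacuous), asserts the stabilization-compatible ladder by tracing through those arguments, and passes to the colimit as $n \to \infty$, deferring the details to \cite[Theorem 3.13]{grwmonoids} and \cite{gmtw}. Your identification of the compatibility of the equivalences with the stabilization maps as the real work is exactly where the paper also leans on the cited sources rather than spelling it out.
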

	
\subsection{The scanning argument} \label{sec.smoothscanning} The spectrum $\Psi^{\mr{Diff},\upsilon}(d)$ is weakly equivalent to the \emph{Madsen-Tillmann spectrum} $MT\upsilon$, which in this paper shall be denoted $MT^\mr{Diff} \upsilon$ to distinguish it from the topological version to be defined in \cref{sec:thom-spectrum-top}. It is an example of a Thom spectrum, and we describe its construction in detail. 

Let $\mr{O}(n+d\,\mr{fix}\,d) \subset \mr{O}(n+d)$ denote the subgroup fixing $\bR^d$ pointwise and $\mr{O}(n+d\,\mr{pres}\,d) \subset \mr{O}(n+d)$ denote the subgroup preserving $\bR^d$ setwise, so that $\mr{O}(n+d\,\mr{pres}\,d)$ is the semi-direct product $\mr{O}(d) \ltimes \mr{O}(n+d\,\mr{fix}\,d)$. The quotient $\mr{O}(n+d)/ \mr{O}(n+d\,\mr{pres}\,d)$ is the \emph{Grassmannian  $\mr{Gr}^\mr{Diff}_d(\bR^{n+d})$ of $d$-planes in $\bR^{n+d}$}. As $\mr{O}(n+d\,\mr{fix}\,d) \simeq \mr{O}(n)$ by Gram--Schmidt, we see that $\mr{Gr}^\mr{Diff}_d(\bR^{n+d}) \simeq \mr{O}(n+d)/(\mr{O}(d) \times \mr{O}(n))$. We may identify $\mr{Gr}^\mr{Diff}_d(\bR^{n+d})$ with the subspace of $\Psi_d^\mr{Diff}(\bR^{n+d})$ of those smooth submanifolds that are $d$-dimensional planes through the origin. We can add a $d$-dimensional tangential structure to this:

\begin{definition}The \emph{Grassmannian $\mr{Gr}^{\mr{Diff},\upsilon}_d(\bR^{n+d})$ of $d$-planes in $\bR^{n+d}$ with $\upsilon$-structure} is the topological space of pairs $(P,\varphi_P)$ of a $d$-dimensional linear plane $P \in \mr{Gr}^\mr{Diff}_d(\bR^{n+d})$ and a map of $d$-dimensional vector bundles $\varphi_P \colon TP \to \upsilon$.\end{definition}

The Grassmannian $\mr{Gr}^\mr{Diff}_d(\bR^{n+d})$ carries an $n$-dimensional vector bundle $\gamma^\perp_{d,n+d}$ given by those vectors $v \in \smash{\bR^{n+d}}$ such that $v \perp P$. We pull it back along the map 
\[\pi_\upsilon \colon \mr{Gr}^{\mr{Diff},\upsilon}_d(\bR^{n+d}) \lra \mr{Gr}^\mr{Diff}_d(\bR^{n+d})\]
which forgets $\varphi_P$ and take its \emph{Thom space} $\mr{Thom}(\pi_\upsilon^* \gamma^\perp_{d,n+d})$ to obtain the $n$th level of $\smash{MT^\mr{Diff}\upsilon}$. Explicitly, this Thom space is obtained by taking the unit sphere bundle $S(\gamma^\perp_{d,n+d})$ and unit disc bundle $D(\gamma^\perp_{d,n+d})$ with respect to the standard Riemannian metric, pulling them back along $\pi_{\upsilon}$ and taking the pushout
\[\begin{tikzcd}\pi^*_\upsilon S(\gamma^\perp_{d,n+d}) \rar \dar & \ast \dar \\
\pi^*_\upsilon D(\gamma^\perp_{d,n+d})  \rar & \mr{Thom}(\pi_\upsilon^* \gamma^\perp_{d,n+d}).\end{tikzcd}\]
The left vertical map can be presented as an NDR-pair and hence the Thom space is obtained as a pushout along a cofibration, so is also a homotopy pushout.

To understand the structure maps, we use that the inclusion $\bR^{n+d} \hookrightarrow \bR^{1+n+d}$ on the last $n+d$ coordinates induces a map $\iota \colon \mr{Gr}^{\mr{Diff},\upsilon}_d(\bR^{n+d}) \to \mr{Gr}^{\mr{Diff},\upsilon}_d(\bR^{1+n+d})$ along which $\gamma^\perp_{d,1+n+d}$ gets pulled back to the Whitney sum $\epsilon \oplus \gamma^\perp_{d,n+d}$. To produce the structure map, we then use the homeomorphism $\mr{Thom}(\epsilon \oplus \zeta) \cong S^1 \wedge \mr{Thom}(\zeta)$. 

\begin{definition}The \emph{Madsen-Tillmann spectrum $MT^\mr{Diff} \upsilon$} has $(n+d)$th level given by the Thom space $\mr{Thom}(\pi_\upsilon^* \gamma^\perp_{d,n+d})$. The structure maps are given by
\[S^1 \wedge \mr{Thom}(\pi_\upsilon^* \gamma^\perp_{d,n+d}) \cong \mr{Thom}(\pi_\upsilon^* \iota^* \gamma^\perp_{d,1+n+d}) \lra \mr{Thom}(\pi_\upsilon^* \gamma^\perp_{d,1+n+d}).\]\end{definition}

There is a map of spectra $MT^\mr{Diff} \upsilon \to \Psi^{\mr{Diff},\upsilon}(d)$ given on $(n+d)$th levels by the map
\[\mr{Thom}(\pi_\upsilon^* \gamma^\perp_{d,n+d}) \lra \Psi^{\mr{Diff},\upsilon}_d(\bR^n),\]
obtained by considering a $d$-dimensional plane $P$ as a $d$-dimensional closed smooth manifold, and using the map $\varphi_P \colon TP \to \upsilon$ to endow this with an $\upsilon$-structure. The following is \cite[Theorem 3.22]{grwmonoids}, and using Theorem \ref{thm.cobinfiniteloop} completes the computation of the homotopy type of the cobordism category.

\begin{theorem} \label{thm.gmtw} The map of spectra $MT^\mr{Diff} \upsilon \to \Psi^{\mr{Diff},\upsilon}(d)$ is a weak equivalence.
\end{theorem}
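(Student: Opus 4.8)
This is the \emph{scanning theorem} of \cite{gmtw}; following \cite{grwmonoids} I sketch the structure of the argument. A map of spectra is a weak equivalence as soon as it induces, on $m$th level spaces, maps whose connectivity grows faster than $m$, so that each stable homotopy group $\pi_k^s(E) = \mr{colim}_m\, \pi_{k+m}(E_m)$ is computed in a stabilising range. Thus it suffices to show that the map
\[\mr{Thom}(\pi_\upsilon^* \gamma^\perp_{d,m}) \lra \Psi^{\mr{Diff},\upsilon}_d(\bR^m)\]
is a weak equivalence in a range growing suitably with $m$. Unwinding the definitions, this map is the inclusion of a subspace: a non-basepoint of $\mr{Thom}(\pi_\upsilon^* \gamma^\perp_{d,m})$ is a triple $(P,\varphi_P,v)$ of a linear $d$-plane $P \subset \bR^m$, a bundle map $\varphi_P \colon TP \to \upsilon$, and a normal vector $v \perp P$, and it is sent to the \emph{affine} $d$-plane $P + v \subset \bR^m$ with the translated $\upsilon$-structure, the basepoint going to $\varnothing$ (the limit of $P+v$ as $|v| \to \infty$). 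So the content is that, in a range growing with $m$, every $\upsilon$-structured $d$-submanifold of $\bR^m$ --- and every compact family of such --- can be deformed within $\Psi^{\mr{Diff},\upsilon}_d(\bR^m)$ into an affine $d$-plane with $\upsilon$-structure.

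I would prove this by a scanning argument exploiting the flexibility of the invariant topological sheaf $M \mapsto \Psi^{\mr{Diff},\upsilon}_d(M)$ recalled above. The crucial structural input is that this sheaf is \emph{microflexible}: over a cube of parameters, a family of $\upsilon$-structured submanifolds defined near a compact subset extends over a slightly smaller cube --- the mechanism being exactly the freedom, built into the topology on $\Psi_d$, for pieces of manifolds to ``disappear at infinity''. Via Gromov's $h$-principle for microflexible invariant sheaves \cite{rwembedded} --- or, equivalently for our purposes, the iterated graphing constructions of \cite{grwmonoids} --- microflexibility reduces the homotopy type of $\Psi^{\mr{Diff},\upsilon}_d(\bR^m)$, in a range, to that of a local model assembled from the behaviour of the sheaf near a single point of $\bR^m$. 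A straightening deformation --- linearly rescaling a submanifold about a chosen point, so that it limits onto its affine tangent plane when the point lies on it and onto $\varnothing$ otherwise --- identifies this local model with the Grassmannian $\mr{Gr}_d(\bR^m)$ together with its $\upsilon$-structures and its normal bundle $\gamma^\perp_{d,m}$: the tangent plane contributes the point of $\mr{Gr}^{\mr{Diff},\upsilon}_d(\bR^m)$, the position of the submanifold relative to the scanning point contributes the normal vector, and the submanifold receding from that point contributes the collapse to the Thom-space basepoint --- that is, the local model is $\mr{Thom}(\pi_\upsilon^* \gamma^\perp_{d,m})$. Tracing through the scanning equivalence one checks that it is realised by the inclusion above, and that the whole construction is natural in $m$ for the two sequences of structure maps, so the levelwise comparisons assemble into the asserted weak equivalence of spectra.

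The main obstacle is the flexibility step. Showing that $\Psi^{\mr{Diff},\upsilon}_d(-)$ is microflexible and that the scanning map is the claimed weak equivalence requires careful work with parametrised families of smooth submanifolds-with-structure and with the point-set topology of $\Psi_d$ --- in particular a precise treatment of the ``disappearing at infinity'' that powers the extension property --- and this is the genuine content of \cite{grwmonoids, rwembedded}. A secondary, more bookkeeping-type difficulty is to arrange the local straightening and the scanning equivalence so that they are compatible with the structure maps on both sides, hence assemble into an honest map of spectra coinciding with the one under consideration.
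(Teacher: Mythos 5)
The paper does not actually prove Theorem \ref{thm.gmtw}: it is quoted verbatim as \cite[Theorem 3.22]{grwmonoids} (originally \cite{gmtw}), so the ``paper's proof'' is a citation, and your proposal should be judged as a sketch of the argument in that reference. Your sketch does contain the correct key idea, namely the straightening/zooming deformation at a scanning point (fibers missing the point recede to $\varnothing$, fibers through the point converge to their affine tangent plane with its $\upsilon$-structure), and your description of the map from the Thom space is right. But the structural skeleton you hang this on has a genuine flaw: the appeal to microflexibility and Gromov's $h$-principle does no work here. That machinery converts $\Phi(M)$, for $M$ open, into a section space $\Gamma(M,T\Phi(\bR^m))$ whose \emph{fiber} is $\Phi(\bR^m)$; applied to $M=\bR^m$, which is contractible and parallelizable, it returns $\Psi^{\mr{Diff},\upsilon}_d(\bR^m)$ itself. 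So it cannot ``reduce the homotopy type of $\Psi^{\mr{Diff},\upsilon}_d(\bR^m)$ to a local model'' --- the local model \emph{is} the space you are trying to compute, and this step is circular. In the GMTW/GRW architecture (and in the topological analogue of the present paper, Section \ref{sec.microflexibility}), microflexibility and the $h$-principle are the engine behind Theorem \ref{thm.cobinfiniteloop}, i.e.\ $B\cat{Cob}^{\mr{Diff},\upsilon}(d,n)\simeq\Omega^{n-1}\Psi^{\mr{Diff},\upsilon}_d(\bR^n)$ (respectively Corollary \ref{cor.topbordism}), not behind Theorem \ref{thm.gmtw}.

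The actual content of Theorem \ref{thm.gmtw} is the direct deformation argument you mention only in passing and then defer to the literature: given a compact family in $\Psi^{\mr{Diff},\upsilon}_d(\bR^m)$, relative to a subfamily already consisting of planes, one arranges transversality/standard form near the origin, pushes the rest of each manifold off to infinity, and rescales, checking that this limit is continuous in the family, lands in the subspace of affine planes-with-$\upsilon$-structure or $\varnothing$, and is compatible with the spectrum structure maps; that is precisely the proof of \cite[Theorem 3.22]{grwmonoids}, and none of it is supplied by your $h$-principle step. (A small additional remark: the connectivity-range reduction at the start is harmless but unnecessary, since the cited argument shows each level map is a weak equivalence outright.) As written, the proposal presents a vacuous reduction as the main mechanism and leaves the genuine content unproved, so it does not constitute an independent proof.
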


\begin{remark}The spectrum $MT^\mr{Diff} \upsilon$ is $(-d)$-connective; the first non-trivial reduced homology class of the $(n+d)$th level is in degree $n = -d+(n+d)$. However, the homotopy type of the classifying space of the cobordism category depends only on the connective cover. An $n$-categorical variant of the cobordism category depends on the entire spectrum \cite{bokstedtmadsen}. We believe their results can be adapted to PL and topological manifolds.\end{remark}

\subsection{Excision in the tangential structure} \label{sec.smoothexcision} We now discuss the dependence of $\Psi^{\mr{Diff},\upsilon}(d)$ on the tangential structure $\upsilon$.

\subsubsection{Semisimplicial vector bundles}
We need some technical results about resolving vector bundles. A \emph{semisimplicial $d$-dimensional vector bundle} is a functor from $\Delta^\mr{op}_\mr{inj}$, the opposite of the category $\Delta_\mr{inj}$ of finite non-empty ordered sets and order-preserving injections, to the category of spaces with $d$-dimensional vector bundles. By passing to the skeleton of $\Delta^\mr{op}_\mr{inj}$, this is the same data as a $d$-dimensional vector bundle $\upsilon_p$ for each $[p] = (0<1<\ldots<p)$, and a map of vector bundles $\upsilon_{p'} \to \upsilon_p$ for each order-preserving injection $[p] \to [p']$.

Given such a semisimplicial vector bundle, we can take the thick geometric realisations of its total and base spaces, and get a map $p^{||\upsilon_{\bullet}||} \colon E^{||\upsilon_\bullet||} \to B^{||\upsilon_\bullet||}$ with fibres that can be identified with $\bR^d$. The following lemma tells us when it is a (numerable) vector bundle, i.e.\  a (numerable) locally trivial bundle with fibres $\bR^d$ and structure group $\mr{GL}_d(\bR)$.

\begin{lemma}\label{lem.realisationvectorbundle} If $\upsilon_{\bullet}$ is a levelwise numerable semisimplicial $d$-dimensional vector bundle, then $||\upsilon_{\bullet}||$ is a $d$-dimensional vector bundle. It is numerable if it is levelwise trivial.
\end{lemma}

\begin{proof}We shall use the following general fact: if $G$ is a well-pointed topological group and $\xi_{\bullet}$ is a semisimplicial principal $G$-bundle such that each $p^{\xi_p} \colon E^{\xi_p} \to B^{\xi_p}$ is numerable, then $||\xi_{\bullet}||$ is a principal $G$-bundle. It is numerable if each $p^{\xi_p}$ is trivial. This is proven for the bar construction in \cite[Theorem 8.2]{mayclassifying} and in general in \cite[Theorem 2]{robertsstevenson}. The latter theorem is stated in larger generality: to obtain the statement above, we take $B = \ast$, take the simplicial group to be constant in the simplicial direction, and use that the thick geometric realisation of a semisimplicial space equals the geometric realisation of the proper simplicial space obtained by freely adjoining degeneracies (see the proof of \cite[Lemma 1.8]{rwebertsemi}).
		
To deduce the statement of the lemma from this, we use the natural bijection between isomorphism classes of (numerable) $d$-dimensional vector bundles and (numerable) principal $\mr{GL}_d(\bR)$-bundles, given in one direction by taking the associated bundles and in the other direction by taking the frame bundle. Note that taking associated bundles commutes with geometric realisation, because both taking a product with a space (see \cite[Remark 2.23]{gepnerhenriques}) and taking the quotient with respect to a group action do (as colimits commute with colimits).
\end{proof}

All numerable vector bundles are up to homotopy equivalence obtained by thick geometric realisation of a levelwise trivial semisimplicial vector bundle:

\begin{lemma}\label{lem.trivialvectorbundle} For every numerable $d$-dimensional vector bundle $\upsilon$, there is a levelwise trivial semisimplicial $d$-dimensional vector bundle $\upsilon_\bullet$ such that $\upsilon$ is homotopy equivalent to $||\upsilon_{\bullet}||$ as a numerable $d$-dimensional vector bundle.\end{lemma}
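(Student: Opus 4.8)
The plan is to produce the semisimplicial vector bundle $\upsilon_\bullet$ by a standard bar-type construction over the groupoid of charts, and then invoke Lemma~\ref{lem.realizationvectorbundle}. Concretely, pick a numerable open cover $\{U_i\}_{i \in I}$ of the base $B$ of $\upsilon$ trivializing $\upsilon$, together with a subordinate partition of unity $\{\lambda_i\}$. For a nonempty finite ordered tuple $(i_0, \dots, i_p)$ let $U_{i_0 \cdots i_p} \coloneqq U_{i_0} \cap \cdots \cap U_{i_p}$, and define $B_p \coloneqq \coprod_{(i_0,\dots,i_p)} U_{i_0 \cdots i_p}$, which receives the (trivial) $d$-dimensional vector bundle $\upsilon_p \coloneqq \upsilon|_{B_p}$; since $\upsilon$ is trivial over each $U_i$ it is trivial over every intersection, so $\upsilon_\bullet$ is levelwise trivial. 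The face maps $d_j \colon B_{p} \to B_{p-1}$ are the inclusions of intersections obtained by omitting the $j$th index, covered by the corresponding restriction maps of $\upsilon$; this makes $\upsilon_\bullet$ a semisimplicial $d$-dimensional vector bundle in the sense of Section~\ref{sec.smoothexcision}. (One may freely drop degeneracies and only record the injective maps, matching the convention there.)

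Next I would identify $||\upsilon_\bullet||$ with $\upsilon$ up to homotopy equivalence of numerable vector bundles. By Lemma~\ref{lem.realizationvectorbundle}, since $\upsilon_\bullet$ is levelwise numerable and in fact levelwise trivial, $||\upsilon_\bullet||$ is a numerable $d$-dimensional vector bundle. There is a canonical map $B^{||\upsilon_\bullet||} \to B$ induced in each simplicial degree by the inclusions $B_p = \coprod U_{i_0 \cdots i_p} \to B$, which are compatible with the face maps; this map is covered by a map of vector bundles $||\upsilon_\bullet|| \to \upsilon$, since in each degree we have the tautological restriction $\upsilon|_{B_p} \to \upsilon$. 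It suffices to check the induced map on base spaces is a homotopy equivalence; equivalently, using the frame bundle correspondence invoked in Lemma~\ref{lem.realizationvectorbundle}, that the analogous map $B^{||P_\bullet||} \to B$ for the associated $\mathrm{GL}_d(\bR)$-principal bundles is a weak (hence homotopy) equivalence. But this is the statement that the thick realization of the Čech nerve of a numerable cover maps by a weak equivalence to the base — the partition of unity $\{\lambda_i\}$ provides the contracting data needed, exactly as in the proof of \cite[Theorem~8.2]{mayclassifying} that the bar construction computes the classifying space; alternatively one can observe that $||B_\bullet|| \to B$ has a section up to homotopy built from the $\lambda_i$ and contractible homotopy fibers over each point (the fiber being the thick realization of the nerve of the cover of a point, which is contractible).

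Finally, I would promote this weak equivalence of base spaces, covered by a bundle map, to a homotopy equivalence of numerable vector bundles: a bundle map covering a homotopy equivalence of paracompact (or, in the generality here, numerably covered) base spaces is automatically a homotopy equivalence of vector bundles, by the usual covering-homotopy argument for numerable bundles as in \cite[Section~8]{mayclassifying} or \cite[\S4]{milnorstasheff}. This yields the desired $\upsilon \simeq ||\upsilon_\bullet||$.

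I expect the main obstacle to be verifying that $B^{||\upsilon_\bullet||} \to B$ is a homotopy equivalence with the correct amount of care about the \emph{thick} (fat) realization and about \emph{numerability} rather than mere paracompactness — that is, making sure the contracting homotopies assembled from the partition of unity are compatible with omitting the degeneracies, and that all the bundles and covers in sight remain numerable so that Lemma~\ref{lem.realizationvectorbundle} and the covering-homotopy theorem genuinely apply. Everything else (levelwise triviality, construction of the face maps, the bundle map) is routine bookkeeping once the cover and partition of unity are fixed.
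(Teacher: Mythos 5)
Your proposal is correct and follows essentially the same route as the paper: take the \v{C}ech nerve of a numerable trivializing cover, observe the resulting semisimplicial bundle is levelwise trivial, and use the subordinate partition of unity (Segal's argument, cited in the paper as \cite[\S 4]{Se2}) to see that $||\cU_\bullet|| \to B$ is a homotopy equivalence covered by a bundle map, so that $||\upsilon_\bullet||$ is the pullback of $\upsilon$ along a homotopy equivalence. The only cosmetic difference is that you take $\upsilon_p = \upsilon|_{B_p}$ (trivializable, with face maps given by restriction) where the paper writes the levelwise bundles literally as $U_{i_0\cdots i_p} \times \bR^d$ with $d_0$ given by transition functions; both yield the same conclusion.
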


\begin{proof}Pick an open cover $\cU$ of $B$ such that for each $U_i \in \cU$ the bundle $\upsilon|_{U_i}$ is trivialisable, and which admits a subordinate partition of unity. This exists by definition of a numerable $\bR^d$-bundle. 	
Consider the semisimplicial space $\cU_\bullet$ given by
	\[[k] \longmapsto \bigsqcup_{(i_0,\ldots,i_k)} (U_{i_0} \cap \ldots \cap U_{i_k}),\]
	where the right-hand side is indexed by the ordered $(k+1)$-tuples of indices of $\cU$. This has an augmentation to $B$, and \cite[\S 4]{Se2} describes how to use the partition of unity to obtain a homotopy inverse $B \to ||\cU_\bullet||$ (which happens to be a section).
	
	By our choice of $\cU$, we have a trivialisation of the bundle $\upsilon$ over each $U_i$, and hence over $U_\sigma$ for each $\sigma = (i_0,\ldots,i_k)$ using the trivialisation of $U_{i_0}$. Now write down a semisimplicial $\bR^d$-bundle $\upsilon_\bullet$ as follows: it is a levelwise trivial bundle \[[k] \longmapsto \bigsqcup_{(i_0,\ldots,i_k)} (U_{i_0} \cap \ldots \cap U_{i_k}) \times \bR^d,\]
	whose face maps $d_i$ for $i>0$ are given by the inclusion, while the face map $d_0$ uses the transition function $U_{i_0} \cap U_{i_1} \to\mr{GL}_d(\bR)$. This semisimplicial space is augmented over the total space of $\upsilon$, yielding upon thick geometric realisation a map $||\upsilon_\bullet|| \to \upsilon$ which exhibits the left term as the pullback of $\upsilon$ along the map $||\cU_\bullet|| \to B$.
\end{proof}

\subsubsection{Excision for Thom spectra} The construction $\upsilon \mapsto MT^\mr{Diff}\upsilon$ gives a functor from $d$-dimensional vector bundles to spectra, because both the construction of $\mr{Gr}^{\mr{Diff},\upsilon}_d(\bR^{n+d})$ and the Thom space construction are natural in the $d$-dimensional vector bundle $\upsilon$. It has a number of properties, the first of which involves weak equivalences of vector bundles; these are maps of vector bundles whose underlying map of spaces is a weak equivalence.

\begin{lemma}\label{lem.smooththomspectraweq}The construction $\upsilon \mapsto MT^\mr{Diff}\upsilon$ takes weak equivalences to weak equivalences.\end{lemma}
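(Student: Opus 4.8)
The plan is to reduce to the statement about the underlying map of spaces level by level, using the Thom space description given above. Fix a weak equivalence $\phi \colon \upsilon \to \upsilon'$ of $d$-dimensional vector bundles, with underlying map $f \colon B \to B'$ a weak equivalence. For each $n$, I want to show that the induced map on $(n+d)$th levels
\[\mr{Thom}(\pi_\upsilon^* \gamma^\perp_{d,n+d}) \lra \mr{Thom}(\pi_{\upsilon'}^* \gamma^\perp_{d,n+d})\]
is a weak equivalence; since the structure maps are compatible with $\phi$, this gives a levelwise weak equivalence of spectra, hence a weak equivalence of spectra.

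First I would analyze the space $\mr{Gr}^{\mr{Diff},\upsilon}_d(\bR^{n+d})$. By definition its points are pairs $(P,\varphi_P)$ of a $d$-plane $P$ and a bundle map $\varphi_P \colon TP \to \upsilon$. Forgetting $\varphi_P$ gives a map $\pi_\upsilon$ to $\mr{Gr}^\mr{Diff}_d(\bR^{n+d})$, whose fiber over $P$ is the space $\mr{Bun}(TP,\upsilon)$ of bundle maps. I would argue that $\pi_\upsilon$ is a fibration (or at least a quasi-fibration) over the Grassmannian, so that the square
\[\begin{tikzcd} \mr{Gr}^{\mr{Diff},\upsilon}_d(\bR^{n+d}) \rar \dar{\pi_\upsilon} & \mr{Gr}^{\mr{Diff},\upsilon'}_d(\bR^{n+d}) \dar{\pi_{\upsilon'}} \\ \mr{Gr}^\mr{Diff}_d(\bR^{n+d}) \rar[equal] & \mr{Gr}^\mr{Diff}_d(\bR^{n+d}) \end{tikzcd}\]
reduces the problem to showing the map of fibers $\mr{Bun}(TP,\upsilon) \to \mr{Bun}(TP,\upsilon')$ is a weak equivalence for each $P$. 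This last fact follows from the discussion in Section \ref{sec.asidebundles}: after replacing $f$ and $f'$ by Hurewicz fibrations one identifies $\mr{Bun}(TP,\upsilon)$ with a space of lifts of a fixed classifying map $g \colon P \to BO(d)$ along $f$ (and similarly for $\upsilon'$), and since $f$ is a weak equivalence the map of lifting spaces is a weak equivalence by obstruction theory / the relative classifying property. Hence $\pi_\upsilon \to \pi_{\upsilon'}$ is a weak equivalence over the Grassmannian.

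Next I would pass from the bundle data to the Thom spaces. The normal bundle $\gamma^\perp_{d,n+d}$ lives on the Grassmannian and is pulled back along both $\pi_\upsilon$ and $\pi_{\upsilon'}$ to a genuine (numerable, since the Grassmannian is compact metric) rank-$n$ vector bundle. A weak equivalence of total spaces covering the identity on the base of a fixed vector bundle induces a weak equivalence on the pulled-back sphere bundles and disk bundles (using that the map of pullback bundles is a weak equivalence on total spaces, e.g. by the five lemma applied to the long exact sequences of the sphere/disk bundle pairs, or by noting the pullback of a fibration along a weak equivalence is a weak equivalence). The Thom space is the pushout of $\ast \leftarrow \pi^*S(\gamma^\perp) \to \pi^* D(\gamma^\perp)$ along a cofibration, so taking Thom spaces preserves weak equivalences. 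Therefore the induced map on $(n+d)$th levels is a weak equivalence, completing the argument.

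The main obstacle I anticipate is establishing that $\pi_\upsilon \colon \mr{Gr}^{\mr{Diff},\upsilon}_d(\bR^{n+d}) \to \mr{Gr}^\mr{Diff}_d(\bR^{n+d})$ is sufficiently well-behaved (a fibration or quasi-fibration) so that comparing fibers suffices, and in keeping track of numerability/cofibrancy so that the Thom space pushout is a homotopy pushout. An alternative route that sidesteps the fibration question entirely is to use Lemma \ref{lem.trivialvectorbundle} to write $\upsilon \simeq ||\upsilon_\bullet||$ and $\upsilon' \simeq ||\upsilon'_\bullet||$ with $\upsilon_\bullet, \upsilon'_\bullet$ levelwise trivial, reduce via compatibility of $MT^\mr{Diff}(-)$ with realization (a property presumably established just after this lemma) to the levelwise case, and then handle trivial bundles over a weak equivalence of bases directly — where the Thom space is just a smash with $S^n$ and the statement is transparent. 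I would pursue whichever of these the surrounding machinery makes cleanest, but the fibration-of-Grassmannians approach is the most self-contained given only what precedes the statement.
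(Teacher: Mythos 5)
Your proposal is correct, and its skeleton is the paper's: reduce to a levelwise statement, use that the Thom space is a pushout along a cofibration (so it preserves weak equivalences of pushout diagrams), and reduce to comparing the pulled-back disk and sphere bundles. Where you genuinely differ is in how that comparison is fibered: the paper compares over the map $B \to B'$ (asserting the disk-bundle comparison is equivalent to $B \to B'$, and running a map of fiber sequences with fiber $S^{n-1}$ for the sphere bundles), whereas you compare the two fibrations $\pi_\upsilon, \pi_{\upsilon'}$ over the \emph{common} Grassmannian with the identity on the base and reduce to the map of fibers $\mr{Bun}(TP,\upsilon) \to \mr{Bun}(TP,\upsilon')$. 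Your route works, and the obstacle you flag is fillable: the $O(n+d)$-action on planes lifts to pairs $(P,\varphi_P)$ (send $\varphi_P$ to $\varphi_P \circ (TA)^{-1}$), so $\pi_\upsilon$ is the projection of an associated fiber bundle over the homogeneous space $\mr{Gr}^{\mr{Diff}}_d(\bR^{n+d})$, hence a Hurewicz fibration over a paracompact base; alternatively you could avoid the fibration question entirely by fibering over $B \to B'$ as the paper does. For the fiber comparison you also do not need the full machinery of Section \ref{sec.asidebundles}: since $P$ is contractible, evaluation at the origin identifies $\mr{Bun}(TP,\upsilon)$ up to weak equivalence with the space of pairs $(b,\iota)$ of $b \in B$ and a linear isomorphism $\iota \colon P \to \upsilon_b$, and the induced map to the corresponding space for $\upsilon'$ covers the weak equivalence $B \to B'$ and is a homeomorphism on fibers, hence a weak equivalence by the long exact sequence; this uniform treatment in fact cleanly justifies both the disk- and sphere-bundle steps. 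One caution about your alternative route via Lemma \ref{lem.trivialvectorbundle} and realization-compatibility (Lemma \ref{lem.smooththomgeomrel}): to transfer the conclusion from $||\upsilon_\bullet||$ back to $\upsilon$ you need $MT^{\mr{Diff}}$ to send the comparison map to an equivalence, which is the lemma being proved unless you invoke that this comparison is a genuine homotopy equivalence of numerable bundles (so that functoriality plus homotopy invariance of $MT^{\mr{Diff}}$ suffice); as sketched it risks circularity and is in any case heavier than the direct argument, so your first route is the right one to pursue.
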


\begin{proof}Let $\upsilon \to \upsilon'$ be a weak equivalence, with underlying map $B \to B'$ of base spaces. It suffices to prove this induces a levelwise pointed weak equivalence. The Thom space was the homotopy pushout of the diagram 
	\[\pi^*_\upsilon D(\gamma^\perp_{d,n+d}) \longleftarrow \pi^*_\upsilon S(\gamma^\perp_{d,n+d}) \lra \ast.\]
It thus suffices to show that the maps $\pi^*_\upsilon D(\gamma^\perp_{d,n+d}) \to \pi^*_{\upsilon'} D(\gamma^\perp_{d,n+d})$ and $\pi^*_\upsilon S(\gamma^\perp_{d,n+d}) \to \pi^*_{\upsilon'} S(\gamma^\perp_{d,n+d})$ are weak equivalences. The first map is equivalent to $B \to B'$ which is a weak equivalence by hypothesis. The second map fits into a map of fibre sequences
\[\begin{tikzcd} S^{n-1} \dar{\simeq} \rar & \pi^*_\upsilon S(\gamma^\perp_{d,n+d}) \rar \dar & B \dar{\simeq} \\
S^{n-1} \rar & \pi^*_{\upsilon'} S(\gamma^\perp_{d,n+d}) \rar & B'.\end{tikzcd} \]
Since the maps on fibres and base spaces are weak equivalences, so is that on total spaces.
\end{proof}

Given a $d$-dimensional semisimplicial vector bundle $\upsilon_\bullet$ which is levelwise numerable, by Lemma \ref{lem.realisationvectorbundle} we can take its thick geometric realisation to obtain a vector bundle, and take its Thom space $\mr{Thom}(\pi_{||\upsilon_{\bullet}||}^* \gamma^\perp_{d,n+d})$. The map $\Delta^k \wedge \upsilon_k \to ||\upsilon_{\bullet}||$ induces compatible pointed maps
\[\Delta^k_+ \wedge \mr{Thom}(\pi_{\upsilon_k}^* \gamma^\perp_{d,n+d}) \cong \mr{Thom}(\pi_{\Delta^k \times \upsilon_k}^* \gamma^\perp_{d,n+d}) \lra \mr{Thom}(\pi_{||\upsilon_{\bullet}||}^* \gamma^\perp_{d,n+d})\]
and by the universal property of colimits these induce a map 
\[||\mr{Thom}(\pi_{\upsilon_\bullet}^* \gamma^\perp_{d,n+d})|| \lra \mr{Thom}(\pi_{||\upsilon_{\bullet}||}^* \gamma^\perp_{d,n+d}),\]
where the first thick geometric realisation is in the category of pointed spaces. These assemble to a map of spectra 
\begin{equation}\label{eqn:mt-comp}||MT^\mr{Diff}\upsilon_{\bullet}|| \lra MT^\mr{Diff}||\upsilon_{\bullet}||.
\end{equation}

A major difference between these two spectra is that the left term involves manifolds where the underlying map of the tangential structure lands in some $B_p$, while for the right term it may hit much of $||B_\bullet||$. Thus to prove that \eqref{eqn:mt-comp} is a weak equivalence we introduce the following variation:

\begin{definition}The subspace $\mr{Gr}^\mr{Diff,\upsilon,const}_d(\bR^{n+d})$ of $\mr{Gr}^\mr{Diff,\upsilon}_d(\bR^{n+d})$ consists of those $(P,\varphi_P)$ such that the map $\phi_P \colon P \to B$ underlying $\varphi_P \colon TP \to \nu$ factors over a point.\end{definition}

\begin{lemma}\label{lem.thomconstsmooth} The inclusion $\mr{Gr}^\mr{Diff,\upsilon,const}_d(\bR^{n+d}) \hookrightarrow \mr{Gr}^\mr{Diff,\upsilon}_d(\bR^{n+d})$ is a weak equivalence.\end{lemma}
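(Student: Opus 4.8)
The plan is to show that the inclusion $\mr{Gr}^\mr{Diff,\upsilon,const}_d(\bR^{n+d}) \hookrightarrow \mr{Gr}^\mr{Diff,\upsilon}_d(\bR^{n+d})$ induces a bijection on all homotopy groups by exhibiting it as the inclusion of the fiber over a point of a map whose base is weakly contractible. Concretely, I would analyze both spaces as total spaces of bundles over the ordinary Grassmannian $\mr{Gr}^\mr{Diff}_d(\bR^{n+d})$ via the forgetful map $\pi_\upsilon$. The fiber of $\mr{Gr}^\mr{Diff,\upsilon}_d(\bR^{n+d})$ over a plane $P$ is the space $\mr{Bun}(TP,\upsilon)$ of vector bundle maps, while the fiber of $\mr{Gr}^\mr{Diff,\upsilon,const}_d(\bR^{n+d})$ over $P$ is the subspace of those $\varphi_P$ whose underlying map $\phi_P\colon P\to B$ is constant. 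Since $P$ is contractible (it is a linear plane, hence a vector space), the inclusion of the constant-map fiber into the full fiber is a weak equivalence: by the discussion in Section~\ref{sec.asidebundles}, $\mr{Bun}(TP,\upsilon)\simeq \mr{Lift}(P,B\to BO(d))$ after replacing $f$ by a fibration, and since $P$ is contractible any lift is homotopic to one through a point; the constant-map locus corresponds precisely to lifts factoring through a point, and this inclusion is an equivalence because the evaluation-at-basepoint map from the full lifting space to $B$ is a fibration with contractible "space of paths of lifts" correcting the difference. More directly: both fibers deformation retract onto $\mr{Bun}(T_0P,\upsilon_{b})$ for a fixed $b$ by contracting $P$ to its origin $0$ and simultaneously contracting the base component; this retraction stays within the constant-map locus throughout in the const case, and sweeps the full fiber onto it in general.

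The key steps, in order, are as follows. First, verify that $\pi_\upsilon\colon \mr{Gr}^\mr{Diff,\upsilon}_d(\bR^{n+d})\to \mr{Gr}^\mr{Diff}_d(\bR^{n+d})$ and its restriction to $\mr{Gr}^\mr{Diff,\upsilon,const}_d$ are both fibrations (or at least quasifibrations) — this follows from the local triviality of the bundle-of-mapping-spaces construction, exactly as the analogous claims are used in Section~\ref{sec.asidebundles} and Lemma~\ref{lem.smooththomspectraweq}. Second, since $\mr{Gr}^\mr{Diff}_d(\bR^{n+d})$ is path-connected, it suffices by the five lemma applied to the long exact sequences of homotopy groups to prove that the inclusion of fibers over a single point $P$ is a weak equivalence. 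Third, prove that fiber statement: both fibers are spaces of vector bundle maps out of the contractible manifold $P$, and contracting $P$ onto its origin produces a deformation of the full fiber onto the constant-map sublocus. I would phrase this last step using a linear homotopy $h_s\colon P\to P$, $x\mapsto (1-s)x$, covered by the canonical identification of $TP$ with the pullback $h_s^*TP$ (using that $P$ is a vector space, so $TP$ is canonically trivial), which induces a deformation retraction $\mr{Bun}(TP,\upsilon)\to \mr{Bun}(TP,\upsilon)$ ending in the constant-map locus.

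The main obstacle I anticipate is making the "contract $P$ to a point" argument precise at the level of vector bundle maps rather than merely underlying maps: one must check that pulling back a bundle map along the shrinking homotopy $h_s$ is continuous in $s$ and lands, at $s=1$, genuinely in $\mr{Gr}^\mr{Diff,\upsilon,const}_d$ — i.e.\ that the resulting $\varphi_P$ has underlying map literally factoring through a point, not just nullhomotopic. This is where the canonical triviality of $TP$ for a linear plane $P$ is essential: it lets one transport a bundle map $\varphi_P\colon TP\to\upsilon$ along $h_s$ to $h_s^*\varphi_P$ without ambiguity, and at $s=1$ the underlying map is the constant map at $\phi_P(0)$. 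A secondary technical point is that one should carry out this argument not just for a fixed $P$ but with enough naturality that it assembles with the fibration structure; alternatively, one avoids this entirely by invoking that a weak equivalence on fibers over a connected base, for a map of fibrations inducing the identity on the base, is a weak equivalence on total spaces — which is the cleaner route and the one I would take.
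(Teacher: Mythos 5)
Your proposal is correct in substance, and it shares the paper's key geometric input — radially contracting a plane through the origin onto the origin to make the underlying map of the tangential structure constant — but it packages the argument differently. The paper argues directly with the lifting criterion: a relative map $(D^i,\partial D^i) \to (\mr{Gr}^\mr{Diff,\upsilon}_d(\bR^{n+d}),\mr{Gr}^\mr{Diff,\upsilon,const}_d(\bR^{n+d}))$ is represented (after smooth approximation) by a family $X \subset D^i \times \bR^{n+d}$ of planes through the origin with $\upsilon$-structure; the evident fiberwise deformation retraction of $X$ onto $D^i \times \{0\}$ makes the underlying map fiberwise constant, and the bundle homotopy covering theorem lifts this to a homotopy of bundle maps, preserving the "desired form" over $\partial D^i$ throughout. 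You instead fiber both spaces over $\mr{Gr}^\mr{Diff}_d(\bR^{n+d})$ via $\pi_\upsilon$, prove the fiberwise statement for a single plane $P$ using the linear contraction $h_s$ and the canonical trivialization of $TP$ (which lets you pull back the bundle map explicitly, avoiding the covering homotopy theorem at the fiber level, and correctly guarantees the underlying map is literally constant at $s=1$ while the constant locus is preserved for all $s$), and then conclude for total spaces by comparing (quasi)fibrations over the connected base. What your route buys is that the contraction only ever happens over a single contractible fiber; what it costs is the claim that $\pi_\upsilon$ and its restriction to the constant locus are fibrations in the topology these Grassmannians carry (as subspaces of $\Psi^{\mr{Diff},\upsilon}_d(\bR^{n+d})$) — this is plausible via local triviality of the tautological bundle, exactly as you indicate, but it is the one step you would actually have to write out, and the paper's proof is structured precisely so as not to need it. One small imprecision to fix: the heuristic that both fibers "deformation retract onto $\mr{Bun}(T_0P,\upsilon_b)$ for a fixed $b$ by simultaneously contracting the base component" does not literally make sense for a general base $B$ (it need not be contractible); your precise formulation via $h_s$, which yields a homotopy of the identity of $\mr{Bun}(TP,\upsilon)$ into the constant locus preserving that locus, is the right statement and suffices.
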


\begin{proof}Suppose we are given a commutative diagram
	\[\begin{tikzcd} \partial D^i \rar \dar & {\mr{Gr}^\mr{Diff,\upsilon,const}_d(\bR^{n+d})} \dar \\[-2pt]
	D^i \rar &  {\mr{Gr}^\mr{Diff,\upsilon}_d(\bR^{n+d})}.\end{tikzcd}\]
	Then we need to provide a lift after a homotopy through commutative diagrams. By smooth approximation as in \cref{sec.ssetsmooth}, we may assume this is represented a smooth neat submanifold $X \subset D^i \times \bR^{n+d}$ such that each of the fibres of $\pi \colon X \hookrightarrow D^i \times \bR^{n+d} \to D^i$ is $d$-dimensional planes through the origin, together with a $\nu$-structure on its vertical tangent bundle. The underlying map of this $\nu$-structure factors through a point on the fibre over every $b \in \partial D^i$; we say that it is \emph{of the desired form}. Our goal is to homotope $\varphi_X$ such that it is of the desired form for all $b \in D^i$.
	
	Since the fibres are planes through the origin, there is an evident deformation retraction $H$ of $X$ onto $D^i \times \{0\}$. Then the composition $\phi_X \circ H \colon [0,1] \times X \to B$ is a homotopy from $\phi_X$ to a map which factors over a point on each fibre. Applying the bundle homotopy covering theorem, we cover this by a homotopy from $\varphi_X$ to a bundle map which is of the desired form for all $b \in B$.
\end{proof}

Pulling back the universal bundle along the inclusion, we get a map of Thom spaces
\[\mr{Thom}^\mr{const}(\pi_{\upsilon}^* \gamma^\perp_{d,n+d}) \lra \mr{Thom}(\pi_{\upsilon}^* \gamma^\perp_{d,n+d})\]
which is a pointed weak equivalence.

\begin{lemma}\label{lem.smooththomgeomrel} If $\upsilon_\bullet$ is a $d$-dimensional semisimplicial vector bundle, then the canonical map $||MT^\mr{Diff}\upsilon_{\bullet}|| \to MT^\mr{Diff}||\upsilon_{\bullet}||$ is a weak equivalence.\end{lemma}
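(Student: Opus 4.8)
The plan is to reduce the statement, levelwise, to a statement about the Grassmannians $\mr{Gr}^{\mr{Diff},\upsilon}_d(\bR^{n+d})$ and then to use the constant-structure variant introduced just above. On $(n+d)$th levels the canonical map is $||\mr{Thom}(\pi^*_{\upsilon_\bullet}\gamma^\perp_{d,n+d})|| \to \mr{Thom}(\pi^*_{||\upsilon_\bullet||}\gamma^\perp_{d,n+d})$; since the structure maps are compatible, it suffices to prove each of these is a pointed weak equivalence. First I would dispose of the case where $\upsilon_\bullet$ is levelwise numerable. In that case $||\upsilon_\bullet||$ is a genuine numerable vector bundle by Lemma \ref{lem.realizationvectorbundle}, and the Thom space on the right is the one appearing in the definition of $MT^\mr{Diff}||\upsilon_\bullet||$. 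Thom space is a pushout along the cofibration $\pi^*S(\gamma^\perp) \to \pi^*D(\gamma^\perp)$, and this pushout presentation is compatible with thick geometric realization (which commutes with finite limits up to the relevant cofibrancy, and certainly with the pushouts and products involved here). So the map on Thom spaces is obtained by realizing a levelwise map of pushout squares, and it is enough to check that $||\pi^*_{\upsilon_\bullet}D(\gamma^\perp_{d,n+d})|| \to \pi^*_{||\upsilon_\bullet||}D(\gamma^\perp_{d,n+d})$ and the corresponding map of sphere bundles are weak equivalences. The disk-bundle map is homotopy equivalent to the identity on $||\mr{Gr}^{\mr{Diff},\upsilon_\bullet}_d(\bR^{n+d})|| \to \mr{Gr}^{\mr{Diff},||\upsilon_\bullet||}_d(\bR^{n+d})$, and the sphere-bundle map fits into a map of fibre sequences with fibre $S^{n-1}$ over it, so it is enough to handle the Grassmannians.

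The key point is therefore that the natural map $||\mr{Gr}^{\mr{Diff},\upsilon_\bullet}_d(\bR^{n+d})|| \to \mr{Gr}^{\mr{Diff},||\upsilon_\bullet||}_d(\bR^{n+d})$ is a weak equivalence. Here I would bring in Lemma \ref{lem.thomconstsmooth}: both sides are weakly equivalent to their constant-structure versions. On the realized side, $||\mr{Gr}^{\mr{Diff},\upsilon_\bullet,\mr{const}}_d(\bR^{n+d})||$; on the target side, $\mr{Gr}^{\mr{Diff},||\upsilon_\bullet||,\mr{const}}_d(\bR^{n+d})$. But a bundle map $TP \to ||\upsilon_\bullet||$ whose underlying map $P \to B^{||\upsilon_\bullet||}$ factors through a point is, up to contractible choice, the same as a point of $B^{||\upsilon_\bullet||}$ together with a linear isomorphism of $\bR^d$ with the fibre; since $B^{||\upsilon_\bullet||} = ||B^{\upsilon_\bullet}||$ and the fibre over a point of the $k$-simplices is the fibre of $\upsilon_k$, one checks that $\mr{Gr}^{\mr{Diff},||\upsilon_\bullet||,\mr{const}}_d(\bR^{n+d})$ is precisely $||\mr{Gr}^{\mr{Diff},\upsilon_\bullet,\mr{const}}_d(\bR^{n+d})||$ (the realization of the levelwise constant-structure Grassmannians), compatibly with the map in question. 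This gives the weak equivalence in the levelwise numerable case.

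For a general semisimplicial vector bundle $\upsilon_\bullet$ — not assumed levelwise numerable — I would reduce to the numerable case by a standard replacement, using Lemma \ref{lem.trivialvectorbundle} levelwise: each $\upsilon_p$ admits a homotopy equivalence $||\upsilon_{p,\bullet}|| \simeq \upsilon_p$ from a levelwise trivial (hence numerable) semisimplicial bundle, and these can be chosen functorially in $p$ — e.g.\ by a bar-type construction applied to a fixed classifying space $BO(d)$, or by taking the cover to be the one pulled back from the geometric realization — yielding a bisemisimplicial levelwise-numerable $\upsilon_{\bullet,\bullet}$ with $||\upsilon_{\bullet,\bullet}||_{\text{(second direction)}} \simeq \upsilon_\bullet$ levelwise. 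Then Lemma \ref{lem.smooththomspectraweq}, which says $MT^\mr{Diff}$ preserves weak equivalences of vector bundles, lets us replace $\upsilon_\bullet$ and $||\upsilon_\bullet||$ by the corresponding realizations, and an application of the case already treated together with the fact that thick geometric realization of semisimplicial spaces commutes past itself finishes the argument. The main obstacle I expect is precisely this last bookkeeping: ensuring the levelwise trivializing covers from Lemma \ref{lem.trivialvectorbundle} can be chosen compatibly with the face maps of $\upsilon_\bullet$ so that one gets an honest bisemisimplicial object, and then checking that all the realizations and the $MT^\mr{Diff}$ construction commute in the required order. The Grassmannian identification and the disk/sphere-bundle decomposition are routine once the constant-structure lemma is in hand.
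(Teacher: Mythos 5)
Your proposal is correct and, at its core, is the same argument as the paper's: both hinge on Lemma \ref{lem.thomconstsmooth} together with the fact that the ``constant'' construction commutes with thick geometric realization, via the identification $\mr{Bun}^{\mr{Diff,const}}(TP,\upsilon)\cong \mr{Bun}(TP,\bR^d)\times_{\mr{GL}_d(\bR)}\upsilon$ and the fact that products and quotients commute with realization. The only differences are minor: the paper compares the constant Thom spaces directly, exhibiting $||\mr{Thom}^{\mr{const}}(\pi^*_{\upsilon_\bullet}\gamma^\perp_{d,n+d})||\to \mr{Thom}^{\mr{const}}(\pi^*_{||\upsilon_\bullet||}\gamma^\perp_{d,n+d})$ as a homeomorphism, whereas you compare constant Grassmannians and then propagate the equivalence through the disk/sphere-bundle pushout (routine, as you say); and your final reduction to non-levelwise-numerable $\upsilon_\bullet$ is unnecessary --- the lemma is stated and used in the setting of the preceding paragraph, where $\upsilon_\bullet$ is levelwise numerable so that $||\upsilon_\bullet||$ is an honest vector bundle (which is needed even to apply Lemma \ref{lem.thomconstsmooth} to it), so the bisemisimplicial bookkeeping you flag as the main obstacle can simply be dropped.
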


\begin{proof}We apply Lemma \ref{lem.thomconstsmooth} to get pointed weak equivalences $\mr{Thom}^\mr{const}(\pi_{\upsilon_p}^* \gamma^\perp_{d,n+d}) \hookrightarrow \mr{Thom}(\pi_{\upsilon_p}^* \gamma^\perp_{d,n+d})$. Since thick geometric realisation preserves levelwise weak equivalences by \cite[Theorem 2.2]{rwebertsemi}, the horizontal maps in the commutative diagram
	\[\begin{tikzcd}  {||\mr{Thom}^\mr{const}(\pi_{\upsilon_\bullet}^* \gamma^\perp_{d,n+d})||} \dar \rar{\simeq} & {||\mr{Thom}(\pi_{\upsilon_\bullet}^* \gamma^\perp_{d,n+d})||} \dar \\
	\mr{Thom}^\mr{const} (\pi_{||\upsilon_\bullet||}^* \gamma^\perp_{d,n+d}) \rar{\simeq} &\mr{Thom}(\pi_{||\upsilon_\bullet||}^* \gamma^\perp_{d,n+d})\end{tikzcd}\]
are pointed weak equivalences. Hence it suffices to establish a weak equivalence
	\[||\mr{Thom}^\mr{const}(\pi_{\upsilon_\bullet}^* \gamma^\perp_{d,n+d})|| \lra \mr{Thom}^\mr{const}(\pi_{||\upsilon_\bullet||}^* \gamma^\perp_{d,n+d}).\]
We first observe that the quotient map $\mr{Bun}(TM,\bR^d) \times_{\mr{GL}_d(\bR)} \upsilon \to \mr{Bun}^\mr{Diff,const}(TM,\upsilon)$ is a homeomorphism. Since thick geometric realisatio commutes with products (see \cite[Remark 2.23]{gepnerhenriques}) and quotients (since colimits commute with colimits), this implies that $||\mr{Bun}^\mr{const}(TP,\upsilon_\bullet)|| \to \mr{Bun}^\mr{const}(TP,||\upsilon_\bullet||)$ is also a homeomorphism, where the superscript as always means that the underlying map $\phi_P$ of a bundle map $\varphi_P$ factors over a point. Similarly, the formation of unit sphere and unit disc bundles commute with thick geometric realisation, so we have a homeomorphism between the diagrams defining the two Thom space constructions. Finally, thick geometric realisation commutes with homotopy pushouts up to weak equivalence, and as $||{\ast}||$ (with $\ast$ the constant semisimplicial space on a point) is contractible preserves homotopy cofibres up to weak equivalence.\end{proof}

\subsubsection{Excision for spectra of smooth manifolds} We claim that $\upsilon \mapsto \Psi^{\mr{Diff},\upsilon}(d)$ satisfies similar properties.

\begin{lemma}\label{lem.vectorweq} The construction $\upsilon \mapsto \Psi^{\mr{Diff},\upsilon}(d)$ takes weak equivalences to weak equivalences.\end{lemma}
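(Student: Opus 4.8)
The plan is to prove the a priori stronger statement that for every smooth manifold $M$ a weak equivalence $\upsilon \to \upsilon'$ of ($d'$-dimensional) tangential structures induces a weak equivalence $\Psi^{\mr{Diff},\upsilon}_d(M) \to \Psi^{\mr{Diff},\upsilon'}_d(M)$; taking $M = \bR^n$ then yields a levelwise, hence stable, weak equivalence of spectra. Write $\tilde h \colon E \to E'$ for the map of total spaces lying over the weak equivalence $h \colon B \to B'$ of base spaces. Since a map of vector bundles is by convention a fibrewise isomorphism, $\tilde h$ exhibits $\upsilon$ as the pullback $h^* \upsilon'$, and the induced map $F \colon \Psi^{\mr{Diff},\upsilon}_d(M) \to \Psi^{\mr{Diff},\upsilon'}_d(M)$ sends $(X,\varphi_X)$ to $(X, \tilde h \circ \varphi_X)$.

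To show $F$ is a weak equivalence I would verify that for every $i \geq 0$ each commutative square
\[\begin{tikzcd} \partial D^i \rar \dar & \Psi^{\mr{Diff},\upsilon}_d(M) \dar{F} \\ D^i \rar & \Psi^{\mr{Diff},\upsilon'}_d(M) \end{tikzcd}\]
admits a diagonal lift after a homotopy of the square; verifying this for all $i$ is equivalent to $F$ being a weak equivalence. Just as in the proofs of Lemmas \ref{lem.universaltangential} and \ref{lem.thomconstsmooth}, the smoothing results recalled in Section \ref{sec.ssetsmooth} let me assume the square is represented by smooth data: a smooth neat submanifold $X \subset D^i \times M$ with $\pi \colon X \to D^i$ a submersion of relative dimension $d$, a bundle map $\varphi'_X \colon T_\pi X \oplus \epsilon^{d'-d} \to \upsilon'$ with underlying map $\phi'_X \colon X \to B'$, and over $X_\partial \coloneqq \pi^{-1}(\partial D^i)$ a lift $\varphi^\partial_X$ of $\varphi'_X|_{X_\partial}$ to $\upsilon$, with underlying map $\phi^\partial_X \colon X_\partial \to B$.

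The argument then has two steps. First I would lift the underlying map. Since $\pi$ is a submersion and $\partial D^i \subset D^i$ is collared, the inclusion $X_\partial \hookrightarrow X$ is collared, so $(X, X_\partial)$ has the homotopy type of a CW pair; because $h$ is a weak equivalence, the comparison map $\mr{Map}(X, B) \to \mr{Map}(X, B') \times_{\mr{Map}(X_\partial, B')} \mr{Map}(X_\partial, B)$ is then a weak equivalence, so there is a map $\phi_X \colon X \to B$ together with a homotopy, compatible over $X_\partial$, from $(h \circ \phi_X, \phi_X|_{X_\partial})$ to $(\phi'_X, \phi^\partial_X)$. Using the (relative) bundle homotopy covering theorem for the numerable bundle $\upsilon'$, this homotopy of underlying maps lifts to a homotopy of the square, after which I may assume $\phi'_X = h \circ \phi_X$ and $\phi^\partial_X = \phi_X|_{X_\partial}$. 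Second I would ``divide by $\tilde h$'': as $\tilde h$ is a fibrewise isomorphism over $h$, composing $\varphi'_X$ fibrewise with the inverse of $\tilde h$ over the image of $\phi_X$ produces the unique bundle map $\varphi_X \colon T_\pi X \oplus \epsilon^{d'-d} \to \upsilon$ over $\phi_X$ with $\tilde h \circ \varphi_X = \varphi'_X$; by this uniqueness it restricts to $\varphi^\partial_X$ over $X_\partial$. Then $(X, \varphi_X)$ is the required lift.

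The only non-formal ingredient is the homotopy-theoretic fact used in the first step, namely that $\mr{Map}(X,-)$ and $\mr{Map}(X_\partial,-)$ preserve weak equivalences; this holds because $X$ and $X_\partial$ have the homotopy type of CW complexes, hence are cofibrant in the Quillen model structure on topological spaces, in which every object is fibrant. I expect the main difficulty to be purely organisational: arranging the smooth representatives so that $X_\partial \hookrightarrow X$ is genuinely collared, and keeping the homotopies of underlying maps and of bundle maps compatible over $\partial D^i$ so that they assemble into a single homotopy of the original square.
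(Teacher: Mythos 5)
Your proposal is correct and follows essentially the same route as the paper's proof: smooth approximation to reduce to a neat submanifold with bundle data, the CW-pair structure of $(X,X|_{\partial D^i})$ plus the weak equivalence $B \to B'$ to lift the underlying map rel boundary, the bundle homotopy covering theorem to carry the bundle structures along, and the fibrewise-isomorphism property of $\upsilon \to \upsilon'$ to produce the unique lift at the end. The only differences (packaging the relative lifting as a mapping-space statement, and obtaining the CW-pair structure from collars rather than handle decompositions) are cosmetic.
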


\begin{proof}Let $\varphi \colon \upsilon \to \upsilon'$ be a weak equivalence of $d$-dimensional vector bundles, with underlying map of base spaces $\phi \colon B \to B'$. It suffices to establish a levelwise pointed weak equivalence $\Psi^{\mr{Diff},\upsilon}_d(\bR^n) \to \Psi^{\mr{Diff},\upsilon'}_d(\bR^n)$: given a commutative diagram
	\[\begin{tikzcd} \partial D^i \rar \dar & \Psi^{\mr{Diff},\upsilon}_d(\bR^n) \dar \\[-2pt]
	D^i \rar & \Psi^{\mr{Diff},\upsilon'}_d(\bR^n),\end{tikzcd}\]
we need to provide a lift after a homotopy through commutative diagrams fixing $\varnothing$. Using smooth approximation as in \cref{sec.ssetsmooth}, we may assume the bottom map is represented by a smooth neat submanifold $X \subset D^i \times \bR^n$ such that the map $\pi \colon X \hookrightarrow D^i \times \bR^n \to D^i$ is a  smooth submersion of relative dimension $d$, together with an $\upsilon'$-structure on its vertical tangent bundle. The latter is described by a map of $d$-dimensional vector bundles $\varphi_X' \colon T_\pi X \to \upsilon'$ with underlying map of base spaces $\phi'_X \colon X \to B'$. We also have a lift to an $\upsilon$-structure over $\partial D^i$, described by a map $\varphi_{X|_{\partial D^i}} \colon T_\pi X|_{\partial D^i} \to \upsilon$ such that $\varphi \circ \varphi_{X|_{\partial D^i}} = \varphi_X'|_{T_\pi X|_{\partial D^i}}$. 

By \cite{milnorcw}, $(X,X|_{\partial D^i})$ has the homotopy type of a pair of CW-complexes. As the map $\phi \colon B \to B'$ underlying $\varphi$ is a weak equivalence, this implies that we can lift the map $\phi'_X \colon X \to B'$ underlying $\varphi'_X$ to a map $X \to B$ after a homotopy through commutative diagrams (only modifying the maps to $B$ and $B'$).

Using the bundle homotopy covering theorem, we extend $\varphi_X|_{\partial D^i}$ to a bundle map $\varphi_{X|_{[0,1] \times \partial D^i}} \colon [0,1] \times T_\pi X|_{\partial D^i} \to \upsilon$ covering the homotopy $[0,1] \times X|_{\partial D^i} \to B$. We may combine $g \circ \varphi_{[0,1] \times X|_{\partial D^i}}$ and $\varphi'_X$ into a bundle map 
\[[0,1] \times T_\pi X|_{\partial D^i} \cup \{0\} \times T_\pi X \lra \upsilon',\]
whose underlying map $[0,1] \times X|_{\partial D^i} \cup \{0\} \times X \to B$ extends to $X \times I$. Again using the bundle homotopy covering theorem, we may extend this to a bundle map $\varphi'_{[0,1] \times X} \colon [0,1] \times T_\pi X \to \upsilon'$. That is, our homotopy through commutative diagrams may be extended to a homotopy through commutative diagrams of bundle maps.

Note that the map $\varphi \colon \upsilon \to \upsilon'$ provides an identification of the fibre $\upsilon_b$ of $\upsilon$ over $b$ with the fibre of $\upsilon'_{g(b)}$ of $\upsilon'$ over $g(b)$. Since at the end of the homotopy the map $\phi'_{X \times \{1\}} \colon X \to B'$ underlying $\varphi'_{X \times \{1\}}$ lifts to a map $\phi_{X \times \{1\}} \to B$ rel boundary, a lift of $\varphi'_{X \times \{1\}}$ to a bundle map $\varphi_{X \times\{1\}} \colon T_\pi X \to \upsilon$ is given uniquely using the identification 
\[\varphi_{\phi_{X \times \{1\}}(b)}^{-1} \circ (\varphi_{X \times\{1\}})_b \colon \{1\} \times TX_b \overset{\cong}\lra \upsilon_{\phi_{X \times \{1\}}(b)}.\qedhere\]\end{proof}

As for Thom spectra, we claim that there is a canonical map of spectra \[||\Psi^\mr{Diff,\upsilon_{\bullet}}(d)|| \lra \Psi^{\mr{Diff},||\upsilon_{\bullet}||}(d).\] 
To describe it, we need to give pointed maps $||\Psi^\mr{Diff,\upsilon_{\bullet}}_d(\bR^n)|| \to \Psi^{\mr{Diff},||\upsilon_{\bullet}||}_d(\bR^n)$. For each $k$ we have a pointed map $\Delta^k_+ \wedge \Psi^\mr{Diff,\upsilon_k}_d(\bR^n) \to\Psi^{\mr{Diff},||\upsilon_{\bullet}||}_d(\bR^n)$ given by sending $(\vec{t},X,\varphi_X)$ to the manifold $X$ with tangential structure $\{\vec{t}\} \times \varphi_X \colon TX \to \Delta^k \times \upsilon_k \to ||\upsilon_{\bullet}||$. This is pointed because it sends $\varnothing$ to $\varnothing$.

The following excision result for $\upsilon \mapsto \Psi^\mr{Diff,\upsilon}(d)$ may be deduced from Theorem \ref{thm.gmtw} and the corresponding excision result for Thom spectra, but we will prove it directly.

\begin{theorem}\label{thm.excisiontangentsmooth} If $\upsilon_\bullet$ is a levelwise numerable $d$-dimensional semisimplicial vector bundle, then the canonical map $||\Psi^\mr{Diff,\upsilon_{\bullet}}(d)|| \to \Psi^{\mr{Diff},||\upsilon_{\bullet}||}(d)$ is a weak equivalence.\end{theorem}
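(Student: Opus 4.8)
My plan is to follow the template of the proof of Lemma~\ref{lem.vectorweq}, though I first record a shortcut: the weak equivalence $MT^\mr{Diff}\upsilon \simeq \Psi^{\mr{Diff},\upsilon}(d)$ of Theorem~\ref{thm.gmtw} is natural in $\upsilon$, so applying thick geometric realization to the levelwise weak equivalence $MT^\mr{Diff}\upsilon_\bullet \to \Psi^{\mr{Diff},\upsilon_\bullet}(d)$ (\cite[Theorem 2.2]{rwebertsemi}) and comparing with Lemma~\ref{lem.smooththomgeomrel} and Theorem~\ref{thm.gmtw} evaluated at $||\upsilon_\bullet||$ would exhibit three sides of a commuting square as weak equivalences, hence the fourth. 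I would nonetheless give a direct argument, as it is what generalizes to topological manifolds.

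For the direct argument: since a map of spectra is a weak equivalence if and only if it is so levelwise, it suffices to treat $||\Psi^{\mr{Diff},\upsilon_\bullet}_d(\bR^n)|| \to \Psi^{\mr{Diff},||\upsilon_\bullet||}_d(\bR^n)$ for each $n$; and by Lemma~\ref{lem.trivialvectorbundle}, Lemma~\ref{lem.vectorweq}, Lemma~\ref{lem.realizationvectorbundle} and \cite[Theorem 2.2]{rwebertsemi} I may assume $\upsilon_\bullet$ is levelwise trivial, so $||\upsilon_\bullet||$ is an honest numerable vector bundle. Then I would solve lifting problems against $\partial D^i \hookrightarrow D^i$ after a homotopy through commutative squares fixing $\varnothing$, exactly as in Lemma~\ref{lem.vectorweq}. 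The smoothing techniques of \cite[Section 2.4]{grwmonoids}, applied simultaneously to the bottom map and to its given partial lift, let me assume the bottom map is a smooth neat submanifold $X \subset D^i \times \bR^n$ with $\pi \colon X \to D^i$ a smooth submersion of relative dimension $d$ and a bundle map $\varphi_X \colon T_\pi X \to ||\upsilon_\bullet||$ covering $\phi_X \colon X \to B^{||\upsilon_\bullet||}$; unwinding the skeletal filtration of the thick geometric realization, the partial lift over $\partial D^i$ presents $(X,\varphi_X)|_{\partial D^i}$ in ``semisimplicially decomposed'' form, i.e.\ as a decomposition of $X|_{\partial D^i}$ into pieces lying over the cells of the filtration, the piece in filtration level $p$ carrying a map to $(\Delta^p)^\circ \times B^{\upsilon_p}$ and a $\upsilon_p$-structure on its vertical tangent bundle, coherently glued along faces by the maps $\upsilon_p \to \upsilon_{p-1}$.

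The work then is to extend this decomposition and its structures from $X|_{\partial D^i}$ to all of $X$. Handle decompositions of $\pi$ give $(X, X|_{\partial D^i})$ the homotopy type of a relative CW complex of dimension $d+i$, while $B^{||\upsilon_\bullet||}$ is filtered by closed cofibrations whose $N$-th stage is obtained by attaching $\Delta^N \times B^{\upsilon_N}$ along $\partial\Delta^N \times B^{\upsilon_N}$. Cellular approximation pushes $\phi_X$, rel $X|_{\partial D^i}$, into the $(d+i)$-st stage, and a further relative general-position deformation against the stratification $\bigsqcup_N (\Delta^N)^\circ \times B^{\upsilon_N}$ makes $\phi_X$ transverse to it, producing the desired extension of the decomposition of $X$. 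Covering this homotopy of $\phi_X$ by a homotopy of $\varphi_X$ via the bundle homotopy covering theorem and restricting the result to the pieces yields $\upsilon_p$-structures and maps to the $(\Delta^p)^\circ$, coherently, hence the required lift $D^i \to ||\Psi^{\mr{Diff},\upsilon_\bullet}_d(\bR^n)||$ extending the partial lift.

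I expect the main obstacle to be the middle step: pinning down the precise correspondence between maps into the thick geometric realization $||\Psi^{\mr{Diff},\upsilon_\bullet}_d(\bR^n)||$ and coherent semisimplicial decompositions of families of manifolds-with-tangential-structure, and then executing the relative transversality/extension argument up the skeletal filtration of $||\upsilon_\bullet||$ while keeping the bundle-map data coherent (the relative classifying property of Section~\ref{sec.smoothuniversal} and the bundle homotopy covering theorem being the essential tools). The non-compactness of the manifolds occurring in $\Psi$ should be harmless here: $X$ remains finite dimensional, so the cellular and transversality arguments against the filtration terminate after finitely many stages.
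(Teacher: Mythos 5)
Your opening ``shortcut'' is legitimate: the map $MT^\mr{Diff}\upsilon \to \Psi^{\mr{Diff},\upsilon}(d)$ is natural in $\upsilon$, so the square comparing $||MT^\mr{Diff}\upsilon_\bullet|| \to ||\Psi^{\mr{Diff},\upsilon_\bullet}(d)||$ with $MT^\mr{Diff}||\upsilon_\bullet|| \to \Psi^{\mr{Diff},||\upsilon_\bullet||}(d)$ commutes, and Theorem \ref{thm.gmtw}, Lemma \ref{lem.smooththomgeomrel} and preservation of levelwise weak equivalences by thick realization give Theorem \ref{thm.excisiontangentsmooth} by two-out-of-three; indeed the paper states exactly this deduction is possible and only declines it in favour of a direct proof. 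Had you rested on that paragraph, you would have a correct proof by a different (indirect) route.

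The direct argument you actually propose, however, has a genuine gap at its central step, and it is not the paper's argument. A point of $||\Psi^{\mr{Diff},\upsilon_\bullet}_d(\bR^n)||$ is a class $[\vec{t},X,\varphi_X]$ in which the \emph{entire} manifold $X$ carries a $\upsilon_p$-structure for a \emph{single} level $p$ and a \emph{single} barycentric coordinate $\vec{t}\in\Delta^p$; under the canonical map, the underlying map of the resulting $||\upsilon_\bullet||$-structure lands in $\{\vec{t}\}\times B^{\upsilon_p}\subset ||B_\bullet||$. Hence lifting a family over $D^i$ through the canonical map requires, for every $b\in D^i$, that $\phi_{X_b}\colon X_b \to ||B_\bullet||$ factor through a single simplex level with a fiberwise constant simplex coordinate. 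Your plan --- cellular approximation of $\phi_X\colon X \to B^{||\upsilon_\bullet||}$ into a finite filtration stage followed by a ``general position'' deformation against the strata $(\Delta^N)^\circ\times B^{\upsilon_N}$ --- does not achieve this: there is no transversality theory against this stratification (the $B^{\upsilon_N}$ are arbitrary spaces), and, more fundamentally, even a map meeting the strata ``generically'' will have the simplicial coordinate varying along each fiber $X_b$, cutting individual fibers into pieces lying over different strata; such data is simply not a point of the thick realization, because the semisimplicial direction of $\Psi^{\mr{Diff},\upsilon_\bullet}_d(\bR^n)$ resolves the tangential structure, not the manifold. The missing ingredient is a fiberwise deformation of the tangential structure itself: the paper proves (Lemma \ref{lem.tangentconstsmooth}, by an ANR/partition-of-unity argument together with a ``zoom in'' trick) that any family may be deformed so that the underlying map is constant on each fiber, and on these ``const'' subspaces the comparison map is an actual homeomorphism, which is what makes the levelwise statement go through. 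A smaller point: your preliminary reduction to levelwise trivial $\upsilon_\bullet$ is not justified as stated, since Lemma \ref{lem.trivialvectorbundle} resolves a single numerable bundle, not a semisimplicial one compatibly with its face maps (one would need a bisemisimplicial resolution); the paper instead only replaces the bases by $|\mr{Sing}(B_\bullet)|$ via Lemma \ref{lem.vectorweq} so as to work with CW bases.
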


To prove this theorem, it will be convenient to prove a stronger result.

\begin{definition}The subspace $\Psi^\mr{Diff,\upsilon,const}_d(\bR^n)$ of $\Psi^\mr{Diff,\upsilon}_d(\bR^n)$ consists of those $(X,\varphi_X)$ such that the map $\phi_X \colon X \to B$ underlying $\varphi_X \colon TX \to \upsilon$ factors over a point.\end{definition}

\begin{lemma}\label{lem.tangentconstsmooth} If the base $B$ of $\upsilon$ is a CW complex, the inclusion $\Psi^\mr{Diff,\upsilon,const}_d(\bR^n) \hookrightarrow \Psi^\mr{Diff,\upsilon}_d(\bR^n)$ is a pointed weak equivalence.\end{lemma}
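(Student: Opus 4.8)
The plan is to run a lifting argument of the same shape as those proving Lemmas~\ref{lem.thomconstsmooth} and~\ref{lem.vectorweq}. Given a commutative square with upper horizontal map $\partial D^i \to \Psi^{\mr{Diff},\upsilon,\mr{const}}_d(\bR^n)$ and lower horizontal map $D^i \to \Psi^{\mr{Diff},\upsilon}_d(\bR^n)$, I must supply a diagonal lift after a homotopy of squares that over $\partial D^i$ stays inside $\Psi^{\mr{Diff},\upsilon,\mr{const}}_d(\bR^n)$. First I apply the smoothing techniques of \cite[Section~2.4]{grwmonoids} (this reduction is also where the hypothesis that $B$ is a CW complex enters, so as to smoothly approximate and normalize the bundle-map part of the structure) to represent the lower map by a smooth neat submanifold $X \subset D^i \times \bR^n$ such that $\pi \colon X \to D^i$ is a smooth submersion of relative dimension $d$, together with $\varphi_X \colon T_\pi X \to \upsilon$ whose underlying map $\phi_X \colon X \to B$ is fiberwise constant over $\partial D^i$. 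The goal then becomes: deform $X$ and $\varphi_X$, through commutative squares that remain in $\Psi^{\mr{Diff},\upsilon,\mr{const}}_d(\bR^n)$ over $\partial D^i$, so that $\phi_X$ becomes fiberwise constant over all of $D^i$.

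The deformation retraction onto the zero section used in Lemma~\ref{lem.thomconstsmooth} — available there because the fibers were linear planes — is not available now, and I replace it by \emph{fiberwise magnification}. Suppose we have chosen, over the part of $D^i$ where the fiber is nonempty, a smooth section $\sigma$ of $\pi$; see the next paragraph. Let the fiber over $b$ at time $\lambda \in [1,\infty]$ be $\sigma(b) + \lambda\bigl(X_b - \sigma(b)\bigr)$. For finite $\lambda$ this is diffeomorphic to $X_b$, and as $\lambda \to \infty$ it converges in $\Psi^{\mr{Diff}}_d(\bR^n)$ to the affine tangent plane $\Pi_b \coloneqq \sigma(b) + T_{\sigma(b)}X_b$, the magnification flattening $X_b$ near $\sigma(b)$ while the remainder of $X_b$ runs off to infinity. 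Because the magnification is linear (affine with linear part $\lambda\cdot\mr{id}$) it does not rotate tangent planes, so $\varphi_X$ can be carried along this homotopy without any rescaling, and in the limit it becomes a bundle map out of $T\Pi_b$ whose underlying map sends all of $\Pi_b$ to the single point $\phi_X(\sigma(b))$. Thus the $\lambda = \infty$ family lies in $\Psi^{\mr{Diff},\upsilon,\mr{const}}_d(\bR^n)$ and furnishes the lift; and over $\partial D^i$ magnifying a fiber on which $\phi_X$ is already constant keeps it so, so the homotopy of squares behaves as required.

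The main obstacle is precisely to produce the section $\sigma$ coherently over $D^i$ and to deal with the empty locus $E \coloneqq \{\, b \in D^i : X_b = \varnothing \,\}$; here the ``disappearance at infinity'' built into the topology of $\Psi^{\mr{Diff}}_d(\bR^n)$ is what makes things work. Since $X \subset D^i \times \bR^n$ is closed, the fibers $X_b$ recede uniformly to infinity as $b$ approaches $E$, and $b \mapsto \min_{x \in X_b} |x|$ is continuous and positive on $D^i \setminus E$; so near $E$ the square already sits over the basepoint $\varnothing$, and if the magnification there is performed about a point $\sigma(b)$ of $X_b$ it merely produces affine planes that themselves recede to infinity, hence glue continuously to $\varnothing$ over $E$. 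Over $D^i \setminus E$ one produces the section after possibly a further homotopy of $X$ (for instance, for $n$ large relative to $i$ and $d$, by stabilizing $X$ by a standard $d$-plane placed in a region of $\bR^n$ that $X$ avoids, a point of which then provides $\sigma$), and the magnification about that $\sigma$ is continuous. The remaining verifications — that magnification is continuous for the $\Psi^{\mr{Diff}}_d(\bR^n)$-topology, that the transported $\varphi_X$ converges as claimed, and that the bundle homotopy covering theorem applies throughout the homotopy — are routine and follow the discussion in \cite[Section~2]{grwmonoids}.
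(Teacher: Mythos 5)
Your magnification idea founders on the two points you yourself flag as needing care, and neither can be repaired in the way you suggest. First, the section $\sigma$: a topological/smooth submersion over a contractible base with all fibers non-empty need \emph{not} admit a continuous section, precisely because parts of fibers escape to infinity. For instance with $d=0$, $n=1$, take $X\subset[0,1]\times\bR$ to be the union of the two sheets $\{(t,\tfrac{1}{1-t})\}_{t<1}$ and $\{(t,1+\tfrac1t)\}_{t>0}$: every fiber is non-empty, $X$ is closed and $\pi$ is a submersion, but any section must follow one sheet and that sheet escapes to infinity before the endpoint, so no section exists. Your proposed fixes do not close this gap: the lemma must be proved for a \emph{fixed} $n$ and all $i$ (it is applied levelwise in the proof of Theorem \ref{thm.excisiontangentsmooth}), so ``$n$ large relative to $i$ and $d$'' is not available; and a fixed $d$-plane in ``a region of $\bR^n$ that $X$ avoids'' need not exist, since the fibers are unbounded closed sets whose union over $D^i$ can be all of $\bR^n$ (already for $d=0$, $n=1$: $X_b=b+\bZ$), while choosing a $b$-dependent disjoint plane continuously is again a section-type problem of the same nature, and one would additionally have to match its $\upsilon$-structure with the fiberwise value over $\partial D^i$ to stay in $\Psi^{\mr{Diff},\upsilon,\mr{const}}_d(\bR^n)$.

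Second, the treatment of the empty locus $E$ is incorrect. If $\sigma(b)\to\infty$ as $b\to E$, neither the intermediate stages nor the limit recede to infinity: magnification about a distant center pulls distant parts of $X_b$ into a fixed compact window (center $(R,0)$, point $(R-1,0)$, $\lambda=R$ lands at the origin), and the limiting affine plane $\sigma(b)+T_{\sigma(b)}X_b$ can pass through any given compact set however large $|\sigma(b)|$ is, depending on the direction of the tangent plane. So the proposed family is not continuous near $E$ and does not fix the basepoint $\varnothing$; cutting the magnification off would destroy the conclusion that the structure becomes fiberwise constant. This is exactly the difficulty the paper's proof avoids: it never moves the submanifold at all until a final ambient ``zoom'' $(\psi_t)^*\bar X$, which is continuous on all of $\Psi^{\mr{Diff},\upsilon}_d(\bR^n)$ and fixes $\varnothing$; instead it deforms only the underlying map $\phi_X$ towards its value at \emph{local} section points (local sections always exist for a submersion, no global section is needed), using cutoff functions and, crucially, the ANR property of a finite subcomplex $K\subset B$ containing $\phi_X(X\cap(D^i\times B_1))$ — which is where the CW hypothesis actually enters, not in the smooth-approximation step as you suggest.
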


\begin{proof}Suppose we are given a commutative diagram
\[\begin{tikzcd} \partial D^i \rar \dar & {\Psi^\mr{Diff,\upsilon,const}_d(\bR^n)} \dar \\[-2pt]
D^i \rar &  {\Psi^{\mr{Diff},\upsilon}_d(\bR^n)}.\end{tikzcd}\]
Then we need to provide a lift after a homotopy through commutative diagrams fixing $\varnothing$. Using smooth approximation as in \cref{sec.ssetsmooth}, we may assume that the commutative diagram is represented by a closed smooth neat submanifold $X \subset D^i \times \bR^n$ such that $\pi \colon X \hookrightarrow D^i \times \bR^n \to D^i$ is a smooth submersion of relative dimension $d$, together with a $\upsilon$-structure $\varphi_X \colon T_\pi X \to \upsilon$ on its vertical tangent bundle. Let $\phi_X \colon X \to B$ denote its underlying map. For every $b \in \partial D^i$, the restriction of $\phi_X$ to $X_b$ factors over a point; in this case, we say that it is \emph{of the desired form}.

Let $B_r \subset \bR^n$ denote the closed ball of radius $r$ around the origin. Since $D^i \times B_1$ is compact and $X$ is closed, $X \cap (D^i \times B_1)$ is compact. Hence there is a finite subcomplex $K$ of $B$ such that the image of $X \cap (D^i \times B_1)$ under $\phi_X$ lies in $K$. Any finite CW-complex is an ANR, so in particular $K$ is. Hence there exists an open neighbourhood $U$ of the diagonal $\Delta_K \subset K\times K$ which deformation retracts onto $\Delta_K$ via a homotopy $H \colon [0,1] \times U \to U$. Evaluating the map $\phi_X$ at pairs of points in a fibre of $X \cap (D^i \times B_1)$, we obtain a map
\[\rho \colon \{(x,x') \in X \times_{\pi} X \mid ||x||,||x'|| \leq 1\} \lra K \times K,\]
with $||-||$ denoting the Euclidean norm on $\bR^n$, sending the diagonal to the diagonal. Since $X \cap (D^i \times B_1)$ is compact, there exists an $r \in (0,1)$ such that $\rho^{-1}(U)$ contains $\{(x,x') \in X \times_{\pi} X \mid ||x||,||x'||<r\}$.

Let $V \subset D^i$ be the open subset $\{ b \in D^i \mid X_b \neq \varnothing\}$, i.e.\ those points over which the fibre of $X$ is non-empty. Since $X|_V \to V$ is a submersion with non-empty fibres, it has local sections. Since $X \cap (D^i \times B_{r/2})$ is compact, so is the subset $L$ of $V$ given by $\{b \in D^i \mid X \cap (\{b\} \times B_{r/2}) \neq \varnothing\}$. We can thus find an $0<r'<r$ and a finite open cover $\cV = \{V_i\}_{1 \leq i \leq s}$ of some open neighbourhood $V'$ of $L$ in $V$, such that over each $V_i$ we have a section $s_i \colon V_i \to X|_{V_i}$ which lands in $X \cap B_{r'}$. Pick a partition of unity subordinate to $\cV$. By normalization, we construct from this partition of unity a collection of compactly-supported functions $\lambda_i \colon V_i \to [0,1]$ such that for each $v \in V'$ at least one $\lambda_i$ takes value $1$ at $v$. Pick a smooth function $\eta \colon X \to [0,1]$ that is $1$ on $X \cap B_{r'}$ and $0$ on the complement of $X \cap B_r$ in $X$. This shall be used to control a homotopy of tangent maps, guaranteeing it is supported near the origin of $\bR^n$.

We will inductively modify the tangent maps over each $V_i$, and keep the submersion unchanged. After the $i$th step the tangential structure will be of the desired form on $X_b \cap B_{r'}$ for all $b \in \bigcup_{1 \leq j \leq i} \lambda_j^{-1}(1)$. During each step, if the tangential structure was already of the desired form on $X_b \cap B_{r'}$, it will remain so during the entire homotopy. Furthermore, over $\partial D^i$ we will remain in $\Psi_d^\mr{Diff,\upsilon,const}(\bR^n)$ at all times. At the end of this procedure, the tangential structure will be of the desired form on all of $X \cap (D^i \times B_{r'})$.

For the $i$th step, on the set $X_{i,r} \coloneqq X \cap (V_i \times \mr{int}(B_r))$, consider the homotopy $h \colon [0,1] \times X_{i,r} \to B$ given by
\[h(t,x) \coloneqq \begin{cases} \pi_1 \circ H_{2t}(\rho(x,s_i(\pi(x)))) & \text{if $t \leq 1/2$,} \\
\pi_2 \circ H_{2-2t}(\rho(x,s_i(\pi(x)))) & \text{if $t > 1/2$.}\end{cases}\]
This starts at $\phi_X$ and ends at the map $\phi_X(s_i(\pi(-)))$. Note that if for some $x \in X_{i,r}$ it is true that $\phi_X(x) = \phi_X(s_i(\pi(x)))$, this homotopy is constant at $x$ with value $\phi_X(x)$, because $H$ fixes the diagonal pointwise. We can produce a homotopy $\tilde{h} \colon [0,1] \times X \to B$ by cutting this off using $\eta$ and $\lambda_i$:
\[\tilde{h}(t,x) \coloneqq h\left[\eta(x)\lambda_i(\pi(x))t,x \right].\]
This may be covered by a homotopy of vector bundle maps starting at $\varphi_X$. At the end of this homotopy the tangential structure is of the desired form on $X_b \cap B_{r'}$ for $b \in \lambda_i^{-1}(1)$, in addition to those $b$ where it already was of the desired form. 

Now we zoom in: pick a smooth family of self-embeddings $\psi_t \colon \bR^n \to \bR^n$ such that $\psi_0$ is the identity and $\psi_1$ has image in $B_{r'}$ and create a map $D^i \times [0,1] \to \Psi^{\mr{Diff},\upsilon}_d(\bR^n)$ by taking $(\psi_t)^* \overline{X}$. This gives a homotopy through commutative diagrams to one where the tangent map is of the desired form everywhere, that is, the map $D^i \to \Psi^{\mr{Diff},\upsilon}_d(\bR^n)$ factors through the subspace $\Psi^\mr{Diff,\upsilon,const}_d(\bR^n)$.\end{proof}

\begin{proof}[Proof of Theorem \ref{thm.excisiontangentsmooth}] It suffices to prove that the map is a levelwise pointed weak equivalence. Since $\upsilon \mapsto \Psi_d^\mr{Diff,\upsilon}(\mathbb{R}^n)$ takes weak equivalences to weak equivalences by Lemma \ref{lem.vectorweq}, we may replace the base $B_k$ of $\upsilon_k$ by $|\mr{Sing}(B_\bullet)|$ and hence assume it is the geometric realisation of a simplicial set. Thus each $B_p$ may be assumed to be a CW complex (a geometric realisation of a simplicial set), as well as $||B_\bullet||$ (a geometric realisation of a semisimplicial simplicial set).   
	
	Now we apply Lemma \ref{lem.tangentconstsmooth} to $\Psi_d^\mr{Diff,\upsilon_p}(\mathbb{R}^n)$ to get weak equivalences \[\Psi_d^\mr{Diff,\upsilon_p,const}(\mathbb{R}^n) \overset{\simeq}\lra \Psi_d^\mr{Diff,\upsilon_p}(\mathbb{R}^n).\] Since thick geometric realisation preserves levelwise weak equivalences by \cite[Theorem 2.2]{rwebertsemi}, we get a weak equivalence $||\Psi_d^\mr{Diff,\upsilon_\bullet,const}(\mathbb{R}^n)|| \to ||\Psi_d^\mr{Diff,\upsilon_\bullet}(\mathbb{R}^n)||$. 
	
	The canonical map $\Delta^k \times \Psi_d^\mr{Diff,\upsilon_k}(\mathbb{R}^n) \to \Psi_d^\mr{Diff,||\upsilon_{\bullet}||}(\mathbb{R}^n)$ sends $\Delta^k \times \Psi_d^\mr{Diff,\upsilon_k,\mr{const}}(\mathbb{R}^n)$ into $\Psi_d^\mr{Diff,||\upsilon_{\bullet}||,const}(\mathbb{R}^n)$, so we get a commutative diagram
	\[\begin{tikzcd} {||\Psi_d^\mr{Diff,\upsilon_\bullet,const}(\mathbb{R}^n)||} \rar{\simeq} \dar & {||\Psi_d^\mr{Diff,\upsilon_\bullet}(\mathbb{R}^n)||} \dar \\
	\Psi_d^\mr{Diff,||\upsilon_{\bullet}||,const}(\mathbb{R}^n) \rar{\simeq} & \Psi_d^\mr{Diff,||\upsilon_{\bullet}||}(\mathbb{R}^n).\end{tikzcd}\]
	We showed above that the top map is a weak equivalence, and the bottom map is also a weak equivalence by another application of Lemma \ref{lem.tangentconstsmooth}. To prove the theorem it hence suffices to prove that the left map is a weak equivalence. It is in fact a homeomorphism. It is visibly a continuous bijection and verifying that its inverse is continuous is tedious (it will also be irrelevant when working with simplicial sets as in the topological case). To verify its inverse is continuous, we recall the construction of the topology on the spaces $\Psi_d^\mr{Diff,\upsilon}(-)$ in \cite[Section 2.2 \& 2.3]{grwmonoids}. The space $\Psi_d^\mr{Diff,\upsilon}(-)$ is a colimit as $r \to \infty$ of spaces $\Psi_d^\mr{Diff,\upsilon}(B_r \subset \bR^n)^\mr{cs}$, each of which in turn can be described a colimit as $\epsilon \to 0$ of spaces $\Psi_d^\mr{Diff,\upsilon}(\mr{int}(B_{r+\epsilon}))^\mr{cs}$. Each of these is homeomorphic to $\Psi_d^\mr{Diff,\upsilon}(\bR^n)^\mr{cs}$. All of these constructions have analogues with $\upsilon$-structure whose underlying map factors over a point. Since colimits commute, it suffices to prove that the continuous bijections \[||\Psi_d^\mr{Diff,\upsilon_\bullet,const}(\bR^n)^\mr{cs}|| \lra \Psi_d^\mr{Diff,||\upsilon_\bullet||,const}(\bR^n)^\mr{cs}\] are homeomorphisms. The topology $\Psi_d^\mr{Diff,\upsilon,const}(\bR^n)^\mr{cs}$ is constructed by demanding certain partially defined maps $\Gamma_c(NM) \times \mr{Bun}^\mr{Diff,const}(TM,\upsilon) \dashrightarrow \Psi_d^\mr{Diff,\upsilon,const}(\bR^n)^\mr{cs}$ are homeomorphisms onto open subsets. Since products commute with geometric realisation, it hence suffices to prove that the continuous bijections \[||\mr{Bun}^\mr{Diff,const}(TM,\upsilon_\bullet)|| \lra \mr{Bun}^\mr{Diff,const}(TM,||\upsilon_\bullet||)\] are homeomorphisms, which we already did in the proof of Lemma \ref{lem.smooththomgeomrel}.
\end{proof}

\section{Spaces of topological manifolds} \label{sec.spacestopmfds} We start by setting up spaces of topological manifolds, analogous to Section \ref{sec.smoothmanifolds}.

\subsection{Spaces of topological submanifolds} 

Analogous to the smooth simplices in $\Psi^{\mr{Diff}}_d(M)$ of Section \ref{sec.ssetsmooth}, we define a simplicial set of $d$-dimensional topological manifolds in $M$. We point out that in this paper, by Convention \ref{conv.locallyflat}, all embeddings and submanifolds are locally flat.

\begin{definition}Let $M$ be a topological manifold of dimension $m$. The \emph{space $\Psi^\mr{Top}_d(M)$ of $d$-dimensional manifolds in $M$} is the simplicial set with $k$-simplices given by the set of closed topological submanifolds $X \subset \Delta^k \times M$ such that $\pi \colon (\Delta^k \times M,X) \to \Delta^k$ is a relative topological submersion of relative dimension $(m,d)$.

A map $\theta \colon [k] \to [k']$ induces a map from $k'$-simplices to $k$-simplices by pulling back along the map $\theta_* \colon \Delta^k \to \Delta^{k'}$. That is, $X \subset \Delta^{k'} \times M$ is sent to $\theta^* X \subset \Delta^k \times M$ given by $\{(\vec{t},m) \mid (\theta_*(\vec{t}), m) \in X\}$.
\end{definition}

Unlike in the smooth case, we can glue and pull back topological submersions along a retraction $\Delta^k \to \Lambda^k_i$ to prove that the simplicial set $\smash{\Psi^\mr{Top}_d(M)}$ is Kan. Moreover, it extends to a \emph{quasitopological space} in the sense of \cite{spanierquasi}: its value on a topological space $Y$ is given by replacing $\Delta^k$ by $Y$ in the previous definition and through pullback, this construction is contravariantly functorial in all continuous maps. We shall use this extension to a quasitopological space, we turn homotopy-theoretic arguments into more geometric ones, e.g.\ representing elements of homotopy groups by $X \subset \partial D^i \times M$. In doing so, we can meaningfully talk about $X_b$ for any point $b \in \partial D^i$, and manipulate $X$ fibrewise; indeed, being a closed submanifold or a relative topological submersion are properties of $X$, not additional data.

Let us discuss the dependence of the simplicial set $\Psi^\mr{Top}_d(M)$ on $M$.

\begin{definition}\
	\begin{itemize}
		\item The category $\mr{sSet}$ has class of objects given by simplicial sets and morphisms from $X$ to $Y$ given by maps of simplicial sets $X \to Y$.
		\item The $\mr{sSet}$-enriched category $\cat{sSet}$ has class of objects given by simplicial sets and simplicial set of morphisms from $X$ to $Y$ given by $\mr{Map}(X,Y)$.
		\item The $\mr{sSet}$-enriched category $\cat{Mfd}^\mr{Top}$ has a set of objects given by the topological submanifolds of $\bR^\infty$, and the morphism space from $M$ to $N$ given by the simplicial set $\mr{Emb}^\mr{Top}(M,N)$ of codimension zero embeddings, as in Definition \ref{def.embeddings}.
	\end{itemize}
\end{definition}

\begin{definition}An \emph{invariant simplicial sheaf on topological manifolds} is a functor of $\mr{sSet}$-enriched categories $\Phi \colon (\cat{Mfd}^\mr{Top})^\mr{op} \to \cat{sSet}$, such that for any open cover $\{U_i\}_{i \in I}$ of a topological manifold $M$, the following is an equalizer diagram in $\mr{sSet}$:
	\[\begin{tikzcd} \Phi(M)\ar[r]& \prod_i \Phi(U_i) \rar[shift left=.5 ex] \rar[shift left=-.5 ex]& \prod_{i,j} \Phi(U_i \cap U_j).\end{tikzcd}\]
\end{definition}

Given an embedding $g \colon M \to M'$, there is a map $g^* \colon \Psi_d^\mr{Top}(M') \to \Psi_d^\mr{Top}(M)$ sending a $k$-simplex $X \subset \Delta^k \times M'$ to $(\mr{id}_{\Delta^k} \times g)^{-1}(X) \subset \Delta^k \times M$. The locality of the existence of submersion charts in Definition \ref{def.submersion} implies that this is again a submersion. Furthermore, as submersions can be glued along open covers of the domain, we have the following lemma.

\begin{lemma}The assignment $M \mapsto \Psi^\mr{Top}_d(M)$ extends to a functor $\Psi^\mr{Top}_d \colon (\cat{Mfd}^\mr{Top})^\mr{op} \to \cat{sSet}$ which is an invariant simplicial sheaf on topological manifolds.\end{lemma}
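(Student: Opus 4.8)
The plan is to verify the two parts of the assertion separately: first that $M \mapsto \Psi^\mr{Top}_d(M)$ is a simplicial functor on $(\cat{Mfd}^\mr{Top})^\mr{op}$, and then that it satisfies the sheaf (equalizer) condition. For the functoriality, I would use the map $g^* \colon \Psi_d^\mr{Top}(M') \to \Psi_d^\mr{Top}(M)$ already described in the excerpt, sending a $k$-simplex $X \subset \Delta^k \times M'$ to $(\mr{id}_{\Delta^k}\times g)^{-1}(X)$. The three things to check are: (i) $g^*X$ is again a closed topological submanifold of $\Delta^k \times M$ which is a submersion of relative dimension $d$ over $\Delta^k$ — this follows from the locality of submersion charts (Definition \ref{def.submersion}), since near any point of $g^*X$ one can pull back a submersion chart of $X$ through the open embedding $g$; (ii) $g^*$ is compatible with the simplicial structure maps $\theta^*$, which is immediate from associativity of taking preimages; and (iii) $(g' \circ g)^* = g^* \circ (g')^*$ and $\mr{id}^* = \mr{id}$, again immediate. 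Finally, to see that $g \mapsto g^*$ is itself simplicial (i.e. continuous in the embedding parameter in the appropriate simplicial sense), one unwinds the definition of $\mr{Emb}^\mr{Top}(M,N)$ from Definition \ref{def.embeddings}: a $k$-simplex of $\mr{Map}(\mr{Emb}^\mr{Top}(M,N),\Psi^\mr{Top}_d(-))$-type data is a family of embeddings over $\Delta^k$, and pulling back preserves the submersion property fiberwise by the same locality argument.

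For the sheaf condition, fix an open cover $\{U_i\}_{i\in I}$ of $M$. I must show that for every $k$ the diagram of $k$-simplices is an equalizer of sets: $\Psi^\mr{Top}_d(M)_k$ injects into $\prod_i \Psi^\mr{Top}_d(U_i)_k$ with image exactly the compatible families. Injectivity is clear: a closed submanifold $X \subset \Delta^k\times M$ is determined by its intersections with the $\Delta^k\times U_i$, since these cover $\Delta^k \times M$. For the descent part, suppose we are given $X_i \subset \Delta^k\times U_i$ submersions of relative dimension $d$ over $\Delta^k$ that agree on overlaps, meaning $X_i \cap (\Delta^k\times (U_i\cap U_j)) = X_j \cap (\Delta^k \times (U_i \cap U_j))$ as subsets of $\Delta^k \times (U_i\cap U_j)$. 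Set $X := \bigcup_i X_i \subset \Delta^k\times M$. Then $X \cap (\Delta^k \times U_i) = X_i$, so $X$ is a closed subset (its complement is open, being covered by the open sets $(\Delta^k \times U_i) \setminus X_i$ together with $\Delta^k\times(M\setminus \bigcup_i U_i)$; one checks locally). That $X$ is a topological submanifold and that $\pi \colon X \to \Delta^k$ is a submersion of relative dimension $d$ are both \emph{local} conditions on $\Delta^k\times M$: near any point of $X$, which lies in some $\Delta^k\times U_i$, these hold because $X$ agrees there with $X_i$. This is precisely the statement, recalled in the excerpt, that submersions (and submersion charts) can be glued along open covers of the domain.

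The main obstacle — such as it is — is entirely bookkeeping around the distinction between the open cover of the \emph{base} $M$ and the resulting open cover $\{\Delta^k\times U_i\}$ of the total space $\Delta^k\times M$, and making sure the gluing of submersions is applied to the latter. The substantive input (locality of submersion charts in Definition \ref{def.submersion}, and the fact that a locally-flat submanifold property is local) is already isolated in the preceding definitions, so the proof is a matter of assembling these observations. I would therefore write the proof compactly: first dispatch functoriality in one sentence citing locality of submersion charts, then verify the equalizer condition degreewise, noting that both injectivity and descent reduce to the fact that ``closed topological submanifold that is a relative-dimension-$d$ submersion over $\Delta^k$'' is a condition that can be checked on the open cover $\{\Delta^k\times U_i\}_{i\in I}$ of $\Delta^k\times M$. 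Since an equalizer in $\cat{sSet}$ is computed degreewise, this suffices, and the simplicial enhancement of the functor is verified by the same argument applied to families over simplices.

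\begin{proof}For an embedding $g \colon M \to M'$ the map $g^* \colon \Psi^\mr{Top}_d(M') \to \Psi^\mr{Top}_d(M)$ sending $X \subset \Delta^k\times M'$ to $(\mr{id}_{\Delta^k}\times g)^{-1}(X)$ is well-defined: locality of the existence of submersion charts in Definition \ref{def.submersion} shows $(\mr{id}_{\Delta^k}\times g)^{-1}(X)$ is again a topological submersion of relative dimension $d$ over $\Delta^k$, and it is closed since $g$ is an open embedding. Compatibility with the simplicial operators $\theta^*$ and functoriality in $g$ are immediate from the definition of preimages; the same argument applied to families of embeddings over a simplex shows the assignment is simplicial.

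It remains to check the equalizer condition, which in $\cat{sSet}$ may be checked in each simplicial degree $k$. Let $\{U_i\}_{i\in I}$ be an open cover of $M$, so that $\{\Delta^k\times U_i\}_{i\in I}$ is an open cover of $\Delta^k\times M$. The restriction $\Psi^\mr{Top}_d(M)_k \to \prod_i \Psi^\mr{Top}_d(U_i)_k$ is injective because a subset of $\Delta^k\times M$ is determined by its intersections with the members of an open cover. Given a family $(X_i)_{i\in I}$ with $X_i \subset \Delta^k\times U_i$ agreeing on overlaps, put $X := \bigcup_i X_i \subset \Delta^k\times M$; then $X \cap (\Delta^k\times U_i) = X_i$. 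The complement of $X$ is open because it is covered by the open sets $(\Delta^k\times U_i)\setminus X_i$ and $\Delta^k\times(M\setminus\bigcup_i U_i)$, so $X$ is closed. Being a locally flat topological submanifold and the property that $\pi\colon X \to \Delta^k$ is a topological submersion of relative dimension $d$ are both local on $\Delta^k\times M$, hence can be verified on the cover $\{\Delta^k\times U_i\}$, where they hold since there $X$ agrees with $X_i$; this uses that submersions and submersion charts glue along open covers of the domain. Thus $X$ is a $k$-simplex of $\Psi^\mr{Top}_d(M)$ restricting to the given family, proving the equalizer condition.
\end{proof}
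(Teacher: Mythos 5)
Your argument is correct and is exactly the paper's (largely implicit) proof spelled out: functoriality by pulling back along codimension-zero embeddings using locality of submersion charts, and the degreewise equalizer condition from the fact that being a closed locally flat submanifold submerging over $\Delta^k$ is a local condition on $\Delta^k \times M$, so the pieces glue. The only trivial quibbles are that closedness of $(\mr{id}_{\Delta^k}\times g)^{-1}(X)$ needs only continuity of $g$, and the set $\Delta^k\times(M\setminus\bigcup_i U_i)$ in your descent step is empty since the $U_i$ cover $M$.
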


\subsection{Spaces of topological smanifolds with tangential structure} As in Section \ref{sec.smoothtangentialstruc}, we consider topological manifolds with tangential structures. Given a topological submersion $\pi \colon X \to \Delta^k$ of relative dimension $d$, the tangent microbundles of the fibres assemble into to the vertical tangent microbundle $T_\pi X$ described in Example \ref{exam.vertical}, a $d$-dimensional topological microbundle. Evidently this is natural in the morphisms of $\Delta$ up to canonical isomorphism. Though it may seem natural to let tangential structures be $d$-dimensional topological microbundles, we shall use instead \emph{topological $\bR^d$-bundles}, that is, fibre bundles with fibre $\bR^d$ and structure group $\mr{Top}(d)$. Compare this to the definition of a $d$-dimensional vector bundle, which is a fibre bundle with fibre $\bR^d$ and structure group $\mr{GL}_d(\bR)$. This choice is more convenient because the theory of fibre bundles is better developed in the literature, and comes at hardly any loss of generality due to the Kister-Mazur theorem (Theorem \ref{thm.kister}, which holds for base spaces which are ENR's or paracompact).

\begin{definition}A $d$-dimensional \emph{topological tangential structure} is a numerable topological $\bR^d$-bundle $\xi$.\end{definition}

\begin{definition}Let $M$ be a topological manifold and $\xi$ a $d'$-dimensional tangential structure with $d' \geq d$. The \emph{space $\Psi^\mr{Top,\xi}_d(M)$ of $d$-dimensional manifolds in $M$ with $\xi$-structure} is given by the following simplicial set: its $k$-simplices are given by those pairs $\overline{X} = (X,\phi_X)$ of a $k$-simplex $X \in \Psi_d^\mr{Top}(M)_k$ and a map of $d'$-dimensional microbundles $\varphi_X \colon T_\pi X \oplus \epsilon^{d'-d} \to \xi$. A morphism $\theta \colon [k] \to [k']$ in $\Delta$ sends $\overline{X} = (X,\varphi_X)$ to the pair $(\theta^* X,\varphi_X|_{T_\pi \theta^* X \oplus \epsilon^{d'-d}})$.\end{definition}

Like $\Psi^\mr{Top}_d(M)$, the simplicial set $\Psi^\mr{Top,\xi}_d(M)$ is Kan and extends to a quasitopological space. We again discuss the functoriality in $M$. Given a codimension zero embedding $g \colon M \to M'$, there is a map $g^* \colon \Psi^{\mr{Top},\xi}_d(M') \to \Psi^{\mr{Top},\xi}_d(M)$. It is given on the submersion as before, and if $\phi_X$ is the $\xi$-structure on $X$, we give $g^{-1}(X)$ the $\xi$-structure $\phi_X \circ Tg \colon T_\pi(g^{-1}(X)) \oplus \epsilon^{d'-d} \to \xi$, where $Tg \colon T_{\pi}(g^{-1}(X))\oplus \epsilon^{d'-d} \to T_\pi X\oplus \epsilon^{d'-d}$ is the map on total spaces induced by the homeomorphism $g^{-1}(X) \rightarrow X$. Having described the functoriality in $M$, the locality of the data and conditions implies:

\begin{lemma} The assignment $M \mapsto \Psi^{\mr{Top},\xi}_d(M)$ extends to a functor of $\mr{sSet}$-enriched categories $\Psi^{\mr{Top},\xi}_d(-) \colon (\cat{Mfd}^\mr{Top})^{\mr{op}} \to \cat{sSet}$, which is an invariant simplicial sheaf on topological manifolds.\end{lemma}

\subsection{The universal tangential structure} As discussed for $d$-dimensional vector bundles in Section \ref{sec.smoothuniversal}, there exist universal topological $\bR^d$-bundles. The associated $\bR^d$-bundle to the universal numerable principal $\mr{Top}(d)$-bundle over the bar construction $B\mr{Top}(d)$ provides an example \cite{mayclassifying}. There is also a Grassmannian model, analogous to the description of $\mr{Gr}^\mr{Diff}_d(\bR^{n+d})$ as $\mr{O}(n+d)/(\mr{O}(d) \times \mr{O}(n))$.

Let $\mr{Top}(n+d\, \mr{fix}\,d)$ denote the simplicial group of homeomorphisms of $\bR^{n+d}$ fixing $\bR^d$ pointwise, and $\mr{Top}(n+d\,\mr{pres}\,d)$ the simplicial group of homeomorphisms of $\bR^{n+d}$ preserving $\bR^d$ setwise, so that $\mr{Top}(n+d,\mr{pres}\,d) \cong \mr{Top}(d) \ltimes \mr{Top}(n+d\, \mr{fix}\,d)$.

\begin{definition}\label{def.topgrassmannian} The \emph{topological Grassmannian $\mr{Gr}^\mr{Top}_d(\bR^{n+d})$ of $d$-planes in $\bR^{n+d}$} is the simplicial set $\mr{Top}(n+d)/\mr{Top}(n+d\,\mr{pres}\,d)$. We let $\mr{Gr}^\mr{Top}_d(\bR^\infty)$ denote $\mr{colim}_{n \to \infty} \mr{Gr}^\mr{Top}_d(\bR^{n+d})$ and call this \emph{topological Grassmannian of $d$-planes}.\end{definition}

\begin{remark}We give two warnings. Firstly, unlike the smooth case, $\mr{Top}(n+d \,\mr{fix}\,d)$ is not homotopy equivalent to $\mr{Top}(n)$ in general \cite[Example 2]{millettnormal}. Secondly, there is a map of simplicial sets
\[\mr{Top}(n+d)/\mr{Top}(n+d\,\mr{pres}\,d) \lra \Psi_d^\mr{Top}(\bR^{n+d})\]
given by sending $f$ to $f(\bR^{d} \times \{0\})$. Its image is contained in the subsimplicial set with $k$-simplices given by those $X \subset \Delta^k \times \bR^{n+d}$ such that each fibre $X_b$ contains the origin and is homeomorphic to $\bR^d$, but this is not an isomorphism (e.g.\ when taking $d=1$, $n=2$, it does not hit the components of knotted $\bR \subset \bR^3$).\end{remark}

\begin{lemma}\label{lem.topgrassmannian} The topological Grassmannian $\mr{Gr}^\mr{Top}_d(\bR^\infty)$ is weakly equivalent to $B\mr{Top}(d)$.\end{lemma}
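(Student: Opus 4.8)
The plan is to identify $\mr{Gr}^\mr{Top}_d(\bR^\infty)$ as the geometric realization of a homogeneous space and then recognize it as a classifying space by exhibiting a principal bundle. Write $\mr{Top}(n+d,d) \cong \mr{Top}(d) \times \mr{Top}(n+d,d)$ inside $\mr{Top}(n+d)$ as a semidirect product, as the excerpt already notes. First I would recall that the projection $\mr{Top}(n+d) \to \mr{Top}(n+d)/\mr{Top}(n+d,d)$ is a principal $\mr{Top}(n+d,d)$-bundle (this uses that $\mr{Top}(n+d,d) \hookrightarrow \mr{Top}(n+d)$ admits local sections, which is where some Kister–Mazur-type input or a local-section argument for topological homeomorphism groups enters). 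Passing to the quotient by the residual $\mr{Top}(d)$-action gives that
\[\mr{Top}(n+d)/\mr{Top}(n+d,d) \lra \mr{Top}(n+d)/(\mr{Top}(d) \times \mr{Top}(n+d,d)) = \mr{Gr}^\mr{Top}_d(\bR^{n+d})\]
is a principal $\mr{Top}(d)$-bundle.

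Next I would take the colimit as $n \to \infty$. On total spaces, $\mr{colim}_n \mr{Top}(n+d)/\mr{Top}(n+d,d)$ is a model for $E\mr{Top}(d)$: it carries a free $\mr{Top}(d)$-action, and I must check it is weakly contractible. The key point is that $\mr{Top}(n+d)/\mr{Top}(n+d,d)$ is the space of locally flat embeddings $\bR^d \hookrightarrow \bR^{n+d}$ (with appropriate standardness near infinity / as germs) that are in the orbit of the standard one, and as $n \to \infty$ this becomes highly connected — any family of such embeddings parametrized by a sphere can be isotoped into a higher codimension and then un-knotted/straightened, so all homotopy groups vanish in the colimit. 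This is the analogue of the contractibility of the smooth Stiefel manifold colimit $V_d(\bR^\infty)$, and it is the step I expect to be the main obstacle, since in the topological category one cannot simply cite Gram–Schmidt; instead one needs the standard codimension-$\geq 3$ unknotting/engulfing results (or, since we only need weak contractibility of a simplicial set, a direct isotopy-extension argument using that $\mr{Top}(m,d)$-structures glue). Granting this, $\mr{Gr}^\mr{Top}_d(\bR^\infty) = E\mr{Top}(d)/\mr{Top}(d) = B\mr{Top}(d)$ up to weak equivalence, since a principal $\mr{Top}(d)$-bundle with weakly contractible total space classifies the universal bundle.

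An alternative, possibly cleaner route I would consider is to build a zig-zag of weak equivalences directly: map the bar-construction model $B\mr{Top}(d)$ (built from the universal numerable principal bundle, as in \cite{mayclassifying}) into $\mr{Gr}^\mr{Top}_d(\bR^\infty)$ by classifying the tautological $\bR^d$-bundle over the Grassmannian (whose fiber over the class of $f$ is $f(\bR^d \times \{0\})$), and then show this map is a weak equivalence by comparing the associated principal $\mr{Top}(d)$-bundles — it suffices that the Grassmannian's tautological bundle is universal, i.e.\ that its total space is weakly contractible, which again reduces to the same connectivity statement. Either way, the technical heart is the weak contractibility of $\mr{colim}_n \mr{Top}(n+d)/\mr{Top}(n+d,d)$, for which I would cite or adapt the relevant engulfing/isotopy results from \cite{kirbysiebenmann}; everything else (local sections, passage to quotients, commuting quotients with filtered colimits of simplicial sets) is formal.
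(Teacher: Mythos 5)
Your proposal is correct and follows essentially the same route as the paper: write $\mr{Gr}^\mr{Top}_d(\bR^{n+d})$ as the quotient of $\mr{Top}(n+d)/\mr{Top}(n+d,d)$ by its free $\mr{Top}(d)$-action and reduce everything to the weak contractibility (high connectivity as $n \to \infty$) of that quotient. The only differences are in how that key input is sourced: the paper cites Lashof's Proposition (t/pl) (the map $\mr{PL}(n+d)/\mr{PL}(n+d,d) \to \mr{Top}(n+d)/\mr{Top}(n+d,d)$ is a weak equivalence in codimension $\geq 3$) combined with Haefliger--Wall's $n$-connectivity of the PL quotient, or Rourke--Sanderson directly, together with the Kister--Mazur/Kuiper--Lashof comparison between automorphisms and germs of embeddings (which your sketch elides), rather than engulfing results from Kirby--Siebenmann; also, in the simplicial-group setting freeness of the action already suffices, so no local-section/principal-bundle verification is needed.
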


\begin{proof}Writing $\mr{Top}(n+d)/\mr{Top}(n+d\,\mr{pres}\,d)$ as $(\mr{Top}(n+d)/\mr{Top}(n+d\,\mr{fix}\,d))/\mr{Top}(d)$, we see that the action of $\mr{Top}(d)$ is free and hence it suffices to prove that the quotient $\mr{Top}(n+d)/\mr{Top}(n+d\,\mr{fix}\,d)$ becomes highly connected as $n \to \infty$. This follows by combining \cite[Proposition (t/pl)]{lashofembeddings} with \cite[Theorem 3]{haefligerwall}; the former says that if $n \geq 3$, $\mr{PL}(n+d)/\mr{PL}(n+d\,\mr{fix}\,d) \to \mr{Top}(n+d)/\mr{Top}(n+d\,\mr{fix}\,d)$ is a weak equivalence, and the latter says that $\mr{PL}(n+d)/\mr{PL}(n+d\,\mr{fix}\,d)$ is $n$-connected. These papers use germs of topological or PL embeddings instead of topological or PL automorphisms, that is, $\mr{Top}_{(0)}(d)$ instead of $\mr{Top}(d)$, etc. However, such simplicial sets of automorphisms are weakly equivalent to the corresponding simplicial set of germs of automorphisms, as a consequence of the Kister-Mazur theorem or its PL analogue, the Kuiper-Lashof theorem (Remark \ref{rem.germsweq}).\end{proof}

\begin{remark}Alternatively, that $\mr{Top}(n+d)/\mr{Top}(n+d\,\mr{fix}\,d)$ becomes highly connected as $n \to \infty$ may be deduced directly using \cite[Theorem 2.3]{rourkesandersontop}.\end{remark}

The map $\mr{Top}(n+d)/\mr{Top}(n+d\,\mr{fix}\,d) \to \mr{Top}(n+d)/ \mr{Top}(n+d\,\mr{pres}\,d)$ is a principal $\mr{Top}(d)$-bundle, and its geometric realisation has an associated topological $\bR^d$-bundle. As we let $n \to \infty$, this gives a universal numerable topological $\bR^d$-bundle. (If we had used topological groups instead of simplicial groups, we could have explicitly described it as $\mr{Top}(n+d)/\mr{Top}(n+d\,\mr{pres}\,d\,\mr{fix}\,\{e_1\})$ where $\mr{Top}(n+d\,\mr{pres}\,d\,\mr{fix}\,\{e_1\})$ is the topological group of homeomorphisms of $\bR^d$ fixing the origin and the point $e_1 \in \bR^d$.) Picking a universal numerable topological $\bR^d$-bundle $\xi^\mr{univ}$, we have an analogue of Lemma \ref{lem.universaltangential}. 

\begin{lemma}\label{lem.universaltangentialtop} The natural transformation $\Psi^\mr{Top,\xi^{univ}}_d(-) \to \Psi^\mr{Top}_d(-)$ is a natural weak equivalence.\end{lemma}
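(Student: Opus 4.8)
The plan is to mimic the proof of Lemma \ref{lem.universaltangential} in the topological setting, replacing smooth approximation by the Kan property of $\Psi^\mr{Top}_d(M)$ and the relative classifying property of the universal vector bundle by the corresponding property for $\xi^\mr{univ}$ established via \cite[\S III.8]{kirbysiebenmann}. First I would observe that it suffices to show the map $\Psi^\mr{Top,\xi^{univ}}_d(M) \to \Psi^\mr{Top}_d(M)$ induces a bijection on all homotopy groups, which, since both sides are Kan simplicial sets, amounts to a relative lifting problem: given a commutative square
\[\begin{tikzcd} \partial \Delta^i \rar \dar & \Psi^\mr{Top,\xi^{univ}}_d(M) \dar \\
\Delta^i \rar & \Psi^\mr{Top}_d(M),\end{tikzcd}\]
produce a diagonal lift after a homotopy of squares (rel $\partial\Delta^i$ on the top-left, though in fact we only need it up to homotopy since we are checking an isomorphism on homotopy groups, so I would phrase it with $\partial D^i \to D^i$ as in the quasitopological-space description). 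The bottom map is a topological submersion $\pi \colon X \hookrightarrow D^i \times M \to D^i$ of relative dimension $d$, and over $\partial D^i$ we are given a $\xi^\mr{univ}$-structure on the vertical tangent microbundle $T_\pi X$, i.e.\ a microbundle map $T_\pi X|_{\partial D^i} \oplus \epsilon^{d'-d} \to \xi^\mr{univ}$; the task is to extend it over all of $D^i$.

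Next I would reduce the structure-extension problem to the relative classifying property. Since $\xi^\mr{univ}$ is a topological $\bR^d$-bundle rather than a microbundle, I first invoke the Kister--Mazur theorem (Theorem \ref{thm.kister}) to identify, up to weak equivalence and compatibly with restriction, microbundle maps $T_\pi X \oplus \epsilon^{d'-d} \to \xi^\mr{univ}$ with maps of $\bR^{d'}$-bundles out of a chosen $\bR^{d'}$-bundle representative of $T_\pi X \oplus \epsilon^{d'-d}$; this is legitimate because $X$ is a paracompact manifold (Convention on manifolds) so the relevant base spaces are paracompact. A map of numerable $\bR^{d'}$-bundles into the universal one is the same as a classifying map of $X$ to the base $B^\mr{univ}$ of $\xi^\mr{univ}$ covering it by a bundle map, so the $\xi^\mr{univ}$-structure over $\partial D^i$ is, after these identifications, the germ of a classifying map $f_{\partial D^i} \colon \cO(\partial D^i \hookrightarrow X) \to B^\mr{univ}$ for the (stabilized) vertical tangent bundle on a neighborhood, together with a bundle map covering it. As in the smooth proof, I would first push the $\xi^\mr{univ}$-structure off the closed set $\partial D^i$ to an open collar neighborhood (using the collar on $\partial D^i \subset D^i$ and pulling back the submersion along the collar retraction, a move legitimate precisely because topological submersions, unlike smooth ones, pull back along retractions — this is the same phenomenon that makes $\Psi^\mr{Top}_d(M)$ Kan), so that we genuinely have a germ of a classifying map on an open neighborhood of the corresponding closed subset of $X$. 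Then the relative classifying property of the universal numerable topological $\bR^d$-bundle over topological manifolds, quoted in the excerpt from \cite[\S III.8]{kirbysiebenmann}, says exactly that the germ of such a classifying map extends over all of $X$ (with the extension unique up to homotopy rel the germ), and covering it by a bundle map by the bundle homotopy covering property gives the desired $\xi^\mr{univ}$-structure on all of $T_\pi X \oplus \epsilon^{d'-d}$. Undoing the collar compression and the Kister--Mazur identification yields the required lift, completing the argument that $\pi_i$ is surjective; injectivity is the same argument applied to $\partial D^{i+1} \subset D^{i+1}$.

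The main obstacle I expect is the application of the relative classifying property to the total space $X$: in \cite{kirbysiebenmann} this property is stated for a topological manifold $B$ and a closed subset $A \subset B$, whereas here $X$ is a manifold with corners (it is a neat submanifold of $D^i \times M$ fibering over $D^i$), and the closed set in question is $\pi^{-1}(\partial D^i)$, which is itself a codimension-zero submanifold-with-corners sitting inside the boundary structure of $X$. One has to check that Kirby--Siebenmann's statement is robust enough to cover this situation — either by noting that $X$ is still an ENR, so the ENR version of the relative classifying property in the excerpt applies verbatim, or by first thickening: replacing $X$ by $X \times \bR$ or an open neighborhood of $X$ in $D^i \times M$ to get an honest open (hence boundaryless, or at least collared) manifold, transporting the problem there via the fact that a microbundle and its pullback along a deformation retraction have equivalent structure spaces, and solving it upstairs. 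I would also need to be slightly careful that the collar-compression step is compatible with the quasitopological-space structure so that the resulting homotopy of commutative squares is actually continuous in the $D^i$-parameter; this is routine given that submersions glue and pull back, but it is the kind of point where the topological category genuinely behaves better than the smooth one and should be stated rather than waved away.
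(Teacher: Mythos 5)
Your proposal is correct and follows essentially the same route as the paper: represent the square by a topological submersion $X \subset D^i \times M$ with a $\xi^\mr{univ}$-structure on $T_\pi X$ over $\partial D^i$, use the Kister--Mazur theorem (plus Kister's equivalence $\mr{Emb}^\mr{Top}(\bR^d,\bR^d) \simeq \mr{Top}(d)$, which is the precise form of your ``up to weak equivalence'' identification of microbundle maps with $\bR^d$-bundle maps) to reduce to honest topological $\bR^d$-bundles, and then run the collar-extension argument of Lemma \ref{lem.universaltangential} with the relative classifying property of \cite[\S III.8]{kirbysiebenmann}. Your extra care about applying that property to the total space $X$ is reasonable but not a divergence from the paper's argument.
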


\begin{proof}Suppose we have a commutative diagram
	\[\begin{tikzcd}\partial D^i \rar \dar & \Psi^\mr{Top,\xi^{univ}}_d(M) \dar \\[-2pt]
	D^i \rar & \Psi^\mr{Top}_d(M).\end{tikzcd}\]
Then we need to provide a lift after a homotopy of commutative diagrams. This commutative diagram may be represented by a closed topological submanifold $X \subset D^i \times M$ such that $(D^i \times M,X) \to D^i$ is a relative topological submersion, and over $\partial D^i$ we have a map of $d$-dimensional topological microbundles $T_\pi X|_{\partial D^i} \to \xi$.

Using the Kister--Mazur theorem (\cref{thm.kister}) we may pick a topological $\bR^d$-bundle inside $T_\pi X|_{\partial D^i}$. The map $T_\pi X|_{\partial D^i} \to \xi$ then induces a fibrewise embedding of $\bR^d$ into $\bR^d$, and by Kister's weak equivalence $\mr{Emb}^\mr{Top}(\bR^d,\bR^d) \simeq \mr{Top}(d)$, we may deform this map to one of topological $\bR^d$-bundles. At this point, the proof of Lemma \ref{lem.universaltangential} goes through, replacing vector bundles with topological $\bR^d$-bundles throughout.\end{proof}

\subsection{Spectra of topological submanifolds} As in the smooth case, we will break up our computation of the homotopy type of the topological cobordism category into two steps. The first identifies it as an infinite loop space of the following spectrum, analogous to the one defined in Section \ref{sec.smoothspectra}:

\begin{definition}\label{def.topspectrum} The spectrum $\Psi^{\mr{Top},\xi}(d)$ has $n$th level given by $\Psi^{\mr{Top},\xi}_d(\bR^n)$, pointed at $\varnothing$, and structure maps $S^1 \wedge \Psi^{\mr{Top},\xi}_d(\bR^n) \to \Psi^{\mr{Top},\xi}_d(\bR^{n+1})$ given by identifying $S^1$ with the one-point compactification of $\bR$, and defining it to be
	\[(t,\overline{X}) \longmapsto \begin{cases} \varnothing & \text{if $(t,\overline{X}) = (\infty,\overline{X})$,} \\
	\iota (\overline{X}) + t \cdot e_1 & \text{otherwise,}\end{cases}\]
with $\iota \colon \bR^n \to \bR^{n+1}$ the inclusion on the last $n$ coordinates.\end{definition}

\section{The topological cobordism category} \label{sec:cobordismcat} In this section we define cobordism categories of topological manifolds.

\subsection{Restricted spaces of manifolds} As in the smooth case, we need to consider spaces of long topological manifolds. 

\begin{definition}For $0 \leq p \leq n$, we let $\psi^{\mr{Top},\xi}(d,n,p)$ be the subsimplicial set of $\Psi^{\mr{Top},\xi}_d(\bR^{n})$ where $k$-simplices are given by those $\overline{X} = (X,\varphi_X) \in \Psi^{\mr{Top},\xi}_d(\bR^n)_k$ such that $X \subset \Delta^k \times \bR^{p} \times (0,1)^{n-p}$. Let us denote ${\mr{colim}}_{n \to \infty}\, \psi^{\mr{Top},\xi}(d,n,p)$ by the colimit $\psi^{\mr{Top},\xi}(d,\infty,p)$ over the maps $\overline{X} \mapsto \iota(\overline{X})$.\end{definition}

When $p = 0$, the homotopy type of these spaces is more familiar. For a $d$-dimensional topological manifold $M$, the simplicial set $\mr{Bun}^\mr{Top}(TM,\xi)$ of microbundle maps from $TM$ to $\xi$ has an action of the simplicial group $\mr{Top}(M)$.

\begin{definition}We define $B\mr{Top}^\xi(M)$ to be the homotopy quotient $\mr{Bun}^\mr{Top}(TM,\xi) \sslash \mr{Top}(M)$.\end{definition}

This defines a homotopy type, and the homotopy quotient in this definition can be made explicit by picking a model, such as the thick geometric realisation of a two-sided bar construction, or as the quotient $E  \times_{\mr{Top}(M)} \mr{Bun}^\mr{Top}(TM,\xi)$ for a weakly contractible simplicial set $E$ with a free $\mr{Top}(M)$ action. The next lemma uses the latter.

\begin{lemma}\label{lem.stableobjectsbtop}  There is a weak equivalence
	\[\bigsqcup_{[M]} B\mr{Top}^\xi(M) \lra \psi^{\mr{Top},\xi}(d,\infty,0),\]
	where the indexing set runs over all homeomorphism types of compact $d$-dimensional  topological manifolds.\end{lemma}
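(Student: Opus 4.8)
The plan is to decompose the simplicial set $\psi^{\mr{Top},\xi}(d,\infty,0)$ according to the homeomorphism type of the underlying manifold, and to recognize each piece as a homotopy orbit space. First I would observe that a $k$-simplex of $\psi^{\mr{Top},\xi}(d,n,0)$ is a pair $(X,\varphi_X)$ with $X \subset \Delta^k \times (0,1)^n$ a topological submersion over $\Delta^k$ of relative dimension $d-1$ (being $p=0$ forces $X$ to sit in the open cube, hence all fibers are \emph{compact} $(d-1)$-manifolds, and all fibers over a connected simplex are homeomorphic since a submersion over a simplex is a trivial bundle by local triviality of topological submersions plus the Kister--Mazur-style gluing used earlier). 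Thus the path components are indexed by homeomorphism types $[M]$ of compact $(d-1)$-manifolds, and I would analyze each component $\psi^{\mr{Top},\xi}(d,\infty,0)_{[M]}$ separately.

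For a fixed $M$, consider the simplicial set $\emb^\mr{Top}(M,(0,1)^\infty)$ of (families of) locally flat embeddings of $M$ into the open infinite cube; this is weakly contractible by a standard general-position/engulfing argument in the limit $n\to\infty$ (any two embeddings into $(0,1)^\infty$ can be connected and contracted, exactly as in the smooth case; this is where $n\to\infty$ is essential). It carries a free action of $\mr{Top}(M)$ by precomposition. I would then build a map
\[
\emb^\mr{Top}(M,(0,1)^\infty) \times_{\mr{Top}(M)} \mr{Bun}^\mr{Top}(TM,\xi) \lra \psi^{\mr{Top},\xi}(d,\infty,0)_{[M]},
\]
sending an embedding $e$ and a bundle map $\varphi$ to the image $e(M)$ equipped with the $\xi$-structure transported along $e$; this is well-defined on the quotient because reparametrizing $M$ changes $e$ and $\varphi$ compatibly. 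Since $\emb^\mr{Top}(M,(0,1)^\infty)$ is a weakly contractible free $\mr{Top}(M)$-simplicial set, the domain is a model for $B\mr{Top}^\xi(M) = \mr{Bun}^\mr{Top}(TM,\xi)\sslash\mr{Top}(M)$, and summing over $[M]$ gives the asserted map.

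To see this map is a weak equivalence I would exhibit an inverse up to homotopy, or more directly argue that it is surjective on $\pi_*$ with contractible "fibers": given a $k$-simplex $X \subset \Delta^k\times(0,1)^n$ of $\psi^{\mr{Top},\xi}(d,\infty,0)_{[M]}$, the submersion $\pi\colon X\to\Delta^k$ is a trivial $M$-bundle (as $\Delta^k$ is contractible and we are in the stable range), so choosing a trivialization is choosing a lift of $X$ through the map above; the space of such trivializations, keeping the inclusion into $(0,1)^n$ and the $\xi$-structure fixed, is a torsor over $\mr{Map}(\Delta^k,\mr{Top}(M))$ and in particular nonempty and connected. Assembling these choices over a filtration of a sphere $\partial D^i$, exactly as in the lifting arguments of Lemmas~\ref{lem.universaltangential} and~\ref{lem.tangentconstsmooth}, shows the map induces isomorphisms on all homotopy groups. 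The main obstacle I anticipate is the careful verification that topological submersions over a simplex (or disk) are genuinely trivial bundles with structure group $\mr{Top}(M)$ — this relies on the local existence of submersion charts (Definition~\ref{def.submersion}) together with a gluing argument, and is the topological replacement for the smooth tubular-neighborhood bookkeeping; once that is in hand, the weak contractibility of $\emb^\mr{Top}(M,(0,1)^\infty)$ and the identification of the homotopy orbit space are routine.
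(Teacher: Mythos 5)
Your proposal follows essentially the same route as the paper: compactness of the fibers makes the submersion a fiber bundle (Corollary \ref{cor.unionfiberbundle}), the simplicial set is then identified with $\bigsqcup_{[M]}\mr{Emb}^\mr{Top}(M,(0,1)^\infty)\times_{\mr{Top}(M)}\mr{Bun}^\mr{Top}(TM,\xi)$, and weak contractibility of $\mr{Emb}^\mr{Top}(M,(0,1)^\infty)$ (Lemma \ref{lem.weakwhitney}) together with freeness of the $\mr{Top}(M)$-action gives $\bigsqcup_{[M]}B\mr{Top}^\xi(M)$. The differences are cosmetic: the paper simply observes that the comparison map is an isomorphism of simplicial sets, so no homotopy-group lifting argument is needed (and in particular no appeal to connectedness of $\mr{Map}(\Delta^k,\mr{Top}(M))$, which is generally false -- the quotient by $\mr{Top}(M)$ already makes the lift unique), and the fibers here are compact $d$-dimensional, not $(d-1)$-dimensional, manifolds.
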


\begin{proof}The fibres of a simplex of $\psi^{\mr{Top},\xi}(d,\infty,0)$ are closed in a bounded subset of a Euclidean space , hence compact. By Corollary \ref{cor.unionfibrebundle}, a closed topological submersion with compact fibres is a fibre bundle. Thus sending a pair of an embedding and a $\xi$-structure to their image gives an isomorphism of simplicial sets
	\[\bigsqcup_{[M]} \mr{Emb}^\mr{Top}(M,(0,1)^\infty) \times_{\mr{Top}(M)} \mr{Bun}^\mr{Top}(TM,\xi)\cong \psi^\mr{Top,\xi}(d,\infty,0).\]
	The simplicial set $\mr{Emb}^\mr{Top}(M,(0,1)^\infty)$ is weakly contractible by Lemma \ref{lem.weakwhitney} and the action of $\mr{Top}(M)$ by precomposition is free, so that the left hand side is weakly equivalent to $\bigsqcup_{[M]} B\mr{Top}^\xi(M)$.
\end{proof}

When $n \neq \infty$ and $d \neq 4$ the spaces $\psi^{\mr{Top},\xi}(d,n,0)$ approximate $\psi^{\mr{Top},\xi}(d,\infty,0)$; the map $\psi^{\mr{Top},\xi}(d,n,0) \to \psi^{\mr{Top},\xi}(d,n+1,0)$ is a highly-connected monomorphism of simplicial sets:

\begin{lemma}\label{lem.psi-connectivity} If $d \neq 4$, $n \geq 5$, and $n-d \geq 3$, the map $\psi^{\mr{Top},\xi}(d,n,0) \to \psi^{\mr{Top},\xi}(d,n+1,0)$ is $(n-2d-1)$-connected.\end{lemma}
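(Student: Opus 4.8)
The plan is to follow the same strategy used for the connectivity of the maps in the smooth cobordism category, namely to reduce to a statement about connectivity of spaces of embeddings and then invoke a quantitative Whitney-type embedding theorem for topological manifolds. First I would analyze the map $\psi^{\mr{Top},\xi}(d,n,0) \to \psi^{\mr{Top},\xi}(d,n+1,0)$ using Lemma \ref{lem.stableobjectsbtop} to identify both sides up to weak equivalence. More precisely, by the same argument as in that lemma (compact fibers, so a topological submersion is a fiber bundle by Corollary \ref{cor.unionfiberbundle}), one has an isomorphism of simplicial sets
\[\psi^{\mr{Top},\xi}(d,n,0) \cong \bigsqcup_{[M]} \mr{Emb}^\mr{Top}(M,(0,1)^n) \times_{\mr{Top}(M)} \mr{Bun}^\mr{Top}(TM,\xi),\]
and similarly with $(0,1)^{n+1}$ in place of $(0,1)^n$. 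Under these identifications, the stabilization map is induced componentwise by the inclusion $\mr{Emb}^\mr{Top}(M,(0,1)^n) \hookrightarrow \mr{Emb}^\mr{Top}(M,(0,1)^{n+1})$ (composing an embedding into $(0,1)^n$ with $\iota$ and translating into $(0,1)^{n+1}$), and the $\mr{Bun}^\mr{Top}(TM,\xi)$ and $\mr{Top}(M)$ factors are unchanged.

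Since taking homotopy quotients by a fixed group and a fixed fiber preserves connectivity of a map, it then suffices to show that for each compact $d$-manifold $M$, the inclusion $\mr{Emb}^\mr{Top}(M,(0,1)^n) \hookrightarrow \mr{Emb}^\mr{Top}(M,(0,1)^{n+1})$ is $(n-2d-1)$-connected. This is the topological analogue of the quantitative Whitney embedding theorem. The idea is standard: given a family of embeddings of $M$ into $(0,1)^{n+1}$ parametrized by $\partial D^i$, together with a family of embeddings into $(0,1)^n$ over $D^i$ restricting correctly on the boundary, one wants to push the whole $D^i$-family off the last coordinate direction. A parametrized general-position argument shows this can be done when the ambient dimension is large relative to $2d$ plus the parameter dimension $i$; unravelling, one gets the inclusion is $k$-connected for $k$ up to roughly $(n+1) - (2d+1) - 1 = n - 2d - 1$. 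The hypotheses $n \geq 5$ and $n - d \geq 3$ are precisely what one needs for topological general position and local flatness arguments (e.g.\ to ensure handle-straightening and the absence of low-dimensional pathologies), and $d \neq 4$ enters because the smoothing-theoretic or engulfing inputs to topological general position require avoiding dimension $4$ in the relevant strata.

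The main obstacle is establishing the quantitative embedding theorem for \emph{topological} manifolds with the stated connectivity range and hypotheses. In the smooth case this is a routine transversality/general-position count, but topologically one must work with locally flat submanifolds and invoke topological general position (following Kirby--Siebenmann and subsequent work), which is where the dimension restrictions $d \neq 4$, $n \geq 5$, $n - d \geq 3$ come from. I would organize this as: (i) reduce to the embedding statement as above; (ii) handle the case where $M$ is built from finitely many handles, inducting over handles and using a parametrized topological general position lemma to isotope each handle off the extra coordinate; (iii) assemble and track the connectivity bound. Steps (i) and (iii) are bookkeeping; step (ii), specifically citing or proving the parametrized topological general position input in the required range, is the crux.
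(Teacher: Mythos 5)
Your reduction is exactly the paper's: identify $\psi^{\mr{Top},\xi}(d,n,0)$ with $\bigsqcup_{[M]} \mr{Emb}^\mr{Top}(M,(0,1)^n) \times_{\mr{Top}(M)} \mr{Bun}^\mr{Top}(TM,\xi)$ as in Lemma \ref{lem.stableobjectsbtop}, use freeness of the $\mr{Top}(M)$-action to reduce to the stabilization of embedding spaces, and then invoke a quantitative Whitney-type statement. The divergence, and the gap, is in how that embedding statement is established. You propose to prove that $\mr{Emb}^\mr{Top}(M,(0,1)^n) \hookrightarrow \mr{Emb}^\mr{Top}(M,(0,1)^{n+1})$ is highly connected by a \emph{parametrized} topological general position argument over $D^i$, handle by handle, and you name "citing or proving the parametrized topological general position input" as the crux. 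That input is precisely what is not available: there is no parametrized general position (or parametrized engulfing/transversality) theorem for families of locally flat topological embeddings in the literature that you could cite, and producing one is a substantial project in its own right. The paper sidesteps exactly this difficulty: in Appendix B it proves Theorem \ref{thm.catwhitney} by first comparing embeddings to \emph{block} embeddings (Theorem \ref{thm.catblockcomparison}, which rests on Millett's handle induction and the Burghelea--Lashof--Rothenberg lemma of disjunction, with Pedersen's appendix supplying the topological case), the point being that an element of a homotopy group of the block embedding space is represented by a \emph{single} embedding, so only unparametrized general position (Rourke--Sanderson in PL, Dancis in Top) is needed. The proof of Lemma \ref{lem.psi-connectivity} then uses only the resulting \emph{absolute} connectivity: both embedding spaces are $(n-2d-2)$-connected since $(0,1)^n$ is contractible, which yields the stated connectivity of the stabilization map. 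So your step (i) matches the paper, but step (ii) as written would fail without being rerouted through block embeddings and disjunction.

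A smaller inaccuracy: your explanation of the hypothesis $d \neq 4$ (smoothing-theoretic or engulfing inputs to general position) is not where it enters. Per the remark following the lemma, $d \neq 4$ is needed because the appendix proof of the quantitative Whitney theorem inducts over a handle decomposition of $M$, and a topological $4$-manifold admits a handle decomposition only if it is smoothable; the hypotheses $n \geq 5$, $n-d \geq 3$ feed into the disjunction and general position results for the ambient manifold, not into smoothing theory.
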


\begin{proof}As in the proof of Lemma \ref{lem.stableobjectsbtop}, we identify $\psi^{\mr{Top},\xi}(d,n,0)$ with the disjoint union of the quotients $\mr{Emb}(M,(0,1)^n) \times_{\mr{Top}(M)} \mr{Bun}(TM,\xi)$. Since the action of $\mr{Top}(M)$ on $\mr{Emb}^\mr{Top}(M,(0,1)^n)$ and $\mr{Emb}^\mr{Top}(M,(0,1)^{n+1})$ is free, the connectivity of $\psi^{\mr{Top},\xi}(d,n,0) \to \psi^\mr{Top,\xi}(d,n+1,0)$ is equal to that of the maps $\mr{Emb}^\mr{Top}(M,(0,1)^n) \to \mr{Emb}^\mr{Top}(M,(0,1)^{n+1})$. By Theorem  \ref{thm.catwhitney}, the domain and codomain are $(n-2d-2)$-connected as long as $n \geq 5$ and $n-d \geq 3$, which gives the result.\end{proof}

\begin{remark}This proof fails for $d=4$ since we needed handle decompositions in our proof of the quantitative Whitney embedding theorem, and a topological 4-manifold admits a handle decomposition if and only if it admits a smooth structure.\end{remark}

\subsection{The definition}\label{sec.cobcatdefinition} We define topological cobordism categories following Definition \ref{def.smoothcob}. To do so, we explain what it means to take the product of an interval with a manifold with $\xi$-structure:

\begin{definition}Given $\overline{Y} = (Y,\varphi_Y) \in \psi^{\mr{Top},\xi}(d-1,n-1,0)$, the \emph{translational product} $(a,b) \times \overline{Y} \in \Psi_d^\mr{Top,\xi}((a,b) \times (0,1)^{n-1})$ is the $d$-dimensional manifold $(a,b) \times Y$ with $\xi$-structure defined as follows: the tangent microbundle of $(a,b) \times Y$ can be identified with $\epsilon \oplus TY$ by translation in the $(a,b)$-direction, and thus a $\xi$-structure on $Y$ gives a $\xi$-structure on $(a,b) \times Y$.\end{definition}

\begin{definition}\label{def.topcobordism} The \emph{topological cobordism category $\cat{Cob}^{\mr{Top},\xi}(d,n)$} is the non-unital category internal to $\mr{sSet}$ such that:
	\begin{itemize}
		\item the simplicial set of objects is given by $\psi^{\mr{Top},\xi}(d-1,n-1,0)$,
		\item the simplicial set of morphisms is given by the subsimplicial set of $\mr{Sing}((0,\infty)) \times \psi^{\mr{Top},\xi}(d,n,1)$ consisting of those $(t,\overline{X})$ such that there exists an $\epsilon > 0$ so that 
		\begin{align*}\qquad&\overline{X} \cap ((-\infty,\epsilon) \times (0,1)^{d-1}) = (-\infty,\epsilon) \times (\overline{X} \cap (\{0\} \times (0,1)^{n-1})), \quad \text{and} \\
		&\overline{X} \cap ((t-\epsilon,\infty) \times (0,1)^{n-1}) = (t-\epsilon,\infty)\times (\overline{X} \cap (\{t\} \times (0,1)^{n-1})),\end{align*}
		\item the source map sends $(t,\overline{X})$ to $\overline{X} \cap (\{0\} \times (0,1)^{n-1})$ and the target map sends it to $\overline{X} \cap (\{t\} \times (0,1)^{n-1})$,
		\item the composition map sends a pair of composable morphisms $(t_0,\overline{X}_0)$ and $(t_1,\overline{X}_1)$ to $(t_0+t_1,\overline{X})$ with $\overline{X}$ given by
		\[\qquad \overline{X} = \begin{cases}\overline{X}_0 \cap ((-\infty,t_0] \times (0,1)^{n-1}) & \text{in $(-\infty,t_0] \times (0,1)^{n-1}$,} \\
		\overline{X}_1 \cap ([0,\infty) \times (0,1)^{n-1})+t_0 \cdot e_1 & \text{in $[t_0,\infty) \times (0,1)^{n-1}$.}\end{cases}\]
	\end{itemize}\end{definition}

\begin{remark}Observe that though we wrote $t$, $t_0$ and $t_1$ in the previous definition these are actually continuous functions $\Delta^k \to \bR$.\end{remark}

The nerve of this non-unital category internal to $\mr{sSet}$ is a semisimplicial object in $\mr{sSet}$ and the thick geometric realisation of its semisimplicial direction is a simplicial set we denote $B\cat{Cob}^{\mr{Top},\xi}(d,n)$; this is the classifying space of the cobordism category $\cat{Cob}^{\mr{Top},\xi}(d,n)$.

In the remainder of this subsection we justify this definition by arguing that (i) as $n \to \infty$ the homotopy types of the objects, morphisms, and classifying spaces stabilise, and (ii) this nerve is a semi-Segal object. We start with (ii), which means that the map
\[\begin{tikzcd} N_p \cat{Cob}^{\mr{Top},\xi}(d,n) \dar \\[-5pt] \underbrace{N_1 \cat{Cob}^{\mr{Top},\xi}(d,n) \times^h_{N_0 \cat{Cob}^{\mr{Top},\xi}(d,n)} \cdots \times^h_{N_0 \cat{Cob}^{\mr{Top},\xi}(d,n)} N_1 \cat{Cob}^{\mr{Top},\xi}(d,n)}_{p}\end{tikzcd}\]
induced by the Segal morphisms in $\Delta_\mr{inj}$, is a weak equivalence. As it is a nerve, this formula holds with actual pullbacks in place of homotopy pullbacks, so it suffices to verify that the combined source and target maps of $\cat{Cob}^\mr{Top,\xi}(d,n)$ are Kan fibrations:

\begin{lemma}\label{lem.sourcetargetfib} The combined source and target map $(s,t) \colon \mr{mor}(\cat{Cob}^{\mr{Top},\xi}(d,n)) \to \psi^{\mr{Top},\xi}(d-1,n-1,0)^2$ is a Kan fibration.\end{lemma}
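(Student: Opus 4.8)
The plan is to verify the lifting property for the map $(s,t)$ against the horn inclusions $\Lambda^k_i \hookrightarrow \Delta^k$ by a fiberwise geometric argument, using that topological submersions can be glued and retracted along the projection $\Delta^k \to \Lambda^k_i$ (exactly the tool that made $\Psi^\mr{Top}_d(M)$ Kan, and which fails in the smooth case). So suppose we are given a commutative square
\[\begin{tikzcd}\Lambda^k_i \rar \dar & \mr{mor}(\cat{Cob}^{\mr{Top},\xi}(d,n)) \dar{(s,t)}\\
\Delta^k \rar & \psi^{\mr{Top},\xi}(d-1,n-1,0)^2.\end{tikzcd}\]
Unwinding definitions, the data of the top-left corner is: a $k$-simplex $(t,\bar X)$ over the horn, i.e.\ a continuous function $t\colon \Lambda^k_i\to(0,\infty)$ and a $\xi$-manifold $\bar X\subset \Lambda^k_i\times\bR\times(0,1)^{n-1}$ submersing over $\Lambda^k_i$ and collared near $0$ and near $t$; together with the extension of its source and target over all of $\Delta^k$, namely $\xi$-manifolds $\bar Y_0,\bar Y_1\subset\Delta^k\times(0,1)^{n-1}$ submersing over $\Delta^k$, restricting over $\Lambda^k_i$ to the source and target of $\bar X$.

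First I would dispose of the $(0,\infty)$-coordinate: since $t\colon\Lambda^k_i\to(0,\infty)$ is continuous and $\Lambda^k_i\hookrightarrow\Delta^k$ admits a retraction $\rho$, set $t'\coloneqq t\circ\rho\colon\Delta^k\to(0,\infty)$, a continuous extension. The bulk of the argument is to extend $\bar X$ over $\Delta^k$. Here I would work on the total space, not the base: pull the submersion $\pi\colon X\to\Lambda^k_i$ back along $\rho$ to get a candidate $\rho^*X\subset\Delta^k\times\bR\times(0,1)^{n-1}$ submersing over $\Delta^k$, which restricts correctly over $\Lambda^k_i$ — but its source/target over $\Delta^k$ is $\rho^*(\partial\text{-data})$, not the prescribed $\bar Y_0,\bar Y_1$. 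The two agree over $\Lambda^k_i$, so the discrepancy is a $\Delta^k$-family of submersions that is the correct translational product near the ends and that interpolates between $\rho^*$ of the old ends and the new $\bar Y_0,\bar Y_1$; this is exactly a path in the relevant mapping space fixing the endpoints over $\Lambda^k_i$, and can be inserted into the collar regions $(-\infty,\epsilon)$ and $(t'-\epsilon,\infty)$ after shrinking $\epsilon$. Concretely: reparametrize the $\bR$-direction by a $\Delta^k$-family of self-embeddings so that the collars near $0$ and near $t'$ each have length bounded below, glue in the translational products $(-\infty,\epsilon)\times\bar Y_0$ and $(t'-\epsilon,\infty)\times\bar Y_1$ (translated), and use the gluing of submersions over $\Delta^k$ to splice these onto the middle piece $\rho^*X\cap(\epsilon,t'-\epsilon)$; over $\Lambda^k_i$ everything is already equal to the given $\bar X$, so no modification occurs there. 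Finally, the $\xi$-structure: $\rho^*X$ carries the pulled-back $\xi$-structure, the glued-in translational products carry the $\xi$-structures induced from $\bar Y_0,\bar Y_1$ as in the definition of the translational product, and on the overlap regions these two $\xi$-structures agree with the given one over $\Lambda^k_i$; on $\Delta^k$ they can be interpolated using that $\mr{Bun}^\mr{Top}(-,\xi)$ is an invariant simplicial sheaf and hence behaves well with respect to such collaring/retraction, or more elementarily by covering the homotopy of underlying maps by a homotopy of microbundle maps. This produces the desired filler $(t',\bar X')$.

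The main obstacle I anticipate is purely bookkeeping rather than conceptual: one must choose the family of reparametrizations of $\bR$ and the collar-lengths $\epsilon$ uniformly over $\Delta^k$ while matching the already-fixed collars over $\Lambda^k_i$, so that all the gluings are literally along open sets and the submersion-gluing lemma (the topological-category phenomenon invoked right after Definition~\ref{def.topcobordism}, and earlier for $\Psi^\mr{Top}_d(M)$) applies. Because submersions, unlike smooth submersions, glue along open covers of the domain with no compatibility of charts required, each of these steps goes through; the only real content is that $\Lambda^k_i\hookrightarrow\Delta^k$ is a deformation retract, which lets us transport all structures from the horn to the full simplex and then correct the source/target in the collars. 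I do not expect to need $d\neq 4$ anywhere here, since no handle decompositions or smoothing are used — only the flexibility of topological submersions and of microbundle maps.
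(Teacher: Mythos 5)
Your overall architecture (extend the $(0,\infty)$-coordinate by the retraction, keep the pulled-back family $\rho^*\bar X$ in the middle, modify only in the collar regions to make the ends match the prescribed $\bar Y_0,\bar Y_1$, then transport the $\xi$-structure by the microbundle homotopy covering theorem, Theorem \ref{thm.microbundlecovering}) is the same as the paper's, and your observation that $d\neq 4$ plays no role here is correct. But there is a genuine gap at the central step: the ``discrepancy'' piece interpolating, over each $b\in\Delta^k$, between $(\bar Y_0)_{\rho(b)}$ (where it must meet $\rho^*\bar X$) and $(\bar Y_0)_b$ (where it must meet the glued-in translational product) is asserted to exist as ``a path in the relevant mapping space,'' but it is never constructed. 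The only construction your setup suggests is to pull the family $\bar Y_0$ back along the paths of a deformation retraction of $\Delta^k$ onto $\Lambda^k_i$ and take its ``graph'' in the $\bR$-direction; however, a $k$-simplex of $\psi^{\mr{Top},\xi}(d,n,1)$ must be a \emph{locally flat} closed submanifold of $\Delta^k\times\bR\times(0,1)^{n-1}$ (Convention \ref{conv.locallyflat}) whose projection to $\Delta^k$ is a topological submersion, and neither property is automatic for such a graph: knowing that $Y_0\to\Delta^k$ is a submersion and that $Y_0$ is locally flat in $\Delta^k\times(0,1)^{n-1}$ does not by itself give a fiberwise-ambient local product structure, which is what one needs to see that the interpolating region is again a locally flat submanifold submersing over $\Delta^k$. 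The gluing-of-submersions principle you invoke only applies after each piece is known to be such a submanifold-submersion agreeing with its neighbours on open overlaps; the problematic piece is precisely the interpolation.

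This is exactly where the paper spends its effort: since the object families have compact fibers, the submersions $Y_i\to[0,1]\times D^{k-1}$ are fiber bundles (Corollary \ref{cor.unionfiberbundle}), hence trivializable over the contractible base, so each $Y_i$ is the graph of a family of embeddings of a \emph{fixed} manifold $Z_i\subset(0,1)^{n-1}$; parametrized isotopy extension (Theorem \ref{thm.isotopyextension}) then realizes this family by ambient homeomorphisms $\eta_i$ of $(0,1)^{n-1}$, and the lift is defined by letting $\eta_i$ act on the collar regions with a cut-off function. Writing the moving ends as images of a fixed locally flat submanifold under a continuous family of ambient homeomorphisms is what delivers, for free, both the local flatness and the submersion charts that your splicing argument leaves unverified. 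To repair your proof you would need to insert this bundle-trivialization-plus-isotopy-extension step (or an equivalent ambient statement) to produce the interpolating collars; with that added, the rest of your bookkeeping, and your treatment of the $\xi$-structures, goes through.
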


\begin{proof}Since a horn inclusion $\Lambda^k_i \hookrightarrow \Delta^k$ is homeomorphic to $\{0\} \times D^{k-1} \hookrightarrow [0,1] \times D^{k-1}$, we may consider the latter instead (and replace $k-1$ by $k$ for brevity). That is, we may suppose we are given a commutative diagram
	\[\begin{tikzcd} \{0\} \times D^k\rar{f} \dar & \mr{mor}(\cat{Cob}^{\mr{Top},\xi}(d,n)) \dar{(s,t)} \\[-2pt]
	{[0,1]} \times D^k \rar & \psi^{\mr{Top},\xi}(d-1,n-1,0)^2,\end{tikzcd}\]
and need to provide a lift. The bottom map is represented by a pair $\overline{Y}_0,\overline{Y}_1$ as follows: for $i=0,1$, $\overline{Y}_i = (Y_i,\varphi_{Y_i})$ consists of a closed topological submanifold $Y_i \subset [0,1] \times D^k \times (0,1)^{n-1}$ such that $\pi_i \colon ([0,1] \times D^k \times (0,1)^{n-1} \to [0,1],Y_i) \times D^k$ is a relative topological submersion of relative dimension $(n-1,d-1)$, together with a map $\varphi_{Y_i} \colon T_{\pi_i} Y_i \oplus \epsilon \to \xi$ of $d$-dimensional topological microbundles.

The top map  is represented by a continuous map $\tau \colon \{0\} \times D^k \to (0,\infty)$, and a $\overline{X} = (X,\varphi_X)$ as follows: $X$ is a closed topological submanifold of $\{0\} \times D^k \times \bR \times (0,1)^{n-1}$ such that $\pi \colon (\{0\} \times D^k \times \bR \times (0,1)^{n-1},X) \to \{0\} \times D^k$ is a relative topological submersion of relative dimension $(n,d)$, together with a map $\varphi_{X} \colon T_{\pi} X \to \xi$ of topological $\bR^d$-bundles. There should exist an $\epsilon > 0$ so that for each $(0,b) \in \{0\} \times D^k$, $\overline{X}_{(0,b)} \cap ((-\infty,\epsilon) \times (0,1)^{n-1}) = (-\infty,\epsilon) \times (\overline{Y}_0)_{(0,b)}$ and  $\overline{X}_{(0,b)} \cap ((\tau(b)-\epsilon,\infty) \times (0,1)^{n-1}) = (\tau(b)-\epsilon,\infty)\times (\overline{Y}_1)_{(0,b)}$.

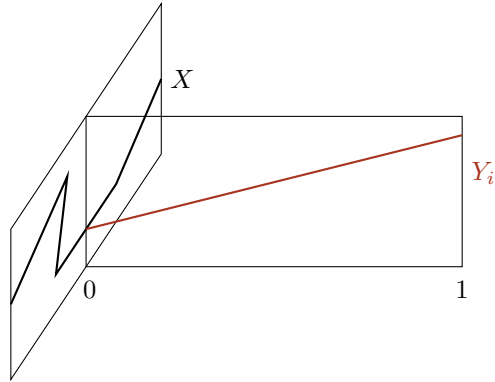
\begin{figure}
	\begin{tikzpicture}
	\draw (0,0) -- (0,2) -- (2,5) -- (2,3) -- cycle;
%	\draw [dotted] (0,0) -- (0,2) -- ({0+.5},{2+3/2*.5}) -- ({0+.5},{3/2*.5}) -- cycle;
%	\draw [dotted,xshift=5cm] (0,0) -- (0,2) -- ({0+.5},{2+3/2*.5}) -- ({0+.5},{3/2*.5}) -- cycle;
%	\draw [dotted,yshift=1.5cm,xshift=1cm] (0,0) -- (0,2) -- ({0+.5},{2+3/2*.5}) -- ({0+.5},{3/2*.5}) -- cycle;
%	\draw [dotted,yshift=1.5cm,xshift=6cm] (0,0) -- (0,2) -- ({0+.5},{2+3/2*.5}) -- ({0+.5},{3/2*.5}) -- cycle;
	\draw [xshift=1cm,yshift=1.5cm] (0,0) -- (5,0) -- (5,2) -- (0,2) -- cycle;
%	\draw [dotted,xshift=.5cm,yshift=.75cm] (0,0) -- (5,0) -- (5,2) -- (0,2);
%	\draw [dotted,yshift=1.5cm,xshift=1cm] (0,0) -- (5,0) -- (5,2) -- (0,2);
%	\draw [dotted,xshift=2.5cm,yshift=3.75cm] (0,0) -- (5,0) -- (5,2) -- (0,2);
	\draw [thick] (0,1) -- (.75,2.7) -- (.6,1.4) -- (1.4,2.6) -- (2,4);
	\draw [thick, Mahogany] (1,2) -- (6,3.25);
	\node at (2,4) [right] {$X$};
	\node at (6,2.75) [right,Mahogany] {$Y_i$};
	\node at (6,1.2) {$1$};
	\node at (1.05,1.2) {$0$};
	\end{tikzpicture}
	\caption{We need to extend $X$ rightwards compatibly with $Y_i$. This will be done using isotopy extension.}
	\label{fig:kanfibrationinput}
\end{figure}

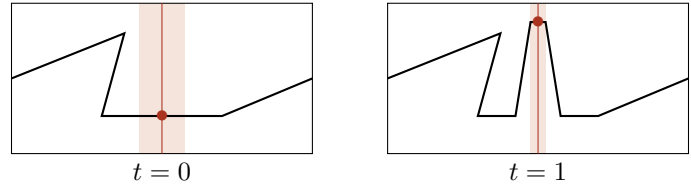
\begin{figure}
	\begin{tikzpicture}
		\begin{scope}
		\fill [Mahogany!10!white] (1.7,0) rectangle (2.3,2);
		\draw (0,0) -- (4,0) -- (4,2) -- (0,2) -- cycle;
		\draw [thick] (0,1) -- (1.5,1.6) -- (1.2,.5) -- (2.8,.5) -- (4,1);
		\node [Mahogany] at (2,.5) {$\bullet$};
		\draw [Mahogany] (2,0) -- (2,2);
		\node at (2,0) [below] {$t=0$};
		\end{scope}
		
		\begin{scope}[xshift=5cm]
		\fill [Mahogany!10!white] (1.9,0) rectangle (2.1,2);
		\draw (0,0) -- (4,0) -- (4,2) -- (0,2) -- cycle;
		\draw [thick] (0,1) -- (1.5,1.6) -- (1.2,.5) -- (1.7,.5) -- (1.9,1.75) -- (2.1,1.75) -- (2.3,.5) -- (2.8,.5) -- (4,1);
		\node [Mahogany] at (2,1.75) {$\bullet$};
		\draw [Mahogany] (2,0) -- (2,2);
		\node at (2,0) [below] {$t=1$};
		\end{scope}
	\end{tikzpicture}
	\caption{The starting and end point of the family obtained by applying the argument of Lemma \ref{lem.sourcetargetfib} to the Figure \ref{fig:kanfibrationinput}.}
	\label{fig:kanfibrationresult}	
\end{figure}

We claim that without loss of generality $\tau$ is identically $1$. To do so, pick a continuous family of homeomorphisms $\rho_t \colon \bR \to \bR$ for $t \in (0,\infty)$ with the following properties: \begin{enumerate}[\indent (i)]
	\item $\rho_t$ sends $(0,t)$ onto $(0,1)$, 
	\item $\rho_t$ is the identity near $0$, and 
	\item $\rho_t$ is a translation near $t$.
\end{enumerate} Now replace $f$ by $d \mapsto (\rho^{-1}_{\tau(d)} \times \mr{id}_{(0,1)^{n-1}})^* f(d)$.

Having made this simplifying assumption, we explain how to lift the relative submersions. A closed topological submersion with compact fibres is a fibre bundle (Corollary \ref{cor.unionfibrebundle}) and fibre bundles over contractible bases are trivialisable. Hence, for $i=0,1$, there is a $(d-1)$-dimensional closed topological submanifold $Z_i \subset (0,1)^{n-1}$ such that $Y_i$ is the graph of a family of topological embeddings $e_i \colon [0,1] \times D^k \to \mr{Emb}^\mr{Top}(Z_i,(0,1)^{n-1})$. By the isotopy extension theorem (Theorem \ref{thm.isotopyextensionmfd}) there is a family of homeomorphisms $\eta_i \colon [0,1] \times D^k \to \mr{Top}((0,1)^{n-1})$ such that $e_i$ is obtained by restricting each $\eta_i(t,d) \colon (0,1)^{n-1} \to (0,1)^{n-1}$ to $Z_i \subset (0,1)^{n-1}$. Let $\epsilon > 0$ be as above and let $\lambda \colon [0,1] \to [0,1]$ be a continuous function that is $1$ near $0$ and $0$ near $1$. Then we may define the submersion part of the lift as follows: the fibre over $(t,d) \in [0,1] \times D^d$ is
	\[X_{(t,d)} = \begin{cases} X_{0,d} & \text{in $[\epsilon,1-\epsilon] \times (0,1)^{n-1}$,} \\
	\eta_0(t \lambda(|s|/\epsilon),d)(Z_0) & \text{in $\{s\}\times (0,1)^{n-1}$, $s\in (-\epsilon,\epsilon]$,} \\
	 \eta_1(t \lambda(|1-s|/\epsilon),d)(Z_1) & \text{in $\{s\} \times (0,1)^{n-1}$, $s \in [1-\epsilon,1 +\epsilon)$.} \end{cases}\]
We finally explain how to add the $\xi$-structures. The $\xi$-structures on $X$ and the $Y_i$'s give a $\xi$-structure on the lift which is defined on all of the fibres over $\{0\} \times D^d$, and over $[0,1] \times D^d$ only on the part of the fibres inside $(-\epsilon',\epsilon') \times (0,1)^{n-1} \cup (1-\epsilon',1+\epsilon') \times (0,1)^{n-1}$ for some $\epsilon>\epsilon'>0$. We can then use the microbundle homotopy covering theorem to extend it (Theorem \ref{thm.microbundlecovering}).
\end{proof}

To investigate the dependence on $n$, we shall use the following definition.

\begin{definition}Given $t_0 < t_1$ and $\overline{Y}_0,\overline{Y}_1 \in \psi^{\mr{Top},\xi}(d-1,n-1,0)$, we define 
	\[\psi^{\mr{Top},\xi}(d,n,1)((t_0,\overline{Y}_0),(t_1,\overline{Y}_1))\]
	to be the colimit as $\epsilon \to 0$ of the subsimplicial sets of $\Psi^{\mr{Top},\xi}_d((t_0-\epsilon,t_1+\epsilon) \times \bR^{n-1})$ consisting of $\overline{X} = (X,\varphi)$ such that $\overline{X} \cap ((t_0-\epsilon,t_0+\epsilon) \times (0,1)^{n-1}) \cong (t_0-\epsilon,t_0+\epsilon) \times \overline{Y}_0$ and $\overline{X} \cap ((t_1-\epsilon,t_1+\epsilon) \times (0,1)^{n-1}) \cong (t_1-\epsilon,t_1+\epsilon) \times \overline{Y}_1$.\end{definition}

\begin{figure}
	\centering
	\begin{tikzpicture}
	\begin{scope}
	\draw [dotted] (-2,0) -- (1.2,0);
	\draw [dotted] (-2,2) -- (1.2,2);	
	\draw (-1.8,0) -- (1,0);
	\draw (-1.8,2) -- (1,2);
	\draw [Mahogany] (-1.8,0) -- (-1.8,2);
	\draw [Mahogany] (1,0) -- (1,2);
	%	\draw [thick,Mahogany] (1.45,0) -- (1.45,2);
	
	\node at (-1.8,0) [below,Mahogany] {$t_0$};
	\node at (1,0) [below,Mahogany] {$t_1$};
	
	%	\node at (0,-.5) [below,Mahogany] {$t_0<t_1 \in \bR$};
	
	\draw [thick,dotted] (-2,1.25) -- (-1.8,1.25);
	\draw [thick,dotted] (1,1.15) -- (1.2,1.15);
	\draw [thick] (-1.8,1.25) -- (-1.7,1.25) -- (-1,0.25) -- (.5,1.5) -- (.9,1.15) -- (1,1.15);
	\draw [thick] (-.6,1.5) rectangle (-.2,1.8);
	\node at (-2,1.25) [left] {$\overline{Y}_0$};
	\node at (1.2,1.15) [right] {$\overline{Y}_1$};
	\end{scope}
	\end{tikzpicture}
	\caption{A $0$-simplex in $\psi^{\mr{Top},\xi}(1,2,1)((t_0,\overline{Y}_0),(t_1,\overline{Y}_1))$.}
	\label{fig:relpoint}
\end{figure}

As in Lemma \ref{lem.stableobjectsbtop}, as $n \to \infty$ the simplicial set $\psi^{\mr{Top},\xi}(d,n,1)((t_0,\overline{Y}_0),(t_1,\overline{Y}_1))$ approximates the disjoint union of the classifying spaces $B\mr{Top}^\xi_\partial(W)$ over all homeomorphism types of $d$-dimensional manifolds $W$ with boundary identified with $Y_0 \sqcup Y_1$.

\begin{lemma}Suppose $d \geq 5$ and $n-d \geq 3$, then the maps $B\cat{Cob}^{\mr{Top},\xi}(d,n) \to B\cat{Cob}^{\mr{Top},\xi}(d,n+1)$ are $(n-2d-1)$-connected.\end{lemma}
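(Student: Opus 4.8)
The plan is to imitate the proof of Lemma~\ref{lem.psi-connectivity} at every level of the nerve. Recall that $B\cat{Cob}^{\mr{Top},\xi}(d,n)$ is the thick geometric realization of the semisimplicial simplicial set $N_\bullet\cat{Cob}^{\mr{Top},\xi}(d,n)$, and that the inclusion $\bR^n\hookrightarrow\bR^{n+1}$ on the last $n$ coordinates induces a levelwise monomorphism $N_\bullet\cat{Cob}^{\mr{Top},\xi}(d,n)\to N_\bullet\cat{Cob}^{\mr{Top},\xi}(d,n+1)$. First I would invoke the standard fact that a map of semisimplicial spaces which is levelwise $c$-connected realizes to a $c$-connected map: one inducts up the skeletal filtration of $\|-\|$, using that each stage is obtained by gluing along $\partial\Delta^p\times(-)$ and that a homotopy pushout of $c$-connected maps along $c$-connected cofibrations is again $c$-connected, then passes to the colimit over $p$. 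This reduces the statement to showing that $N_p\cat{Cob}^{\mr{Top},\xi}(d,n)\to N_p\cat{Cob}^{\mr{Top},\xi}(d,n+1)$ is $(n-2d-1)$-connected for every $p\ge 0$.

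For $p=0$ the map in question is $\psi^{\mr{Top},\xi}(d-1,n-1,0)\to\psi^{\mr{Top},\xi}(d-1,n,0)$, and I would treat it exactly as in Lemma~\ref{lem.psi-connectivity}, now applied to closed $(d-1)$-dimensional manifolds: using Lemma~\ref{lem.stableobjectsbtop} to identify it componentwise with the map $\mr{Emb}^\mr{Top}(M,(0,1)^{n-1})\times_{\mr{Top}(M)}\mr{Bun}^\mr{Top}(TM,\xi)\to\mr{Emb}^\mr{Top}(M,(0,1)^{n})\times_{\mr{Top}(M)}\mr{Bun}^\mr{Top}(TM,\xi)$, invoking freeness of the $\mr{Top}(M)$-action to reduce to the connectivity of $\mr{Emb}^\mr{Top}(M,(0,1)^{n-1})\to\mr{Emb}^\mr{Top}(M,(0,1)^{n})$, and then quoting Theorem~\ref{thm.catwhitney}. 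Since $\dim M=d-1$ and the ambient dimensions are $n-1$ and $n$, the source is $(n-2d-1)$-connected and the target is $(n-2d)$-connected, so this map is $(n-2d)$-connected, which is better than required.

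For $p\ge 1$ I would first describe $N_p\cat{Cob}^{\mr{Top},\xi}(d,n)$ explicitly: a $p$-simplex is a chain of $p$ composable morphisms, whose composite is a single $d$-dimensional cobordism in $\bR\times(0,1)^{n-1}$ that is a translational product near $\pm\infty$ and near the $p-1$ interior cut levels, together with the cut positions $0<t_1<t_1+t_2<\cdots$. As in the proof of Lemma~\ref{lem.sourcetargetfib} I would truncate the cylindrical ends to pass to a compact topological $d$-manifold with corners $W$ carrying fixed identifications of its horizontal boundary slices and adjacent collars; using that a topological submersion with compact fibers is a fiber bundle (Corollary~\ref{cor.unionfiberbundle}), this identifies $N_p\cat{Cob}^{\mr{Top},\xi}(d,n)$ with a disjoint union, over the relevant homeomorphism types of such $W$ together with a choice of cut data, of spaces
\[
\mr{Emb}^\mr{Top}_\partial\big(W,[0,T]\times(0,1)^{n-1}\big)\times_{\mr{Top}_\partial(W)}\mr{Bun}^\mr{Top}\big(T_\pi W\oplus\epsilon^{d'-d},\xi\big)\times\big(\text{contractible space of cut data}\big).
\]
Since stabilization $n\hookrightarrow n+1$ alters only the $\mr{Emb}^\mr{Top}_\partial$-factor, the $\mr{Top}_\partial(W)$-action on it is free, and the cut-data factor is contractible, the connectivity of the $p$-th nerve map equals that of $\mr{Emb}^\mr{Top}_\partial(W,[0,T]\times(0,1)^{n-1})\to\mr{Emb}^\mr{Top}_\partial(W,[0,T]\times(0,1)^{n})$. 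As $\dim W=d\ge 5$, $n\ge 8$ and $n-d\ge 3$, Theorem~\ref{thm.catwhitney} shows the source is $(n-2d-2)$-connected and the target $(n-2d-1)$-connected, so this map is $(n-2d-1)$-connected. Taking the minimum of these bounds over all $p\ge 0$ gives the claimed connectivity.

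I expect the main obstacle to be the bookkeeping in the case $p\ge 1$: extracting the compact manifold-with-corners model $W$ from the non-compact composite cobordism, keeping precise track of the horizontal boundary identifications and of the collars near the cut levels, and transporting the $\xi$-structure to this model (using the microbundle homotopy covering theorem, Theorem~\ref{thm.microbundlecovering}) so that the identification with the Borel construction above holds on the nose. A secondary point requiring care is that when $d=5$ the $p=0$ level concerns closed topological $4$-manifolds, so one must check that the needed instance of Theorem~\ref{thm.catwhitney} is available there; this is the only place where the argument comes close to the excluded dimension.
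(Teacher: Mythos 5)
Your reduction to levelwise connectivity of the nerve maps, your treatment of the $p=0$ level via Lemma \ref{lem.stableobjectsbtop} and Theorem \ref{thm.catwhitney}, and all of your numerology agree with the paper. The genuine gap is in the $p\ge 1$ step. The identification of $N_p\cat{Cob}^{\mr{Top},\xi}(d,n)$ with a disjoint union of Borel constructions built from $\mr{Emb}^\mr{Top}_\partial(W,[0,T]\times(0,1)^{n-1})$ and $\mr{Top}_\partial(W)$ cannot be correct as stated: in the nerve the objects --- the slices $\bar{Y}_0,\dots,\bar{Y}_p\subset(0,1)^{n-1}$ together with their $\xi$-structures --- are part of the data and vary, whereas a boundary-fixed embedding space ($\mr{Emb}_\partial$ in the sense of Definition \ref{def.embeddings}, i.e.\ embeddings restricting to a prescribed embedding of $\partial W$, and $\mr{Top}_\partial(W)$ fixing $\partial W$ pointwise) describes only the cobordisms with a \emph{fixed} tuple of endpoints, and likewise freezes the interior cut slices. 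What your formula actually describes is the fiber of the iterated source--target map of Lemma \ref{lem.sourcetargetfib} over a fixed sequence of objects, not $N_p$ itself. If instead you let the boundary and the interior cut slices move inside the level sets, the identification can be repaired, but then Theorem \ref{thm.catwhitney} no longer applies directly, since it is stated only for embeddings rel a fixed boundary embedding; to estimate the connectivity of the resulting spaces of embeddings with moving boundary you would have to fiber over the space of slice embeddings --- which is exactly the mechanism your argument omits.

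The paper supplies that missing mechanism: Lemma \ref{lem.sourcetargetfib} shows the combined source--target map is a Kan fibration, giving a map of fiber sequences with base $\psi^{\mr{Top},\xi}(d-1,n-1,0)^{p+1}$ and fiber a product of morphism spaces with fixed endpoints. The base map is $(n-2d)$-connected by Lemma \ref{lem.psi-connectivity} applied in dimension $d-1$, the fiber map is $(n-2d-1)$-connected by precisely the Borel-construction-plus-relative-Whitney argument you give (this is where your computation belongs), and the two estimates are combined through the fiber sequence, with some care about path components of base and fiber which your proposal also does not address. So your proposal contains the fiber computation but not the argument promoting it to the full nerve level. Your closing caveat is well taken: when $d=5$ the object comparison concerns closed topological $4$-manifolds, where Theorem \ref{thm.catwhitney} for $\mr{CAT}=\mr{Top}$ requires a handle decomposition; this is genuinely the delicate borderline case of the statement.
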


\begin{proof}
	By \cite[Lemma 2.4]{rwebertsemi} it suffices to show that the map on $p$-simplices of the nerves is $(n-2d-1)$-connected for all $p \geq 0$. As in the proof of Lemma \ref{lem.sourcetargetfib}, without loss of generality all $(0,\infty)$-parameters  are equal to $1$. By Lemma \ref{lem.sourcetargetfib}, there is a map of fibre sequences based at a sequence of objects $(\overline{Y}_0,\ldots,\overline{Y}_p)$ for $0 \leq i \leq p$,
	\[\begin{tikzcd} \prod_{i=0}^{p-1} \psi^{\mr{Top},\xi}(d,n,1)((0,\overline{Y}_i),(1,\overline{Y}_{i+1})) \rar \dar & \prod_{i=0}^{p-1} \psi^{\mr{Top},\xi}(d,n+1,1)((0,\overline{Y}_i),(1,\overline{Y}_{i+1})) \dar \\
	N_p \cat{Cob}^{\mr{Top},\xi}(d,n) \rar \dar & N_p \cat{Cob}^{\mr{Top},\xi}(d,n+1) \dar \\
	\psi^{\mr{Top},\xi}(d-1,n-1,0)^{p+1}\rar &  \psi^{\mr{Top},\xi}(d-1,n,0)^{p+1}.\end{tikzcd} \]
	It suffices to prove that the restriction of the maps to each of the path components of the bases and fibres is $(n-2d-1)$-connected, because as soon as $2d+2 < n$ the map on the base spaces is a bijection on path components.
	
	The map on bases is a $(p+1)$-fold product of the map $\psi^{\mr{Top},\xi}(d-1,n-1,0) \to \psi^{\mr{Top},\xi}(d-1,n,0)$ induced by the inclusion $(0,1)^{n-1} \hookrightarrow (0,1)^n$. This is $(n-2d)$-connected by Lemma \ref{lem.psi-connectivity}. The top-left corner is a product of spaces given by a disjoint union over all homeomorphism classes of compact $d$-dimensional topological manifolds $W$ with identifications $\partial W = Y_i \sqcup Y_{i+1}$ of the spaces
	\[\mr{Emb}^\mr{Top}((W,\partial W),([0,1] \times (0,1)^{n-1},\{0,1\} \times (0,1)^{n-1})) \times_{\mr{Top}_\partial(W)} \mr{Bun}_\partial(TW,\xi),\]
	where the embeddings are restricted to be the identity on $Y_i$ and $Y_{i+1}$ on $\partial W$. A similar statement holds for the top right corner, $n-1$ replaced by $n$. The map is induced by the inclusion $(0,1)^{n-1} \hookrightarrow (0,1)^n$ and this is $(n-2d-1)$-connected by the same argument as in Lemma \ref{lem.psi-connectivity}, using the version of Theorem \ref{thm.catwhitney} with boundary.
\end{proof}

\subsection{Comparison to long manifolds} The nerve of $\cat{Cob}^{\mr{Top},\xi}(d,n)$ is weakly equivalent to the following semi-Segal object consisting of long manifolds.

\begin{definition}Let $\psi_\cat{Cob}^{\mr{Top},\xi}(d,n,1)_\bullet$ be the semisimplicial simplicial set with simplicial set of $p$-simplices given by the subsimplicial set of $\mr{Sing}(\bR)^{p+1} \times \psi^{\mr{Top},\xi}(d,n,1)$ consisting of 
	\begin{itemize}
		\item a $(p+1)$-tuple of real valued functions $t_0 < \ldots < t_p$, and
		\item an $\overline{X}$ such that there exists an $\epsilon >0$ so that $\overline{X} \cap ((t_i-\epsilon,t_i+\epsilon) \times (0,1)^{n-1}) = (t_i-\epsilon,t_i+\epsilon)  \times (\overline{X} \cap (\{t_i\} \times (0,1)^{n-1}))$ for $0 \leq i \leq p$.
	\end{itemize} \end{definition}

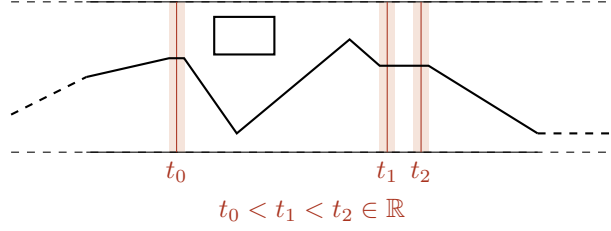
\begin{figure} 
	\centering
	\begin{tikzpicture}
	\fill [Mahogany!10!white] (-1.9,0) rectangle (-1.7,2);
	\fill [Mahogany!10!white] (0.9,0) rectangle (1.1,2);
	\fill [Mahogany!10!white] (1.35,0) rectangle (1.55,2);
	\draw [dashed] (-4,0) -- (4,0);
	\draw [dashed] (-4,2) -- (4,2);	
	\draw (-3,0) -- (3,0);
	\draw (-3,2) -- (3,2);
	\draw [Mahogany] (-1.8,0) -- (-1.8,2);
	\draw [Mahogany] (1,0) -- (1,2);
	\draw [Mahogany] (1.45,0) -- (1.45,2);
	
	\node at (-1.8,0) [below,Mahogany] {$t_0$};
	\node at (1,0) [below,Mahogany] {$t_1$};
	\node at (1.45,0) [below,Mahogany] {$t_2$};
	
	\node at (0,-.5) [below,Mahogany] {$t_0<t_1<t_2 \in \bR$};
	
	\draw [thick,dashed] (-4,.5) -- (-3,1);
	\draw [thick,dashed] (3,.25) -- (4,.25);
	\draw [thick] (-3,1) -- (-1.9,1.25) -- (-1.7,1.25) -- (-1,0.25) -- (.5,1.5) -- (.9,1.15) -- (1.55,1.15) -- (3,.25);
	\draw [thick] (-1.3,1.3) rectangle (-.5,1.8);
	
	\end{tikzpicture}
	\caption{A $0$-simplex in $\psi^\mr{Top,\xi}_\cat{Cob}(1,2,1)_2$, the red strips denoting the translationally constant regions.}
	\label{fig:longtop}
\end{figure}

The $t_i$'s as in the previous definition will be called \emph{slices for $\overline{X}$}; they demarcate sections of the long manifold $\overline{X}$ which are locally translational products, and are allowed to vary when moving over the base. Let us spell this out: a $k$-simplex of the simplicial set of $p$-simplices consists of a topological submanifold $X \subset \Delta^k \times \bR \times (0,1)^{n-1}$ such that $(\Delta^k \times \bR \times (0,1)^{n-1},X) \to \Delta^k$ is a relative topological submersion of relative dimension $(n,d)$. This comes with $\xi$-structure on the vertical tangent microbundle of $X$, and $p+1$ continuous functions $t_i \colon \Delta^k \to \bR$ such that for each $d \in \Delta^k$ the manifold $X_d$ as well as its $\xi$-structure are a product near $\{t_i(d)\} \times (0,1)^{n-1}$. See Figure \ref{fig:longtop} for an example when $k=0$.

There is a semisimplicial map 
\[r_\bullet \colon \psi_\cat{Cob}^{\mr{Top},\xi}(d,n,1)_\bullet \lra N_\bullet \cat{Cob}^{\mr{Top},\xi}(d,n)\]
sending $((t_0<\ldots<t_p),\overline{X})$ to the sequence of composable morphisms given by $(t_{i+1}-t_i,\overline{X}_i)$, where $\overline{X}_i$ is obtained from $\overline{X} \cap [t_i,t_{i+1}] \times (0,1)^{n-1}$ by translating it by $-t_i \cdot e_1$ and then extending it semi-infinitely by translation out to $-\infty$ and $\infty$.

\begin{lemma}\label{lem.longtocob} The map $||r_\bullet|| \colon ||\psi_\cat{Cob}^{\mr{Top},\xi}(d,n,1)_\bullet|| \to B\cat{Cob}^{\mr{Top},\xi}(d,n)$ is a weak equivalence.\end{lemma}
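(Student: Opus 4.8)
The plan is to compare both sides to the semisimplicial space $\psi^{\mr{Top},\xi}_\cat{Cob}(d,n,1)_\bullet$ via $r_\bullet$ and to show this map is a levelwise weak equivalence; then $||r_\bullet||$ is a weak equivalence because thick geometric realization preserves levelwise weak equivalences (\cite[Theorem 2.2]{rwebertsemi}). Concretely, I want to exhibit, for each $p$, a chain of weak equivalences relating $\psi^{\mr{Top},\xi}_\cat{Cob}(d,n,1)_p$ to $N_p\cat{Cob}^{\mr{Top},\xi}(d,n)$ compatible with $r_p$.

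First I would observe that $N_p\cat{Cob}^{\mr{Top},\xi}(d,n)$ is, by Lemma \ref{lem.sourcetargetfib}, the total space of an iterated fibration: projecting to the $(p{+}1)$-tuple of objects gives a fibration whose fiber over $(\bar Y_0,\dots,\bar Y_p)$ is $\prod_{i=0}^{p-1}\psi^{\mr{Top},\xi}(d,n,1)((t_i,\bar Y_i),(t_{i+1},\bar Y_{i+1}))$ (after normalizing all interval-parameters to $1$ as in the proof of Lemma \ref{lem.sourcetargetfib}). On the other side, $\psi^{\mr{Top},\xi}_\cat{Cob}(d,n,1)_p$ carries the map recording the restrictions $\bar X\cap(\{t_i\}\times(0,1)^{n-1})$ to $\psi^{\mr{Top},\xi}(d-1,n-1,0)^{p+1}$; cutting a long manifold along its slices $t_0<\dots<t_p$ and extending each piece translationally to $\pm\infty$ identifies the fiber over $(\bar Y_0,\dots,\bar Y_p)$ with the same product of mapping-cylinder-type spaces $\prod_i\psi^{\mr{Top},\xi}(d,n,1)((t_i,\bar Y_i),(t_{i+1},\bar Y_{i+1}))$, and $r_p$ is precisely this cutting map. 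So the strategy reduces to: (a) check $r_p$ induces the identity (up to weak equivalence) on these fibers, and (b) check it induces a weak equivalence on the base spaces of objects; for (b) one compares $\psi^{\mr{Top},\xi}(d-1,n-1,0)^{p+1}$ on the $\psi_\cat{Cob}$ side — which is literally the space of $(p{+}1)$-tuples of slices — with the same space appearing as the base of the nerve fibration, the map being a homeomorphism onto its image.

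The real content is showing that forgetting the precise positions of the slices loses no homotopical information: a point of $\psi^{\mr{Top},\xi}_\cat{Cob}(d,n,1)_p$ remembers both the long manifold $\bar X$ in all of $\bR\times(0,1)^{n-1}$ \emph{and} a choice of $p{+}1$ translational slices, whereas $r_p$ records only the cut-up composable chain. To see this map is a weak equivalence I would construct a deformation retraction: given a family over $D^i$ representing an element of a relative homotopy group, use that being translationally constant near $\{t_i\}$ is an open condition and that the slices can be slid using a family of reparametrizing homeomorphisms of $\bR$ (as in the $\tau\equiv 1$ reduction in Lemma \ref{lem.sourcetargetfib}) to bring any configuration of slices to a standard one at $0,1,\dots,p$; then the remaining data is exactly a $p$-tuple of morphisms of the cobordism category, i.e.\ a point of $N_p\cat{Cob}^{\mr{Top},\xi}(d,n)$. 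The $\xi$-structures are carried along throughout, which is harmless since reparametrizing $\bR$ acts on microbundle maps by pullback. One subtlety to address: the slices are required to exist only locally in $\bR$ (for \emph{some} $\epsilon$, with $\epsilon$ possibly varying over $\Delta^k$ and over the base $D^i$), so the retraction must be performed with $\epsilon$ treated as an auxiliary parameter going to $0$, i.e.\ on the level of the colimit; this is routine but must be stated.

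The main obstacle I expect is bookkeeping rather than a genuine difficulty: making the fiberwise comparison compatible with the fibration structures of Lemma \ref{lem.sourcetargetfib} simultaneously over all choices of object-tuples, and verifying that $r_\bullet$ is semisimplicial so that the levelwise equivalences assemble. A clean way to package this, avoiding explicit fiber-by-fiber arguments, is to factor $r_p$ as a zig-zag through the sub-simplicial-set of $\psi^{\mr{Top},\xi}_\cat{Cob}(d,n,1)_p$ where the slices are pinned at $0,1,\dots,p$; the inclusion of this pinned subspace is a weak equivalence by the sliding-homeomorphisms deformation retraction above, and the restriction of $r_p$ to it is a homeomorphism onto $N_p\cat{Cob}^{\mr{Top},\xi}(d,n)$ (one recovers $\bar X$ from a composable chain by gluing the pieces along their translational collars). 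This reduces everything to the single sliding argument, which is the heart of the proof.
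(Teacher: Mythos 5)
Your overall reduction (show $r_\bullet$ is a levelwise weak equivalence and invoke that thick realization preserves levelwise equivalences) matches the paper, but the step you lean on at the end is false, and it is exactly where the real content lies. A $p$-simplex of $\psi_\cat{Cob}^{\mr{Top},\xi}(d,n,1)_\bullet$ records the long manifold $\bar{X}$ on \emph{all} of $\bR \times (0,1)^{n-1}$, including whatever it does on $(-\infty,t_0)$ and $(t_p,\infty)$, while $r_p$ discards everything outside $[t_0,t_p]$ and re-extends translationally. Consequently, after pinning the slices at $0,1,\dots,p$, the remaining data is \emph{not} "exactly a $p$-tuple of morphisms": the restriction of $r_p$ to your pinned subspace is surjective onto $N_p\cat{Cob}^{\mr{Top},\xi}(d,n)$ but wildly non-injective (two pinned long manifolds agreeing on $[0,p]\times(0,1)^{n-1}$ but differing beyond the last slice, say by an extra closed component at $t=p+5$, have the same image), so it is not a homeomorphism, and your gluing recipe only produces the sub-subspace of manifolds that are infinite translational collars outside $[0,p]$. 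The same oversight invalidates the earlier fiberwise identification of the fiber of $\psi_\cat{Cob,p}$ over $(\bar{Y}_0,\dots,\bar{Y}_p)$ with $\prod_i \psi^{\mr{Top},\xi}(d,n,1)((t_i,\bar{Y}_i),(t_{i+1},\bar{Y}_{i+1}))$. The sliding-of-slices deformation does nothing to address this, and it is not obvious a priori that the forgotten tails are homotopically negligible -- that is the point needing proof.

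The paper handles precisely this: it identifies $N_p\cat{Cob}^{\mr{Top},\xi}(d,n)$ with the subspace $i_p$ of those $((t_0,\dots,t_p),\bar{X})$ with $t_0=0$ and $\bar{X}$ translationally constant on $(-\infty,t_0)$ and $(t_p,\infty)$ (so $r_p\circ i_p=\mr{id}$), and then shows $i_p$ is a weak equivalence by a compression homotopy: choose a family of self-embeddings $\eta(d,t)$ of $\bR$ that are the identity on $(-\epsilon/2,t_p(d)+\epsilon/2)$ and whose time-$1$ image lies in $(-\epsilon,t_p(d)+\epsilon)$, and pull $\bar{X}_d$ back along $\eta(d,t)\times\mr{id}_{(0,1)^{n-1}}$. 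Because $\bar{X}_d$ is translationally constant near the outermost slices, the end result is an infinite collar outside $[t_0,t_p]$, the tails having been pushed off, and the deformation stays in the subspace over $\partial D^i$. If you want to salvage your argument, replace the "pinned slices" subspace by this collar-extended subspace and supply such a compression (or pulling-in-from-infinity) homotopy; pinning the slice positions is optional and not where the difficulty sits.
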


\begin{proof}By \cite[Theorem 2.2]{rwebertsemi} it suffices to prove that $r_\bullet$ is a levelwise weak equivalence. We use that $N_p \cat{Cob}^{\mr{Top},\xi}(d,n)$ may be identified with the subspace of $\psi_\cat{Cob}^{\mr{Top},\xi}(d,n,1)_p$ of those $((t_0,\ldots,t_p),(\overline{Y}_0,\ldots,\overline{Y}_p),\overline{X})$ such that $t_0 = 0$, $\overline{X} \cap ((-\infty,t_0) \times (0,1)^{n-1}) = (-\infty,t_0) \times \overline{Y}_0$, and $\overline{X} \cap ((t_k,\infty) \times (0,1)^{n-1}) = (t_k,\infty) \times \overline{Y}_k$.
	
	We claim that the inclusion $i_p$ of $N_p \cat{Cob}^{\mr{Top},\xi}(d,n)$ as a subspace of $\psi_\cat{Cob}^{\mr{Top}, \xi}(d,n,1)_p$ is a weak homotopy inverse to $r_p \colon \psi_\cat{Cob}^{\mr{Top},\xi}(d,n,1)_p \to N_p \cat{Cob}^{\mr{Top},\xi}(d,n)$. Clearly $r_p \circ i_p$ is the identity, so it suffices to show that $i_p$ is a weak equivalence. That is, we shall show that any commutative diagram
	\[\begin{tikzcd} \partial D^i \rar \dar & N_p \cat{Cob}^{\mr{Top},\xi}(d,n) \dar{i_p} \\[-2pt]
	D^i \rar & \psi_\cat{Cob}^{\mr{Top},\xi}(d,n,1)_p\end{tikzcd}\]
	can be homotoped through commutative diagrams to one where a lift exists. In other words, we are given a closed topological submanifold $X \subset D^i \times \bR \times (0,1)^{n-1}$ such that $(X,D^i \times \bR \times (0,1)^{n-1}) \to D^i$ is a relative topological submersion of relative dimension $(n,d)$ with a $\xi$-structure on the vertical tangent microbundle of $X$. Additionally, we have slices $t_i \colon D^i \to \mathbb{R}$ for $i=0,\ldots,p$ and, as in the proof of Lemma \ref{lem.sourcetargetfib} we can assume that the slice $t_0$ is constantly equal to 0. For $d \in \partial D^i$, the fibre $X_d$ is constant in $(-\infty,\epsilon) \times (0,1)^{n-1}$ and $(t_p(d)-\epsilon,\infty) \times (0,1)^{n-1}$, and we need to deform $X$ so that this is true for all $d \in D^i$.
	
	Pick a family of topological embeddings $\eta \colon D^i \times [0,1] \to \mr{Emb}^\mr{Top}(\bR,\bR)$ such that 
	\begin{enumerate}[(i)]
		\item $\eta(d,0)$ is the identity for all $d \in D^i$, 
		\item $\eta(d,t)$ is the identity on $(-\epsilon/2,t_p(d)+\epsilon/2)$, \item $\eta(d,1)$ has image in $(-\epsilon,t_p(d)+\epsilon)$.
	\end{enumerate} 
	Then we may produce a relative submersion over $D^i \times [0,1]$ with $\xi$-structure on the vertical tangent microbundle by taking as fibres
	\[(d,t) \longmapsto (\eta(t,d) \times \mr{id}_{(0,1)^{n-1}})^* \overline{X}_d.\] Slices for this fibre are provided by those for $(d,0)$. This gives the desired homotopy, the $\xi$-structures simply moved along during all the deformations.\end{proof}

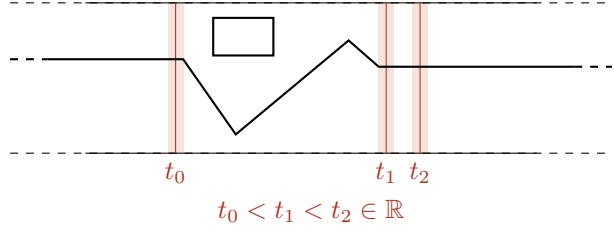
\begin{figure}
	\centering
	\begin{tikzpicture}
	\fill [Mahogany!10!white] (-1.9,0) rectangle (-1.7,2);
	\fill [Mahogany!10!white] (0.9,0) rectangle (1.1,2);
	\fill [Mahogany!10!white] (1.35,0) rectangle (1.55,2);
	\draw [dashed] (-4,0) -- (4,0);
	\draw [dashed] (-4,2) -- (4,2);	
	\draw (-3,0) -- (3,0);
	\draw (-3,2) -- (3,2);
	\draw [Mahogany] (-1.8,0) -- (-1.8,2);
	\draw [Mahogany] (1,0) -- (1,2);
	\draw [Mahogany] (1.45,0) -- (1.45,2);
	
	\node at (-1.8,0) [below,Mahogany] {$t_0$};
	\node at (1,0) [below,Mahogany] {$t_1$};
	\node at (1.45,0) [below,Mahogany] {$t_2$};
	
	\node at (0,-.5) [below,Mahogany] {$t_0<t_1<t_2 \in \bR$};
	
	\draw [thick,dashed] (-4,1.25) -- (-3.5,1.25);
	\draw [thick,dashed] (3.5,1.15) -- (4,1.15);
	\draw [thick] (-3.5,1.25) -- (-1.7,1.25) -- (-1,0.25) -- (.5,1.5) -- (.9,1.15) -- (3.5,1.15);
	\draw [thick] (-1.3,1.3) rectangle (-.5,1.8);
	
	\end{tikzpicture}
	\caption{The $0$-simplex of Figure \ref{fig:longtop}, after deforming it to lie in the image of $i_p$.}
	\label{fig:longtopdeformed}
\end{figure}

\subsection{Creating enduring transversality} In this section we prove two technical lemma's about deforming topological submanifolds with $\xi$-structure.

Let us fix an open subset $U \subset \bR^n$ and a closed topological submanifold $M \subset U$ with normal microbundle $\nu$. For a closed topological submanifold $X \subset B \times U$ and $b \in B$, let us recall the notation $X_b \coloneqq X \cap (\{b\} \times U)$ for the fibre over $b$. Let $\pitchfork$ denote microbundle transversality as in Definition \ref{def.microbundletransversality} (which suppresses the normal bundle data from the notation). The reader should keep in mind the example $U = \bR \times (0,1)^{n-1}$, $M = \{a\} \times (0,1)^{n-1}$ for $a \in \bR$, and $B$ some small ball in $D^i$.

In the statement and proof of the following lemma, we shall use the notation $\cO(-)$ for an open neighbourhood of the given subspace, which is produced during the proof and while giving the proof occasionally redefined. We shall also use $\cS(-)$ to denote a ``support'' open subset which one can specify beforehand.

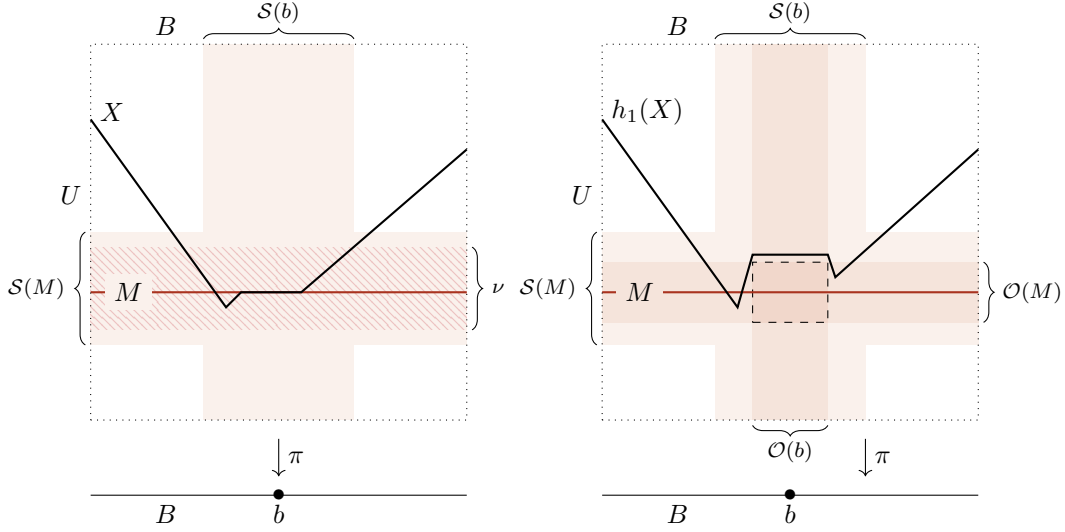
\begin{figure}
	\begin{tikzpicture}
	\begin{scope}
	\fill [Mahogany!5!white] (0,1) rectangle (5,2.5);
	\fill [Mahogany!5!white] (1.5,0) rectangle (3.5,5);
	\fill [pattern=north west lines, pattern color=Mahogany!20!white] (0,1.2) rectangle (5,2.3);
%	\fill [Mahogany!10!white] (2,0) rectangle (3,5);		
%	\fill [Mahogany!10!white] (0,1.3) rectangle (5,2.1);
%	\draw [dashed] (2,1.3) rectangle (3,2.1);
	\draw [decorate,decoration={brace,amplitude=4pt},xshift=-2pt,yshift=0pt]
	(0,1) -- (0,2.5) node [black,midway,xshift=-.65cm] {\footnotesize $\cS(M)$};
	\draw [decorate,decoration={brace,amplitude=4pt},xshift=2pt,yshift=0pt]
	(5,2.3) -- (5,1.2) node [black,midway,xshift=.35cm] {\footnotesize $\nu$};
%	\draw [decorate,decoration={brace,amplitude=4pt},xshift=2pt,yshift=0pt]
%(5,2.1) -- (5,1.3) node [black,midway,xshift=.65cm] {\footnotesize $\cO(M)$};
	\draw [decorate,decoration={brace,amplitude=4pt},yshift=2pt,xshift=0pt]
(1.5,5) -- (3.5,5) node [black,midway,yshift=.35cm] {\footnotesize $\cS(b)$};
%	\draw [decorate,decoration={brace,amplitude=4pt},yshift=-2pt,xshift=0pt]
%(3,0) -- (2,0) node [black,midway,yshift=-.35cm] {\footnotesize $\cO(b)$};
	\draw [thick,Mahogany] (0,1.7) -- (5,1.7);
	\node [fill=Mahogany!5!white] at (0.5,1.7) {$M$};
	\draw [dotted] (0,0) rectangle (5,5);
	\draw (0,-1) -- (5,-1);
	\draw [->] (2.5,-.25) -- (2.5,-.75);
	\node at (2.5,-.5) [right] {$\pi$};
	\node at (1,-1) [below] {$B$};
	\node at (2.5,-1) {$\bullet$};
	\node at (2.5,-1) [below] {$b$};
	\node at (1,5) [above] {$B$};
	\node at (0,3) [left] {$U$};
	\draw [thick] (0,4) -- (1.8,1.5) -- (2,1.7) -- (2.8,1.7) -- (5,3.6);
	\node at (0,4.1) [right] {$X$};
	\end{scope}
	
	\begin{scope}[xshift=6.8cm]
	\fill [Mahogany!5!white] (0,1) rectangle (5,2.5);
	\fill [Mahogany!5!white] (1.5,0) rectangle (3.5,5);
	\fill [Mahogany!10!white] (2,0) rectangle (3,5);		
	\fill [Mahogany!10!white] (0,1.3) rectangle (5,2.1);
	\draw [dashed,fill=Mahogany!15!white] (2,1.3) rectangle (3,2.1);
	\draw [decorate,decoration={brace,amplitude=4pt},xshift=-2pt,yshift=0pt]
	(0,1) -- (0,2.5) node [black,midway,xshift=-.65cm] {\footnotesize $\cS(M)$};
		\draw [decorate,decoration={brace,amplitude=4pt},xshift=2pt,yshift=0pt]
	(5,2.1) -- (5,1.3) node [black,midway,xshift=.65cm] {\footnotesize $\cO(M)$};
	\draw [decorate,decoration={brace,amplitude=4pt},yshift=2pt,xshift=0pt]
	(1.5,5) -- (3.5,5) node [black,midway,yshift=.35cm] {\footnotesize $\cS(b)$};
		\draw [decorate,decoration={brace,amplitude=4pt},yshift=-2pt,xshift=0pt]
	(3,0) -- (2,0) node [black,midway,yshift=-.35cm] {\footnotesize $\cO(b)$};
	\draw [thick,Mahogany] (0,1.7) -- (5,1.7);
	\node [fill=Mahogany!10!white] at (0.5,1.7) {$M$};
	\draw [dotted] (0,0) rectangle (5,5);
	\draw (0,-1) -- (5,-1);
	\draw [->] (3.5,-.25) -- (3.5,-.75);
	\node at (3.5,-.5) [right] {$\pi$};
	\node at (1,-1) [below] {$B$};
	\node at (2.5,-1) {$\bullet$};
	\node at (2.5,-1) [below] {$b$};
	\node at (1,5) [above] {$B$};
	\node at (0,3) [left] {$U$};
	\draw [thick] (0,4) -- (1.8,1.5) -- (2,2.2) -- (3,2.2) -- (3.1,1.9) -- (5,3.6);
	\node at (0,4.1) [right] {$h_1(X)$};
	\end{scope}
	\end{tikzpicture}
	\caption{On the right, the input of Lemma \ref{lem.slicedeform}, and on the left its output. Note that when $d=0$, then being transverse to $M$ is the same as being disjoint from it.}
	\label{fig:slicedeform}
\end{figure}

\begin{lemma}\label{lem.slicedeform} Fix a topological manifold $B$ of dimension $p$, and suppose we are given the following data:
	\begin{itemize}
		\item  a closed topological submanifold $X \subset B \times U$ such that $(B \times U,X) \to B$ is a relative topological submersion of relative dimension $(n,d)$,
		\item a submanifold $M \subset U$ with normal microbundle $\nu$,
		\item a point $b_0 \in B$, and
		\item open neighbourhoods $\cS(b_0) \subset B$ and $\cS(M) \subset U$ of $M$.
	\end{itemize}  
Moreover, let us denote the closure of $\cS(M)$ in $U$ by $\mr{cl}_U(\cS(M))$.  Then, if $M$ is compact or $\mr{cl}_U(\cS(M)) \cap X_{b_0}$ is compact, we can find:
	\begin{enumerate}[\indent (a)]
		\item (\emph{transversality over $b_0$}) an ambient isotopy $h_t$ of $U$ (i.e., a homotopy of homeomorphisms $h_t \colon B\times U \rightarrow B\times U$  commuting with the projection onto $B$) supported in $\cS(b_0) \times \cS(M)$ such that $h_1(X)_{b_0} \pitchfork M$ (i.e., the fibre of the map $h_1(X) \hookrightarrow B \times U \to B$ over $b_0$ is microbundle transverse to $M$),
		\item (\emph{locally constant}) open neighbourhoods $\cO(b_0) \subset \cS(b_0)$ and $\cO(M) \subset \cS(M)$ of $b_0$ and $M$, so that $h_1(X)_{b} \cap \cO(M) = h_1(X)_{b_0} \cap \cO(M)$ for all $b \in \cO(b_0)$.
	\end{enumerate}
Necessarily $h_1(X)_{b} \pitchfork M$ for all $b \in \cO(b_0)$ and in fact $h_1(X) \cap (\cO(b_0) \times \cO(M)) \pitchfork (\cO(b_0) \times M)$.
\end{lemma}

\begin{proof}By picking a chart in $B$ which is contained in $\cS(b_0)$, we can assume without loss of generality that $B$ is the closed unit ball $B_1$ in $\mathbb{R}^p$, $\cS(b_0) = \mr{int}(B_1)$, and that $b_0$ is the origin in $\mathbb{R}^p$. Eventually, $\cO(b_0)$ will be a small ball around the origin. Throughout the rest of this proof, even though we are identifying $b_0$ with the origin in $\mathbb{R}^p$, we will continue to denote this point by $b_0$. 
	
\medskip

\noindent \emph{Step 1: arranging (a).} Let us first apply microbundle transversality (Theorem \ref{thm.microbundletransversality}) with $X = X_{b_0} \subset U$, $N = U$, $C = \varnothing$, and $D$ some closed neighbourhood of $M$ in $U$ contained in $\cS(M)$. The result is an ambient isotopy $k_t \colon U \rightarrow U$ of $U$ supported in $\cS(M)$ such that $k_1(X_{b_0}) \pitchfork M$.

Pick a continuous function $\lambda \colon [0,1] \to [0,1]$ which is $0$ on $[1/2,1]$ and $1$ on $[0,1/4]$. We now isotope $X \subset B_1 \times U$ (while preserving the map to $B_1$) via an ambient isotopy $\tilde{k}_t \colon B_1 \times U \rightarrow B_1 \times U$ defined as follows:
\[\tilde{k}_t(b,x) = \begin{cases} (b,x) & \text{if $|b|>1/2$,} \\
(b,k_{t \lambda(|b|)}(x)) & \text{if $|b| \leq 1/2$}.\end{cases}\]
For $t=0$, the composition $\tilde{k}_0(X) \hookrightarrow B_1\times U \rightarrow B_1$ is the original topological submersion. For $t=1$, the fibre of the composition $\tilde{k}_1(X) \hookrightarrow B_1\times U \rightarrow B_1$ over $b_0$ is equal to $k_1(X_{b_0})$. In particular, this fibre is microbundle transverse to $M$ and we have thus arranged (a), \emph{transversality over $b_0$}. Since we only modify $X_{b_0}$ within compact subset of $\cS(M)$, this preserves the property that $\mr{cl}_U(\cS(M)) \cap X_{b_0}$ is compact.

\medskip

\noindent \emph{Step 2: arranging (b).} For simplicity, let us relabel the outcome $\tilde{k}_1(X)$ of step 1 to $X$. We next arrange (b) (i.e.\ make the fibres of $X \hookrightarrow B_1\times U \rightarrow B_1$ \emph{locally constant} near $b_0$) under the assumption that $X_{b_0} \pitchfork M$. By this transversality assumption and the hypotheses, $X_{b_0} \cap M$ is a compact submanifold of $X_{b_0}$. We may take a normal microbundle of $X_{b_0} \cap M$ in $X_{b_0}$ whose fibres coincide with those of $\nu$ and whose total space is a compact topological submanifold with boundary $C \subset X_{b_0} \cap \cS(M)$ (necessarily containing $X_{b_0} \cap M$). 

By the union lemma for submersion charts (Theorem \ref{thm.unionsubmersion}) there is an $r \in (0,1/2)$, an open neighbourhood $\cO(C) \subset X_{b_0}$, and a map $g \colon B_r \times \cO(C) \to X$ over the projection to $B_1$ which is an isomorphism onto its image. Moreover, we can assume that the restriction of $g$ on the fibre $\{b_0\}\times \cO(C)$ agrees with the natural inclusion $\cO(C) \hookrightarrow X_{b_0} \hookrightarrow X$. Restricting the composition 
$B_r \times \cO(C)  \xrightarrow{g}X \hookrightarrow B_1 \times U$  to $B_r \times C$ gives us an isotopy $e \colon B_r\times C \rightarrow B_r\times U$ of $C$ in $U$ parametrised by $B_r$. Note that this isotopy $e$ has the property that its restriction to $\{b_0\}\times C$ is equal to the natural inclusion 
$C \hookrightarrow X_{b_0} \hookrightarrow X$ after identifying $C$ with $\{b_0\}\times C$. Consequently, we can find a path of isotopies $\tilde{e}_t \colon B_r\times C \rightarrow B_r\times U$ of $C$ in $U$ over $B_r$ such that $\tilde{e}_1 = e$ and $\tilde{e}_0$ is the standard inclusion 
$B_r \times C \hookrightarrow B_r \times U$. By shrinking $r$ if necessary, we may assume there is a closed subspace $C(M) \subset \cS(M)$ such that the image of each isotopy $\tilde{e}_t$ 
is contained in $B_r \times \mathrm{int}(C(M))$. By the isotopy extension theorem (Theorem \ref{thm.isotopyextensionmfd}), there hence exists a path of ambient isotopies $f_t \colon B_r \times U \rightarrow B_r \times U$ of $U$ over $B_r$ which is supported in $\cS(M)$ 
 and such that $f_t$ extends $\tilde{e}_t$ for each $t$ in $[0,1]$. Moreover, since $\tilde{e}_0$ was the standard inclusion $B_r \times C \hookrightarrow B_r \times U$, we may assume that $f_0$ is the identity homeomorphism from $B_r \times U$ to itself. 
 
Pick a continuous function $\eta \colon [0,1] \to [0,1]$ which is $1$ near $0$ and $0$ on $[r/2,r]$, and consider the ambient isotopy 
$h_t: B_1 \times U \rightarrow B_1\times U$ of $U$ over $B_1$ given by the formula
\[h_t(b,x) = \begin{cases} (b,x) & \text{if $|b|>r$,} \\
f_{\eta(|b|)t}^{-1}(b,x) & \text{if $|b| \leq r$}.\end{cases}\]
For $t=0$, we have that $h_0(X) = X$. On the other hand,  for $t=1$, the intersection $X_{b_0} \cap M$ (which is a subspace of $U$) is contained in the fibre $h_1(X)_b$ if $|b|$ is sufficiently small. Also, $h_1(X)_b$ coincides with $C$ near $X_{b_0} \cap M$ when $b$ is sufficiently close to $b_0$. There may be other intersections of $h_1(X)_b$ with $M$. But for $b$ even closer to $b_0$ this cannot occur: by the assumption that $M$ or $\mr{cl}_U(\cS(M)) \cap X$ is compact, there is an open neighbourhood $U'$ of $M$ such that $X_{b_0} \cap U' = C \cap U'$, which implies that $h_1(X)_b$ is disjoint from $M$ except at $X_{b_0} \cap M$ for $b \in B_1$ with Euclidean norm $|b|$ sufficiently small.\end{proof}

If $X$ came with a $\xi$-structure on its vertical tangent microbundle, then so does the end result $h_1(X)$ of the previous lemma; one can move the $\xi$-structure along the isotopy using the microbundle homotopy covering theorem, here Theorem \ref{thm.microbundlecovering}. However, this $\xi$-structure need \emph{not} be independent of $b' \in \cO(b)$ near $h_1(X)_{b'} \cap \cO(M)$. To ameliorate this, apply to the output the following lemma: take $B$ the output $\cO(b)$, $U$ the output $\cO(M)$, and $\cS(b)$ an open neighbourhood of $b$ such that $\mr{cl}(\cS(b)) \subset \cO(b)$.

\begin{lemma}\label{lem.slicedeformtangent} Suppose we are given the following data:
	\begin{itemize}
		\item a closed topological submanifold $X_0 \subset U$ (so that $(B \times U,B \times X_0) \to B$ is trivially a relative topological submersion of relative dimension $(n,d)$ and we write $X \coloneqq B \times X_0$),
		\item a $\xi$-structure $\varphi_X \colon T_\pi X \to \xi$ on the vertical tangent microbundle of $X$,
		\item a closed subset $K \subset X_0$,
		\item a point $b \in B$, and 
		\item open neighbourhoods $\cS(b) \subset B$ and $\cS(K) \subset X_0$.
	\end{itemize}
Then we can find:
\begin{enumerate}[\indent (a)]
	\item a homotopy $h_t$ of $\xi$-structures supported in $\cS(b) \times \cS(K)$,
	\item open subsets $\cO(b) \subset \cS(b)$ and $\cO(K) \subset \cS(K)$ such that $h_1|_{X_{b'} \cap \cO(K)} = h_1|_{X_{b} \cap \cO(K)}$ for all $b' \in \cO(b)$. 
\end{enumerate}
\end{lemma}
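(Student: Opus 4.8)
The plan is to exploit that the submersion $\pi\colon X = B\times X_0 \to B$ is a \emph{product}: contracting the $B$-factor to the point $b$ drags the $\xi$-structure on each fibre $X_{b'}$ onto the one on $X_b$, and the only real work is to carry this out with support in $\cS(b)\times\cS(K)$. First I would reduce to the case that $B$ is a closed ball $B_1\subset\cS(b)$ centred at $b$, by restricting to a chart of $B$ around $b$; over $B\setminus B_1$ the homotopy will simply be the identity.

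Next I record the canonical identification $T_\pi X \cong \mathrm{pr}_{X_0}^* TX_0$ of the vertical tangent microbundle of Example~\ref{exam.vertical}, where $\mathrm{pr}_{X_0}\colon B\times X_0\to X_0$: the total space of $T_\pi X$ is a neighbourhood of the diagonal in $X\times_B X$, which for $X=B\times X_0$ is a neighbourhood of the diagonal in $B\times(X_0\times X_0)$. Consequently any continuous $g\colon X\to X$ with $\mathrm{pr}_{X_0}\circ g=\mathrm{pr}_{X_0}$ induces a canonical isomorphism $g^*T_\pi X\cong T_\pi X$, so that $g^*\varphi_X$ is again a $\xi$-structure on $T_\pi X$ (covering $\phi_X\circ g$), and this construction is continuous in families.

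Then I would fix cutoff functions: $\lambda\colon B_1\to[0,1]$ that is $1$ on a smaller open ball $\cO(b)\ni b$ and has support in $\mathrm{int}(B_1)$, and $\mu\colon X_0\to[0,1]$ that is $1$ on an open neighbourhood $\cO(K)\subset\cS(K)$ of $K$ and has support in $\cS(K)$ — the latter exists since $X_0$ is normal and $K$ is closed. Using the straight-line contraction $c(s,\beta)=(1-s)\beta$ of $B_1$ towards $b=0$, define $G\colon[0,1]\times X\to X$ by
\[
G\bigl(t,(\beta,x)\bigr) = \bigl(c(t\,\lambda(\beta)\,\mu(x),\beta),\,x\bigr),
\]
extended by the identity over $(B\setminus B_1)\times X_0$, which is continuous because $\lambda$ is supported in $\mathrm{int}(B_1)$. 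Since $\mathrm{pr}_{X_0}\circ G(t,-)=\mathrm{pr}_{X_0}$ for all $t$, setting $h_t\coloneqq G(t,-)^*\varphi_X$ gives a homotopy through $\xi$-structures on $T_\pi X$ with $h_0=\varphi_X$.

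Finally I would check (a) and (b). For (a): if $\beta\notin\cS(b)$ then $\lambda(\beta)=0$, and if $x\notin\cS(K)$ then $\mu(x)=0$; in either case $G(t,(\beta,x))=(\beta,x)$, so $h_t$ agrees with $\varphi_X$ outside $\cS(b)\times\cS(K)$. For (b): if $\beta\in\cO(b)$ and $x\in\cO(K)$ then $\lambda(\beta)\mu(x)=1$, hence $G(1,(\beta,x))=(c(1,\beta),x)=(b,x)$, so $h_1$ at $(\beta,x)$ is the restriction of $\varphi_X$ at $(b,x)$ — independent of $\beta$, and equal to $h_1$ at $(b,x)$ since $G(1,(b,x))=(b,x)$ as well. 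Under the identifications $X_{b'}\cong X_0\cong X_b$ and $T_\pi X|_{X_{b'}}\cong TX_0\cong T_\pi X|_{X_b}$, this is exactly $h_1|_{X_{b'}\cap\cO(K)}=h_1|_{X_b\cap\cO(K)}$ for all $b'\in\cO(b)$. I do not expect a genuine obstacle: the only points requiring care are the canonical description of the vertical tangent microbundle of a product submersion and the continuity and support bookkeeping for $t\mapsto G(t,-)^*\varphi_X$, both routine. In contrast to Lemma~\ref{lem.slicedeform}, no appeal to the microbundle homotopy covering theorem is needed here, since $X$ itself is never moved.
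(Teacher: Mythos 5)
Your argument is correct, and its core geometric idea coincides with the paper's: after reducing to a ball $B_1\subset\cS(b)$ around $b$, you drag the $\xi$-structure toward the fibre over $b$ by precomposing $\varphi_X$ with a collapse of the base direction, which makes sense precisely because $X=B\times X_0$ is a product so the vertical tangent microbundle is canonically $\mathrm{pr}_{X_0}^*TX_0$ and is unchanged by maps over $X_0$. Where you differ is in the execution. The paper defines the new structure only on two disjoint regions — constant in $t$ over $W_1\supset X_0\setminus\cS(K)$, and by the collapse $(t,b,x)\mapsto\varphi_X(\lambda_t(b),x)$ over $W_0\supset K$ — and then invokes the microbundle homotopy covering theorem (Theorem \ref{thm.microbundlecovering}) to extend over the remaining part of $[0,1]\times X$; you instead interpolate explicitly with the fibre-direction cutoff $\mu$ (together with the base cutoff $\lambda$), writing one global formula $h_t=G(t,-)^*\varphi_X$. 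Your route is more elementary and self-contained: it avoids the covering theorem entirely, it makes the support statement (a) immediate rather than something one must arrange when extending, and the verification of (b) on $\cO(b)\times\cO(K)$ is a one-line computation; the small price is that you must spell out the canonical identification $T_\pi X\cong\mathrm{pr}_{X_0}^*TX_0$ and the continuity of $t\mapsto G(t,-)^*\varphi_X$, which you do and which are indeed routine. The paper's version, by contrast, keeps the product identification implicit and delegates the interpolation to a standard black box. Both are valid; yours is arguably the cleaner proof of this particular lemma, and your closing remark that no covering theorem is needed (in contrast with Lemma \ref{lem.slicedeform}, where $X$ itself is moved) is exactly the right observation.
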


\begin{proof}As in the previous lemma, without loss of generality we can assume that $B = B_1$ and $b$ is the origin. $\cO(b)$ will eventually be a small ball around the origin. By normality, there exist open subsets $W_0$ and $W_1$ of $X_0$ such that $K \subset W_0$ and $X_0 \setminus \cS(K) \subset W_1$. Then we may construct a $\xi$-structure on the subset of $[0,1] \times X$ given by $\{0\} \times X \cup [0,1] \times B_1 \times (W_0 \cup W_1) $ as follows: on $\{0\} \times X$ it is $\varphi_X$, on $[0,1] \times B_1 \times W_1$ it is $\varphi_X \circ p_{\hat{1}}$ (where $p_{\hat{1}}$ is the projection which forgets the first factor), but on $[0,1] \times B_1 \times W_0$ it is given at $(t,b,x)$ by $\varphi_X$ at $(\lambda_t(b),x)$, where $\lambda_t$ is a family of maps $B_1 \to B_1$ such that (i) it is the identity outside $B_{1/2}$, (ii) is the identity for $t=0$, and (iii) for $t=1$ it collapses a neighbourhood of the origin onto the origin. We may extend this $\xi$-structure to all of $[0,1] \times X$ using the microbundle homotopy covering theorem (\cref{thm.microbundlecovering}).
\end{proof}

\subsection{Comparing long manifolds to the cobordism category} 

There is a zigzag
\[\psi^{\mr{Top},\xi}(d,n,1)  \overset{\epsilon}{\longleftarrow} ||\psi^\mr{Top,\xi}_\cat{Cob}(d,n,1)_\bullet|| \overset{\simeq}{\longrightarrow} B\cat{Cob}^\mr{Top,\xi}(d,n),\]
where the left map is induced by the augmentation $\epsilon_\bullet \colon \psi^\mr{Top,\xi}_\cat{Cob}(d,n,1)_\bullet \to \psi^\mr{Top,\xi}(d,n,1)$ which forgets all but $\overline{X}$. We showed that the right map is a weak equivalence in Lemma \ref{lem.longtocob}. 

In this section we will prove that the left map is also a weak equivalence, using the results of the previous subsection. This involves an application of microbundle transversality, and thus requires the existence of some normal bundles. In our case, one of the two submanifolds will be of the form $\{t_0\} \times (0,1)^{n-1} \subset \bR \times (0,1)^{n-1}$, which has a standard normal microbundle $\nu \coloneqq (\{t_0\} \times (0,1)^{n-1}, \bR \times (0,1)^{n-1},\mr{id},\pi)$ with $\pi(t,x) \coloneqq (t_0,x)$.

\begin{proposition}\label{prop.longworestrictionstocob} The map $\epsilon$ is weak equivalence.\end{proposition}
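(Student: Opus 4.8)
I want to show the augmentation map $\epsilon \colon ||\psi^{\mr{Top},\xi}_{\cat{Cob}}(d,n,1)_\bullet|| \to \psi^{\mr{Top},\xi}(d,n,1)$ is a weak equivalence. Since the geometric realization on the left is the realization of a semisimplicial object whose $p$-simplices are $\psi^{\mr{Top},\xi}(d,n,1)$ together with a choice of $p+1$ ordered slice functions, the standard strategy is to view $\epsilon$ as a map whose "fiber" over a fixed long manifold $\bar X$ is the semisimplicial space of finite ordered tuples of slices for $\bar X$, and to show this fiber is contractible — i.e.\ that slices are "plentiful." Concretely, I would check the hypotheses of a fiberwise-contractibility criterion (e.g.\ the argument of \cite[Section 6]{grwmonoids} adapted to simplicial sets, or a microfibration/theorem-of-Galatius-style argument): it suffices to show that for any $\bar X \in \psi^{\mr{Top},\xi}(d,n,1)$ and any finite set of existing slices $t_0 < \dots < t_p$ (possibly empty), and any additional requirement coming from a compact parameter family, one can find a new slice $t$ lying in a prescribed open interval, after an isotopy of $\bar X$ supported away from the old slices. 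Equivalently, I would prove the map $\epsilon$ has the weak right lifting property against $\partial D^i \hookrightarrow D^i$: given $X \subset D^i \times \bR \times (0,1)^{n-1}$ a submersion over $D^i$ with $\xi$-structure, together with slices over $\partial D^i$, one must produce slices over all of $D^i$ after a homotopy of commutative squares.

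**Key steps.** First, reduce to producing, locally over $D^i$, a single new slice near a chosen level $t_0 \in \bR$: cover $D^i$ by charts and work one point $b \in D^i$ at a time, using the locality built into the target of Lemma~\ref{lem.slicedeform} and Lemma~\ref{lem.slicedeformtangent}. Second, apply Lemma~\ref{lem.slicedeform} with $M = \{t_0\} \times (0,1)^{n-1}$ and its standard normal microbundle $\nu$: after an isotopy of $X$ supported near $b$ and near the level $t_0$, we arrange that $X_b \pitchfork M$ and that $X_{b'} \cap \cO(M) = X_b \cap \cO(M)$ is locally constant for $b'$ near $b$; transversality to this particular $M$ forces $X_b$ to meet the level set $\{t_0\}$ in a codimension-one submanifold, and a further small isotopy (collapsing $X_b$ near this intersection to be a literal translational product, using a submersion chart of $X_b \to \bR$ at the regular value $t_0$, which exists by microbundle transversality plus the union theorem for submersion charts, Theorem~\ref{thm.unionsubmersion}) makes $X$ translationally constant near $\{t_0(b')\} \times (0,1)^{n-1}$ for $b'$ in a neighborhood of $b$, i.e.\ produces a genuine slice. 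Third, apply Lemma~\ref{lem.slicedeformtangent} to the output to make the $\xi$-structure, not just the underlying submanifold, translationally constant near the new slice and locally constant in $b'$; the microbundle homotopy covering theorem, Theorem~\ref{thm.microbundlecovering}, carries the $\xi$-structure along the isotopies from the first two steps. Fourth, assemble these local choices: choose a locally finite cover of $D^i$ and perform the deformations inductively with disjoint supports (in the $\bR$-direction pick distinct levels $t_0$ for the different charts, or shrink supports), keeping the old slices over $\partial D^i$ fixed; a partition-of-unity / one-chart-at-a-time induction produces a globally defined slice function $t \colon D^i \to \bR$ making the square lift. Finally, run this at the level of families of simplices to conclude $\epsilon$ is a weak equivalence of simplicial sets, or equivalently note that since all the spaces are Kan (so that $\mr{Sing}$-type arguments are unnecessary) it is enough to check the homotopy lifting against disks as above.

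**Main obstacle.** The delicate point is the interplay between transversality and the translational-product (slice) condition: microbundle transversality of $X_b$ to a level set is not by itself enough — a slice requires $X$ to be \emph{literally} a product $(t_0-\epsilon,t_0+\epsilon) \times (\bar X \cap \{t_0\})$ in a whole neighborhood, \emph{uniformly over nearby $b'$}. Getting from "transverse at $b$" to "product near $t_0$, locally constantly in $b'$" is exactly what Lemmas~\ref{lem.slicedeform} and~\ref{lem.slicedeformtangent} are designed to supply (the "locally constant" clauses (b) and the enduring transversality statement at the end of Lemma~\ref{lem.slicedeform}), so the proof is really an orchestration of those two lemmas plus the union theorem for submersion charts; the remaining work — globalizing the local deformations over $D^i$ with controlled supports and keeping the boundary data fixed — is routine but must be done carefully to ensure the supports in both the $D^i$-direction and the $\bR$-direction stay disjoint so that the deformations for different charts commute.
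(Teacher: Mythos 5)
Your proposal follows essentially the same route as the paper's proof: represent the lifting problem by a family $X$ over $D^i$ with slice/weight data near $\partial D^i$, apply Lemmas \ref{lem.slicedeform} and \ref{lem.slicedeformtangent} pointwise to create a transverse, locally constant level away from the existing slice values, and then globalize with disjoint supports and a partition of unity so as to keep the boundary data and extend the map into the realization. The only real difference is bookkeeping: where you pick distinct levels for the (finitely many) charts or shrink supports, the paper fixes $i+2$ disjoint intervals in $\bR \setminus T$ up front and uses the Lebesgue covering dimension of $D^i$ together with a graph-colouring argument to assign levels so that overlapping charts automatically have disjoint supports in the $\bR$-direction, which achieves the same effect.
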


\begin{proof}Suppose we are given a commutative diagram
	\[\begin{tikzcd} \partial D^i \dar \rar & {||\psi^\mr{Top,\xi}_{\cat{Cob}}(d,n,1)_\bullet||} \dar{\epsilon} \\[-2pt]
	D^i \rar & \psi^{\mr{Top},\xi}(d,n,1). \end{tikzcd}\]
Then we must homotope it through commutative diagrams to one where there exists a lift. Without loss of generality, there is a lift over an open neighbourhood $U'$ of $\partial D^i$. Thus, we have a closed topological submanifold $X \subset D^i \times \bR \times (0,1)^{n-1}$ such that the map $\pi \colon (D^i \times \bR \times (0,1)^{n-1},X) \to D^i$ is a relative topological submersion of relative dimension $(n,d)$, with a $\xi$-structure on the vertical tangent microbundle of $X$. We additionally have a collection of $(U_k,t_k,s_k)$, where the $U_k$ are open subsets in an open cover $\{U_k\}$ of $U'$, and for each $U_k$ we have a slice $t_k \colon U_k \to \bR$ and a compactly supported continuous weight function $s_k \colon U_k \to [0,1]$ such that for each $b \in s_k^{-1}((0,1])$ the fibre $X_b$ is microbundle transverse to $\{t_k(b)\} \times (0,1)^{n-1}$ at its standard normal bundle. Let $T \subset \bR$ denote the union of the images of the maps $t_k$, which without loss of generality we may assume to be bounded by shrinking $U'$ if necessary.

\vspace{.5em}

\noindent \textbf{Claim.} \emph{We may assume that for all $b \in D^i \setminus U'$ there exists an element $t \in \bR \setminus T$ and an open neighbourhood $W$ of $\{t\} \times (0,1)^{n-1}$ such that $X_b \pitchfork \{t\} \times (0,1)^{n-1}$ and $\overline{X}_b \cap W = \overline{X}_{b'} \cap W$ for $b'$ near $b$.}

\begin{proof}Pick $(i+2)$ pairwise disjoint intervals $\{[a_j-\eta_j,a_j+\eta_j]\}_{0 \leq j \leq i+1}$ in $\bR \setminus T$, which is possible since $T$ is bounded. Pick for each $b \in \mr{int}(D^i)$ an $i$-dimensional embedded ball $B_1(b) \subset \mr{int}(D^i)$ with origin mapping to $b$.
	
	For each $b \in \mr{int}(D^i)$ and $0 \leq j \leq i+1$, apply Lemmas \ref{lem.slicedeform} and \ref{lem.slicedeformtangent} to the family $F$ obtained by restricting $\overline{X}$ to $U = \bR \times (0,1)^{n-1}$, $B = B_1(b)$, letting $\cS(b) = B_{1/2}(b)$, taking $M = \{a_j\} \times (0,1)^{n-1}$ and finally letting $\cS(M) = (a_j-\eta_j,a_j+\eta_j) \times (0,1)^{n-1}$. The conclusion is that we can isotope $X$ and change its $\xi$-structure using a homotopy supported in $\cS(b) \times \cS(M)$ such that there is an open neighbourhood $U_j(b)$ of $b$ over which the end result is constant near $\{a_j\} \times (0,1)^{n-1}$ and transverse to $\{a_j\} \times (0,1)^{n-1}$. Since there are finitely many $j$, we may without loss of generality assume that $U_j(b) = U_{j'}(b)$ and thus drop the subscripts.
	
	Since $D^i$ is an $i$-dimensional manifold, it has Lebesgue covering dimension $i$. This means that there are open subsets $\tilde{U}(b) \subset U(b)$ and $\tilde{U} \subset U'$ which (a) cover $D^i$, and (b) have the property that each $d \in D^i$ is contained in at most $i+1$ of these subsets. Since $D^i$ is compact, we may assume that only finitely many of these are non-empty: $\tilde{U}$ and $\tilde{U}(b_k)$ for $1 \leq k \leq K$. Since $D^i$ is paracompact we may pick a partition of unity subordinate to $\tilde{U}$ and $\tilde{U}(b_k)$. By normalization we may obtain from this $[0,1]$-valued continuous functions $\sigma_{\tilde{U}}$ and $\{\sigma_{\tilde{U}(b_k)}\}_{1 \leq k \leq K}$ supported in $\tilde{U}$ and $\tilde{U}(b_k)$ respectively, such that for all $d \in D^i$ at least one of these functions has value $1$ on a neighbourhood of $d$.
	
	The finite graph with a vertex for each element of this cover and an edge whenever two elements have non-empty intersection, has vertices of valence $\leq i+1$. Hence its chromatic number is $\leq i+2$, and we can assign to each $1 \leq k \leq K$ a number $0 \leq c(k) \leq i+1$ such that $c(k) \neq c(k')$ if $k \neq k'$ and $\tilde{U}(b_k) \cap \tilde{U}(b_{k'}) \neq \varnothing$.

         Now, by our choice of the open sets $\tilde{U}(b_k)$, there exists for each $1 \leq k \leq K$ an isotopy $H_t^k$ of $\overline{X}$ over $\tilde{U}(b_k)$ which is supported in $(a_{c(k)}-\eta_{c(k)},a_{c(k)}+\eta_{c(k)}) \times (0,1)^{n-1}$ and such that the fibres of the end result are constant near $\{ a_{c(k)} \}\times (0,1)^{n-1}$ and transverse to $\{ a_{c(k)} \}\times (0,1)^{n-1}$. From this we can construct an isotopy over the whole disc $D^i$ by taking 
         \[\widetilde{H}^k_t \coloneqq H^k_{t\cdot \sigma_{\tilde{U}(b_k)}}.\] 
         That is, we use the function $\smash{\sigma_{\tilde{U}(b_k)}}$ to cut off $H_t^k$. It is possible to do all of the isotopies $\smash{\widetilde{H}^k_t}$ simultaneously, because whenever there is a non-empty intersection $\tilde{U}(b_k) \cap \tilde{U}(b_{k'}) \neq \varnothing$, these isotopies have disjoint support in the codomain. The result is a new family $\overline{X}$ such that for each $d \in D^i \setminus U$ there is at least one $t \in \bR$ such that near $b$ the fibre $\overline{X}_{b'}$ is independent of $b'$ near $\{t\} \times (0,1)^{n-1}$ and transverse to this submanifold.   
\end{proof}

%\vspace{-.5em} 
Given the claim above, we may cover $D^i$ by $U'$ and open subsets $V_\ell$ for $1 \leq \ell \leq L$ for which we can find slices $t_\ell \colon V_\ell \to \bR \setminus T$. Pick a subordinate partition of unity given by $\eta$ supported in $U'$ and $\eta_\ell$ supported in $V_\ell$, and replace $(U_k,t_k,s_k)$ by $(U_k,t_k,\eta s_k)$ and add $(V_\ell,t_\ell,\eta_\ell)$ for $1 \leq \ell \leq L$.
\end{proof}

\begin{remark}The claim in the above proof also shows that an alternative definition of $\cat{Cob}^\mr{Top,\xi}_d(n)$ with discrete objects has a weakly equivalent classifying space, as the slices $t_\ell$ produced at the very end of the previous proof will be constant functions.\end{remark}

\section{Microflexibility for spaces of topological manifolds} \label{sec.microflexibility} In this section we give a proof of the variant of Theorem \ref{thm.cobinfiniteloop} for topological manifolds. Though we believe there are no difficulties in applying the iterated delooping argument, inspired by \cite{rwembedded} instead we prove microflexibility of a certain sheaf and apply Gromov's $h$-principle.

\subsection{Gromov's $h$-principle} In \cite{gromovhp}, Gromov gave a general theory of $h$-principles on open manifolds. We describe it in this subsection, and later this section apply it to spaces of topological manifolds. The objects studied in this theory are invariant topological or simplicial sheaves on manifolds, such as the examples $\Psi^{\mr{Diff},\upsilon}_{d}(-)$ or $\Psi^{\mr{Top},\xi}_d(-)$. A crucial role in Gromov's theory is played by the notion of a Serre microfibration. This notion makes sense for simplicial sets that extend to quasitopological spaces, such as our $\Psi_d^\mr{Top,\xi}(M)$, which is exactly the setting in which Gromov works, cf.~\cite[Section 1.4.1]{gromovhp}.

\begin{definition}A map $f \colon E \to B$ of topological spaces or quasitopological spaces is said to be a \emph{Serre microfibration} if in each commutative diagram
	\[\begin{tikzcd} D^i \rar \dar & E \dar{f} \\[-2pt]
	{[0,1]} \times D^i \rar & B.\end{tikzcd}\]
there exists an $\epsilon >0$ and a partial lift over $[0,\epsilon] \times D^i$.\end{definition}

\begin{definition}Let $\Phi$ be an invariant simplicial sheaf on topological manifolds, whose values extend to quasitopological spaces.
	\begin{itemize}
		\item $\Phi$ is said to be \emph{microflexible} if for all pairs $L \subset K$ of compact subsets of all topological manifolds $M$ the following restriction map is a Serre microfibration
		\[\Phi(K \subset M) \lra \Phi(L \subset M).\]
		\item $\Phi$ is said to be \emph{flexible} if for all pairs $L \subset K$ of compact subsets of all topological manifolds $M$ the following restriction map is a fibration
		\[\Phi(K \subset M) \lra \Phi(L \subset M).\]
	\end{itemize}
\end{definition}

\begin{remark}A sheaf $\Phi$ is microflexible (resp.\ flexible) with respect to all pairs $(K,L)$ of compact subsets if and only if it is microflexible (resp.\ flexible) with respect to pairs of the form $(H \cup (D^r \times D^{k-r}),H)$ where $H \subset M$ is an embedded finite handlebody, see \cite[Theorem V.A.1]{kirbysiebenmann} or \cite[Lemma 3.3]{kupershp}. Using the invariance of $\Phi$, this may further be reduced to the case $(K,L) = (D^r \times D^{k-r},S^{r-1} \times D^{k-r})$ in $M = \bR^k$.\end{remark}

Gromov's theory of microflexible sheaves used the existence of triangulations or handlebodies. This might cause a problem for topological 4-manifolds, which may not necessarily admit such structures, but in \cite[Appendix V.A]{kirbysiebenmann}, Siebenmann described a proof of Gromov's $h$-principle that avoids them. Up to this caveat, the following result about flexible sheaves is proven in \cite[Section 2.2.1]{gromovhp}.

\begin{proposition}[Gromov] There exists an initial flexible approximation $\Phi \to \Phi^f$ for any microflexible sheaf $\Phi$. Given a $k$-dimensional topological manifold $M$, and a topological $\bR^k$-bundle $\tau_M$ in the tangent microbundle $TM$ (which exists using Theorem \ref{thm.kister}), the value of $\Phi^f$ on $M$ is weakly equivalent to the space of sections of the associated bundle
	\[T\Phi(\bR^k) \coloneqq \tau_M \times_{\mr{Top}(k)} \Phi(\bR^k).\]
Furthermore, given a codimension zero embedding $N \subset M$, the following diagram commutes up to homotopy
\[\begin{tikzcd}\Phi^f(M) \rar \dar[swap]{\simeq} & \Phi^f(N) \dar{\simeq} \\[-4pt]
\Gamma(M,T\Phi(\bR^k)) \rar & \Gamma(N,T\Phi(\bR^k)).\end{tikzcd}\]\end{proposition}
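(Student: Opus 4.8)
\emph{Proof plan.} The statement is essentially a recollection of Gromov's theory \cite{gromovhp}, adapted to topological manifolds, so most of the work is citation and bookkeeping. The plan is to first recall the construction of the flexible approximation and then to identify the value of a flexible sheaf on $M$ with a section space in two steps: comparing $\Phi$ with $\Phi^f$ on the chart $\bR^k$, and using flexibility to make $\Phi^f$ into a homotopy sheaf. Concretely: one produces $\Phi\to\Phi^f$ by the transfinite iteration of \cite{gromovhp} which at each stage replaces the restriction maps $\Phi(K\subset M)\to\Phi(L\subset M)$ — equivalently, by the reduction recorded before the statement, the maps associated to pairs $(D^r\times D^{k-r},S^{r-1}\times D^{k-r})$ — by Serre fibrations; initiality of $\Phi\to\Phi^f$ among maps to flexible sheaves is then immediate. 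Throughout I would phrase everything using Siebenmann's chart-based version of the theory from \cite[Appendix V.A]{kirbysiebenmann} rather than Gromov's original triangulation/handlebody setup, since topological $4$-manifolds need not admit triangulations or handle decompositions.

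Next, to identify $\Phi^f(M)$ with a section space I would combine two inputs. (i) Since $\bR^k$ is an open manifold and $\Phi$ is microflexible, Gromov's $h$-principle (in Siebenmann's formulation) gives a weak equivalence $\Phi(\bR^k)\overset{\simeq}{\longrightarrow}\Phi^f(\bR^k)$; as $\Phi\to\Phi^f$ is a map of invariant simplicial sheaves this equivalence is equivariant for the action of $\mr{Emb}^\mr{Top}(\bR^k,\bR^k)\simeq\mr{Top}(k)$ (Kister's weak equivalence), so $T\Phi(\bR^k)=\tau_M\times_{\mr{Top}(k)}\Phi(\bR^k)$ is fibrewise weakly equivalent to the bundle with fibre $\Phi^f(\bR^k)$. (ii) A flexible sheaf is a homotopy sheaf on the opens of $M$: the sheaf condition realizes $\Phi^f$ of a union $M_0\cup_A H$, with $H$ a chart or open handle glued along $A$, as the pullback $\Phi^f(M_0)\times_{\Phi^f(A)}\Phi^f(H)$, and flexibility makes $\Phi^f(H)\to\Phi^f(A)$ a fibration, so this pullback is a homotopy pullback. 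Building $M$ from a locally finite atlas by the chart-by-chart induction of \cite[Appendix V.A]{kirbysiebenmann} (handle-by-handle when $k\neq 4$) and passing to the colimit, I would present $\Phi^f(M)$ as the homotopy limit over the nerve of the atlas of copies of $\Phi^f(\bR^k)$ glued by the transition embeddings; this homotopy limit is exactly $\Gamma(M,T\Phi(\bR^k))$ by (i).

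For the final square, I would note that the comparison equivalence built above is assembled from the restriction maps of $\Phi^f$ and the restriction maps of sections, hence is natural for codimension zero embeddings: choosing the atlas of $M$ so that it restricts to an atlas of $N$, the square commutes up to the coherence homotopies carried along by the induction.

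I expect the main obstacle to be precisely the dimension-$4$ issue: one must run both halves of the argument — Gromov's $h$-principle for microflexible sheaves and the homotopy-sheaf property of flexible sheaves — entirely through Siebenmann's chart-by-chart scheme rather than any handle decomposition, since the latter may be unavailable. A secondary, fussier point is keeping the section-space identification natural enough (coherent choice of atlas, basepoints, and order of attachment) to obtain the ``furthermore'' square rather than merely a levelwise equivalence.
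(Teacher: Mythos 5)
The paper does not actually prove this proposition: it is quoted from Gromov's theory, with \cite{gromovhp} and Siebenmann's chart-based account \cite[Appendix V.A]{kirbysiebenmann} cited for the triangulation-free version, and the only added content is the remark explaining why the final square commutes merely up to homotopy. So there is no in-paper argument to compare against; judged on its own terms, your outline deviates from the cited sources in a way that creates genuine gaps. In Gromov's and Siebenmann's treatments, $\Phi^f$ is not produced by an abstract transfinite fibrant replacement: it is taken concretely to be the sheaf of sections of the space of germs of $\Phi$ (equivalently, after Kister, of the associated bundle $\tau_M \times_{\mr{Top}(k)} \Phi(\bR^k)$, at the cost of choosing a map $\tau_M \to M$ that is an embedding on fibers). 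With that model, flexibility is immediate (restriction of sections along compact pairs is a fibration), the section-space description is essentially definitional, and the $h$-principle plays no role in this proposition. Your route instead posits an abstract initial $\Phi^f$ (the small-object-style construction must be checked to stay within continuous invariant sheaves, and ``initiality is then immediate'' is not — at best one gets a homotopical universal property), and then feeds Theorem \ref{thm.hprinciple} back in to identify the fiber. That is logically backwards relative to the literature: the $h$-principle is proved there for the germ-section model of $\Phi^f$, so to invoke it for your model you would first have to identify the two, which is essentially the statement being proved.

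The second soft spot is the descent step. Flexibility is a fibration condition on restrictions between germs near compact pairs $L \subset K$, not on the restriction maps $\Phi^f(U) \to \Phi^f(U \cap V)$ between values on open sets, so upgrading the strict sheaf pullback to a homotopy pullback needs an exhaustion/germ argument that your sketch omits. Moreover, identifying the homotopy limit over the nerve of an atlas with $\Gamma(M, \tau_M \times_{\mr{Top}(k)} \Phi(\bR^k))$ requires converting the transition embeddings of the atlas into the $\mr{Top}(k)$-cocycle of the chosen bundle $\tau_M \subset TM$ via Kister's equivalence; this is precisely where the choice of fiberwise embedding $\tau_M \to M$ enters, the point the paper singles out as the reason the ``furthermore'' square only commutes up to homotopy, and your proposal does not engage with it. None of these steps is hopeless — the statement is, after all, established in \cite{gromovhp} and \cite[Appendix V.A]{kirbysiebenmann} — but as written the proposal replaces the standard construction with an unverified abstract one and then argues in a circle through the $h$-principle, so it does not yet constitute a proof.
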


That the above diagram only commutes up to homotopy is because we used a bundle with fibre $\Phi(\bR^k)$ instead of a bundle of germs: we need to choose a map $\tau_M \to M$ which is an embedding on each fibre, and this may need to be rechosen when restricting. 

Up to caveat mentioned above, the following result about microflexible sheaves is proven in \cite[p.~260--261]{kirbysiebenmann} or \cite[Section 2.2.2]{gromovhp}:

\begin{theorem}[Gromov-Siebenmann] \label{thm.hprinciple} If $\Phi$ is a microflexible invariant topological sheaf on topological manifolds, then the map
	\[\Phi(M \rel A) \lra \Phi^f(M \rel A)\]
is a weak equivalence for all $A \subset M$ closed such that no component of $ M \setminus A$ has compact closure.\end{theorem}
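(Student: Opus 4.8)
The plan is to begin by reformulating the target. Because $\Phi^f$ is flexible, the preceding proposition identifies $\Phi^f(M)$ with the section space $\Gamma(M, T\Phi(\bR^k))$ of the associated bundle $T\Phi(\bR^k) = \tau_M \times_{\mr{Top}(k)} \Phi(\bR^k)$ (here $k = \dim M$), compatibly with restriction along codimension zero embeddings. Passing to relative versions, $\Phi^f(M \rel A)$ is identified with the space $\Gamma_A(M, T\Phi(\bR^k))$ of sections having a prescribed germ near $A$, and under this identification the map $\Phi(M\rel A) \to \Phi^f(M\rel A)$ becomes the tautological ``jet'' map $j$ sending an element of $\Phi(M)$ to its field of germs. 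Thus the theorem is precisely the parametric $h$-principle for the microflexible sheaf $\Phi$ on $M$ relative to $A$, and it suffices to prove that $j$ is a weak equivalence. Crossing with $D^n$ and enlarging $A$ — i.e.\ replacing $(M,A)$ by $(M \times D^n,\, A \times D^n \cup M \times \partial D^n)$, which still has no component of its complement with compact closure — reduces this to the following single statement, for an arbitrary microflexible invariant $\Phi$ and an arbitrary admissible pair $(M,A)$: \emph{a holonomic section (element of $\Phi$) over a neighbourhood of $A$, together with a formal, i.e.\ $T\Phi$-valued, extension over all of $M$, can be homotoped relative to $A$ to a holonomic section over all of $M$}.

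To prove this I would run the standard cell-by-cell induction. Fix a locally finite cover of $M \setminus A$ by charts, exhaust $M \setminus A$ by compacta, and extend the holonomic section across one ``cell'' at a time. By the reduction recorded after the definition of microflexibility, each step is modelled on a handle pair $(D^r\times D^{k-r},\, S^{r-1}\times D^{k-r})$ in $\bR^k$: one is given the holonomic section near the attaching region $S^{r-1}\times D^{k-r}$ and the prescribed formal extension over the handle, and microflexibility of $\Phi$ furnishes a holonomic lift over a short sub-interval of the interpolating formal homotopy. When $r < k$, so that the core $D^r\times 0$ has positive codimension $k-r \geq 1$, this partial lift is promoted to a full holonomic section over a neighbourhood of the whole handle by the standard reparametrization argument, which exploits the normal $D^{k-r}$-direction; the section is changed only within the prescribed support and only along a formally trivial homotopy. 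The role of the hypothesis that no component of $M \setminus A$ has compact closure is that it lets one avoid top-index ($r = k$) cells, for which there is no normal room and for which the $h$-principle genuinely fails: $M \setminus A$ can be exhausted using cells of index $< k$ only (equivalently, it retracts onto a spine of positive codimension), so every extension step is of the kind just described. Local finiteness of the cover makes the infinite composite of homotopies converge.

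I expect the real content to lie in two places. The first is the reparametrization step converting microflexibility's ``small time'' into an honest full lift with controlled support, together with the bookkeeping ensuring that the homotopies produced at different cells have sufficiently disjoint supports to be carried out compatibly along the exhaustion; this is routine but delicate. The second, and the main conceptual obstacle, is the appeal to handle decompositions (or positive-codimension spines) of $M \setminus A$, which is unavailable for topological $4$-manifolds. To sidestep this I would invoke Siebenmann's proof of Gromov's $h$-principle in \cite[Appendix V.A]{kirbysiebenmann}, which replaces the handle/spine machinery by a direct engulfing argument relying only on topological isotopy extension and the sheaf property, and which is valid in all dimensions; the remaining point is to check that his argument goes through verbatim for microflexible invariant simplicial sheaves whose values extend to quasitopological spaces, which is the setting at hand.
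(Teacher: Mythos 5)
Your outline is the standard Gromov cell-by-cell argument (microflexibility plus reparametrization in positive codimension, with the openness hypothesis used to avoid top-index handles), and for the only genuinely delicate case---topological $4$-manifolds, which need not admit handle decompositions---you defer to Siebenmann's proof in Kirby--Siebenmann, Appendix V.A, which is precisely what the paper does: it states this theorem as an imported result of Gromov and Siebenmann and offers no proof of its own. So your proposal is essentially the same approach as the paper's treatment, filled out with the standard details.
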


\subsection{Applying the $h$-principle} The following observation appears in \cite{rwembedded,dottohprinciple} for the smooth case, where the proof is easier because being an embedding is an open condition. Our proof instead relies on respectful isotopy extension.

\begin{proposition}The invariant topological sheaf $\Psi^{\mr{Top},\xi}_d(-)$ is microflexible.\end{proposition}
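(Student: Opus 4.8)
The plan is to verify the defining lifting property of a Serre microfibration directly for the restriction maps $\Psi^{\mr{Top},\xi}_d(K \subset M) \to \Psi^{\mr{Top},\xi}_d(L \subset M)$. By the reduction noted just before Theorem \ref{thm.hprinciple}, it suffices to treat the pair $(K,L) = (D^r \times D^{k-r}, S^{r-1} \times D^{k-r})$ inside $M = \bR^k$; concretely one is handed a commutative square
\[\begin{tikzcd} D^i \rar \dar & \Psi^{\mr{Top},\xi}_d(\cO(K)) \dar \\ {[0,1]} \times D^i \rar & \Psi^{\mr{Top},\xi}_d(\cO(L)),\end{tikzcd}\]
and must produce a partial lift over $[0,\epsilon] \times D^i$. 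Unwinding the quasitopological structure, the top map is a closed topological submanifold $X \subset D^i \times \cO(K)$ submersing over $D^i$ with a $\xi$-structure on $T_\pi X$, and the bottom map is a one-parameter family $Y \subset [0,1] \times D^i \times \cO(L)$ of such data whose restriction to $\{0\} \times D^i$ agrees with $X$ over $\cO(L)$.

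First I would shrink the germ neighborhoods: choose compact neighborhoods $L \subset L' \subset K' \subset K$ so that $K'$ is again a handlebody-type region, and observe that $X$ restricted over a neighborhood of $K'$ is what we really need to extend. The family $Y$ describes how the submersion is being pushed around near $L$. The key geometric input is \emph{respectful parametrized isotopy extension} (Theorem \ref{thm.isotopyextension} in the excerpt's numbering): since $Y \to [0,1]\times D^i$ is a submersion agreeing with $X$ at time $0$ near $L$, for small enough time the deformation of the part of $X$ lying over a neighborhood of $L'$ is realized by an ambient isotopy of $\cO(L')$ in $\cO(K')$, which by isotopy extension extends to an ambient isotopy of $\cO(K')$ supported in a slightly larger set but still away from $\partial K'$ — here is exactly where the ``$\epsilon$'' enters, because the extension is only guaranteed for a short time before the isotopy would be forced to leave the chart. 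Applying this ambient isotopy to all of $X$ (over $\cO(K')$) produces a family $\tilde X \subset [0,\epsilon] \times D^i \times \cO(K')$ of submersions extending $X$ and agreeing with $Y$ near $L$, giving the partial lift on the level of underlying manifolds.

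It remains to carry the $\xi$-structures along. Over $\{0\} \times D^i$ we have $\varphi_X$, over $[0,\epsilon]\times D^i$ near $L$ we have the $\xi$-structure coming from $Y$, and these agree on the overlap; together they define a $\xi$-structure on a subspace of $[0,\epsilon] \times T_\pi \tilde X$ of the form (time $0$ everywhere) $\cup$ (all times near $L$), which is an NDR-type subspace of $[0,\epsilon] \times T_\pi \tilde X$. The microbundle homotopy covering theorem (Theorem \ref{thm.microbundlecovering}) then lets us extend this partial $\xi$-structure over the whole of $[0,\epsilon] \times T_\pi \tilde X$, completing the partial lift in $\Psi^{\mr{Top},\xi}_d(\cO(K))$. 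Since all constructions are natural in the parameter disk $D^i$ and take $\varnothing$ to $\varnothing$, this exhibits the restriction map as a Serre microfibration, which by the above reductions proves microflexibility.

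The main obstacle I anticipate is the respectful isotopy extension step: one must know not merely that a topological isotopy extends, but that it extends \emph{over the parameter space $D^i$} and in a way compatible with the ambient submersion structure (so that the resulting $\tilde X$ is still a submersion over $D^i$, not just fiberwise a family of manifolds). This is precisely the ``respectful'' version of the parametrized isotopy extension theorem, and getting the supports and the time-$\epsilon$ bound to cooperate — so that the extended isotopy stays inside $\cO(K)$ and fixes things near $\partial K$ — is the technical heart of the argument; the $\xi$-structure bookkeeping afterwards is routine given the microbundle homotopy covering theorem.
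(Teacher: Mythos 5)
Your overall route is the paper's: straighten the incoming family near $L$ by a (respectful, parametrized) ambient isotopy, extend the underlying submersion trivially in the time direction, and carry the $\xi$-structure along with the microbundle homotopy covering theorem (Theorem \ref{thm.microbundlecovering}); the paper likewise handles the $D^i$-parameters by treating them as a foliation to be respected in isotopy extension. However, there is one substantive step you assert rather than prove, and it is exactly the step where the microfibration's $\epsilon$ is born. You claim that ``for small enough time the deformation of the part of $X$ lying over a neighborhood of $L'$ is realized by an ambient isotopy,'' attributing the short-time restriction to isotopy extension. That is backwards: Theorem \ref{thm.isotopyextension} extends an isotopy of a compact subset for the \emph{entire} parameter range on which it is given, with no short-time caveat. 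What genuinely needs an argument is the prior assertion that the family $Y$ over $[0,1]\times D^i$ is, near time $0$ and near $L$, the track of an isotopy of a \emph{fixed} compact subset $Z$ of the time-zero fiber at all. A topological submersion is not a fiber bundle --- fibers can change and pieces can escape --- so this is not automatic.

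The paper supplies this via Theorem \ref{thm.unionsubmersion}: one glues finitely many (respectful, in the $i>0$ case) submersion charts covering the compact piece $Z = L' \cap Y_0$ of the time-zero fiber into a single product chart $\cO([0,\epsilon]) \times Z \to Y$ over the time coordinate, and it is this gluing that produces the $\epsilon$; respectfulness of the charts (with respect to the foliation by $\{x\}\times \cO(L)$, $x \in D^i$) is what guarantees the resulting isotopy, and hence the extended ambient homeomorphism, is compatible with the projection to $D^i$, so that the extended family is still a submersion over $[0,\epsilon]\times D^i$. With that lemma inserted before your isotopy-extension step, your argument goes through and coincides with the paper's; without it, the key claim is unsupported. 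Your $\xi$-structure bookkeeping at the end is fine as stated.
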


\begin{proof}We must find an $\epsilon>0$ and a partial lift over $[0,\epsilon] \times D^i$ in each commutative diagram
\[\begin{tikzcd} D^i \ar[r] \ar[d] & \Psi^{\mr{Top},\xi}_d(D^r \times D^{k-r} \subset \bR^k) \ar[d] \\[-2pt]
{[0,1] \times D^i} \ar[r] & \Psi^{\mr{Top},\xi}_d(S^{r-1} \times D^{k-r} \subset \bR^k).\end{tikzcd}\]

For the benefit of the reader we first prove the case $i=0$; the proof in the case $i>0$ is exactly analogous but involves more notation. In the case $i=0$, the top map is represented by the following data: 
\begin{itemize}
	\item an open subset $\cO(D^r \times D^{k-r}) \subset \bR^k$,
	\item a closed topological submanifold $X \subset \cO(D^r \times D^{k-r})$, and
	\item a $\xi$-structure on its tangent microbundle $TX$.
\end{itemize}
The bottom map is represented by similar data:
\begin{itemize}
	\item open subsets $\cO([0,1]) \subset \bR$ and $\cO(S^{r-1} \times D^{k-r}) \subset \bR^k$,
	\item a closed topological submanifold $X' \subset \cO([0,1]) \times \cO(S^{r-1} \times D^{k-r})$ such that $\pi' \colon (\cO([0,1]) \times \cO(S^{r-1} \times D^{k-r}),X') \to \cO([0,1])$ is a relative topological submersion of relative dimension $(r,d)$,
	\item a $\xi$-structure on its vertical tangent microbundle $T_{\pi'} X'$.
\end{itemize}
We use the abbreviations $\overline{X} = (X,\varphi_X)$ and $\overline{X}' = (X',\varphi_{X'})$.  Since the diagram commutes, without loss of generality we can assume $\cO(S^{r-1} \times D^{k-r}) \subset \cO(D^{r} \times D^{k-r})$, and $\overline{X}'_0 = \overline{X} \cap \cO(S^{r-1} \times D^{k-r})$ (where $\overline{X}'_0$ is the fibre of $\overline{X}'$ over $0 \in \cO([0,1])$ together with its $\xi$-structure, as always). Our goal is to show that we may assume that $X' = \cO([0,1]) \times X'_0$, by applying some homeomorphisms and shrinking some of the open subsets. Given this claim, it is easy to extend $X$ compatibly with $X'$: we can take the extension to be $\cO([0,1])\times X$. By construction, this comes with a $\xi$-structure over $\{0\} \times X \cup X'$. As $[0,1] \times X$ deformation retracts onto $\{0\} \times X \cup X'$ (e.g.~combine Proposition A.6.7 and Theorem A.6.9 of \cite{fritschpiccinini}), we may use the microbundle homotopy covering theorem to extend the $\xi$-structure. This completes the proof in the case $i=0$. 

\begin{figure}
	\begin{tikzpicture}
		\draw [dotted] (0,0) -- (0,2) -- (2,5) -- (2,3) -- cycle;
		\draw [dotted] (0,0) -- (0,2) -- ({0+.5},{2+3/2*.5}) -- ({0+.5},{3/2*.5}) -- cycle;
		\draw [dotted,xshift=5cm] (0,0) -- (0,2) -- ({0+.5},{2+3/2*.5}) -- ({0+.5},{3/2*.5}) -- cycle;
		\draw [dotted,yshift=3cm,xshift=2cm] (0,0) -- (0,2) -- ({0+.5},{2+3/2*.5}) -- ({0+.5},{3/2*.5}) -- cycle;
		\draw [thick,dashed,fill=black!5!white] (0,1) -- (5,1) -- (5.5,1.25) -- (.5,2);
		\draw [thick,dashed,fill=black!5!white] (2,3.8) -- (7,4) -- (7.5,4.25) -- (2.5,4.5);
		\draw [thick,dashed,fill=black!5!white] (4,5.5) -- (5.5,5) -- (7,4.65) -- (7.5,4.8) -- (6.5,5.75) -- cycle;
		\draw [dotted,yshift=3cm,xshift=7cm] (0,0) -- (0,2) -- ({0+.5},{2+3/2*.5}) -- ({0+.5},{3/2*.5}) -- cycle;
		\draw [dotted] (0,0) -- (5,0) -- (5,2) -- (0,2);
		\draw [dotted,xshift=.5cm,yshift=.75cm] (0,0) -- (5,0) -- (5,2) -- (0,2);
		\draw [dotted,yshift=3cm,xshift=2cm] (0,0) -- (5,0) -- (5,2) -- (0,2);
		\draw [dotted,xshift=2.5cm,yshift=3.75cm] (0,0) -- (5,0) -- (5,2) -- (0,2);
		\draw [decorate,decoration={brace,amplitude=4pt},xshift=-1pt,yshift=2pt]
		(0,2) -- (2.5,{5+3/2*.5}) node [black,midway,xshift=-1.05cm,yshift=.1cm] {\footnotesize $\cO(D^1 \times D^0)$};
		\draw [thick] (0,1) -- (1,3) -- (.9,2.4) -- (2.5,4.5);
		\node at (1.35,3.5) {$X$};
		\node at (3,1.3) {$X'$};
		\node at (4,4.15) {$X'$};
		\node at (5,-.25) {$1$};
		\node at (0,-.25) {$0$};
	\end{tikzpicture}
	\label{fig:microflexible}
	\caption{The submanifolds $X$ and $X'$ for $i=0$, $d=1$, $r=1$ and $k=1$. The goal is to extend $X$ rightwards a little bit. The strategy is to straighten out $X'$ near $0$, and extend $X$ by translation.}
\end{figure}
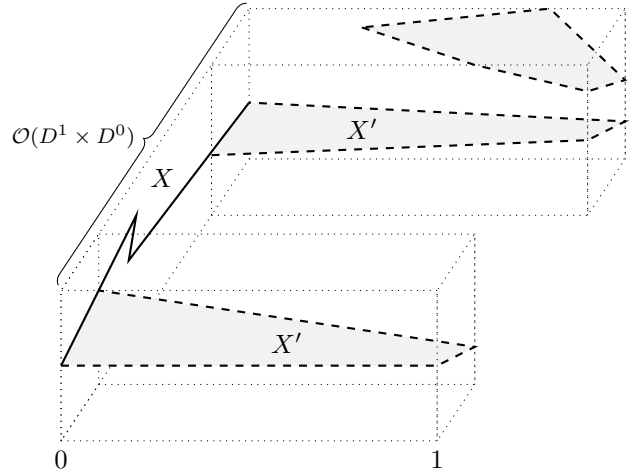

\vspace{.5em}

\noindent \textbf{Claim 1.} \emph{In the case $i=0$, we may assume that $X' = \cO([0,1]) \times X'_0$.}

\begin{proof}Since $X'$ is closed, so is the fibre $X'_0$ over $0 \in \cO([0,1])$ and hence $X'_0 \cap (S^{r-1} \times D^{k-r})$ is compact. Thus there is a finite collection of relative submersion charts for $(\cO([0,1]) \times \cO(S^{r-1} \times D^{k-r}),X') \to \cO([0,1])$ covering $X'_0 \cap (S^{r-1} \times D^{k-r})$. Using the union lemma (Theorem \ref{thm.unionsubmersion} and the remarks following it) with $B = \cO([0,1])$, $b_0  = 0$, and $(E,E') = (\cO([0,1]) \times \cO(S^{r-1} \times D^{k-r}),X)$, we can combine these into a single relative submersion chart. By shrinking its domain we may assume it is a product, and obtain a closed interval $I \subset \cO([0,1])$ containing $\{0\}$ in its interior and an open subset $V \subset \cO(S^{r-1} \times D^{k-r})$ containing $S^{r-1} \times D^{k-r}$, together with an open embedding 
\[
	\varphi \colon I \times V \lra I \times \cO(S^{r-1} \times D^{k-r})
\] 
over $I$ so that $\varphi^{-1}(X') = I \times (V \cap X'_0)$ and $\varphi$ is the inclusion over $\{0\}$.

Pick a compact subset $L$ of $V$ containing $S^{r-1} \times D^{k-r}$ in its interior, e.g.\ an annulus. Then let $K \coloneqq L \cap X'_0$, a compact subset of $X'_0$. We apply the form of the isotopy extension theorem in \cref{thm.isotopyextension} with $k=1$ by identifying $I$ with $[0,1]$, $t$ corresponding to $0$ under this identification, $e = \varphi$, $V = V$, $N = \cO(S^{r-1} \times D^{k-r})$, and $K \subset V$. The conclusion is that there exists a homeomorphism $\Phi \colon I \times \cO(S^{r-1} \times D^{k-r}) \to I \times \cO(S^{r-1} \times D^{k-r})$ over $I$, which is the identity over $0$ and compactly-supported in the $\cO(S^{r-1} \times D^{k-r})$-direction, such that $\Phi$ agrees with $\phi$ near $S^{r-1} \times D^{k-r}$. Since it is compactly-supported in the $\cO(S^{r-1} \times D^{k-r})$-direction, we may extend it by the identity to $I \times \cO(D^r \times D^{k-r})$.

Upon applying $\Phi^{-1}$ to $X'$ and $\Phi_0^{-1}$ to $X$, we may assume that $X'$ contains the product $U \times K$. As $(I \times L) \cap X'$ is compact because $X'$ is closed and $K = X'_0 \cap L$, there is an open neighbourhood $U'$ of $0$ in $I$ and an open neighbourhood $V'$ of $S^{r-1} \times D^{k-r}$ in $\mathrm{int}(L)$ such that $(U' \times V') \cap X' = U' \times (V' \times K)$. Reparametrizing the interval, taking $\cO([0,1]) \coloneqq U'$ and $\cO(S^{r-1},D^{k-r}) = V'$, we have achieved our stated goal.\end{proof}

\vspace{.5em}

Let us now do the case $i > 0$. The main difference is that we now need to take care of the $D^i$'s. This involves an additional open subset of $\bR^i$ containing $D^i$, which we think of not as additional parameters but as a providing a collection of closed subsets to be respected (we comment on this in \cref{rem.why-respect}). The top map is represented by the following data: 
\begin{itemize}
	\item open subsets $\cO(D^i) \subset \bR^i$ and $\cO(D^r \times D^{k-r}) \subset \bR^k$,
	\item  a closed topological submanifold $X \subset \cO(D^i) \times \cO(D^{r} \times D^{k-r})$ such that $\pi \colon (\cO(D^i) \times \cO(D^{r} \times D^{k-r}),X) \to \cO(D^i)$ is a relative topological submersion of relative dimension $(r,d)$, and
	\item a $\xi$-structure $\varphi_X$ on its vertical tangent microbundle $T_\pi X$.
\end{itemize}

The bottom map is represented by similar data: 
\begin{itemize}
	\item open subsets $\cO([0,1]) \subset \bR$, and $\cO(S^{r-1} \times D^{k-r}) \subset \bR^k$,
	\item a closed topological submanifold $X' \subset \cO([0,1])\times \cO(D^i) \times \cO(S^{r-1} \times D^{k-r})$ such that $\pi' \colon (\cO([0,1])\times \cO(D^i) \times \cO(S^{r-1} \times D^{k-r}),X') \to \cO([0,1]) \times \cO(D^i)$ is a relative topological submersion of relative dimension $(r,d)$, and
	\item a $\xi$-structure $\varphi_{X'}$ on its vertical tangent microbundle $T_{\pi'} X'$.
\end{itemize}

We again use the abbreviations $\overline{X} = (X,\varphi_X)$ and $\overline{X}' = (X',\varphi_{X'})$. Without loss of generality $\cO(S^{r-1} \times D^{k-r}) \subset \cO(D^r \times D^{k-r})$, and $\overline{X}'_0 = \overline{X} \cap \cO(D^i) \times \cO(S^{r-1} \times D^{k-r})$. Here we retain the use of $\overline{X}'_0$ for $\overline{X} \cap (\{0\} \times  \cO(D^i) \times \cO(S^{r-1} \times D^{k-r}))$, which coincides with $(\pi')^{-1}(\{0\} \times \cO(D^i))$. We will mimic the strategy in the case $i=0$ and our goal is to show that we may assume that $X' = \cO([0,1]) \times X'_0$. Given this, we can extend by $\cO([0,1]) \times X$ as before, and the $\xi$-structures may be taking care of as before.

\vspace{.5em}

\noindent \textbf{Claim 2.} \emph{In the case $i>0$, we may assume that $X' = \cO([0,1]) \times X'_0$.}

\begin{proof}To do so, consider the composite projection 
\[q \colon (\cO([0,1]) \times \cO(D^i) \times \cO(S^{r-1} \times D^{k-r}),X') \lra \cO([0,1]) \times \cO(D^i) \lra \cO([0,1]).\]
There is a collection $\mathscr{C}$ of closed subsets in $\cO([0,1]) \times \cO(D^i) \times \cO(S^{r-1} \times D^{k-r}$ given by $(\cO([0,1]) \times \{x\} \times \cO(S^{r-1} \times D^{k-r}))$ with $x \in \cO(D^i)$. Relative submersion charts for $X' \to \cO([0,1]) \times \cO(D^i)$ give rise to relative submersion charts for $q$ respecting $\mathscr{C}$, because respecting $\mathscr{C}$ means preserving the projection to $\cO(D^i)$. As before, we combine these using \cref{thm.unionsubmersion} into a single respectful relative submersion chart for $q$ and upon shrinking its domain obtain a closed interval $I \subset \cO([0,1])$ containing $\{0\}$ in its interior and an open subset $V \subset \cO(S^{r-1} \times D^{k-r})$ containing $S^{r-1} \times D^{k-r}$, together with an open embedding
\[
	\varphi \colon I \times (X'_0 \cap \cO(D^i) \times V) \lra X' \cap (\cO([0,\epsilon]) \times \cO(D^i) \times \cO(S^{r-1} \times D^{k-r})
\]
over $I \times \cO(D^i)$ so that $\varphi^{-1}(X') = I \times ((\cO(D^i) \times V) \cap X'_0)$ and $\varphi$ is the inclusion over $\{0\}$.

Pick a compact subset $L \subset V$ containing $S^{r-1} \times D^{k-r}$ in its interior and let $K \coloneqq (\cO(D^i) \times L) \cap X'_0$. Using respectful isotopy extension as in Theorem \ref{thm.isotopyextension} with respect to closed subsets $\cO([0,1]) \times \{x\} \times \cO(S^{r-1} \times D^{k-r})$, we obtain a homeomorphism $\Phi \colon I \times \cO(D^i) \times \cO(S^{r-1} \times D^{k-r}) \to I \times \cO(D^i) \times \cO(S^{r-1} \times D^{k-r})$ over $I \times \cO(D^i)$ which is compactly supported in the $\cO(S^{r-1} \times D^{k-r})$-direction. Applying $\Phi^{-1}$, restricting and renaming, we again have achieved our stated goal.\end{proof}

Having proved both the cases $i=0$ and $i>0$, we are done.
\end{proof}

\begin{remark}\label{rem.why-respect} Let us comment on the reason for considering the $D^i$-parameter as a collection of closed subsets to be respected instead of an additional set of parameters. Firstly, \cref{thm.unionsubmersion} is a statement about charts for a subset of a \emph{single} fibre. We are thus forced to think of the intersection of $X'$ with $\{0\} \times \cO(D^i) \times \cO(S^{r-1} \times D^{k-r})$ as a fibre of the map to $\cO([0,1])$. To keep track of $\cO(D^i)$-parameters we recast them as fibres of the projection to $\cO(D^i)$. Secondly, at the end of the proof, isotopy extension is used. Ordinary versions of this require as input an isotopy of a \emph{fixed} subset, but the fibres of $K$ change while moving over $\cO(D^i)$. Hence we think of $X'$ as obtained by single-parameter isotopy of $K$ respecting the closed subsets by fibres of the projection to $\cO(D^i)$.\end{remark}

A consequence of this proposition and Theorem \ref{thm.hprinciple} is the following:

\begin{proposition}There is a weak equivalence $B\cat{Cob}^{\mr{Top},\xi}(d,n) \simeq \Omega^{n-1} \Psi^{\mr{Top},\xi}_d(\bR^n)$ natural in $\xi$. Furthermore, for all $n \geq 1$ there is a zigzag of commutative diagrams
\[\begin{tikzcd} B\cat{Cob}^{\mr{Top},\xi}(d,n) \dar & \lar[swap]{\simeq} \cdots \dar \rar{\simeq}  & \Omega^{n-1} \Psi^{\mr{Top},\xi}_d(\bR^n) \dar \\[-4pt]
B\cat{Cob}^{\mr{Top},\xi}(d,n+1) & \lar[swap]{\simeq} \cdots \rar{\simeq} & \Omega^{n} \Psi^{\mr{Top},\xi}_d(\bR^{n+1}).\end{tikzcd}\]\end{proposition}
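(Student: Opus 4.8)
The plan is to assemble this from the pieces already established in Sections \ref{sec:cobordismcat} and \ref{sec.microflexibility}, running the same scanning-free argument used in the smooth case of Section \ref{sec.smoothcobclass}, but now with microflexibility standing in for the iterated delooping. First I would produce the weak equivalence for fixed $n$. We have the zigzag
\[B\cat{Cob}^{\mr{Top},\xi}(d,n) \xleftarrow{\simeq} ||\psi^\mr{Top,\xi}_\cat{Cob}(d,n,1)_\bullet|| \xrightarrow{\epsilon} \psi^{\mr{Top},\xi}(d,n,1),\]
in which the left map is a weak equivalence by Lemma \ref{lem.longtocob} and $\epsilon$ is a weak equivalence by Proposition \ref{prop.longworestrictionstocob}. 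So it remains to identify $\psi^{\mr{Top},\xi}(d,n,1)$, the simplicial set of long manifolds in $\bR \times (0,1)^{n-1}$, with $\Omega^{n-1}\Psi^{\mr{Top},\xi}_d(\bR^n)$. This is where Gromov's $h$-principle enters: $\Psi^{\mr{Top},\xi}_d(-)$ is microflexible by the proposition just proved, so Theorem \ref{thm.hprinciple} applies. I would run the $h$-principle on the open manifold $\bR^n = \bR \times \bR^{n-1}$ relative to the closed subset $A$ which is the complement of $(0,1)^{n-1}$ in the last $n-1$ coordinates (more precisely, relative to sections that are ``empty'' off the middle cube and translationally invariant near the two ends in the first coordinate); no component of $\bR^n \setminus A$ has compact closure, so the hypothesis of Theorem \ref{thm.hprinciple} is met. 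The flexification $\Phi^f$ evaluated on $\bR^n$ is the space of sections of the associated bundle $\tau_{\bR^n}\times_{\mr{Top}(n)}\Psi^{\mr{Top},\xi}_d(\bR^n)$, which over the contractible base $\bR^n$ with its standard framing is just the mapping space, and cutting down to sections supported in the middle cube and constant near the ends of the first coordinate turns this into $\Omega^{n-1}$ of the fiber $\Psi^{\mr{Top},\xi}_d(\bR^n)$, with the extra loop coordinate being the first (translation) direction absorbed into the constancy condition. This yields $\psi^{\mr{Top},\xi}(d,n,1) \simeq \Omega^{n-1}\Psi^{\mr{Top},\xi}_d(\bR^n)$ and hence the first claim; naturality in $\xi$ is immediate since every construction (the cobordism category, the long-manifold model, the microflexible sheaf, and Gromov's flexification) is functorial in the tangential structure.

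For the second claim I would check that all of the above weak equivalences are compatible with the stabilization maps as $n$ increases. On the cobordism-category side the map $\cat{Cob}^{\mr{Top},\xi}(d,n)\to\cat{Cob}^{\mr{Top},\xi}(d,n+1)$ is induced by the inclusion $\bR^n\hookrightarrow\bR^{n+1}$ on the last $n$ coordinates, which is compatible with the long-manifold models $\psi^\mr{Top,\xi}_\cat{Cob}(d,n,1)_\bullet$ and with $\epsilon$ by inspection of the definitions, so the left square of the desired diagram commutes on the nose. On the right, the map $\Omega^{n-1}\Psi^{\mr{Top},\xi}_d(\bR^n)\to\Omega^n\Psi^{\mr{Top},\xi}_d(\bR^{n+1})$ should be the one induced by looping the adjoint of the spectrum structure map of $\Psi^{\mr{Top},\xi}(d)$ from Definition \ref{def.topspectrum}, i.e.\ $\bar{X}\mapsto \iota(\bar{X})$ (inclusion on the last $n$ coordinates, with the new first coordinate absorbed into a loop). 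The content is that the $h$-principle identification intertwines the geometric ``include and stretch'' operation on long manifolds with this spectrum-level map; this is checked exactly as in the smooth case of Section \ref{sec.smoothcobclass} by tracing a point of $\psi^{\mr{Top},\xi}(d,n,1)$ through the flexification, using that the new $\bR$-direction can be introduced either as an extra end-to-end coordinate of the long manifold or as an extra loop, and these agree up to the canonical homotopy already present because the associated-bundle model only commutes up to homotopy (the remark after Gromov's proposition). One then fills in the dotted middle columns by naturality of Lemma \ref{lem.longtocob} and Proposition \ref{prop.longworestrictionstocob} in the ambient dimension.

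The main obstacle is the bookkeeping in extracting $\Omega^{n-1}$ from Gromov's flexification: one must be careful that ``section supported in $(0,1)^{n-1}$, translationally constant near the two ends of the $\bR$-factor'' really produces $n-1$ genuine loop coordinates and one collapsed $\bR$-factor, and that the resulting identification is the stable one compatible with the spectrum structure maps, rather than off by a shuffle or a sign of the loop coordinates. Concretely, one pairs the relative-$h$-principle statement $\Psi^{\mr{Top},\xi}_d(M \rel A)\simeq \Phi^f(M\rel A)$ with the section-space description, observing that sections of a trivialized bundle over $I^{n-1}\times \bR$ that are basepoint off a compact set of the $I^{n-1}$ directions and constant near $\pm\infty$ in the $\bR$ direction form $\Omega^{I^{n-1}}\Psi$ with the $\bR$-coordinate contractible; this is the same manipulation as in \cite{grwmonoids,rwembedded}, so I would cite that precedent and only indicate the translation to the topological setting. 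Everything else — Kanness, the fibrancy of source/target maps, and the connectivity estimates — has already been arranged in the preceding lemmas and is not needed for this statement.
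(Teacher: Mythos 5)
Your proposal follows the paper's proof essentially step for step: after the zigzag through $||\psi^{\mr{Top},\xi}_{\cat{Cob}}(d,n,1)_\bullet||$ (Lemma \ref{lem.longtocob} and Proposition \ref{prop.longworestrictionstocob}), the paper applies Theorem \ref{thm.hprinciple} to $M=\bR\times D^{n-1}$ with boundary condition $\varnothing$ near $\partial M$, uses parallelizability to identify the flexible sheaf with $\mr{Map}_\partial(M,\Psi^{\mr{Top},\xi}_d(\bR^n))\simeq\Omega^{n-1}\Psi^{\mr{Top},\xi}_d(\bR^n)$, and handles the stabilization diagram by choosing compatible maps $T\bR^n\to\bR^n$ and $T\bR^{n+1}\to\bR^{n+1}$ that are embeddings on each fiber — the same route you describe. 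The one slip is that $\psi^{\mr{Top},\xi}(d,n,1)$ carries no ``translationally invariant/constant near the ends of the $\bR$-factor'' condition: that direction is simply unconstrained and, being contractible, contributes no loop coordinate, which is exactly why the answer is $\Omega^{n-1}$ rather than $\Omega^{n}$; your final count is right, but that clause should be dropped.
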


\begin{proof}Apply Theorem \ref{thm.hprinciple} to $M \coloneqq \bR \times D^{n-1}$ with boundary condition $\varnothing$ near $\partial M = \bR \times \partial D^{n-1}$. Then we obtain that 
\[\Psi^{\mr{Top},\xi}(d,M \rel \partial M) \simeq \mr{Map}_\partial(M,\Psi^{\mr{Top},\xi}(d,\bR^n)) \simeq \Omega^{n-1} \Psi^{\mr{Top},\xi}(d,\bR^n)\]
where in the first weak equivalence we used that $\bR \times D^{n-1}$ is parallellizable. The map $\iota \colon \bR^n \hookrightarrow \bR^{n+1}$ induces compatible vertical maps; for the right hand-side we use that we can write down compatible maps $T\bR^n \to \bR^n$ and $T\bR^{n+1} \to \bR^{n+1}$ which are an embedding on each fibre.
\end{proof}

The following corollary is obtained from the previous proposition by letting $n \to \infty$, and is phrased using the spectrum of Definition \ref{def.topspectrum}.

\begin{corollary}\label{cor.topbordism} We have that $B\cat{Cob}^\mr{Top,\xi}(d) \simeq \Omega^{\infty-1}\Psi^\mr{Top,\xi}(d)$.
\end{corollary}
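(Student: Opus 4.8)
The plan is to obtain Corollary \ref{cor.topbordism} from the preceding proposition by passing to the colimit $n \to \infty$, exactly in parallel with the smooth discussion in Section \ref{sec.smoothcobclass} leading to Theorem \ref{thm.cobinfiniteloop}. Recall that $\cat{Cob}^\mr{Top,\xi}(d)$ is defined as $\mr{colim}_{n\to\infty} \cat{Cob}^\mr{Top,\xi}(d,n)$, and that the spectrum $\Psi^\mr{Top,\xi}(d)$ has $n$th level $\Psi^\mr{Top,\xi}_d(\bR^n)$, so that $\Omega^{\infty-1}\Psi^\mr{Top,\xi}(d) = \mr{colim}_{n\to\infty} \Omega^{n-1}\Psi^\mr{Top,\xi}_d(\bR^n)$, the colimit being taken along the adjoints of the structure maps.

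First I would invoke the second half of the preceding proposition: it furnishes, for each $n \geq 1$, a zigzag of commutative squares connecting the stabilization map $B\cat{Cob}^\mr{Top,\xi}(d,n) \to B\cat{Cob}^\mr{Top,\xi}(d,n+1)$ to the map $\Omega^{n-1}\Psi^\mr{Top,\xi}_d(\bR^n) \to \Omega^{n}\Psi^\mr{Top,\xi}_d(\bR^{n+1})$ induced by the adjoint of the spectrum structure map, through a fixed intermediate diagram whose horizontal arrows are all weak equivalences. One must check that these zigzags are compatible as $n$ varies — i.e.\ that they assemble into a zigzag of towers indexed by $n \in \bN$ — which is how the proposition is phrased (the intermediate terms in each row are the same kind of object, namely $B$ of the Segal object $\psi^\mr{Top,\xi}_\cat{Cob}(d,n,1)_\bullet$ together with $\psi^\mr{Top,\xi}(d,n,1)$ and the section-space model coming from Gromov's $h$-principle, and the vertical maps are induced by $\iota\colon\bR^n \hookrightarrow \bR^{n+1}$ throughout). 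Then I would take $\mr{colim}_{n\to\infty}$ of the whole zigzag of towers.

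Next I would use that filtered (sequential) colimits of spaces commute with the formation of homotopy colimits of diagrams, and in particular that a levelwise weak equivalence of towers of spaces induces a weak equivalence on the colimits — here it suffices that each horizontal map in the zigzag is a weak equivalence and that sequential colimits along the vertical maps are homotopy colimits, which holds because the vertical maps $\psi^\mr{Top,\xi}(d,n,p)\to\psi^\mr{Top,\xi}(d,n+1,p)$ are monomorphisms of simplicial sets (hence cofibrations), and likewise on the nerve level. Passing to the colimit over $n$ in the zigzag therefore yields a zigzag of weak equivalences
\[
B\cat{Cob}^\mr{Top,\xi}(d) \;=\; \mr{colim}_n B\cat{Cob}^\mr{Top,\xi}(d,n) \;\simeq\; \cdots \;\simeq\; \mr{colim}_n \Omega^{n-1}\Psi^\mr{Top,\xi}_d(\bR^n) \;=\; \Omega^{\infty-1}\Psi^\mr{Top,\xi}(d),
\]
which is the claim.

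The main obstacle I anticipate is purely bookkeeping: verifying that the colimit appearing on the mapping-space side, $\mr{colim}_n \Omega^{n-1}\Psi^\mr{Top,\xi}_d(\bR^n)$, is formed along precisely the adjoints of the structure maps of the spectrum $\Psi^\mr{Top,\xi}(d)$ from Definition \ref{def.topspectrum} — that is, that the vertical maps produced by the $h$-principle (induced by $\iota\colon\bR^n\hookrightarrow\bR^{n+1}$ and by the chosen compatible maps $T\bR^n\to\bR^n$) agree, up to the coherent homotopies packaged in the zigzag, with the maps $\Sigma\Psi^\mr{Top,\xi}_d(\bR^n)\to\Psi^\mr{Top,\xi}_d(\bR^{n+1})$ given by $(t,\bar X)\mapsto \iota(\bar X)+t\cdot e_1$. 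This is the point where one must ``trace through the arguments'' exactly as in the smooth case; once the identification of the two stabilization maps is in hand, taking the sequential colimit is formal. No genuinely new geometric input is needed here — all of it was already absorbed into the microflexibility of $\Psi^\mr{Top,\xi}_d(-)$ and the application of Theorem \ref{thm.hprinciple} in the preceding proposition.
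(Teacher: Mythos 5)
Your proposal is correct and is essentially the paper's own argument: the paper deduces the corollary from the preceding proposition simply by letting $n \to \infty$, relying on the compatibility of the zigzags with the stabilization maps (which the proposition records) and on the fact that the colimit of the loop spaces along the adjoints of the structure maps is by definition $\Omega^{\infty-1}\Psi^{\mr{Top},\xi}(d)$. The extra points you verify — that the inclusions are monomorphisms of simplicial sets so the sequential colimits are homotopy colimits, and that the vertical maps agree with the adjoint structure maps — are exactly the bookkeeping the paper leaves implicit.
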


\section{Smoothing spectra of topological manifolds} \label{sec.smoothing} 

Given a $d$-dimensional vector bundle $\upsilon$, which is in particular a topological $\bR^d$-bundle, there is heuristically a map of spectra $\Psi^{\mr{Diff},\upsilon}(d) \to \Psi^{\mr{Top},\upsilon}(d)$ obtained by forgetting the smooth structures. Of course the former is a spectrum of topological spaces and the latter of simplicial sets, so this does not make sense without the following convention:

\begin{convention}\label{conv.smooth-simplices} In this section, whenever we talk about smooth manifolds or spaces derived from them, \emph{we take their smooth singular simplicial sets} as in Definition \ref{def.smoothsimplices} without changing the notation.\end{convention}

Having established this, there \emph{is} a map of spectra of simplicial sets
\[\Psi^{\mr{Diff},\upsilon}(d) \lra \Psi^{\mr{Top},\upsilon}(d),\]
and in this section we prove that this map is nearly always a weak equivalence:

\begin{theorem}\label{thm.smoothspectra} If $d \neq 4$, the map $\Psi^{\mr{Diff},\upsilon}(d) \to \Psi^{\mr{Top},\upsilon}(d)$ is a weak equivalence.
\end{theorem}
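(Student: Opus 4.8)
The plan is to reduce the statement to a fiberwise (in fact pointwise-in-$n$) comparison and then apply smoothing theory for topological manifolds in the form of a relative $h$-principle, exactly as advertised in the introduction. It suffices to prove that each level map $\Psi^{\mr{Diff},\upsilon}_d(\bR^n) \to \Psi^{\mr{Top},\upsilon}_d(\bR^n)$ is a weak equivalence of pointed simplicial sets (where on the left we use smooth singular simplices by Convention \ref{conv.smooth-simplices}). Because $\upsilon \mapsto \Psi^{\mr{Diff},\upsilon}(d)$ and $\upsilon \mapsto \Psi^{\mr{Top},\upsilon}(d)$ both take weak equivalences of $d$-dimensional vector bundles to weak equivalences (Lemma \ref{lem.vectorweq} and its topological analogue; combined with Lemma \ref{lem.realizationvectorbundle}, Lemma \ref{lem.trivialvectorbundle}, and the excision results Theorem \ref{thm.excisiontangentsmooth} and its topological counterpart), we may reduce along a semisimplicial resolution to the case where $\upsilon$ is trivial, or indeed handle the tangential structure by the same excision-in-$\upsilon$ machinery as in Section \ref{sec.smoothexcision}; so the heart of the matter is the case of an arbitrary (say trivial, or CW-base) $\upsilon$ on each level.

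\textbf{The comparison via $h$-principles.} On the topological side, Section \ref{sec.microflexibility} shows $\Psi^{\mr{Top},\xi}_d(-)$ is a microflexible invariant sheaf and that Theorem \ref{thm.hprinciple} applies; the same formalism (now due to Gromov in the original smooth setting, cf.\ \cite{gromovhp, rwembedded}) identifies $\Psi^{\mr{Diff},\upsilon}_d(-)$ as a microflexible invariant sheaf on smooth manifolds whose flexibilization is computed by sections of an associated bundle with fiber $\Psi^{\mr{Diff},\upsilon}_d(\bR^d)$, resp.\ $\Psi^{\mr{Top},\upsilon}_d(\bR^d)$, over a $\mr{Top}(d)$- resp.\ $O(d)$-bundle. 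The map of sheaves $\Psi^{\mr{Diff},\upsilon}_d(-) \to \Psi^{\mr{Top},\upsilon}_d(-)$ (defined after smoothing a smooth manifold to its smooth singular simplicial set, then forgetting the smooth structure on the fiberwise submanifold) is a map of microflexible sheaves, hence induces a map on flexibilizations. Evaluating on $M = \bR^n$, both sides are sections spaces over a point, so it suffices to check the map is a weak equivalence on the ``germ at $\bR^d$'' fibers, i.e.\ that $\Psi^{\mr{Diff},\upsilon}_d(\bR^d) \to \Psi^{\mr{Top},\upsilon}_d(\bR^d)$ is a weak equivalence after taking into account the change of structure group $O(d) \to \mr{Top}(d)$. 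By Lemma \ref{lem.topgrassmannian} and its smooth analogue, the relevant local models are the smooth and topological Grassmannians of $d$-planes, which are $BO(d)$ and $B\mr{Top}(d)$ respectively, and the fiber spaces $\Psi^{?,\upsilon}_d(\bR^d)$ are (by Lemma \ref{lem.stableobjectsbtop}-type arguments, using that $\bR^d \subset \bR^d$ is the unique $d$-dimensional submanifold up to isotopy, being locally flat and closed) weakly equivalent to the space of $\upsilon$-structures on $T\bR^d$, i.e.\ contractible once one divides by the structure group. So the comparison amounts to the statement that the map $B O(d) \to B\mr{Top}(d)$ is ``invisible'' after forming the associated $\Psi$-sheaf — which is tautological — together with the geometric input that every topological submersion of relative dimension $d$ (with the appropriate boundary/parameter conditions) can be smoothed rel the region where it is already smooth, for $d \neq 4$.

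\textbf{The main obstacle: stable smoothing theory for topological submersions.} The real content, and the only place $d \neq 4$ enters, is a parametrized relative smoothing theorem: given a topological submersion $X \to \partial D^{i+1}$ of relative dimension $d$ inside $\partial D^{i+1} \times \bR^n$, together with a $\upsilon$-structure on its vertical tangent microbundle, and a smoothing already present over a closed subset, one can isotope $X$ (through topological submersions, carrying the $\upsilon$-structure along via the microbundle homotopy covering theorem, Theorem \ref{thm.microbundlecovering}) to one admitting a compatible smooth structure making $\pi$ a smooth submersion. For $d \neq 4$ this follows from Kirby--Siebenmann smoothing theory \cite{kirbysiebenmann} in its parametrized/relative form: the obstruction to smoothing a topological manifold (or handle-by-handle, a submersion) lies in cohomology with coefficients in the homotopy fiber of $BO \to B\mr{Top}$, and the existence and essential uniqueness of smoothings is governed by $\mr{Top}(d)/O(d)$, whose low-dimensional behaviour is understood precisely when $d \neq 4$ (the concordance classes of smoothings form a sheaf classified by maps to $\mr{Top}/O$, and one uses handle decompositions of $X$ — available since its fibers are $d$-manifolds with $d\ne 4$, cf.\ the remark after Lemma \ref{lem.psi-connectivity}). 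I expect the bulk of the work to be: (i) setting up the relative version carefully so that it can be iterated over handles and over the parameter $\partial D^{i+1}$, using the union theorem for submersion charts (Theorem \ref{thm.unionsubmersion}) to patch local smoothings; (ii) checking that carrying the $\upsilon$-structure along the smoothing isotopy is harmless, which is where Lemma \ref{lem.universaltangential} and the relative classifying property of the universal bundle are used; and (iii) verifying that the resulting map of simplicial sets is a bijection on $\pi_0$ and an isomorphism on all higher homotopy groups by representing classes as in Section \ref{sec.ssetsmooth} by submanifolds of $\partial D^{i+1} \times \bR^n$ and applying the smoothing theorem to null-homotopies. The $d=4$ restriction is genuine: as noted in the introduction, removing it requires a parametrized stable smoothing theory for topological $4$-manifolds à la \cite{lashofshaneson} or \cite[Section 8.6]{freedmanquinn}, which is not available.
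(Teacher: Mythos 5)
There is a genuine gap, and it starts with your very first reduction. You claim it suffices to show that each level map $\Psi^{\mr{Diff},\upsilon}_d(\bR^n) \to \Psi^{\mr{Top},\upsilon}_d(\bR^n)$ is a weak equivalence, and your subsequent argument is aimed at that statement. But for fixed $n$ these maps are not expected to be equivalences at all: the comparison of a smooth with a topological family embedded in $\bR^n$ involves the comparison of embedding spaces $\mr{Emb}^\mr{Diff}_\partial(M_\sigma,N) \to \mr{Emb}^\mr{Top}_\partial(M,N)$, which by Lashof is only connected in a range of order $2n-3d$, not an equivalence. The paper accordingly proves only that the $n$th level map is $(2n-3d-3)$-connected (Proposition \ref{prop.embeddedsmoothing} and Lemma \ref{lem.sphericalconn}) and wins because this connectivity grows faster than the suspension degree; a proof aiming at a levelwise equivalence is aiming at something that is most likely false and in any case never established by your sketch. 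The $h$-principle step you propose to establish it is also incoherent as written: for the invariant sheaf $\Psi^{?,\upsilon}_d(-)$ on $n$-manifolds, the flexibilization has fiber $\Phi(\bR^n)=\Psi^{?,\upsilon}_d(\bR^n)$, so evaluating on the contractible manifold $\bR^n$ returns $\Psi^{?,\upsilon}_d(\bR^n)$ itself — a tautology — and does not reduce the comparison to ``germ'' fibers $\Psi^{?,\upsilon}_d(\bR^d)$ or to Grassmannian local models (you have conflated the ambient dimension $n$ with the fiber dimension $d$). The reduction to Grassmannian/Thom-space models is exactly the scanning argument, which the paper deliberately does not carry out in the topological category, and which your proposal does not supply either.

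The second gap is the ``parametrized relative smoothing theorem for topological submersions'' that you treat as a consequence of Kirby--Siebenmann: no such off-the-shelf statement is used or available in the form you need (fiberwise smoothing, compatible with a fixed embedding into $\partial D^{i+1}\times\bR^n$, for submersions with non-compact fibers, relative to a region where it already holds), and the paper does not prove one. What the paper actually does is resolve $\Psi^{\mr{Top},\upsilon}_d(\bR^n)$ by radial slices: semisimplicial objects whose $p$-simplices are families transverse to, radially constant near, and \emph{smooth} near spheres $S^{n-1}_{r_i}$ (Lemmas \ref{lem.sphericaltop} and \ref{lem.sphericaldiff}). This reduces the levelwise comparison, over fixed smooth slice data, to compact manifolds with boundary, where smoothing theory with tangential structure (Proposition \ref{prop.smoothingtangential}, via Corollary \ref{cor.smoothingx}) combines with Lashof's embedding comparison to give Proposition \ref{prop.embeddedsmoothing}. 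Making the topological resolution work is itself nontrivial: it needs microbundle transversality (Lemmas \ref{lem.slicedeform} and \ref{lem.slicedeformtangent}), a covering-dimension/coloring argument, and — because the slices are $(d-1)$-dimensional — a separate treatment of $d=5$ using Freedman--Quinn sum-stable smoothing with an explicit $S^2\times S^2$ stabilization. Your sketch addresses neither the need to smooth the \emph{embedding} (not just the abstract fibers) nor any mechanism replacing the slice resolution, so the ``main obstacle'' paragraph records the difficulty without resolving it.
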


\subsection{Smoothing theory with tangential structures} Before proving Theorem \ref{thm.smoothspectra}, we need to understand smoothing theory with tangential structures. Let $\upsilon$ be a $d$-dimensional vector bundle and $M$ be a topological manifold. Fix germs of a smooth structure and of a map of vector bundles $TM \to \upsilon$ near $\partial M$. For each smooth structure $\sigma$ extending the germ of smooth structure there is a map $\mr{Diff}_\partial(M_\sigma) \to \mr{Top}_\partial(M)$. The simplicial set $\mr{Bun}^\mr{Diff}_\partial(TM_\sigma,\upsilon)$ of vector bundle maps admits an action of $\mr{Diff}_\partial(M_\sigma)$, and this may be extended to an $\mr{Top}_\partial(M)$-action on the simplicial set $\mr{Bun}^\mr{Top}_\partial(TM,\upsilon)$ of microbundle maps. Thus there is a map of homotopy quotients
\[\begin{tikzcd}
	B\mr{Diff}^\upsilon_\partial(M_\sigma) = \mr{Bun}^\mr{Diff}_\partial(TM_\sigma,\upsilon)\sslash \mr{Diff}_\partial(M_\sigma) \dar \\[-4pt]
	 B\mr{Top}^\upsilon_\partial(M) = \mr{Bun}^\mr{Top}_\partial(TM,\upsilon)\sslash \mr{Top}_\partial(M).
 \end{tikzcd}\]

The target admits an alternative description, analogous to the smooth setting in Section \ref{sec.asidebundles}. Let us pick a map $\upsilon \to \upsilon^\mr{univ}$ of $d$-dimensional vector bundles, and $\upsilon^\mr{univ} \to \xi^\mr{univ}$  of topological $\bR^d$-bundles, both of which without loss of generality have underlying maps which are Hurewicz fibrations. Let $\cX_\partial^\upsilon(M)$ denote the space of commutative diagrams of maps of topological $\bR^d$-bundles
\[\begin{tikzcd} &[10pt] \upsilon \dar \\[-4pt]
& \upsilon^\mr{univ} \dar \\[-4pt]
TM \arrow{ru} \arrow{ruu} \rar  & \xi^\mr{univ},\end{tikzcd}\]
that are equal to the given germ near $\partial M$. Then there is a $\mr{Top}_\partial(M)$-equivariant map $\mr{Bun}^\mr{Top}_\partial(TM,\upsilon) \to \cX^\upsilon_\partial(M)$, given by composing with the maps $\upsilon \to \upsilon^\mr{univ}$ and $\upsilon^\mr{univ} \to \xi^\mr{univ}$. Because the top map uniquely determines the remaining maps, this is an isomorphism of simplicial sets and hence $B\mr{Top}^\upsilon_\partial(M) \cong \cX_\partial^\upsilon(M) \sslash \mr{Top}_\partial(M)$.

\begin{proposition}\label{prop.smoothingtangential} Let $d \neq 4$, $\upsilon$ be a $d$-dimensional vector bundle, $M$ be a topological manifold, and fix germs of a smooth structure and of a map of vector bundles $TM \to \upsilon$ near $\partial M$. Then the map 
	\[\bigsqcup_{[\sigma]} B\mr{Diff}^\upsilon_\partial(M_\sigma) \lra B\mr{Top}^\upsilon_\partial(M)\]
is a weak equivalence, where $[\sigma]$ ranges over diffeomorphism classes of smooth manifolds homeomorphic to $M$ rel a neighbourhood of $\partial M$.\end{proposition}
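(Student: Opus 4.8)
The plan is to reduce the statement to classical (unparametrized-in-the-tangential-structure) smoothing theory by an induction over a handle decomposition, exactly as in the Kirby--Siebenmann treatment of smoothing theory, but carrying the bundle data along. The key point is to identify both sides as values of sheaf-like functors on the category of topological manifolds and compare them. Write $\cS^\upsilon(M)$ for the homotopy colimit $\bigsqcup_{[\sigma]} B\mr{Diff}^\upsilon_\partial(M_\sigma)$ of the left-hand side; concretely, an element of $\cS^\upsilon(M)$ is a smooth structure on $M$ (rel the given germ near $\partial M$) together with a map of vector bundles $TM_\sigma \to \upsilon$ extending the given germ. Write $\cT^\upsilon(M) = \cX^\upsilon_\partial(M)\sslash \mr{Top}_\partial(M) = B\mr{Top}^\upsilon_\partial(M)$ for the right-hand side. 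Both are contravariant (up to coherent homotopy) in codimension-zero embeddings and send disjoint unions to products; more importantly, both satisfy a gluing/Mayer--Vietoris property over open covers, because smooth structures, vector bundle maps, and microbundle maps are all local data. The comparison map $\cS^\upsilon(M) \to \cT^\upsilon(M)$ forgets the smoothing and postcomposes a vector-bundle map with the underlying microbundle.

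\textbf{Key steps, in order.} First, I would verify the base case $M = \bR^d$ (or a disk $D^d$, rel a germ near $\partial D^d$ when needed): here $\cT^\upsilon(\bR^d) = \mr{Bun}^\mr{Top}(T\bR^d,\upsilon)\sslash \mr{Top}(d)$, which by the discussion in Section~\ref{sec.asidebundles} and its topological analogue is weakly equivalent to the space of lifts of the classifying map $\bR^d \to B\mr{Top}(d)$ of $\xi^\mr{univ}$ along $\upsilon \to \xi^\mr{univ}$, i.e.\ (since $\bR^d$ is contractible) to the fiber of $B\mr{O}(d) \to B\mr{Top}(d)$. On the smooth side, $\cS^\upsilon(\bR^d)$ decomposes over concordance classes of smooth structures on $\bR^d$; by the uniqueness of smooth structures on $\bR^d$ for $d\neq 4$ (and more generally the contractibility of the relevant smoothing space, here one uses $d \neq 4$) together with $B\mr{Diff}^\upsilon(\bR^d_{\mr{std}}) \simeq \mr{Bun}^\mr{Diff}(T\bR^d,\upsilon)\sslash \mr{Diff}(d) \simeq \mr{fib}(B\mr{O}(d) \to B\mr{O}(d)) \times (\text{contractible})$ --- more precisely the whole of $\cS^\upsilon(D^d \rel \partial)$ computes the same homotopy fiber of $B\mr{O}(d) \to B\mr{Top}(d)$. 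This identification of base cases is precisely the classical statement that $\mr{Top}(d)/\mr{O}(d)$ controls smoothing theory, combined with the observation that adding compatible bundle data does not change the homotopy fiber. Second, I would run a handle induction: given a handle decomposition of $M$ rel $\partial M$, suppose the comparison is a weak equivalence for the union $M'$ of the handles attached so far, and for attaching a handle $D^r \times D^{d-r}$ use the gluing property to write $\cS^\upsilon$ and $\cT^\upsilon$ of $M = M' \cup_{S^{r-1}\times D^{d-r}} (D^r\times D^{d-r})$ as homotopy pullbacks of their values on $M'$, on the handle, and on the overlap $S^{r-1}\times D^{d-r}$; since handles and overlaps are (products with) Euclidean spaces, the base case plus the inductive hypothesis and the five-lemma for homotopy pullbacks give the step. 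Third, pass to the colimit for $M$ not compact (the statement is local enough, and $\mr{Diff}_\partial$, $\mr{Top}_\partial$ behave well under increasing unions); handle decompositions exist because $d \neq 4$.

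\textbf{Main obstacle.} The genuinely delicate point is establishing the gluing (homotopy-sheaf) property for $\cS^\upsilon$ and $\cT^\upsilon$ \emph{as homotopy quotients}, i.e.\ descent for $B\mr{Diff}_\partial(-)$-type functors along the handle attachment --- this is exactly the content of Morlet-style smoothing theory and of the Kirby--Siebenmann ``local-to-global'' principle, and it is where the restriction $d \neq 4$ really enters (through the existence of handle decompositions and through the contractibility, or correct homotopy type, of the relevant smoothing spaces in the base case). One must also be careful that the fibers of $B\mr{Diff}^\upsilon_\partial(M_\sigma) \to B\mr{Top}^\upsilon_\partial(M)$ can genuinely be identified with smoothing spaces with bundle data --- this uses that $\mr{Bun}^\mr{Diff}_\partial(TM_\sigma,\upsilon) \to \mr{Bun}^\mr{Top}_\partial(TM,\upsilon)$ is, after the identifications of Section~\ref{sec.asidebundles}, a comparison of section spaces of bundles whose fibers differ by $\mr{Top}(d)/\mr{O}(d)$, so that the total comparison is controlled fiberwise by the classical smoothing-theory fibration sequence. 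Once these structural facts are in place, the handle induction is formal, and I would expect the write-up to be short modulo citing Kirby--Siebenmann (and its microbundle/immersion-theoretic reformulation) for the descent and the base case.
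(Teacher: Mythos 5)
There is a genuine gap, and it sits exactly where you have placed the main weight of the argument. Your handle induction requires that $\cS^\upsilon(M)=\bigsqcup_{[\sigma]}B\mr{Diff}^\upsilon_\partial(M_\sigma)$ and $\cT^\upsilon(M)=B\mr{Top}^\upsilon_\partial(M)$ satisfy descent, i.e.\ that their value on $M=M'\cup_{S^{r-1}\times D^{d-r}}(D^r\times D^{d-r})$ is a homotopy pullback of their values on the pieces. But these functors are homotopy quotients by the \emph{global} automorphism groups $\mr{Diff}_\partial(M_\sigma)$ and $\mr{Top}_\partial(M)$, and such quotients are not local: a homeomorphism of $M$ does not preserve $M'$ or the handle, so there are not even restriction maps $\cT^\upsilon(M)\to\cT^\upsilon(M')$ along which to form the square, let alone a reason for it to be cartesian. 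The locality you invoke ("smooth structures, vector bundle maps, and microbundle maps are all local data") is a property of the \emph{structure spaces} $\mr{Sm}(-)$ and $\mr{Bun}(-)$, i.e.\ of the homotopy fiber of the comparison map, not of the classifying spaces themselves. Exploiting that locality correctly is precisely the content of Kirby--Siebenmann smoothing theory (flexibility of $\mr{Sm}(-)$ plus the $h$-principle), so your plan either silently re-proves Kirby--Siebenmann or must cite it as a black box --- and once you do the latter, the handle induction has nothing left to do and the remaining content of the proposition is the step your sketch treats as an aside, namely that adding the $\upsilon$-data does not change the fiber.

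For contrast, the paper's proof never touches handles. It quotes classical smoothing theory in the rephrased form $\bigsqcup_{[\sigma]}B\mr{Diff}_\partial(M_\sigma)\simeq \cX_\partial(M)\sslash\mr{Top}_\partial(M)$ (Corollary \ref{cor.smoothingx}) and then compares $\upsilon$ with $\upsilon^\mr{univ}$: it forms the square \eqref{eqn.squaresmoothing} whose bottom row is that classical statement, and shows the square is homotopy cartesian by computing both vertical homotopy fibers. On the left, $\mr{Bun}^\mr{Diff}_\partial(TM_\sigma,\upsilon^\mr{univ})$ is weakly contractible, so the fiber over the $\sigma$-component is $\mr{Bun}^\mr{Diff}_\partial(TM_\sigma,\upsilon)$; on the right, $\cX^\upsilon_\partial(M)\to\cX_\partial(M)$ is a fibration (since $\upsilon\to\upsilon^\mr{univ}$ is), and its fiber over $\ell\colon TM\to\upsilon^\mr{univ}$ is the space of microbundle maps covering $\ell$, which is exactly $\mr{Bun}^\mr{Diff}_\partial(TM_\sigma,\upsilon)$ for the smooth structure $\sigma$ that $\ell$ determines via smoothing theory (this is one place $d\neq 4$ is used, in addition to your appeal to handle decompositions). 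Your sentence asserting that "adding compatible bundle data does not change the homotopy fiber" is the statement that actually needs proof, and your proposal offers no argument for it; if you supply the fibration $\cX^\upsilon_\partial(M)\to\cX_\partial(M)$ and the identification of its fibers with smooth bundle maps, you will find you no longer need the handle induction at all.
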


\begin{proof}The maps $\mr{Bun}^\mr{Diff}_\partial(TM_\sigma,\upsilon) \to \cX^\upsilon_\partial(M)$ induce a map
\[\bigsqcup_{[\sigma]} \mr{Bun}^\mr{Diff}_\partial(TM_\sigma,\upsilon) \sslash \mr{Diff}_\partial(M_\sigma) \lra \cX_\partial^\upsilon(M) \sslash \mr{Top}_\partial(M).\]
Let $\cX_\partial(M)$ denote the space of commutative diagrams of topological $\bR^d$-bundle maps
	\[\begin{tikzcd} & \upsilon^\mr{univ} \dar \\
	TM \arrow{ru} \rar  & \xi^\mr{univ}.\end{tikzcd}\]
Then we have a map
\[\bigsqcup_{[\sigma]} \mr{Bun}^\mr{Diff}_\partial(TM_\sigma,\upsilon^\mr{univ}) \sslash \mr{Diff}_\partial(M_\sigma) \lra \cX_\partial(M) \sslash \mr{Top}_\partial(M)\]
which fits in a commutative diagram
\begin{equation}\label{eqn.squaresmoothing} \begin{tikzcd}\bigsqcup_{[\sigma]} \mr{Bun}^\mr{Diff}_\partial(TM_\sigma,\upsilon) \sslash \mr{Diff}_\partial(M_\sigma) \dar \rar & \cX_\partial^\upsilon(M) \sslash \mr{Top}_\partial(M) \dar \\[-2pt]
\bigsqcup_{[\sigma]} \mr{Bun}^\mr{Diff}_\partial(TM_\sigma,\upsilon^\mr{univ}) \sslash \mr{Diff}_\partial(M_\sigma) \rar & \cX_\partial(M) \sslash \mr{Top}_\partial(M),\end{tikzcd}\end{equation}
with bottom map a weak equivalence by smoothing theory, Corollary \ref{cor.smoothingx}. To prove that the top map is a weak equivalence, and hence the proposition, it suffices to prove that the diagram is homotopy cartesian.
	
Since $\mr{Diff}_\partial(M_\sigma)$ is a group, \cite[Proposition 1.6]{segalcategories} applies to tell us that
\[\begin{tikzcd} \bigsqcup_{[\sigma]} \mr{Bun}^\mr{Diff}_\partial(TM_\sigma,\upsilon) \rar \dar & \bigsqcup_{[\sigma]} \mr{Bun}^\mr{Diff}_\partial(TM_\sigma,\upsilon) \sslash \mr{Diff}_\partial(M_\sigma) \dar \\[-2pt]
\bigsqcup_{[\sigma]} \mr{Bun}^\mr{Diff}_\partial(TM_\sigma,\upsilon^\mr{univ}) \rar & \bigsqcup_{[\sigma]} \mr{Bun}^\mr{Diff}_\partial(TM_\sigma,\upsilon^\mr{univ}) \sslash \mr{Diff}_\partial(M_\sigma)\end{tikzcd}\]
is homotopy cartesian. Similarly, the following is homotopy cartesian:
\[\begin{tikzcd} \cX_\partial^\upsilon(M) \rar \dar & \cX_\partial^\upsilon(M) \sslash \mr{Top}_\partial(M) \dar \\[-2pt]
\cX_\partial(M) \rar & \cX_\partial(M) \sslash \mr{Top}_\partial(M).\end{tikzcd}\]
Because $\mr{Bun}^\mr{Diff}_\partial(TM_\sigma,\upsilon^\mr{univ})$ is weakly contractible by the proof of Lemma \ref{lem.universaltangential}, the homotopy fibre of the left vertical map of \eqref{eqn.squaresmoothing} over the component $\sigma$ of the left-bottom corner is $\mr{Bun}^\mr{Diff}_\partial(TM_\sigma,\upsilon)$.

On the other hand, we claim that the map $\cX_\partial^\upsilon(M) \to \cX_\partial(M)$ is a fibration. This is a consequence of the underlying map of $\upsilon \to \upsilon^\mr{univ}$ being a fibration and the bundle homotopy covering theorem. Thus the homotopy fibre of the right vertical map of \eqref{eqn.squaresmoothing} over a fixed topological $\bR^d$-bundle map $\ell \colon TM \to \upsilon^\mr{univ}$ is weakly equivalent to the fibre, which is the space of lifts of maps of topological $\bR^d$-bundles
	\[\begin{tikzcd} & \upsilon\dar \\ 
TM \arrow{ru} \rar{\ell}  & \upsilon^\mr{univ}.\end{tikzcd}\]

The map $\ell$ endows $TM$ with a vector bundle structure, which by smoothing theory comes from some smooth structure $\sigma$, unique up to isotopy. The condition that a topological $\bR^d$-bundle map $TM \to \upsilon$ is a vector bundle map with respect to this vector bundle structure is exactly that it covers $\ell$. Thus the natural inclusion of $\mr{Bun}^\mr{Diff}_\partial(TM_\sigma,\upsilon)$ into the homotopy fibre is an weak equivalence. We have thus shown that both vertical homotopy fibres of \eqref{eqn.squaresmoothing} are weakly equivalent to $\mr{Bun}^\mr{Diff}_\partial(TM_\sigma,\upsilon)$, and the map between them is weakly equivalent to the identity.
\end{proof}
	
We shall apply this to spaces of submanifolds of subsets of Euclidean space. In addition to the data fixed above, we fix a compact codimension $0$ smooth submanifold $N \subset \bR^n$ with smooth boundary $\partial N$. We also fix a smooth embedding $\partial M \hookrightarrow \partial N$ and a germ of an extension of this to a smooth embedding of a neighbourhood of $\partial M$ into $N$. 

\begin{proposition}\label{prop.embeddedsmoothing} let $d \neq 4$, the map
\[\begin{tikzcd} \bigsqcup_{[\sigma]} \mr{Bun}^\mr{Diff}_\partial(TM_\sigma,\upsilon) \times_{\mr{Diff}_\partial(M_\sigma)} \mr{Emb}^\mr{Diff}_\partial(M_\sigma,N) \dar \\[-4pt]
 \mr{Bun}^\mr{Top}_\partial(TM,\upsilon) \times_{\mr{Top}_\partial(M)} \mr{Emb}^\mr{Top}_\partial(M,N)\end{tikzcd}\]
is $(2n-3d-3)$-connected.\end{proposition}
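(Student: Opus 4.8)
The plan is to compare the two spaces by fibering over the respective spaces of embeddings, using Proposition \ref{prop.smoothingtangential} on the fibers and a connectivity estimate for the spaces of embeddings on the base. Concretely, both sides map to the corresponding space of embeddings: the map $\mr{Emb}^\mr{Diff}_\partial(M_\sigma,N) \to \mr{Emb}^\mr{Top}_\partial(M,N)$ obtained by forgetting the smooth structure (under Convention \ref{conv.smooth-simplices}), and after passing to homotopy quotients one gets fibration sequences with fibers $\bigsqcup_{[\sigma]} B\mr{Diff}^\upsilon_\partial(M_\sigma)$ and $B\mr{Top}^\upsilon_\partial(M)$ respectively. More precisely, I would use that $\mr{Bun}^\mr{Diff}_\partial(TM_\sigma,\upsilon) \times_{\mr{Diff}_\partial(M_\sigma)} \mr{Emb}^\mr{Diff}_\partial(M_\sigma,N)$ is the total space of a bundle over $\mr{Emb}^\mr{Diff}_\partial(M_\sigma,N)/\mr{Diff}_\partial(M_\sigma)$ with fiber $B\mr{Diff}^\upsilon_\partial(M_\sigma)$, and similarly on the topological side; alternatively and more robustly, I would build a commutative square whose columns are the two maps in the statement and whose rows are the evident maps to $\mr{Emb}^\mr{Diff}_\partial(M_\sigma,N) \sslash \mr{Diff}_\partial(M_\sigma)$ and $\mr{Emb}^\mr{Top}_\partial(M,N) \sslash \mr{Top}_\partial(M)$.

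The key steps, in order, are as follows. First, identify the homotopy fiber of $\bigsqcup_{[\sigma]} \mr{Bun}^\mr{Diff}_\partial(TM_\sigma,\upsilon) \times_{\mr{Diff}_\partial(M_\sigma)} \mr{Emb}^\mr{Diff}_\partial(M_\sigma,N) \to \bigsqcup_{[\sigma]} \mr{Emb}^\mr{Diff}_\partial(M_\sigma,N)\sslash\mr{Diff}_\partial(M_\sigma)$ over the component of $\sigma$ with $B\mr{Diff}^\upsilon_\partial(M_\sigma)$, using that $\mr{Diff}_\partial(M_\sigma)$ acts freely on $\mr{Emb}^\mr{Diff}_\partial(M_\sigma,N)$ so the homotopy quotient agrees with the honest quotient, and similarly on the topological side. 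Second, apply Proposition \ref{prop.smoothingtangential}: the induced map on fibers $\bigsqcup_{[\sigma]} B\mr{Diff}^\upsilon_\partial(M_\sigma) \to B\mr{Top}^\upsilon_\partial(M)$ is a weak equivalence (this is where $d \neq 4$ is used). Third, analyze the map on base spaces $\bigsqcup_{[\sigma]}\mr{Emb}^\mr{Diff}_\partial(M_\sigma,N)\sslash\mr{Diff}_\partial(M_\sigma) \to \mr{Emb}^\mr{Top}_\partial(M,N)\sslash\mr{Top}_\partial(M)$: by the same free-action argument and Proposition \ref{prop.smoothingtangential} with $\upsilon$ taken to be (a vector bundle version of) the tangential structure classifying the relevant smooth structures — or more directly by smoothing theory, which says $\bigsqcup_{[\sigma]} \mr{Diff}_\partial(M_\sigma) \to \mr{Top}_\partial(M)$ and the smoothing of embeddings combine to give that this map is highly connected — one reduces to the connectivity of $\bigsqcup_{[\sigma]}\mr{Emb}^\mr{Diff}_\partial(M_\sigma,N) \to \mr{Emb}^\mr{Top}_\partial(M,N)$. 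Fourth, invoke the quantitative smoothing-of-embeddings estimate: the forgetful map from smooth to topological codimension-zero embeddings of a $d$-manifold into an $n$-manifold, summed over smooth structures, is $(2n-3d-3)$-connected (this is the same numerical bound appearing in the statement, and is the embedded analogue of smoothing theory combined with the quantitative Whitney-type input). Fifth, feed these connectivity estimates through the five lemma / long exact sequence comparison for the two fibration sequences: since the map on fibers is $\infty$-connected (a weak equivalence) and the map on bases is $(2n-3d-3)$-connected, the map on total spaces is $(2n-3d-3)$-connected.

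The main obstacle I expect is step four, the quantitative smoothing of embedding spaces: one needs that $\mr{Emb}^\mr{Diff}_\partial(M_\sigma,N) \to \mr{Emb}^\mr{Top}_\partial(M,N)$ is $(2n-3d-3)$-connected after allowing all smooth structures $\sigma$ on $M$ (and correspondingly on $N$, which is already smooth). This should follow from the stable/metastable range smoothing theory for manifolds and immersion–embedding comparison — the difference between $\mr{Emb}$ and $\mr{Imm}$ is $\sim (2n - 3d)$-connected by Haefliger–Hirsch-type results, immersions are governed by bundle monomorphisms by the Hirsch–Gromov $h$-principle, and smoothing theory controls the tangential data — but assembling the precise constant $2n-3d-3$ rel boundary, uniformly over the discrete set of smooth structures, is the delicate bookkeeping. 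A secondary technical point is verifying that the square relating total spaces, fibers, and bases is genuinely a map of fibration sequences in the simplicial-set model (using that the source/target-type maps are Kan fibrations, as in Lemma \ref{lem.sourcetargetfib}, and that homotopy quotients by the free group actions compute the correct homotopy types); this is routine but must be done carefully given the mix of topological spaces and simplicial sets forced by Convention \ref{conv.smooth-simplices}.
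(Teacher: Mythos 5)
Your proposal assembles the right ingredients (Proposition \ref{prop.smoothingtangential}, the $(2n-3d-3)$-connected comparison of embedding spaces, and a comparison of fibration sequences), but the fibration you set up has its fiber and base scrambled, and two of your steps fail as written. First, the homotopy fiber of $\mr{Bun}^\mr{Diff}_\partial(TM_\sigma,\upsilon) \times_{\mr{Diff}_\partial(M_\sigma)} \mr{Emb}^\mr{Diff}_\partial(M_\sigma,N) \to \mr{Emb}^\mr{Diff}_\partial(M_\sigma,N)\sslash\mr{Diff}_\partial(M_\sigma)$ is $\mr{Bun}^\mr{Diff}_\partial(TM_\sigma,\upsilon)$, not $B\mr{Diff}^\upsilon_\partial(M_\sigma)$, and the resulting map on fibers $\mr{Bun}^\mr{Diff}_\partial(TM_\sigma,\upsilon) \to \mr{Bun}^\mr{Top}_\partial(TM,\upsilon)$ is \emph{not} a weak equivalence (the target is the space of vector-bundle refinements of the tangent microbundle, a lift space governed by $\mr{Top}(d)/O(d)$), so Proposition \ref{prop.smoothingtangential} cannot be invoked there. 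Second, your reduction of the base map $\bigsqcup_{[\sigma]}\mr{Emb}^\mr{Diff}_\partial(M_\sigma,N)\sslash\mr{Diff}_\partial(M_\sigma) \to \mr{Emb}^\mr{Top}_\partial(M,N)\sslash\mr{Top}_\partial(M)$ to the connectivity of $\bigsqcup_{[\sigma]}\mr{Emb}^\mr{Diff}_\partial(M_\sigma,N) \to \mr{Emb}^\mr{Top}_\partial(M,N)$ is false: that map of homotopy quotients also sits over $\bigsqcup_{[\sigma]} B\mr{Diff}_\partial(M_\sigma) \to B\mr{Top}_\partial(M)$, whose homotopy fiber is the lift space $\cX_\partial(M)$ (sections of a fibration with fiber of $\mr{Top}(d)/O(d)$ type), and this is nowhere near $(2n-3d-3)$-connected for large $n$. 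The entire role of the vector bundle $\upsilon$ in the statement is to cancel exactly this discrepancy, and your decomposition separates $\upsilon$ from the group quotient, losing that cancellation.

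The repair is to run the fibration the other way, which is what the paper does: map both sides by forgetting the embedding factor to $\bigsqcup_{[\sigma]} B\mr{Diff}^\upsilon_\partial(M_\sigma)$ and $B\mr{Top}^\upsilon_\partial(M)$. The horizontal homotopy fibers are then the embedding spaces $\mr{Emb}^\mr{Diff}_\partial(M_\sigma,N)$ and $\mr{Emb}^\mr{Top}_\partial(M,N)$ themselves (by the same free-action/homotopy-cartesian argument you invoke), Proposition \ref{prop.smoothingtangential} makes the map of bases a weak equivalence (this is where $d \neq 4$ enters), and hence the connectivity of the total map equals that of the map on fibers. For that last input the paper does not re-derive the estimate from Haefliger--Hirsch and immersion theory, as you propose in your ``main obstacle'' paragraph, but cites it directly: the $(2n-3d-3)$-connectivity of $\mr{Emb}^\mr{Diff}_\partial(M_\sigma,N) \to \mr{Emb}^\mr{Top}_\partial(M,N)$ is the theorem at the bottom of page 147 of \cite{lashofembeddings}. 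So once the fibration is set up correctly, the proof is a two-line diagram chase plus that citation.
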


\begin{proof}
The map given in this proposition fits in the following commutative diagram
\[\begin{tikzcd}\bigsqcup_{[\sigma]} \mr{Bun}^\mr{Diff}_\partial(TM_\sigma,\upsilon) \times_{\mr{Diff}_\partial(M_\sigma)} \mr{Emb}^\mr{Diff}_\partial(M_\sigma,N) \rar \dar & \dar{\simeq} \bigsqcup_{[\sigma]} B\mr{Diff}^\upsilon_\partial(M_\sigma) \\[-2pt]
 \mr{Bun}^\mr{Top}_\partial(TM,\upsilon) \times_{\mr{Top}_\partial(M)} \mr{Emb}^\mr{Top}_\partial(M,N) \rar & B\mr{Top}^\upsilon_\partial(M),\end{tikzcd}\]
where the right vertical map is a weak equivalence by Proposition \ref{prop.smoothingtangential}. Thus the left map is $(2n-3d-3)$-connected if and only if each map $\mr{Emb}^\mr{Diff}_\partial(M_\sigma,N) \to \mr{Emb}^\mr{Top}_\partial(M,N)$ on horizontal homotopy fibres is. This is the Theorem on the bottom of page 147 of \cite{lashofembeddings}.
\end{proof}

\subsection{The proof of Theorem \ref{thm.smoothspectra}}

We shall prove that the map $\Psi^{\mr{Diff},\upsilon}_d(\bR^n) \to \Psi^{\mr{Top},\upsilon}_d(\bR^n)$ between $n$th levels of the spectra is approximately $2n$-connected. In the first half of the proof, we create smooth ``radial slices'' and ``radial products.'' We introduce some notation for the latter.

\begin{definition}Given an interval $(a,b) \subset (0,\infty)$, $r \in (a,b)$, and submanifold $Y$ of the sphere $S^{n-1}_r \coloneqq \{x \in \bR^n \mid ||x||=r\}$, we let the \emph{radial product} $(a,b) \times^{\mr{rad}} Y$ be the submanifold of $S^{n-1}_{a,b} \coloneqq \{x \in \bR^n \mid ||x|| \in (a,b)\}$ consisting of those points $(s,\theta) \in (0,\infty) \times S^{n-1}$ in radial coordinates such that $s \in (a,b)$ and $r \cdot \theta \in Y$.\end{definition}

We now use these in two definitions, which differ in which parts of the manifolds involved have a smooth structure:

\begin{definition}Let $\Psi^{\mr{Diff},\upsilon}_d(\bR^n)_\bullet$ be the semisimplicial simplicial set with $p$-simplices given by a disjoint union over $(p+1)$-tuples $0 < r_0 < \ldots < r_p$ of positive real numbers of the subsimplicial set\footnote{Recall Convention \ref{conv.smooth-simplices}.} of $\Psi^{\mr{Diff},\upsilon}_d(\bR^n)$ of $\overline{X}$ such that there exists an $\epsilon>0$ so that \begin{enumerate}[(i)]
		\item $X_d \pitchfork S^{n-1}_{r_i}$ for all $d \in \Delta^k$, and
		\item  $\overline{X}_d \cap S^{n-1}_{r_i -\epsilon,r_i+\epsilon} = (r_i -\epsilon,r_i+\epsilon) \times^\mr{rad} (\overline{X}_d \cap S^{n-1}_{r_i})$ for all $d \in \Delta^k$.
	\end{enumerate}\end{definition}

\begin{definition}Let $\Psi^{\mr{Top},\upsilon}_d(\bR^n)_\bullet$ be the semisimplicial simplicial set with $p$-simplices given by a disjoint union over $(p+1)$-tuples $0 < r_0 < \ldots < r_p$ of positive real numbers of the subsimplicial set of $\Psi^{\mr{Top},\upsilon}_d(\bR^n)$ of those $k$-simplices $\overline{X} = (X,\varphi_X)$ such that there exists an $\epsilon>0$ so that 
	\begin{enumerate}[(i)]
		\item $\overline{X} \cap (\Delta^k \times S^{n-1}_{r_i -\epsilon,r_i+\epsilon})$ is a $k$-simplex of $\Psi_d^\mr{Diff,\upsilon}(S^{n-1}_{r_i -\epsilon,r_i+\epsilon})$, 
		\item $X_d \pitchfork S^{n-1}_{r_i}$ for all $d \in \Delta^k$, 
		\item $\overline{X}_d \cap  S^{n-1}_{r_i -\epsilon,r_i+\epsilon} = (r_i-\epsilon,r_i+\epsilon) \times^\mr{rad} (\overline{X}_d \cap S^{n-1}_{r_i})  $ for all $d \in \Delta^k$.
	\end{enumerate}\end{definition}

Note that in the previous two definitions, the radii $r_i$ are given the discrete topology. There is an augmentation $\Psi^{\mr{Top},\upsilon}_d(\bR^n)_\bullet \to \Psi^{\mr{Top},\upsilon}_d(\bR^n)$ given by sending $((r_0,\ldots,r_p),\overline{X})$ to $\overline{X}$. 

\begin{lemma}\label{lem.sphericaltop} The map $||\Psi^{\mr{Top},\upsilon}_d(\bR^n)_\bullet|| \to \Psi^{\mr{Top},\upsilon}_d(\bR^n)$ is a pointed weak equivalence when $n\geq 5$ and $n \geq \max(d+3,2d+1)$.\end{lemma}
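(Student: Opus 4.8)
The plan is to adapt the proof of Proposition~\ref{prop.longworestrictionstocob}, replacing the linear slices $\{t_i\}\times(0,1)^{n-1}$ by the spherical slices $S^{n-1}_{r_i}$ together with their standard radial normal microbundles, and inserting one new ingredient: arranging smoothness near the newly created slices. First I would reduce to a lifting problem: given a commutative square with $\partial D^i \to ||\Psi^{\mr{Top},\upsilon}_d(\bR^n)_\bullet||$ and $D^i \to \Psi^{\mr{Top},\upsilon}_d(\bR^n)$ fixing $\varnothing$, homotope it through such squares until a lift exists. As in Proposition~\ref{prop.longworestrictionstocob}, one may assume a lift exists over an open neighbourhood $U'$ of $\partial D^i$; this equips us with an open cover $\{U_k\}$ of $U'$, radii $r_k\colon U_k\to(0,\infty)$, and compactly supported weights $s_k\colon U_k\to[0,1]$ such that over $s_k^{-1}((0,1])$ the fibre $X_b$ is microbundle transverse to $S^{n-1}_{r_k}$, is a radial product in a shell about it, and is smooth there together with its $\upsilon$-structure. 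Let $R\subset(0,\infty)$ be the union of the images of the $r_k$, which after shrinking $U'$ we may take to be bounded.

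The heart of the argument is the analogue of the Claim in Proposition~\ref{prop.longworestrictionstocob}: after a homotopy of commutative squares fixing $\varnothing$, for every $b\in D^i\setminus U'$ there are a radius $r\in(0,\infty)\setminus R$ and an $\epsilon>0$ so that $X_{b'}\pitchfork S^{n-1}_{r}$, the submanifold is a radial product $(r-\epsilon,r+\epsilon)\times^{\mr{rad}}(\bar X_{b'}\cap S^{n-1}_r)$ over $S^{n-1}_{r-\epsilon,r+\epsilon}$, the restricted submanifold and $\upsilon$-structure there are smooth, and all of this is independent of $b'$ near $b$. To prove the Claim I would pick $i+2$ pairwise disjoint closed intervals $[a_j-\eta_j,a_j+\eta_j]\subset(0,\infty)\setminus R$ and, for each $b$ and each $j$, perform deformations supported in a small parameter ball about $b$ and in the shell $S^{n-1}_{a_j-\eta_j,a_j+\eta_j}$: (a) topological microbundle transversality to $S^{n-1}_{a_j}$ with local constancy, exactly as in Lemma~\ref{lem.slicedeform} (the union theorem for submersion charts, Theorem~\ref{thm.unionsubmersion}, then supplies the radial product structure); (b) the corresponding deformation of the $\upsilon$-structure, as in Lemma~\ref{lem.slicedeformtangent}; and (c) a deformation making the submanifold and its $\upsilon$-structure smooth in the shell. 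One packages these using that $D^i$ has Lebesgue covering dimension $i$, via a chromatic argument assigning to a finite subcover colours in $\{0,\ldots,i+1\}$, so that deformations of the same colour have disjoint support in $D^i$ and deformations of different colours use different shells, hence disjoint support in the $S^{n-1}$-direction; choosing the $a_j$ outside $R$ also ensures the old structure near the $r_k$ is untouched. Finally, given the Claim, one covers $D^i$ by $U'$ and opens $V_\ell$ carrying radial slices into $(0,\infty)\setminus R$, picks a subordinate partition of unity, and enlarges the collection of triples $(U_k,r_k,s_k)$ accordingly; this data assembles, by a partition-of-unity argument as in Lemma~\ref{lem.longtocob}, into the desired lift of the deformed square.

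The main obstacle is step (c): producing, for a single fibre (and then propagating by local constancy), an isotopy of $X_b$ and a homotopy of its $\upsilon$-structure, supported near $b$ and in a prescribed shell, after which the submanifold is the radial product of a \emph{smooth} closed $(d-1)$-submanifold $Y_b\subset S^{n-1}_{a_j}$ carrying a \emph{vector} bundle map $TY_b\oplus\epsilon\to\upsilon$. This is where $d\neq 4$ and the numerical hypotheses enter. The condition $n\geq 5$ lets us invoke topological microbundle transversality, Theorem~\ref{thm.microbundletransversality}, in (a). The condition $n\geq d+3$ puts $Y_b$ in codimension $\geq 3$ in $S^{n-1}$, so that the embedded smoothing theory underlying Propositions~\ref{prop.smoothingtangential} and \ref{prop.embeddedsmoothing} applies to smooth $Y_b$ and linearise its normal microbundle. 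The condition $n\geq 2d+1$ places $Y_b\subset S^{n-1}$ in the stable range and forces the connectivity $2n-3d-3$ of Proposition~\ref{prop.embeddedsmoothing} to be non-negative, so that the relevant space of smooth structures compatible with the given $\upsilon$-structure is non-empty — which is all the pointwise argument needs. Throughout, the $\upsilon$-structure is carried along the isotopies using the microbundle homotopy covering theorem, and is promoted to a vector bundle map near the slice once the smooth structure there has been fixed, again using smoothing theory with tangential structures.

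As in Proposition~\ref{prop.longworestrictionstocob}, the same argument also shows the augmentation is surjective on $\pi_0$ (apply it with $\partial D^0=\varnothing$), so together with the lifting argument above it yields that $||\Psi^{\mr{Top},\upsilon}_d(\bR^n)_\bullet|| \to \Psi^{\mr{Top},\upsilon}_d(\bR^n)$ is a pointed weak equivalence.
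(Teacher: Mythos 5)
Your overall skeleton (spherical slices, Lemmas \ref{lem.slicedeform} and \ref{lem.slicedeformtangent}, the chromatic covering argument, then the partition-of-unity assembly as in Proposition \ref{prop.longworestrictionstocob}) matches the paper, but your step (c) — which is exactly where this lemma differs from Proposition \ref{prop.longworestrictionstocob} — has a genuine gap, caused by a dimension bookkeeping error. The manifold to be smoothed is the slice $Y_b = X_b \cap S^{n-1}_{a_j}$, which has dimension $d-1$, not $d$. Propositions \ref{prop.smoothingtangential} and \ref{prop.embeddedsmoothing} require the smoothed manifold to have dimension $\neq 4$, so in your argument the excluded case is $d=5$, not $d=4$; and the lemma as stated (and as needed for the main theorem) includes $d=5$. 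For $d=5$ the slice is a topological $4$-manifold, and a vector bundle lift of its tangent microbundle does \emph{not} guarantee a smooth structure, so "the relevant space of smooth structures \ldots is non-empty" is simply false in that case; no connectivity statement from Proposition \ref{prop.embeddedsmoothing} can rescue it. The paper's proof spends most of its length exactly here: it splits into $d\le 4$ (slices of dimension $\le 3$, uniquely smoothable), $d\ge 6$ (smoothing theory plus the product structure theorem and the $\pi_0$-version of Proposition \ref{prop.smoothingtangential}), and $d=5$, where it invokes the Sum-Stable Smoothing Theorem of Freedman--Quinn and then gives an explicit construction — introducing a cancelling pair of index $2$ and $3$ critical points in the radial height function inside a chart $\bR^4 \hookrightarrow Y_{d,j}$ — to realize the needed connected sums with $S^2\times S^2$ on the slice while staying inside the given family. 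Nothing in your proposal replaces this.

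A secondary inaccuracy: after a smooth structure on $Y_b$ is found, one must still replace the given \emph{topological} embedding $Y_b \subset S^{n-1}_{a_j}$ by a smooth one by an isotopy. The paper does this by combining non-emptiness of the space of smooth embeddings (Whitney, needing $n-1 \ge 2(d-1)$) with path-connectedness of the space of topological embeddings from Theorem \ref{thm.catwhitney}, which needs $n \ge 2d+1$, $n\ge 5$, $n-d\ge 3$; this, rather than non-negativity of $2n-3d-3$ (which is a weaker inequality and in any case attached to a proposition you cannot use in the critical dimension), is where the numerical hypotheses of the lemma actually enter. Finally, note that the lemma carries no $d\neq 4$ hypothesis, so an argument that genuinely needs it (or, as yours does, needs $d\neq 5$) proves a weaker statement than the one claimed.
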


\begin{proof}We shall follow the outline of the proof of Proposition \ref{prop.longworestrictionstocob}, with $S^{n-1}_r$ replacing $\{t\} \times (0,1)^{n-1}$. Suppose we are given a commutative diagram
	\[\begin{tikzcd} \partial D^i \dar \rar & {||\Psi^{\mr{Top},\upsilon}_d(\bR^n)_\bullet||} \dar{\epsilon} \\[-2pt]
	D^i \rar & \Psi^{\mr{Top},\upsilon}_d(\bR^n).\end{tikzcd}\]
	Then we must homotope it through commutative diagrams to one where there exists a lift, fixing $\varnothing$. Without loss of generality, such a lift exists over an open neighbourhood $U$ of $\partial D^i$. That is, we have a closed topological submanifold $X \subset D^i \times \bR^n$ such that the map $\pi \colon (D^i \times \bR^n,X) \to D^i$ is a relative topological submersion of relative dimension $(n,d)$, with a $\upsilon$-structure on its vertical tangent microbundle. We additionally have a collection of triples $(U_k,r_k,s_k)$, where the $U_k \subset U$ form an open cover of $U$, and for each $U_k$ we are also given an $r_k \in (0,\infty)$ and a compactly supported continuous weight function $s_k \colon U_k \to [0,1]$ such that for $d \in s_k^{-1}((0,1]) \subset U_k$ the intersection $S^{n-1}_{r_k} \cap X_d$ is smooth and $\overline{X}_d$ is a radial product near $S^{n-1}_{r_k}$. Let $R$ denote the collection of $r_k$'s, which without loss of generality we may assume to be finite.
	
	\vspace{.5em}
	
	\noindent \textbf{Claim.} \emph{We may assume that for all $d \in D^i \setminus U$ there exists an $r \in (0,\infty) \setminus R$, an $\epsilon>0$, and an open neigborhood $V$ of $d$ in $D^i$, such that 
	\begin{enumerate}[(i)] 
		\item $\overline{X} \cap (V \times S^{n-1}_{r-\epsilon,r+\epsilon})$ is a closed smooth submanifold with map to $V$ a smooth submersion of relative dimension $d$, together with a $\upsilon$-structure on its vertical tangent bundle, 
		\item $X_d \pitchfork S^{n-1}_r$ for all $d \in V$, 
		\item $\overline{X}_d \cap S^{n-1}_{r-\epsilon,r+\epsilon}  = (r-\epsilon,r+\epsilon)\times^\mr{rad}(\overline{X}_d \cap S^{n-1}_r)  $, and
		\item $\overline{X}_d \cap S^{n-1}_{r-\epsilon,r+\epsilon} = \overline{X}_{d'} \cap S^{n-1}_{r-\epsilon,r+\epsilon}$ for $d' \in V$.
	\end{enumerate}}
	
	\begin{proof} Pick $(i+2)$ pairwise disjoint intervals $\{[a_j-\eta_j,a_j+\eta_j]\}_{0 \leq j \leq i+1}$ in $(0,\infty) \setminus R$. Pick for each $d \in \mr{int}(D^i)$ an embedded ball $B^{i}(d) \subset \mr{int}(D^i)$ with origin mapping to $d$.
		
		For each $d \in \mr{int}(D^i)$ and $0 \leq j \leq i+1$, we first apply Lemmas \ref{lem.slicedeform} and \ref{lem.slicedeformtangent} to the family $F$ obtained by restricting $\overline{X}$ to $U = B^{i}(d)$, letting $V = B^{i}_{1/2}(d)$, taking $M = S^{n-1}_{a_j}$ and finally letting $W = \smash{S^{n-1}_{(a_j-\eta_j,a_j+\eta_j)}}$. The conclusion is that we can isotope $X$ and change its $\upsilon$-structure with a homotopy supported in $V \times W$ such that there is an open neighbourhood $U_j(d)$ of $d$ over which the end result is transverse to $S^{n-1}_{a_j}$ and radially constant near $S^{n-1}_{a_j}$. 
		
		In contrast to Proposition \ref{prop.longworestrictionstocob}, we are not done yet. Let $\overline{Y}_{d,j}$ denote the transverse intersection $X_d \cap S^{n-1}_{a_j}$, which is a compact topological manifold of dimension $(d-1)$ with $\upsilon$-structure. This extends to a radially constant topological manifold $(a_j-\epsilon_j,a_j+\epsilon_j) \times^\mr{rad} \overline{Y}_{d,j} $ with $\upsilon$-structure. This $\upsilon$-structure endows the tangent microbundle of $(a_j-\epsilon_j,a_j+\epsilon_j) \times^\mr{rad}  Y_{d,j} $ with a vector bundle structure. We claim that we can find a smooth structure $\sigma$ on $Y_{d,j}$ (possibly after modifying $Y_{d,j}$ in the case $d=4$), and a homotopy of $\upsilon$-structures so that $TY_{d,j} \oplus \epsilon \to \upsilon$ is a map of vector bundles. At this point we need to distinguish three cases, working from easiest to hardest:
		\begin{description}
			\item[$d \leq 4$] Every topological manifold of dimension $d' \leq 3$ admits a unique smooth structure, and every topological $\bR^{d'}$-bundle reduces uniquely to a vector bundle up to isomorphism.
			\item[$d \geq 6$] Smoothing theory and the product-structure theorem give rise to  a smooth structure $\sigma$ on $Y_{d,j}$. Furthermore, the $\pi_0$-version of smoothing theory with tangential structure as in Proposition \ref{prop.smoothingtangential}, says that this smooth structure can be chosen so that the map $\varphi_{d,j} \colon (a_j-\epsilon_j,a_j+\epsilon_j))\times^\mr{rad} T(Y_{d,j} )  \to \upsilon$ of topological $\bR^d$-bundles is homotopic to a map of $d$-dimensional vector bundles $T(Y_{d,j})_{\sigma} \oplus \epsilon \to \upsilon$.  
			\item[$d=5$] In this case is not necessarily true that the intersections $X_d \cap S^{n-1}_{a_j}$ admits a smooth structure. However, this is possible when after stabilizing by connected sum with some copies of $S^2 \times S^2$ by the Sum-Stable Smoothing Theorem of \cite[Section 8.6]{freedmanquinn} with corresponding vector bundle structure homotopic to the map to $\upsilon$. Let us pick a chart $\bR^4 \hookrightarrow Y_{d,j}$ in each path-component and isotope $Y_{d,j}$ such that the composite $\bR^4 \hookrightarrow Y_{d,j} \hookrightarrow S^{n-1}_{a_j}$ is smooth; this is possible because $\mr{Emb}^\mr{Diff}(\bR^4,\bR^{n-1}) \hookrightarrow \mr{Emb}^\mr{Top}(\bR^4,\bR^{n-1})$ is a $\pi_0$-isomorphism when $n - 4 \geq 3$, by \cite[Proposition (t/d)]{lashofembeddings}.
			
			We obtain a map $(a_j-\epsilon_j,a_j+\epsilon_j) \times^\mr{rad}  \bR^4 \to (a_j-\epsilon_j,a_j+\epsilon_j)\times^\mr{rad} Y_{d,j} $ such that the radial height function to $(a_j-\epsilon_j,a_j+\epsilon_j)$ has no singularities. We explain how to modify the embedding of $(a_j-\epsilon_j,a_j+\epsilon_j) \times^\mr{rad} \bR^4$ into $S^{n-1}_{(a_j-\epsilon_j,a_j+\epsilon_j)}$ so that in the radial height function we introduce a canceling pair of Morse singularities of index $2$ and $3$; in between the critical values, this changes the level sets by connected sum with $S^2 \times S^2$. It is then easy but tedious to add enough copies of $S^2 \times S^2$ to each path component of $Y_{d,j}$ so as to be able to apply the Sum-Stable Smoothing Theorem.
			
			To do so, we assume that $n \geq 6$, switch to cartesian coordinates, and take $a_j = 0$ and $\epsilon_j = 1$. Fix a family of generalized Morse functions $f_s \colon \bR^4 \times (-1,1) \to (-1,1)$ which (i) is equal to the height function when $x_1^2+\ldots+x_4^2+t^2 \geq 1/2$, (ii) is equal to the height function for $t=0$, and (iii) has two cancelling critical points index $2$ and $3$ for $t=1$. Then we write the height function $(x_1,\ldots,x_{n}) \mapsto x_n$ and take the embedding of $\bR^4 \times (-1,1)$ to be given by $(x_1,\ldots,x_4,t) \mapsto (x_1,\ldots,x_4,0,\ldots,0,t)$. Then we deform this embedding through the family of embeddings
			\[\begin{tikzcd} (x_1,\ldots,x_4,t) \dar[|->] \\[-2pt] \left(x_1,\ldots,x_4,st\eta(x_1^2+\ldots+x_4^2+t^2),0,\ldots,0,f_s(t,x_1,\ldots,x_4)\right)\end{tikzcd}\]
			for $s \in [0,1]$, where $\eta \colon [0,1] \to [0,1]$ is a smooth function that is $1$ on $[0,1/2]$ and $0$ on near $1$.
		\end{description}
		Of course, the $(d-1)$-dimensional smooth manifold $Y_{d,j} \subset S^{n-1}_{a_j}$ is not necessarily smoothly embedded yet. However, the space of smooth embeddings is non-empty when $n-1 \geq 2(d-1)$ (i.e.\ $n \geq 2d+1$) by the smooth Whitney embedding theorem, and by Corollary \ref{thm.catwhitney} the space of topological embeddings is path-connected when $(n-1-2(d-1)-2) \geq 0$, i.e.\ $n \geq 2d-1$, as long as $n \geq 5$ and $n-d \geq 3$ (in fact, this is one of the cases where one can reduce to the smooth case using \cite[Theorem on page 147]{lashofembeddings}). In particular, under the hypothesis of the lemma we can isotope the $(Y_{d,j})_\sigma \subset S^{n-1}_{a_j}$ to a smoothly embedded submanifold.
		
		Adding this isotopy and the homotopy of $\upsilon$-structures to the ones previously obtained, we conclude that we can isotope $X$ and change its $\upsilon$-structure with a homotopy supported in $V \times W$ so that conditions (i)--(iv) are satisfied. At this point we can follow the proof of Claim in the proof of Proposition \ref{prop.longworestrictionstocob}, and we shall spare the reader the details.\end{proof}
	
	\vspace{.1em}
	
		Once the claim has been established, the proof follows that of Proposition \ref{prop.longworestrictionstocob}.	
\end{proof}

It is much easier to prove the smooth version; no smoothing theory or microbundle transversality is required and a version of the proof of \cite[Theorem 3.10]{grwmonoids} gives us:

\begin{lemma}\label{lem.sphericaldiff} The map $||\Psi^{\mr{Diff},\upsilon}_d(\bR^n)_\bullet|| \to \Psi^{\mr{Diff},\upsilon}_d(\bR^n)$ is a pointed weak equivalence.\end{lemma}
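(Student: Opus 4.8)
The plan is to run the argument of Lemma~\ref{lem.sphericaltop} in the smooth category, where it simplifies considerably: ordinary parametrised transversality replaces microbundle transversality, and no smoothing theory is needed at all, because transverse intersections with spheres are automatically smoothly embedded smooth manifolds. First I would reduce to showing that the augmentation $\epsilon \colon ||\Psi^{\mr{Diff},\upsilon}_d(\bR^n)_\bullet|| \to \Psi^{\mr{Diff},\upsilon}_d(\bR^n)$ is a weak equivalence; pointedness is immediate, since $\varnothing$ lies in the image (over the empty manifold every choice of radii is allowed). So I would start with a commutative square
\[\begin{tikzcd} \partial D^i \dar \rar & {||\Psi^{\mr{Diff},\upsilon}_d(\bR^n)_\bullet||} \dar{\epsilon} \\ D^i \rar & \Psi^{\mr{Diff},\upsilon}_d(\bR^n),\end{tikzcd}\]
and aim to homotope it through commutative squares, fixing $\varnothing$, to one admitting a lift. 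Using the smoothing techniques of \cite[Section 2.4]{grwmonoids} I would represent the bottom map by a smooth neat submanifold $X \subset D^i \times \bR^n$ with $\pi \colon X \hookrightarrow D^i\times\bR^n \to D^i$ a smooth submersion of relative dimension $d$ and a $\upsilon$-structure on $T_\pi X$, and arrange a lift over an open neighbourhood $U$ of $\partial D^i$ recorded by triples $(U_k,r_k,s_k)$ as in the proof of Lemma~\ref{lem.sphericaltop}; write $R$ for the finite set of the $r_k$.

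The key step is the smooth analogue of the Claim in the proof of Lemma~\ref{lem.sphericaltop}: after a compactly supported isotopy of $X$ and a homotopy of its $\upsilon$-structure, every $d \in D^i \setminus U$ has a radius $r \in (0,\infty) \setminus R$, an $\epsilon>0$, and a neighbourhood $V$ over which $\bar X$ is transverse to $S^{n-1}_{r'}$ for $r' \in (r-\epsilon,r+\epsilon)$, is a radial product near each such sphere, and is independent of $d'\in V$ there. Here I would fix $(i+2)$ pairwise disjoint intervals $[a_j-\eta_j,a_j+\eta_j]\subset(0,\infty)\setminus R$ and an embedded ball $B^i(d)$ centred at $d$, make $X_d$ transverse to $S^{n-1}_{a_j}$ by a small smooth isotopy (which by openness of transversality persists over a neighbourhood of $d$), then straighten $X$ to a radial product near $S^{n-1}_{a_j}$ by flowing along the radial vector field on the collar $S^{n-1}_{a_j-\eta_j,a_j+\eta_j}$, carrying the $\upsilon$-structure along, and finally make it $d'$-independent near $d$ exactly as in part~(b) of Lemma~\ref{lem.slicedeform} (via a submersion chart of $X_{d'}\cap S^{n-1}_{a_j}$ followed by smooth isotopy extension inside the sphere). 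Unlike in Lemma~\ref{lem.sphericaltop} no cases on $d$ arise: the intersection $X_{d'}\cap S^{n-1}_{a_j}$ is automatically a smooth $(d-1)$-manifold, smoothly embedded, so the product-structure theorem, the Sum-Stable Smoothing Theorem, and Proposition~\ref{prop.smoothingtangential} are all unnecessary. All modifications are compactly supported, hence fix $\varnothing$.

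Granting this, I would conclude as in Proposition~\ref{prop.longworestrictionstocob}: cover $D^i$ by $U$ and finitely many $V_\ell$ carrying smooth slices $r_\ell \colon V_\ell \to (0,\infty)\setminus R$ of the above form; since $D^i$ has Lebesgue covering dimension $i$ one may take the cover of multiplicity $\le i+1$, so its intersection graph has chromatic number $\le i+2$; colouring the $V_\ell$ by $\{0,\dots,i+1\}$ and placing $r_\ell$ in $[a_{c(\ell)}-\eta_{c(\ell)},a_{c(\ell)}+\eta_{c(\ell)}]$ makes the straightening isotopies attached to overlapping opens have disjoint support in the $\bR^n$-direction, so they can be performed simultaneously; then a subordinate partition of unity, together with the record over each $d$ of the ordered tuple of radii at which $\bar X$ is a radial product, assembles into a map $D^i \to ||\Psi^{\mr{Diff},\upsilon}_d(\bR^n)_\bullet||$ lifting the homotoped bottom map and agreeing over $\partial D^i$ with the given lift. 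The only real work beyond smooth transversality and straightening is this colouring bookkeeping, which is identical to that in Proposition~\ref{prop.longworestrictionstocob}; I do not expect any genuine obstacle here, which is why the statement can be dispatched by saying that a version of the proof of \cite[Theorem 3.10]{grwmonoids} applies.
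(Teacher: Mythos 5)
Your proof is correct, and it gives the lemma without any dimension restrictions because its only inputs are Sard-type transversality, radial straightening, and smooth isotopy extension; but it is packaged differently from the paper, which simply defers to the smooth scanning argument of \cite[Theorem 3.10]{grwmonoids}, whereas you specialize the paper's own topological argument (Lemma \ref{lem.sphericaltop}, itself modelled on Proposition \ref{prop.longworestrictionstocob}) back to the smooth category. Both routes have the same skeleton---fibrewise transversality to spheres, straightening to a radial product near them, and patching over a cover of $D^i$---but in the smooth setting most of the machinery you import can be dropped: Sard's theorem lets each chart of the cover pick its own regular radius, generically distinct from those of overlapping charts, so the straightening isotopies have disjoint supports automatically and neither the pre-chosen intervals $[a_j-\eta_j,a_j+\eta_j]$ nor the chromatic-number bookkeeping is needed; and since fibrewise transversality is open in the parameter and the radial function is then fibrewise regular over a whole neighbourhood, the straightening can be done parametrically by a cut-off radial flow, so the ``locally constant in $d'$'' step via Lemma \ref{lem.slicedeform}(b) is superfluous---condition (ii) in the definition of $\Psi^{\mr{Diff},\upsilon}_d(\bR^n)_\bullet$ only requires each fibre to be a radial product, not constancy of the family near the sphere. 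If you keep your packaging, make two details explicit: the radii in the resolution carry the discrete topology, so your ``slices $r_\ell$'' must in the end be the constants $a_{c(\ell)}$ (as in the remark after Proposition \ref{prop.longworestrictionstocob}); and the bar in condition (ii) includes the $\upsilon$-structure, so after straightening you need a further homotopy of the structure (bundle homotopy covering, as in Lemma \ref{lem.slicedeformtangent}) to make it radially constant---merely ``carrying it along'' the isotopy does not literally achieve this---and the cut-offs and partitions of unity should be chosen smooth in the parameter so the deformation stays within smooth simplices (Convention \ref{conv.smooth-simplices}).
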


There is a map of semi-simplicial spaces $\Psi^{\mr{Diff},\upsilon}_d(\bR^n)_\bullet \to \Psi^{\mr{Top},\upsilon}_d(\bR^n)_\bullet$.

\begin{lemma}\label{lem.sphericalconn} If $d \neq 4$, the pointed map $||\Psi^{\mr{Diff},\upsilon}_d(\bR^n)_\bullet|| \to ||\Psi^{\mr{Top},\upsilon}_d(\bR^n)_\bullet||$ is $(2n-3d-3)$-connected.\end{lemma}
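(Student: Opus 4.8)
The plan is to reduce to a levelwise statement and then decompose each simplex into ``radial pieces'', in parallel to the linear decomposition of Proposition \ref{prop.longworestrictionstocob}. Both semisimplicial objects have $p$-simplices indexed by the same set of tuples $0 < r_0 < \cdots < r_p$, the map respects this indexing, and the thick geometric realization of a levelwise $k$-connected map of semisimplicial objects is $k$-connected (by the skeletal filtration, cf.\ \cite[Theorem 2.2]{rwebertsemi}). So it suffices to fix $p$ and a tuple $\vec r$ and show that $\Psi^{\mr{Diff},\upsilon}_d(\bR^n)_{\vec r} \to \Psi^{\mr{Top},\upsilon}_d(\bR^n)_{\vec r}$ is $(2n-3d-3)$-connected, where the subscript $\vec r$ denotes the corresponding summand.

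Setting $r_{-1} = 0$ and $r_{p+1} = \infty$, the spheres $S^{n-1}_{r_i}$ cut $\bR^n$ into the closed ball $\bar B_{r_0}$, the $p$ closed annuli $\{r_{i-1}\le\|x\|\le r_i\}$, and the outer region $\{\|x\|\ge r_p\}$. Using the radial-product conditions near each $S^{n-1}_{r_i}$ as gluing data, restriction to these $p+2$ regions identifies $\Psi^{\mr{Top},\upsilon}_d(\bR^n)_{\vec r}$ with an iterated fibre product
\[\mathcal{M}_0^{\mr{Top}} \times_{\mathcal{N}_0} \mathcal{M}_1^{\mr{Top}} \times_{\mathcal{N}_1} \cdots \times_{\mathcal{N}_p} \mathcal{M}_{p+1}^{\mr{Top}},\]
where $\mathcal{N}_i$ is the simplicial set of \emph{smooth} $(d-1)$-submanifolds of $S^{n-1}_{r_i}$ with smooth $\upsilon$-structure, and $\mathcal{M}_i^{\mr{Top}}$ is the simplicial set of topological $d$-submanifolds of the $i$-th region which are radial products with smooth $\upsilon$-structure near the bounding spheres. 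An adaptation of Lemma \ref{lem.sourcetargetfib} shows the restriction maps from $\mathcal{M}_i^{\mr{Top}}$ to the relevant $\mathcal{N}_j$ are Kan fibrations, so this is a homotopy fibre product; the analogous description holds for $\Psi^{\mr{Diff},\upsilon}_d(\bR^n)_{\vec r}$ with $\mathcal{M}_i^{\mr{Diff}}$. Crucially the $\mathcal{N}_i$ are literally the same in both settings, with the comparison map the identity on them, so the connectivity of $\Psi^{\mr{Diff}}_{\vec r} \to \Psi^{\mr{Top}}_{\vec r}$ equals that of the induced map on a fibre over a fixed tuple $(\bar Y_0,\dots,\bar Y_p)$, namely the product over $i=0,\dots,p+1$ of the maps $\mathcal{M}_i^{\mr{Diff}}(\mr{rel}\ \bar Y_\bullet) \to \mathcal{M}_i^{\mr{Top}}(\mr{rel}\ \bar Y_\bullet)$ on spaces of manifolds with prescribed boundary.

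It remains to estimate the connectivity of these $p+2$ maps. For the $p+1$ compact pieces ($\bar B_{r_0}$ and the annuli, all compact smooth codimension-$0$ submanifolds of $\bR^n$ with smooth boundary) the fact that a topological submersion with compact fibres is a fibre bundle (Corollary \ref{cor.unionfiberbundle}, as in Lemma \ref{lem.stableobjectsbtop}) identifies $\mathcal{M}_i^{\mr{Top}}(\mr{rel}\ \bar Y_\bullet)$ with $\bigsqcup_{[W]} \mr{Bun}^\mr{Top}_\partial(TW,\upsilon) \times_{\mr{Top}_\partial(W)} \mr{Emb}^\mr{Top}_\partial(W,N_i)$ over homeomorphism types of compact cobordisms $W$ with the prescribed boundary, and likewise $\mathcal{M}_i^{\mr{Diff}}$ after grouping smooth manifolds by underlying homeomorphism type; Proposition \ref{prop.embeddedsmoothing} then gives that this map is $(2n-3d-3)$-connected, and this is the step that uses $d\ne 4$. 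For the single non-compact piece, both $\mathcal{M}_{p+1}^{\mr{Diff}}(\mr{rel}\ \bar Y_p)$ and $\mathcal{M}_{p+1}^{\mr{Top}}(\mr{rel}\ \bar Y_p)$ are weakly contractible: a standard ``push to infinity'' deformation, using a family of radial self-embeddings of $\{\|x\|\ge r_p\}$ that is the identity near $S^{n-1}_{r_p}$ and eventually exits every compact set, retracts either space onto the single radial product $(r_p,\infty)\times^{\mr{rad}}\bar Y_p$ (moving the $\upsilon$-structure along). So the outer map is a weak equivalence, and the product over all $p+2$ pieces is $(2n-3d-3)$-connected, which is what is needed.

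The main obstacle is making the radial decomposition into an honest homotopy fibre product: one must verify the Kan-fibration analogue of Lemma \ref{lem.sourcetargetfib} for restriction to the cutting spheres and check that restriction to the $p+2$ regions is an isomorphism onto the fibre product after passing to the colimit over the collar size $\epsilon$. A secondary point of care is the contractibility of the non-compact outer piece, where the family of radial self-embeddings must simultaneously fix a collar of $S^{n-1}_{r_p}$ and push all of the content of $W_{p+1}$ beyond it off to infinity.
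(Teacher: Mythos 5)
Your argument is essentially the paper's proof: reduce to a levelwise statement, fibre (via the isotopy-extension argument of Lemma \ref{lem.sourcetargetfib}) over the common product of spaces of smooth slices in the spheres, and identify the map on fibres with (products of) disjoint unions of the maps of Proposition \ref{prop.embeddedsmoothing}, which supplies the $(2n-3d-3)$-connectivity and the use of $d \neq 4$. The only real difference is that you spell out the cut-into-radial-regions description of the fibre and the push-to-infinity contractibility of the non-compact outer piece, details the paper's one-line identification of the fibre leaves implicit; both points are fine.
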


\begin{proof}It suffices to prove that the map is levelwise $(2n-3d-3)$-connected. Fixing $p$ and a collection $0<r_0<\ldots<r_p$, there is a commutative diagram
\[\begin{tikzcd}\Psi^{\mr{Diff},\upsilon}(\bR^n)_p \rar \dar & \Psi^{\mr{Top},\upsilon}(\bR^n)_p \dar \\
\Psi^\mr{Diff,\upsilon}_{d-1}(S^{n-1})^{p+1} \rar[equals] & \Psi^\mr{Diff,\upsilon}_{d-1}(S^{n-1})^{p+1}.\end{tikzcd}\]
Applying the argument in Lemma \ref{lem.sourcetargetfib} in the category of smooth manifolds tells us that the vertical maps are Kan fibrations, so it suffices to prove that the map on fibres has the desired connectivity. But the map on fibres is a disjoint union of maps as in Proposition \ref{prop.embeddedsmoothing}.
\end{proof}

We shall now finish the proof:

\begin{proof}[Proof of Theorem \ref{thm.smoothspectra}] To prove that $\Psi^\mr{Diff,\upsilon}(d) \to \Psi^\mr{Top,\upsilon}(d)$ is a weak equivalence of spectra, it suffices to prove that there exists a $c \in \bZ$ so that for $n$ sufficiently large $\Psi^\mr{Diff,\upsilon}_d(\bR^n) \to \Psi^\mr{Top,\upsilon}_d(\bR^n)$ is a pointed $(2n-c)$-connected map. 

There is a commutative diagram of pointed simplicial sets
\[\begin{tikzcd}  {||\Psi^{\mr{Diff},\upsilon}_d(\bR^n)_\bullet||} \dar{\simeq} \rar & {||\Psi^{\mr{Top},\upsilon}_d(\bR^n)_\bullet||} \dar{\simeq} \\
 \Psi^{\mr{Diff},\upsilon}_d(\bR^n) \rar & \Psi^{\mr{Top},\upsilon}_d(\bR^n)\end{tikzcd}\]
with vertical maps weak equivalences if $d \neq 4$ and $n$ is large enough. Furthermore, Lemma \ref{lem.sphericalconn} proves that the top horizontal map is $(2d-3d-3)$-connected, so we may take $c = 3d+3$.
\end{proof}

\section{The homotopy type of the topological cobordism category} In this section we shall prove Theorem \ref{thm.mainwith}. We start by proving the analogue of Theorem \ref{thm.excisiontangentsmooth}, next discuss the topological Madsen-Tillmann spectrum, and in Corollary \ref{cor.topspectrum} prove that for a topological $\bR^d$-bundle $\xi$ the map $MT^\mr{Top} \xi \to \Psi^{\mr{Top},\xi}(d)$ is a weak equivalence. Using Corollary \ref{cor.topbordism} there is then is a zigzag of weak equivalences
\[B\cat{Cob}^{\mr{Top},\xi}(d) \longleftarrow \cdots \lra \Omega^{\infty-1} MT^\mr{Top}\xi.\]

As in the previous section, our convention is to use simplicial sets of smooth simplices instead of topological spaces when discussing smooth manifolds.

\subsection{Excision in the tangential structure} \label{sec.excision} 

In this section we prove the topological version of excision in the tangential structure. To make sense of the statement, we need the analogues of Lemmas \ref{lem.realisationvectorbundle} and \ref{lem.trivialvectorbundle}. Since their proofs only used facts about general principal $G$-bundles, they also hold for topological $\bR^d$-bundles.

\begin{lemma}If $\xi_\bullet$ is a semisimplicial topological $\bR^d$-bundle which is levelwise numerable, then $||\xi_{\bullet}||$ is also a topological $\bR^d$-bundle. This is numerable if $\xi_{\bullet}$ is levelwise trivial.\end{lemma}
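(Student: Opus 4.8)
The plan is to reduce the statement to the corresponding fact about principal bundles, exactly as was done in Lemma \ref{lem.realizationvectorbundle} for vector bundles. The only structural difference is that $\mr{GL}_d(\bR)$ is replaced by $\mr{Top}(d)$, so the first thing I would do is record that $\mr{Top}(d)$ is well-pointed: this holds because $\mr{Top}(d)$, topologized with the compact-open topology, is a metrizable (indeed ANR) topological group, so the inclusion of the identity is a cofibration. With this in hand, the general fact cited in the proof of Lemma \ref{lem.realizationvectorbundle} — that for a well-pointed topological group $G$ and a semisimplicial principal $G$-bundle $\xi_\bullet$ with each $E^{\xi_p}\to B^{\xi_p}$ numerable, the thick geometric realization $||\xi_\bullet||$ is again a principal $G$-bundle, numerable if each level is trivial — applies verbatim with $G = \mr{Top}(d)$, since its proof (\cite[Theorem 8.2]{mayclassifying}, \cite[Theorem 2]{robertsstevenson}) only uses that $G$ is a well-pointed topological group.

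Next I would pass from principal $\mr{Top}(d)$-bundles to topological $\bR^d$-bundles via the associated-bundle/frame-bundle correspondence. Concretely, a numerable topological $\bR^d$-bundle is the same data as a numerable principal $\mr{Top}(d)$-bundle together with the canonical $\bR^d$-fiber attached by the tautological action of $\mr{Top}(d)$ on $\bR^d$; this correspondence is a bijection on isomorphism classes. Given a levelwise numerable semisimplicial topological $\bR^d$-bundle $\xi_\bullet$, form its levelwise frame bundle $\mr{Fr}(\xi_\bullet)$, a levelwise numerable semisimplicial principal $\mr{Top}(d)$-bundle; by the previous paragraph $||\mr{Fr}(\xi_\bullet)||$ is a (numerable if levelwise trivial) principal $\mr{Top}(d)$-bundle. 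It remains to observe that the associated $\bR^d$-bundle of $||\mr{Fr}(\xi_\bullet)||$ is canonically isomorphic to $||\xi_\bullet||$ — i.e.\ that forming the associated bundle commutes with thick geometric realization. This is because the associated bundle is obtained by taking a product with $\bR^d$ and then a quotient by the (free) $\mr{Top}(d)$-action, and both products with a fixed space and quotients by group actions commute with (thick) geometric realization of semisimplicial spaces, as noted in the proof of Lemma \ref{lem.realizationvectorbundle}.

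The last detail is the numerability clause: if $\xi_\bullet$ is levelwise trivial then so is $\mr{Fr}(\xi_\bullet)$, so $||\mr{Fr}(\xi_\bullet)||$ is numerable by the general fact, and hence so is its associated $\bR^d$-bundle $||\xi_\bullet||$ — a partition of unity witnessing numerability of the principal bundle pulls back to one for the associated bundle along the same open cover.

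I expect no real obstacle here; the statement and its proof are purely formal translations of Lemma \ref{lem.realizationvectorbundle}, the only mild point being the verification that $\mr{Top}(d)$ is well-pointed, which is standard (it is an ANR, being an open subspace of a separable metrizable space of maps, and ANR topological groups are well-pointed). Accordingly the proof can be written in one or two sentences: "Since $\mr{Top}(d)$ is a well-pointed topological group, the argument of Lemma \ref{lem.realizationvectorbundle} applies verbatim, replacing $\mr{GL}_d(\bR)$ by $\mr{Top}(d)$ and vector bundles by topological $\bR^d$-bundles throughout."
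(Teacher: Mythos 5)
Your proposal follows the paper's route exactly: the paper proves this lemma by observing that the argument of Lemma \ref{lem.realizationvectorbundle} only used general facts about principal $G$-bundles for a well-pointed topological group $G$ (May, Roberts--Stevenson), together with the frame-bundle/associated-bundle correspondence and the fact that associated bundles commute with thick geometric realization, so it applies verbatim with $\mr{GL}_d(\bR)$ replaced by $\mr{Top}(d)$.

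The one point where your write-up is not sound is the justification of well-pointedness of $\mr{Top}(d)$. The group of homeomorphisms of $\bR^d$ is \emph{not} an open subspace of the mapping space $\mr{Map}(\bR^d,\bR^d)$ in the compact-open topology (a map close to a homeomorphism on a compact set can fail to be injective outside it), and in any case ``open in a separable metrizable space'' does not give ANR --- the ANR property for homeomorphism groups is a genuinely delicate matter and should not be invoked casually. What is true, and suffices, is that $\mr{Top}(d)$ is metrizable and locally contractible (\v{C}ernavskii, Edwards--Kirby); for a topological group a local contraction can be made to fix the identity by translating, $H'_t(g)=H_t(g)H_t(e)^{-1}$, and together with a metric this produces the NDR data showing $\{e\}\hookrightarrow\mr{Top}(d)$ is a cofibration. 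With that substitution your argument, including the treatment of the numerability clause, is correct and coincides with the paper's.
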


\begin{lemma}\label{lem.tangenttrivialbundle} For every numerable topological $\bR^d$-bundle $\xi$, there is a levelwise trivial semisimplicial topological $\bR^d$-bundle $\xi_\bullet$ with a homotopy equivalence $||\xi_\bullet|| \to \xi$ of numerable topological $\bR^d$-bundles.\end{lemma}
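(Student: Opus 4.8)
The statement to prove is Lemma \ref{lem.tangenttrivialbundle}, the topological $\bR^d$-bundle analogue of Lemma \ref{lem.trivialvectorbundle}. The plan is to mimic the proof of Lemma \ref{lem.trivialvectorbundle} essentially verbatim, replacing ``vector bundle'' with ``topological $\bR^d$-bundle'' and ``$\mr{GL}_d(\bR)$'' with ``$\mr{Top}(d)$'' throughout; the only things that need checking are that each ingredient of that proof uses nothing specific to vector bundles.

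First I would pick an open cover $\cU = \{U_i\}$ of the base $B$ of $\xi$ such that $\xi|_{U_i}$ is trivializable and which admits a subordinate partition of unity; this exists by the definition of a numerable topological $\bR^d$-bundle. Then I would form the Čech semisimplicial space $\cU_\bullet$ with $[k] \mapsto \bigsqcup_{(i_0,\ldots,i_k)} (U_{i_0} \cap \cdots \cap U_{i_k})$, augmented over $B$, and invoke \cite[\S 4]{Se2} exactly as before: the partition of unity yields a section $B \to ||\cU_\bullet||$ which is a homotopy inverse to the augmentation $||\cU_\bullet|| \to B$. Nothing here involves bundles at all, so this step transfers without change. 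Next, using the chosen trivializations over each $U_i$ and the induced trivialization over $U_\sigma = U_{i_0} \cap \cdots \cap U_{i_k}$ coming from $U_{i_0}$, I would write down the levelwise trivial semisimplicial topological $\bR^d$-bundle
\[ [k] \longmapsto \bigsqcup_{(i_0,\ldots,i_k)} (U_{i_0} \cap \cdots \cap U_{i_k}) \times \bR^d, \]
whose face maps $d_i$ for $i > 0$ are the evident inclusions and whose face map $d_0$ is twisted by the transition function $U_{i_0} \cap U_{i_1} \to \mr{Top}(d)$; note these transition functions are continuous precisely because $\xi$ is a fiber bundle with structure group $\mr{Top}(d)$. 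This semisimplicial space is augmented over the total space of $\xi$, and, exactly as in the proof of Lemma \ref{lem.trivialvectorbundle}, it exhibits $||\xi_\bullet||$ as obtained up to isomorphism by pulling back $\xi$ along $||\cU_\bullet|| \to B$. Since that map is a homotopy equivalence, $||\xi_\bullet||$ is homotopy equivalent to $\xi$ as a numerable topological $\bR^d$-bundle — using the preceding lemma to know $||\xi_\bullet||$ is numerable, since $\xi_\bullet$ is levelwise trivial.

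I do not expect a genuine obstacle here: the only conceptual points are (i) that the bijection between isomorphism classes of numerable topological $\bR^d$-bundles and numerable principal $\mr{Top}(d)$-bundles is compatible with geometric realization — which, as the excerpt already notes just before the lemma, follows because the proofs of Lemmas \ref{lem.realizationvectorbundle} and \ref{lem.trivialvectorbundle} only used general facts about principal $G$-bundles, here applied with $G = \mr{Top}(d)$ (well-pointedness of $\mr{Top}(d)$ should be recorded or is standard, since it is a simplicial/topological group that is a CW-type space) — and (ii) that the transition cocycle data makes sense in $\mr{Top}(d)$, which is immediate from the definition of a topological $\bR^d$-bundle. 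So the proof is a one-line appeal to the argument of Lemma \ref{lem.trivialvectorbundle} with the indicated substitutions.

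\begin{proof}The proof of Lemma \ref{lem.trivialvectorbundle} goes through with $\mr{GL}_d(\bR)$ replaced by the well-pointed (simplicial) group $\mr{Top}(d)$ and vector bundles replaced by topological $\bR^d$-bundles: pick an open cover $\cU = \{U_i\}$ of $B$ over which $\xi$ is trivializable and which admits a subordinate partition of unity, form the Čech semisimplicial space $\cU_\bullet$ with $[k] \mapsto \bigsqcup_{(i_0,\ldots,i_k)} (U_{i_0} \cap \cdots \cap U_{i_k})$, and use \cite[\S 4]{Se2} to produce a homotopy inverse (in fact a section) $B \to ||\cU_\bullet||$ of the augmentation. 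Using the trivializations, define the levelwise trivial semisimplicial topological $\bR^d$-bundle
\[ \xi_\bullet \colon \ [k] \longmapsto \bigsqcup_{(i_0,\ldots,i_k)} (U_{i_0} \cap \cdots \cap U_{i_k}) \times \bR^d, \]
with $d_i$ for $i>0$ the inclusions and $d_0$ twisted by the (continuous) transition function $U_{i_0}\cap U_{i_1} \to \mr{Top}(d)$. This is augmented over the total space of $\xi$ and exhibits $||\xi_\bullet||$ as the pullback of $\xi$ along $||\cU_\bullet|| \to B$ up to isomorphism; since that map is a homotopy equivalence and $||\xi_\bullet||$ is numerable by the preceding lemma, $||\xi_\bullet||$ is homotopy equivalent to $\xi$ as a numerable topological $\bR^d$-bundle.
\end{proof}
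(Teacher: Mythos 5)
Your proposal is correct and matches the paper's own treatment: the paper justifies this lemma (and its companion on realizations) precisely by observing that the proofs of Lemmas \ref{lem.realizationvectorbundle} and \ref{lem.trivialvectorbundle} only use general facts about principal $G$-bundles, so they carry over with $\mr{GL}_d(\bR)$ replaced by $\mr{Top}(d)$, which is exactly the \v{C}ech-cover argument you reproduce.
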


We start with the analogue of Lemma \ref{lem.vectorweq}, proven in an analogous manner: 

\begin{lemma}\label{lem.topologicalrdweq} The construction $\xi \mapsto \Psi^\mr{Top,\xi}(d)$ takes weak equivalences to weak equivalences.
\end{lemma}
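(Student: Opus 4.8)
The plan is to imitate the proof of Lemma \ref{lem.vectorweq} in the topological category, replacing vector bundles by topological $\bR^d$-bundles, the $C^\infty$-topology tools by their simplicial-set/quasitopological analogues, and the bundle homotopy covering theorem by the microbundle homotopy covering theorem (Theorem \ref{thm.microbundlecovering}). Let $\varphi \colon \xi \to \xi'$ be a weak equivalence of $d'$-dimensional topological $\bR^d$-bundles with underlying map of base spaces $\phi \colon B \to B'$. It suffices to prove that each structure map $\Psi^{\mr{Top},\xi}_d(\bR^n) \to \Psi^{\mr{Top},\xi'}_d(\bR^n)$ is a pointed weak equivalence, i.e.\ given a commutative square with $\partial D^i \to \Psi^{\mr{Top},\xi}_d(\bR^n)$ and $D^i \to \Psi^{\mr{Top},\xi'}_d(\bR^n)$ (sending basepoints to $\varnothing$), we must produce a lift after a homotopy through such squares fixing $\varnothing$. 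Using that $\Psi^{\mr{Top},\xi}_d(-)$ extends to a quasitopological space, we may represent the bottom map by a closed topological submanifold $X \subset D^i \times \bR^n$ with $\pi \colon X \to D^i$ a topological submersion of relative dimension $d$, together with a microbundle map $\varphi'_X \colon T_\pi X \oplus \epsilon^{d'-d} \to \xi'$ whose underlying map of base spaces is $\phi'_X \colon X \to B'$, and a lift $\varphi_{X|_{\partial D^i}}$ of this $\xi'$-structure to a $\xi$-structure over $\partial D^i$.

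First I would note that $X$ and the pair $(X,X|_{\partial D^i})$ have the homotopy type of a (pair of) CW complex(es): in the topological category this follows from the existence of handle decompositions if $d \neq 4$, and in general one can instead invoke that a topological submersion over a compact base is locally trivial and argue via an open cover, or simply replace $X$ by a CW approximation of the mapping-cylinder type --- it is enough that $\phi'_X$ factors up to homotopy through a CW complex for the lifting problem for $\phi \colon B \to B'$ to be solvable. Since $\phi$ is a weak equivalence, the lifting problem for $\phi'_X$ against $\phi$ with the given lift over $\partial D^i$ can be solved after a homotopy rel $\partial D^i$, yielding a map $\phi_X \colon X \to B$ lifting $\phi'_X$. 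Then, just as in the smooth case, one uses the \emph{microbundle} homotopy covering theorem (Theorem \ref{thm.microbundlecovering}) twice: once to extend $\varphi_{X|_{\partial D^i}}$ over the collar $[0,1] \times \partial D^i$ covering the homotopy of underlying maps, and once more to cover the homotopy of underlying maps over all of $X$ that realizes $\phi'_X \simeq \phi \circ \phi_X$; this produces the required homotopy of squares through microbundle maps to $\xi'$. At the end of this homotopy, the underlying map $\phi'_{X \times \{1\}}$ lifts to $\phi_{X \times \{1\}} \colon X \to B$ rel $\partial D^i$, and since $\varphi$ identifies the fiber $\xi_b$ with $\xi'_{\phi(b)}$, a $\xi$-structure refining the resulting $\xi'$-structure is obtained uniquely by transporting $(\varphi'_{X \times \{1\}})_x$ through $\varphi^{-1}$ over each point $x$; this is the desired lift.

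The main obstacle will be the same as in Lemma \ref{lem.vectorweq}, but slightly more delicate topologically: verifying that the relevant spaces (total space $X$ of a topological submersion, and the pair with its restriction over $\partial D^i$) really do have CW homotopy type so that the obstruction-theoretic lifting of $\phi'_X$ along the weak equivalence $\phi$ goes through, \emph{without} assuming $d \neq 4$. For $d \neq 4$ one cites handle decompositions as in the smooth argument; for the general statement one should instead use that a topological submersion $X \to D^i$ with the stated properties is, over the compact contractible base $D^i$, a fiber bundle (by the union theorem for submersion charts, Corollary \ref{cor.unionfiberbundle}), hence $X \simeq D^i \times F$ for a topological manifold fiber $F$, and every topological manifold is an ANR and thus has CW homotopy type; the relative statement follows because $X|_{\partial D^i} \hookrightarrow X$ is then a cofibration. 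Once CW homotopy type is in hand, everything else is a routine translation of the smooth proof, with the microbundle homotopy covering theorem playing the role of the bundle homotopy covering theorem and Kister--Mazur (Theorem \ref{thm.kister}) available if one ever needs to pass between microbundles and $\bR^d$-bundles.
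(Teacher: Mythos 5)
Your argument is essentially the paper's proof: represent the lifting problem by a submersion $X \subset D^i \times \bR^n$ with a $\xi'$-structure and a partial $\xi$-lift over $\partial D^i$, use that $(X,X|_{\partial D^i})$ has the homotopy type of a CW pair to lift the underlying map along the weak equivalence $\phi \colon B \to B'$, and then cover the resulting homotopies by microbundle maps via Theorem \ref{thm.microbundlecovering}, identifying the final $\xi$-lift fiberwise through $\varphi^{-1}$ exactly as in Lemma \ref{lem.vectorweq}. The one genuine misstep is your proposed justification of the CW homotopy type via Corollary \ref{cor.unionfiberbundle}: that corollary requires \emph{compact fibers}, not a compact base, and here the fibers are only closed (possibly noncompact) submanifolds of $\bR^n$, so the submersion $X \to D^i$ need not be a fiber bundle and the reduction $X \simeq D^i \times F$ is unjustified. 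The detour through handle decompositions (and the attendant worry about $d \neq 4$) is likewise unnecessary. The fix is the fact you mention in passing and which the paper uses directly: $X$ is itself a topological manifold with boundary $X|_{\partial D^i}$, manifolds are ANRs and hence have CW homotopy type by Milnor's theorem (the paper cites this for the pair $(X,X|_{\partial D^i})$; the boundary inclusion is a cofibration by collaring), with no restriction on $d$. With that substitution your proof coincides with the paper's.
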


\begin{proof}Let $\varphi \colon \xi \to \xi'$ be a weak equivalence of topological $\bR^d$-bundles, with underlying map of base spaces $\phi \colon B \to B'$. It suffices to establish a levelwise pointed weak equivalence $\Psi^{\mr{Top},\xi}_d(\bR^n) \to \Psi^{\mr{Top},\xi'}_d(\bR^n)$; given a commutative diagram
	\[\begin{tikzcd} \partial D^i \rar \dar & \Psi^{\mr{Top},\xi}_d(\bR^n) \dar \\[-2pt]
	D^i \rar & \Psi^{\mr{Top},\xi'}_d(\bR^n)\end{tikzcd}\]
	we need to provide a lift after a homotopy through commutative diagrams fixing $\varnothing$. 	This is represented by a closed topological submanifold $X \subset D^i \times \bR^n$ such that the map $\pi \colon (D^i \times \bR^n,X) \to D^i$ is a relative topological submersion of relative dimension $(n,d)$, with a $\xi'$-structure on its vertical tangent microbundle $T_{\pi}X$. Additionally, the   $\xi'$-structure on $T_{\pi}X$ has a lift to a $\xi$-structure over $\partial D^i$.
	
	By \cite{milnorcw}, the pair $(X,X|_{\partial D^i})$ of topological manifolds has the homotopy type of a pair of CW-complexes. Hence that the map $\phi$ is a weak equivalence implies we can lift the map $\phi_X \colon X \to B'$ underlying the $\xi'$-structure $T_{\pi}X \rightarrow \xi'$ to a map $\varphi_X \colon X \to B$, at least after a homotopy through commutative diagrams which only modifies the maps to $B$ and $B'$. Using the microbundle homotopy covering theorem, Theorem \ref{thm.microbundlecovering}, this can be covered by maps of $d$-dimensional topological microbundles. For details, see the proof of Lemma \ref{lem.vectorweq}.\end{proof}

We now explain the analogue of Theorem \ref{thm.excisiontangentsmooth}, also proven in analogous fashion. We give a map of spectra $||\Psi^{\mr{Top},\xi_{\bullet}}(d)|| \rightarrow \Psi^{\mr{Top},||\xi_{\bullet}||}(d)$ by describing a map on the $n$th levels $||\Psi^{\xi_{\bullet}}_d(\bR^n)|| \to \Psi^{||\xi_{\bullet}||}_d(\bR^n)$ of these spectra. To do so, it suffices to provide maps $\Delta^p \times \Psi^{\xi_p}_d(\bR^n) \to \Psi^{||\xi_\bullet||}_n(\bR^n)$ sending $\varnothing$ to $\varnothing$, which are compatible with the face maps. 

A $k$-simplex of the right hand side is a pair $(\sigma,(X,\varphi_X))$ where $\sigma \colon [k] \rightarrow [p]$ is a non-decreasing function and $X \subset \Delta^k \times \bR^n$ is a closed topological submanifold such that $(\Delta^k \times \bR^n,X) \to \Delta^k$ is a relative topological submersion of relative dimension $(n,d)$, with a $\xi_p$-structure on its vertical tangent microbundle. This is sent to the pair $(X,\overline{\sigma}_X \times \varphi_X)$, where $X$ is as before but the $||\xi_{\bullet}||$-structure on its vertical tangent microbundle is given by the composition $\overline{\sigma}_X\times \varphi_X \colon T_\pi X \to \Delta^p \times \xi_p \to ||\xi_{\bullet}||$. Here $\overline{\sigma}_X$ is the composition of the projection $T_\pi X \rightarrow \Delta^k$ followed by the map $\Delta^k \to \Delta^p$ obtained from $\sigma$ by geometric realisation. This is compatible with the structure maps of the spectra.

\begin{theorem}\label{thm.excisiontangent} If $\xi_\bullet$ is a semisimplicial topological $\bR^d$-bundle which is levelwise numerable, then we have a weak equivalence of spectra 
	\[||\Psi^{\mr{Top},\xi_{\bullet}}(d)|| \overset{\simeq}{\longrightarrow} \Psi^{\mr{Top},||\xi_{\bullet}||}(d).\]\end{theorem}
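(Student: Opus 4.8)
The plan is to mimic closely the proof of Theorem~\ref{thm.excisiontangentsmooth}, substituting topological $\bR^d$-bundles for vector bundles and using simplicial sets throughout, which should actually make the argument \emph{simpler} since we no longer need to fuss over point-set topology of the spaces $\Psi_d(-)$. First I would reduce to showing the map is a levelwise pointed weak equivalence of simplicial sets $||\Psi^{\mr{Top},\xi_\bullet}_d(\bR^n)|| \to \Psi^{\mr{Top},||\xi_\bullet||}_d(\bR^n)$. By Lemma~\ref{lem.topologicalrdweq} the construction $\xi \mapsto \Psi^{\mr{Top},\xi}_d(\bR^n)$ takes weak equivalences to weak equivalences, so I may replace each base $B_p$ of $\xi_p$ by $|\mr{Sing}(B_\bullet)|$ and hence assume every $B_p$ and $||B_\bullet||$ is a CW complex (geometric realization of a (semi-)simplicial set).

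The key device is the analogue of Lemma~\ref{lem.tangentconstsmooth}: define $\Psi^{\mr{Top},\xi,\mr{const}}_d(\bR^n) \subset \Psi^{\mr{Top},\xi}_d(\bR^n)$ to consist of those $(X,\varphi_X)$ whose underlying map $\phi_X\colon X \to B$ factors over a point on each fiber, and prove that when $B$ is a CW complex this inclusion is a pointed weak equivalence. The proof should be essentially the one given in the smooth case: given a relative lifting problem represented (using that $\Psi^{\mr{Top}}_d$ is Kan and extends to a quasitopological space) by a topological submersion $X \subset D^i \times \bR^n$ with a $\xi$-structure that is of the desired form over $\partial D^i$, one uses compactness of $X \cap (D^i \times B_1)$ to land in a finite subcomplex $K$, uses that finite CW complexes are ANRs to get a neighborhood deformation retraction of the diagonal, builds local sections of the submersion near the origin, cuts them off with partitions of unity and cutoff functions $\eta,\lambda_i$, and covers the resulting homotopies of underlying maps by homotopies of microbundle maps using the microbundle homotopy covering theorem (Theorem~\ref{thm.microbundlecovering}) in place of the bundle homotopy covering theorem. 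Finally one ``zooms in'' by $(\psi_t)^*$ to make the $\xi$-structure constant everywhere. I should double-check that all the geometric manipulations (isotopy extension, submersion charts, compactness of fibers) go through in the topological category, but these are exactly the tools already invoked throughout Sections~\ref{sec:cobordismcat}--\ref{sec.microflexibility}.

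Granting this lemma, the proof of Theorem~\ref{thm.excisiontangent} follows the smooth template verbatim. Applying the lemma levelwise gives weak equivalences $\Psi^{\mr{Top},\xi_p,\mr{const}}_d(\bR^n) \hookrightarrow \Psi^{\mr{Top},\xi_p}_d(\bR^n)$; since thick geometric realization of semisimplicial (simplicial) sets preserves levelwise weak equivalences (\cite[Theorem 2.2]{rwebertsemi} or the simplicial-set analogue), we get $||\Psi^{\mr{Top},\xi_\bullet,\mr{const}}_d(\bR^n)|| \to ||\Psi^{\mr{Top},\xi_\bullet}_d(\bR^n)||$ a weak equivalence, and another application of the lemma handles the bottom map in the square
\[\begin{tikzcd} {||\Psi^{\mr{Top},\xi_\bullet,\mr{const}}_d(\bR^n)||} \rar{\simeq} \dar & {||\Psi^{\mr{Top},\xi_\bullet}_d(\bR^n)||} \dar \\ \Psi^{\mr{Top},||\xi_\bullet||,\mr{const}}_d(\bR^n) \rar{\simeq} & \Psi^{\mr{Top},||\xi_\bullet||}_d(\bR^n).\end{tikzcd}\]
It then suffices to show the left vertical map is an isomorphism of simplicial sets. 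This is where the simplicial-set setting pays off: unlike the smooth case, where one had to check continuity of the inverse bijection through a tedious unwinding of the colimit topology on $\Psi_d^{\mr{Diff},\upsilon}(-)$, here both sides are honestly defined simplicial sets and the map is manifestly a bijection on $k$-simplices — a $k$-simplex of $||\Psi^{\mr{Top},\xi_\bullet,\mr{const}}_d(\bR^n)||$ is the same data as a topological submersion $X \subset \Delta^k \times \bR^n$ together with a point of $||\xi_\bullet||$ for each fiber and a trivialization identifying $T_\pi X$ with the pullback, which is exactly a $k$-simplex of the target. Concretely, one uses the homeomorphism (isomorphism of simplicial sets) $\mr{Bun}^{\mr{Top},\mr{const}}(TM,\xi) \cong \mr{Bun}^{\mr{Top}}(TM,\bR^d) \times_{\mr{Top}(d)} \xi$, exactly as in the proof of Lemma~\ref{lem.smooththomgeomrel}, together with the fact that products and quotients commute with geometric realization.

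The main obstacle I anticipate is the topological analogue of Lemma~\ref{lem.tangentconstsmooth}, specifically making sure the ANR/deformation-retraction argument and the local-section/cutoff bookkeeping are valid for topological submersions — in particular that one really can represent the lifting problem by a genuine topological submersion over $D^i$ (this uses that $\Psi^{\mr{Top}}_d$ extends to a quasitopological space and is Kan, so there is no smoothing step needed), and that isotopy extension (Theorem~\ref{thm.isotopyextension}) and the union theorem for submersion charts (Theorem~\ref{thm.unionsubmersion}) supply the needed families of homeomorphisms. Everything else is formal manipulation of (semi)simplicial objects and realizations, and should go through without surprises.
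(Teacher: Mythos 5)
Your outline matches the paper's proof up to the last step: reduce to levels, use Lemma~\ref{lem.topologicalrdweq} to make the bases CW complexes, introduce the fiberwise-constant subobjects $\Psi^{\mr{Top},\xi,\mr{const}}_d(\bR^n)$, observe that the proof of Lemma~\ref{lem.tangentconstsmooth} goes through with the microbundle homotopy covering theorem (Theorem~\ref{thm.microbundlecovering}) replacing the vector bundle one, and realize the resulting levelwise equivalences to get the square you draw. All of that is exactly what the paper does.

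The gap is your claim that the left vertical map $||\Psi^{\mr{Top},\xi_\bullet,\mr{const}}_d(\bR^n)|| \to \Psi^{\mr{Top},||\xi_\bullet||,\mr{const}}_d(\bR^n)$ is an isomorphism of simplicial sets, "manifestly a bijection on $k$-simplices." It is not, and the paper explicitly points this out. A $k$-simplex of the left-hand side is (a face-identification class of) a pair $(\sigma,(X,\varphi_X))$ with $\sigma\colon[k]\to[p]$ a simplicial operator and $\varphi_X\colon T_\pi X \to \xi_p$ fiberwise constant; hence the map underlying its induced $||\xi_\bullet||$-structure factors as $\Delta^k \to \Delta^p \times B_p \to ||B_\bullet||$, with the $\Delta^p$-coordinate given by the affine map realizing $\sigma$. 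A $k$-simplex of the right-hand side, by contrast, is $(X,\varphi_X)$ with $\varphi_X\colon T_\pi X \to ||\xi_\bullet||$ whose underlying map is an \emph{arbitrary} continuous fiberwise-constant map to $||B_\bullet||$; generically it does not factor through a single level $\xi_p$ with affine simplex-dependence, so the map is far from surjective on simplices. (This is the opposite of the smooth case, where the levels are honest spaces and one does get a homeomorphism; here the simplicial-set levels versus the topological bundle $||\xi_\bullet||$ create the mismatch.) The fix, as in the paper, is to show this map is merely a weak equivalence: in a relative lifting problem the prescribed boundary data comes with a triangulation making the underlying map of the tangential structure simplicial, and one applies a relative simplicial approximation theorem (\cite{jardineapproximation}) to the underlying map of the bottom simplex, then covers the resulting homotopy by microbundle maps via Theorem~\ref{thm.microbundlecovering}. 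With that replacement your argument becomes the paper's proof; as written, the final identification is false.
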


\begin{proof}Let us define $\Psi^\mr{Top,\xi,const}_d(\bR^n)$ as the subsimplicial set of $\Psi^\mr{Top,\xi}_d(\bR^n)$ with $k$-simplices given by those pairs $(X,\varphi_X)$ of a closed topological submanifold $X \subset \Delta^k \times \bR^n$ such that $\pi \colon (\Delta^k \times \bR^n,X) \to \Delta^k$ is a relative topological submersion of relative dimension $(n,d)$, and $\varphi_X \colon T_\pi X \to \xi$ is a map of topological microbundles such that the underlying map $\phi_X \colon X \to B$ has the property that for each $b \in \Delta^k$ its restriction to $X_b$ factors over a point.
	
The only smooth tool used in the proof of Lemma \ref{lem.tangentconstsmooth} was the vector bundle homotopy covering theorem, so replacing this by the microbundle homotopy covering theorem we obtain that $\Psi^\mr{Top,\xi,const}_d(\bR^n) \hookrightarrow \Psi^\mr{Top,\xi}_d(\bR^n)$ is a weak equivalence when the base of  $\xi$ is a CW complex. As in the proof of Theorem \ref{thm.excisiontangentsmooth}, we may then use Lemma \ref{lem.topologicalrdweq} to assume that each $B_k$ and $||B_\bullet||$ are CW complexes and get a commutative diagram
\[\begin{tikzcd} {||\Psi_d^\mr{Top,\xi_\bullet,const}(d)||} \rar{\simeq} \dar & {||\Psi_d^\mr{Top,\xi_\bullet}(d)||} \dar \\
\Psi_d^\mr{Top,||\xi_{\bullet}||,const}(d) \rar{\simeq} & \Psi_d^\mr{Top,||\xi_{\bullet}||}(d)\end{tikzcd}\]
with horizontal maps weak equivalences. 

Let us compare the two simplicial sets on the left: $k$-simplices in the domain are $(\sigma,(X,\varphi_X))$ as above with $\varphi_X \colon T_\pi X \to \xi_p$ of the desired form (that is, the map underlying the tangential structure is fibrewise constant), while $k$-simplices in target are $(X,\varphi_X)$ with an $\varphi_X \colon T_\pi X \to ||\xi_\bullet||$ of the desired form. These are not isomorphic: in particular, in a commutative diagram
\[\begin{tikzcd} \partial D^i \rar \dar & {||\Psi_d^\mr{Top,\xi_\bullet,const}(d)||} \dar \\[-2pt]
D^i \rar & \Psi_d^\mr{Top,||\xi_{\bullet}||,const}(d) \end{tikzcd}\]
the top map comes with a simplicial triangulation of $\partial D^i$ such that on the map $\phi_X|_{\partial D^i}$ underlying the tangential structure is simplicial in the above sense, while the bottom map does not. However, we can apply a relative simplicial approximation theorem as in \cite{jardineapproximation} to the map underlying the tangential structure, and then use the microbundle homotopy covering theorem, to homotope the bottom map appropriately.\end{proof}

\subsection{The Thom spectrum of a topological $\bR^d$-bundle} \label{sec:thom-spectrum-top}
We discuss a functorial construction $\xi \mapsto MT^\mr{Top}\xi$ associating to a topological $\bR^d$-bundle $\xi$ a Thom spectrum.

To do so we reinterpret the Madsen-Tillmann spectrum $MT^\mr{Diff}\upsilon$ for a $d$-dimensional vector bundle $\upsilon$. As described in Section \ref{sec.smoothscanning}, its $(n+d)$-th level is the Thom space $\mr{Thom}(\pi_\upsilon^* \gamma^\perp_{d,n+d})$, which can also be constructed as the homotopy cofibre of the sphere bundle $\sigma^{\upsilon}_{d,n+d} \colon S\pi_\upsilon^* \gamma^\perp_{d,n+d} \to \mr{Gr}^{\mr{Diff},\upsilon}_{d}(\bR^{n+d})$. This may be explicitly modeled by the mapping cone
\[\left([0,\infty] \times S\pi_\upsilon^* \gamma^\perp_{d,n+d} \sqcup \mr{Gr}^{\mr{Diff},\upsilon}_{d}(\bR^{n+d})\right)/{\sim}\]
where the equivalence relation $\sim$ identifies $(0,x) \in [0,\infty] \times S\pi_\upsilon^* \gamma^\perp_{d,n+d}$ with $\sigma^{\upsilon}_{d,n+d}(x) \in \mr{Gr}^{\mr{Diff},\upsilon}_{d}(\bR^{n+d})$ and $(\infty,x) \in [0,\infty] \times S\pi_\upsilon^* \gamma^\perp_{d,n+d}$ with $(\infty,x') \in [0,\infty] \times S\pi_\upsilon^* \gamma^\perp_{d,n+d}$, which is easily identified with the pushout used before.

The mapping cone construction is homotopy invariant, so we get a homotopy equivalent space if we replace the sphere bundle $\smash{S\pi_\upsilon^* \gamma^\perp_{d,n+d}}$ with the space $\smash{S\mr{Gr}^{\mr{Diff},\upsilon}_{d}(\bR^{n+d})}$ consisting of triples $(x,P,\phi)$ where $x \in \bR^{n+d} \setminus P$. Forgetting $x$ gives a map $S^\upsilon_{d,n+d} \colon S\mr{Gr}^{\mr{Diff},\upsilon}_{d}(\bR^{n+d})  \to \mr{Gr}^{\mr{Diff},\upsilon}_{d}(\bR^{n+d})$ which fits in a commutative diagram
\[\begin{tikzcd}S\pi_\upsilon^* \gamma^\perp_{d,n+d} \arrow{rr}{\simeq} \arrow{rd}[swap]{\sigma^{\upsilon}_{d,n+d}} &[-10pt] &[-10pt] S\mr{Gr}^{\mr{Diff},\upsilon}_{d}(\bR^{n+d}) \arrow{ld}{S^\upsilon_{d,n+d}} \\
& \mr{Gr}^{\mr{Diff},\upsilon}_{d}(\bR^{n+d}). &\end{tikzcd} \]

It is this definition that adapts well to the topological setting. Recall that the topological Grassmannian $\mr{Gr}^\mr{Top}_d(\bR^{n+d})$ of $d$-planes in $\bR^{n+d}$ of Definition \ref{def.topgrassmannian} is given by the simplicial set $\mr{Top}(n+d)/(\mr{Top}(d) \times \mr{Top}(n+d,d))$. 

\begin{definition}We define the \emph{topological Grassmannian $\mr{Gr}^\mr{Top,\xi}_d(\bR^{n+d})$ of $d$-planes in $\bR^{n+d}$ with $\xi$-structure} to be the simplicial set with $k$-simplices given by pairs $(P,\varphi_P)$ of $P \in\mr{Gr}^\mr{Top}_d(\bR^{n+d})_k$ and $\varphi_P \colon T_\pi P \oplus \epsilon^{d'-d} \to \xi$, interpreting $P$ through its image as a closed topological submanifold of $\Delta^k \times \bR^{n+d}$.\end{definition}

\begin{lemma}The construction $\xi \mapsto \mr{Gr}^{\mr{Top},\xi}_d(\bR^{n+d})$ takes weak equivalences to weak equivalences.\end{lemma}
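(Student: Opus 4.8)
The plan is to reduce this to a statement about the underlying simplicial sets together with the microbundle homotopy covering theorem, in close analogy with the smooth case treated in Lemma \ref{lem.smooththomspectraweq}. So let $\varphi \colon \xi \to \xi'$ be a weak equivalence of $d'$-dimensional topological $\bR^{d}$-bundles (we are in the setting where the tangential structure may have dimension $d' \geq d$), with underlying map of base spaces $\phi \colon B \to B'$. It suffices to show that for each fixed $n$ the induced map $\mr{Gr}^{\mr{Top},\xi}_d(\bR^{n+d}) \to \mr{Gr}^{\mr{Top},\xi'}_d(\bR^{n+d})$ is a weak equivalence of simplicial sets; then the claim is immediate (and if one only cares about the induced statement on Thom spectra, it follows from the mapping cone being homotopy invariant, but as stated the lemma is about the Grassmannians themselves, so a levelwise argument is what is needed).

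First I would set up the lifting problem: given a commutative square
\[\begin{tikzcd} \partial D^i \rar \dar & \mr{Gr}^{\mr{Top},\xi}_d(\bR^{n+d}) \dar \\ D^i \rar & \mr{Gr}^{\mr{Top},\xi'}_d(\bR^{n+d}),\end{tikzcd}\]
one needs a lift after a homotopy through commutative squares. Such a square is represented (up to subdivision, using that these simplicial sets extend to quasitopological spaces, so that homotopy groups can be computed by maps out of disks) by a closed topological submanifold $P \subset D^i \times \bR^{n+d}$ whose fibers over $D^i$ are $d$-dimensional linear planes through the origin — equivalently a map $D^i \to \mr{Gr}^\mr{Top}_d(\bR^{n+d})$ — together with a map of $d'$-dimensional microbundles $\varphi'_P \colon T_\pi P \oplus \epsilon^{d'-d} \to \xi'$ over all of $D^i$, and a lift of $\varphi'_P$ to a $\xi$-structure over $\partial D^i$. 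Now $P$ is (the total space of a bundle, hence) a topological manifold, so $(P, P|_{\partial D^i})$ has the homotopy type of a CW pair by \cite{milnorcw}; since $\phi \colon B \to B'$ is a weak equivalence, the map $\phi_P \colon P \to B'$ underlying $\varphi'_P$ lifts to a map $P \to B$ extending the given lift over $\partial D^i$, after a homotopy through commutative diagrams that modifies only the maps to $B$ and $B'$. Finally the microbundle homotopy covering theorem, Theorem \ref{thm.microbundlecovering}, is used twice: once to propagate the $\xi'$-structure along the homotopy of underlying maps, and once — using that $\varphi$ identifies the fiber $\xi_b$ with $\xi'_{\phi(b)}$ — to produce from the lifted underlying map a genuine microbundle map $T_\pi P \oplus \epsilon^{d'-d} \to \xi$ covering it, exactly as in the last paragraph of the proof of Lemma \ref{lem.vectorweq}.

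I expect the only real subtlety, and hence the main obstacle, to be the bookkeeping needed to conclude that the representing datum $P$ is genuinely a topological manifold to which one may apply \cite{milnorcw} and the microbundle homotopy covering theorem — i.e.\ making precise the passage from a map $D^i \to \mr{Gr}^\mr{Top}_d(\bR^{n+d})$, a simplicial set defined as a quotient of simplicial groups, to an actual submersion $P \to D^i$ with the right local structure. This is handled exactly as in the discussion preceding Lemma \ref{lem.universaltangentialtop} and in Lemma \ref{lem.stableobjectsbtop}: a topological submersion with compact fibers is a fiber bundle (Corollary \ref{cor.unionfiberbundle}), and the fibers here are linear planes, hence compact in each bounded region, so $P$ is a manifold and $T_\pi P$ a genuine microbundle. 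Once that is in place, the rest is a verbatim transcription of Lemma \ref{lem.vectorweq} with ``vector bundle'' replaced by ``topological $\bR^d$-bundle'' and the bundle homotopy covering theorem replaced by its microbundle analogue; I would simply remark this and refer back rather than repeating the argument in full.
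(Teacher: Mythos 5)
Your argument is essentially correct, but it takes a genuinely different route from the paper. You re-run the obstruction-theoretic scheme of Lemmas \ref{lem.vectorweq} and \ref{lem.topologicalrdweq} for families of planes over a disk: keep the underlying family $P$ fixed, use that $(P,P|_{\partial D^i})$ has the homotopy type of a CW pair to lift the map to $B'$ along the weak equivalence $\phi \colon B \to B'$, and then invoke the microbundle homotopy covering theorem to drag the $\xi'$-structure along the homotopy and to cover the lifted map by a genuine $\xi$-structure. The paper instead gives a two-line structural proof using the quotient presentation of the Grassmannian: a weak equivalence $\xi \to \xi'$ induces a weak equivalence $\mr{Bun}^\mr{Top}(T\bR^d,\xi) \to \mr{Bun}^\mr{Top}(T\bR^d,\xi')$, and since $\mr{Top}(d) \times \mr{Top}(n+d,d) \hookrightarrow \mr{Top}(n+d)$ is a monomorphism of simplicial groups (so the relevant action is free), passing to the quotient $\mr{Gr}^{\mr{Top},\xi}_d(\bR^{n+d}) \cong (\mr{Bun}^\mr{Top}(T\bR^d,\xi) \times \mr{Top}(n+d))/(\mr{Top}(d) \times \mr{Top}(n+d,d))$ preserves weak equivalences. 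The paper's route buys brevity and sidesteps any discussion of representing homotopy classes geometrically; yours is more robust in that it would apply verbatim to any ``submanifold'' model of the Grassmannian, at the cost of the extra bookkeeping about Kanness/realization that you flag.

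Two small corrections to your own assessment of where the difficulty lies. First, the obstacle you single out (that $P$ is a manifold with a genuine vertical microbundle) is a non-issue, and Corollary \ref{cor.unionfiberbundle} is not the right tool anyway: the fibers are planes, hence non-compact, so the compact-fiber criterion does not apply; rather, a simplex of $\mr{Gr}^{\mr{Top},\xi}_d(\bR^{n+d})$ is by definition a coset of a family of ambient homeomorphisms, so $P$ is tautologically the image of $\Delta^k \times \bR^d \times \{0\}$ under a fiberwise homeomorphism and is a (trivializable, over a disk) $\bR^d$-bundle. Second, the genuine point of care is the one you half-acknowledge: the Grassmannian is defined as a quotient of simplicial groups, not as a subobject of $\Psi^\mr{Top}_d(\bR^{n+d})$ extended to a quasitopological space (cf.\ the warning following Definition \ref{def.topgrassmannian}), so the passage to lifting problems over disks should be justified via the Kan property of this quotient (or geometric realization) rather than the sheaf-theoretic formalism used for $\Psi$; the comparison map to $\Psi^\mr{Top}_d(\bR^{n+d})$ is injective, so your description of the representing data is legitimate once that is in place.
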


\begin{proof}The hypothesis implies that $\mr{Bun}^\mr{Top}(T\bR^d,\xi)  \to \mr{Bun}^\mr{Top}
(T\bR^d,\xi')$ is a weak equivalence, hence so is its product with $\mr{id}_{\mr{Top}(d)}$. Since $\mr{Top}(n+d\,\mr{pres}\,d) \hookrightarrow \mr{Top}(n+d)$ is a monomorphism of simplicial groups, taking the quotient by $\mr{Top}(n+d\,\mr{pres}\,d)$ preserves weak equivalences.	 
\end{proof}

A $k$-simplex in $\mr{Gr}^\mr{Top,\xi}_d(\bR^{n+d})$ in particular contains the data of a topological submersion $P \to \Delta^k$ of relative dimension $d$, whose fibres are topological $d$-planes in $\bR^{n+d}$ that are the image of the standard plane $\bR^d \subset \bR^{n+d} $ under a homeomorphism of $\bR^{n+d}$. Thus for each $b \in \Delta^k$, the complement $\bR^{n+d} \backslash P_b$ is homeomorphic to $\bR^{n+d} \backslash \bR^d$ and hence homotopy equivalent to an $(n-1)$-dimensional sphere.

\begin{definition}We define $S\mr{Gr}^{\mr{Top},\xi}_d(\bR^{n+d})$ to be the simplicial set with $k$-simplices given by triples $(x,P,\varphi)$ of $(P,\varphi) \in \mr{Gr}^{\mr{Top},\xi}_d(\bR^{n+d})_k$ and a continuous map $x \colon \Delta^k \to \Delta^k \times \bR^{n+d} \backslash P$ over $\Delta^k$. \end{definition}

\begin{remark}Using transitivity of the action of homeomorphisms on points, this is isomorphic to the quotient
\[(\mr{Bun}^\mr{Top}(T\bR^d,\xi) \times \mr{Top}(n+d))/ \mr{Top}(n+d\, \mr{pres}\, d\,\mr{fix}\,\{e_{d+1}\})\]
where $\mr{Top}(n+d\, \mr{pres}\, d\,\mr{fix}\,\{e_{d+1}\}) \subset \mr{Top}(n+d\, \mr{pres}\, d)$ is the subsimplicial group fixing the point $e_{d+1} \in \bR^{n+d}$. \end{remark}

Sending $(x,P,\varphi)$ to $(P,\varphi)$ gives a map of simplicial sets \[S^\xi_{d,n+d} \colon S\mr{Gr}^\mr{Top,\xi}_d(\bR^{n+d}) \lra \mr{Gr}^{\mr{Top},\xi}_d(\bR^{n+d}),\]
which is a Kan fibration with fibres homotopy equivalent to $S^{n-1}$. As a consequence of the previous lemma, the construction $\xi \mapsto S\mr{Gr}^{\mr{Top},\xi}_d(\bR^{n+d})$ also takes weak equivalences to weak equivalences. The $(n+d)$th level of the topological Madsen-Tillmann spectrum will then be given by the mapping cone of the map $S^\xi_{d,n+d}$.

To explain the structure maps, we let $\Sigma_\mr{fib}$ denote the unreduced fibrewise suspension and construct a map $\Sigma_\mr{fib} S\mr{Gr}^{\mr{Top},\xi}_d(\bR^{n+d}) \to S\mr{Gr}^{\mr{Top},\xi}_d(\bR^{1+n+d})$. To do so, let $\iota \colon \bR^{n+d} \hookrightarrow \bR^{1+n+d}$ denote the inclusion on the last $n+d$ coordinates and note that the map
\begin{align*}[-1,1] \times S\mr{Gr}^{\mr{Top},\xi}_d(\bR^{n+d}) &\lra S\mr{Gr}^{\mr{Top},\xi}_d(\bR^{1+n+d}) \\
(t,x,P,\varphi_P) &\longmapsto ((1-|t|)\iota(x)+t e_1,\iota(P),\varphi_P),\end{align*}
sends for fixed $(P,\varphi_P)$ the set $\{(1,x,P,\varphi_P) \mid x \in \bR^{n+d} \backslash P\}$ to a point, and similarly sends the set $\{(-1,x,P,\varphi_P) \mid x \in \bR^{n+d} \backslash P\}$ to a point. To get the structure map, use that the mapping cone of $\Sigma_\mr{fib} S\mr{Gr}^{\mr{Top},\xi}_d(\bR^{n+d}) \to \mr{Gr}^{\mr{Top},\xi}_d(\bR^{n+d})$ is isomorphic to the reduced suspension of the mapping cone of $S^\xi_{d,n+d}$.

\begin{definition}The \emph{topological Madsen-Tillmann spectrum} $MT^\mr{Top}\xi$ has $(n+d)$th level given by the mapping cone of the map $S^\xi_{d,n+d}$, and structure maps as above.\end{definition}

To see that this is the right definition, we explain why the stable spherical fibration described is the inverse to the canonical one. The space $\mr{Gr}^\mr{Top}_d(\bR^{n+d})$ carries a canonical topological $\bR^d$-bundle with fibre over $(P,\varphi)$ given by those $y \in \bR^{n+d}$ that are contained in $P$. By Lemma \ref{lem.topgrassmannian}, this becomes the universal topological $\bR^d$-bundle upon letting $n \to \infty$. The complement of the origin provides an associated spherical fibration, and the stable spherical fibration $\{S\mr{Gr}^\mr{Top}_d(\bR^{n+d})\}_{(n+d) \geq 0}$ is the inverse to this canonical spherical fibration; their fibrewise join is a trivial $-1$-dimensional stable spherical fibration.

The construction $\xi \mapsto MT^\mr{Top}\xi$ is natural in maps of topological $\bR^d$-bundles, and satisfies analogues of Lemmas \ref{lem.smooththomspectraweq} and \ref{lem.smooththomgeomrel}. These are proven in a similar fashion.

\begin{lemma}\label{lem.topthomspectraweq} The construction $\xi \mapsto MT^\mr{Top}\xi$ takes weak equivalences to weak equivalences.\end{lemma}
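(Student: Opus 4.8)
The plan is to mimic the proof of Lemma \ref{lem.smooththomspectraweq}, since the topological Madsen--Tillmann spectrum $MT^{\mr{Top}}\xi$ was explicitly built so that its $(n+d)$th level is a mapping cone, and mapping cones are homotopy invariant in a controlled way. Concretely, let $\varphi \colon \xi \to \xi'$ be a weak equivalence of topological $\bR^d$-bundles, with underlying map of base spaces $\phi \colon B \to B'$. It suffices to show that the induced map on $(n+d)$th levels is a pointed weak equivalence for every $n$.

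The key step is to identify this map on levels as a map of mapping cones. Recall that the $(n+d)$th level of $MT^\mr{Top}\xi$ is the mapping cone of the Kan fibration $S^\xi_{d,n+d} \colon S\mr{Gr}^{\mr{Top},\xi}_d(\bR^{n+d}) \to \mr{Gr}^{\mr{Top},\xi}_d(\bR^{n+d})$, and similarly for $\xi'$. First I would invoke the lemma just proven (that $\xi \mapsto \mr{Gr}^{\mr{Top},\xi}_d(\bR^{n+d})$ takes weak equivalences to weak equivalences), together with the remark that the same holds for $\xi \mapsto S\mr{Gr}^{\mr{Top},\xi}_d(\bR^{n+d})$, to conclude that both the base and total space of the fibration map via weak equivalences. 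Then I would appeal to the fact that the mapping cone construction sends a commutative square whose two horizontal maps are weak equivalences to a weak equivalence — this is the gluing lemma for homotopy pushouts, applied to the diagram $\ast \leftarrow S\mr{Gr}^{\mr{Top},\xi}_d(\bR^{n+d}) \to \mr{Gr}^{\mr{Top},\xi}_d(\bR^{n+d})$ and its $\xi'$-analogue, using that one leg is a cofibration (the inclusion into the mapping cylinder) or simply that the mapping cone is a homotopy colimit. Since the map on $(n+d)$th levels is by construction exactly the map of mapping cones induced by this square, it is a weak equivalence.

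The main obstacle, such as it is, is purely bookkeeping: one must check that the square
\[\begin{tikzcd} S\mr{Gr}^{\mr{Top},\xi}_d(\bR^{n+d}) \rar \dar & \mr{Gr}^{\mr{Top},\xi}_d(\bR^{n+d}) \dar \\
S\mr{Gr}^{\mr{Top},\xi'}_d(\bR^{n+d}) \rar & \mr{Gr}^{\mr{Top},\xi'}_d(\bR^{n+d}) \end{tikzcd}\]
genuinely commutes (it does, since the vertical maps only postcompose the $\xi$-structure with $\varphi$, leaving the underlying plane $P$ and the point $x \notin P$ untouched, and $S^\xi_{d,n+d}$ only forgets $x$), and that the identification of the level map of $MT^\mr{Top}\xi \to MT^\mr{Top}\xi'$ with the induced map of mapping cones respects basepoints (the cone point of the mapping cone). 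One also needs that these weak equivalences are compatible with the structure maps of the spectra; this is immediate because the structure maps were defined via the fiberwise suspension $\Sigma_\mr{fib}$, which is natural in $\xi$, and the reduced suspension functor preserves weak equivalences between well-pointed spaces. Hence the map $MT^\mr{Top}\xi \to MT^\mr{Top}\xi'$ is a levelwise weak equivalence, in particular a weak equivalence of spectra, completing the proof.
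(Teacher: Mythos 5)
Your proposal is correct and follows essentially the same route as the paper: both reduce to a levelwise statement, use that $\xi \mapsto \mr{Gr}^{\mr{Top},\xi}_d(\bR^{n+d})$ (and hence $\xi \mapsto S\mr{Gr}^{\mr{Top},\xi}_d(\bR^{n+d})$) preserves weak equivalences, and conclude via homotopy invariance of the mapping cone applied to the commutative square over $S^\xi_{d,n+d}$. Your additional checks (commutativity of the square, basepoints, compatibility with the structure maps) are fine but are left implicit in the paper's argument.
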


\begin{proof}It suffices to establish a levelwise pointed weak equivalence. Since the mapping cone construction preserves weak equivalences of the pushout diagrams, it suffices to recall that if $g \colon \xi \to \xi'$ is a weak equivalence then both of the vertical maps in
	\[\begin{tikzcd}S\mr{Gr}^{\mr{Top},\xi}_d(\bR^{n+d}) \rar{S^\xi_{d,n+d}} \dar & \mr{Gr}^{\mr{Top},\xi}_d(\bR^{n+d}) \dar \\
	S\mr{Gr}^{\mr{Top},\xi'}_d(\bR^{n+d}) \rar{S^{\xi'}_{d,n+d}} & \mr{Gr}^{\mr{Top},\xi'}_d(\bR^{n+d})\end{tikzcd}\]
are weak equivalences.
\end{proof}

\begin{lemma}\label{lem.topthomgeomrel} If $\xi_\bullet$ is a levelwise trivial topological $\bR^d$-bundle, then the canonical map $||MT\xi_{\bullet}|| \to MT||\xi_{\bullet}||$ is a weak equivalence.\end{lemma}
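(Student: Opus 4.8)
The plan is to mimic the proof of Lemma \ref{lem.smooththomgeomrel} step by step, replacing vector bundles with topological $\bR^d$-bundles and the vector bundle homotopy covering theorem with its microbundle analogue (here Theorem \ref{thm.microbundlecovering}). As in the smooth case, it suffices to produce a levelwise pointed weak equivalence of the $(n+d)$th levels, i.e.\ of the mapping cones of $S^{\xi_\bullet}_{d,n+d}$ (realized semisimplicially) and $S^{||\xi_\bullet||}_{d,n+d}$. Since the mapping cone construction preserves levelwise weak equivalences of pushout diagrams and thick geometric realization preserves levelwise weak equivalences by \cite[Theorem 2.2]{rwebertsemi}, it is enough to compare the ``constant'' sub-Grassmannians and reduce to a statement about spaces of bundle maps whose underlying map factors over a point.

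First I would introduce $\mr{Gr}^{\mr{Top},\xi,\mr{const}}_d(\bR^{n+d})$ and $S\mr{Gr}^{\mr{Top},\xi,\mr{const}}_d(\bR^{n+d})$, the subsimplicial sets where the map $\phi_P \colon P \to B$ underlying $\varphi_P$ is fiberwise constant, and prove the analogue of Lemma \ref{lem.thomconstsmooth}: the inclusion $\mr{Gr}^{\mr{Top},\xi,\mr{const}}_d(\bR^{n+d}) \hookrightarrow \mr{Gr}^{\mr{Top},\xi}_d(\bR^{n+d})$ is a weak equivalence. The argument is the same as in the smooth case —the fibers $P_b$ are images of the standard plane $\bR^d$ under a homeomorphism of $\bR^{n+d}$, hence admit an evident fiberwise deformation retraction onto the origin, and one covers this homotopy of underlying maps using the microbundle homotopy covering theorem. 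The same statement holds with $S$ prepended, since $S^\xi_{d,n+d}$ is a Kan fibration and the extra point-data $x$ can be carried along. These weak equivalences pass to mapping cones, giving $\mr{Thom}^{\mr{const}} \hookrightarrow \mr{Thom}$ weak equivalences both before and after realization, so by the usual commuting-square argument it remains to produce a \emph{levelwise pointed homeomorphism}
\[\|\mr{Thom}^{\mr{const}}(\pi^*_{\xi_\bullet}\gamma^\perp_{d,n+d})\| \lra \mr{Thom}^{\mr{const}}(\pi^*_{\|\xi_\bullet\|}\gamma^\perp_{d,n+d}).\]

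For this last point I would, exactly as in the proof of Lemma \ref{lem.smooththomgeomrel}, use that the mapping cone is built by colimits, that colimits commute with colimits, and reduce to showing that the continuous bijection $\|\mr{Bun}^{\mr{Top},\mr{const}}(T\bR^d,\xi_\bullet)\| \to \mr{Bun}^{\mr{Top},\mr{const}}(T\bR^d,\|\xi_\bullet\|)$ is a homeomorphism. Here the key identification is $\mr{Bun}^{\mr{Top},\mr{const}}(T\bR^d,\xi) \cong \mr{Bun}^{\mr{Top}}(T\bR^d,\bR^d) \times_{\mr{Top}(d)} \xi$ (a bundle map with fiberwise-constant underlying map is a germ of a homeomorphism $\bR^d \to \bR^d$ together with a point of a fiber of $\xi$), and the fact that products and quotients by group actions commute with (thick) geometric realization. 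Since $\xi_\bullet$ is assumed \emph{levelwise trivial}, there are no subtleties with the bundle structure on $\|\xi_\bullet\|$ beyond those already handled in Section \ref{sec.excision}.

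The main obstacle I anticipate is purely bookkeeping: the topological setting works with simplicial sets rather than topological spaces, so the ``colimits commute with colimits'' reduction and the homeomorphism claim for $\mr{Bun}^{\mr{Top},\mr{const}}$ must be phrased carefully in terms of simplices and thick geometric realization, and one must check that the semisimplicial map $\|MT^\mr{Top}\xi_\bullet\| \to MT^\mr{Top}\|\xi_\bullet\|$ constructed from the maps $\Delta^p \wedge (\text{level } p) \to (\text{level of }\|\xi_\bullet\|)$ is compatible with the structure maps (this is the same fiberwise-suspension compatibility as in the definition of $MT^\mr{Top}\xi$, so it is routine). There is no genuine geometric difficulty beyond what is already present in Lemmas \ref{lem.smooththomgeomrel} and \ref{lem.topthomspectraweq}; the proof is essentially a transcription.
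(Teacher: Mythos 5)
Your proposal is correct and follows essentially the same route as the paper: the paper's proof likewise introduces $\mr{Gr}^{\mr{Top},\xi,\mr{const}}_d(\bR^{n+d})$, proves the inclusion is a weak equivalence via the deformation retraction of planes onto the origin plus the bundle homotopy covering theorem, and then proceeds exactly as in Lemma \ref{lem.smooththomgeomrel}, reducing to the identification $\mr{Bun}^{\mr{Top},\mr{const}}(T\bR^d,\xi)\cong \mr{Bun}^{\mr{Top}}(T\bR^d,\bR^d)\times_{\mr{Top}(d)}\xi$ and the compatibility of products and quotients with geometric realization. Your only deviation is notational (writing the levels as Thom spaces of $\pi^*\gamma^\perp_{d,n+d}$ rather than as mapping cones of $S^\xi_{d,n+d}$), which does not affect the argument.
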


\begin{proof}Let us define $\mr{Gr}^{\mr{Top},\xi,\mr{const}}_d(\bR^{n+d})$ to the subspace of $\mr{Gr}^{\mr{Top},\xi}_d(\bR^{n+d})$ where the underlying map $\phi_P \colon P \to B$ of $\varphi_P \colon TP \oplus \epsilon^{d'-d} \to \xi$ factors over a point. As in Lemma \ref{lem.thomconstsmooth}, the inclusion is a weak equivalence; the argument only used that the planes through the origin deformation retract to the origin and the bundle homotopy covering theorem. Now proceed as in the proof of Lemma \ref{lem.smooththomgeomrel}.
\end{proof}

\subsection{Framed planes and the proof of Theorem \ref{thm.mainwith}} Let us now focus on the case $\xi = \epsilon^d \times B$, i.e.\ framings with a map to a background space. Theorem \ref{thm.smoothspectra} tells us that the map $\Psi^{\mr{Diff},\epsilon^d \times B}(d) \to \Psi^{\mr{Top},\epsilon^d \times B}(d)$ is a weak equivalence of spectra.

\begin{proposition}\label{prop.thomcomparisontrivial} For a trivial $d$-dimensional vector bundle $\epsilon^d \times B$, the map $MT^\mr{Diff}(\epsilon^d \times B) \to MT^\mr{Top}(\epsilon^d \times B)$ is a weak equivalence.\end{proposition}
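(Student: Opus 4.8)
The plan is to reduce to the case $B = \ast$ and then compare the spaces underlying the two spectra by a connectivity estimate. First I would observe that, since every $d$-plane occurring in $\mr{Gr}^{\mr{Diff},\epsilon^d\times B}_d(\bR^{n+d})$ and in $\mr{Gr}^{\mr{Top},\epsilon^d\times B}_d(\bR^{n+d})$ passes through the origin, evaluating at the origin the map to $B$ underlying the $(\epsilon^d\times B)$-structure defines a weak equivalence
\[\mr{Gr}^{\mr{Cat},\epsilon^d\times B}_d(\bR^{n+d}) \overset{\simeq}{\lra} \mr{Gr}^{\mr{Cat},\epsilon^d}_d(\bR^{n+d}) \times B \qquad (\mr{Cat} \in \{\mr{Diff},\mr{Top}\}),\]
compatibly with the forgetful maps from the $\mr{Diff}$ to the $\mr{Top}$ version: on the fibres of the projections that forget the $B$-component it is the evaluation map $\mr{Map}(P,B) \to B$ at the origin, which is a weak equivalence because each plane $P$ is contractible. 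Under this equivalence the normal bundle $\gamma^\perp_{d,n+d}$ (in the $\mr{Diff}$ case) and the spherical fibration $S\mr{Gr}^{\mr{Top},\epsilon^d\times B}_d(\bR^{n+d})$ (in the $\mr{Top}$ case) are pulled back from the first factor, so the $(n+d)$th level of $MT^\mr{Diff}(\epsilon^d\times B) \to MT^\mr{Top}(\epsilon^d\times B)$ is identified with $B_+$ smashed with the $(n+d)$th level of $MT^\mr{Diff}(\epsilon^d) \to MT^\mr{Top}(\epsilon^d)$. It therefore suffices to treat $B = \ast$.

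Next I would estimate the connectivity of the two ``framed Grassmannians''. The space $\mr{Gr}^{\mr{Diff},\epsilon^d}_d(\bR^{n+d})$ --- a $d$-plane $P$ through the origin together with a trivialisation of $TP$ --- is weakly equivalent to the Stiefel manifold $V_d(\bR^{n+d})$, via the map sending $(P,\varphi_P)$ to the $d$-frame obtained by applying Gram--Schmidt to the inverse of the isomorphism $\varphi_P$ at the origin; hence it is $(n-1)$-connected. The space $\mr{Gr}^{\mr{Top},\epsilon^d}_d(\bR^{n+d})$ is the total space of the $\mr{Top}(d)$-bundle over $\mr{Gr}^\mr{Top}_d(\bR^{n+d})$ associated to the tautological $\bR^d$-bundle, i.e.\ the pullback of the weakly contractible total space of the universal $\mr{Top}(d)$-bundle along the classifying map; since the fibre $\mr{Top}(n+d)/\mr{Top}(n+d,d)$ of $\mr{Gr}^\mr{Top}_d(\bR^{n+d}) \to B\mr{Top}(d)$ becomes highly connected as $n \to \infty$ --- which is exactly what is proved in Lemma \ref{lem.topgrassmannian} --- so does $\mr{Gr}^{\mr{Top},\epsilon^d}_d(\bR^{n+d})$. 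Consequently the forgetful map $\mr{Gr}^{\mr{Diff},\epsilon^d}_d(\bR^{n+d}) \to \mr{Gr}^{\mr{Top},\epsilon^d}_d(\bR^{n+d})$ is a map between spaces whose connectivity grows without bound with $n$, hence is itself highly connected.

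Finally, applying the Thom space construction --- respectively the mapping cone of the spherical fibration --- to a $k$-connected map of bases together with the $n$-dimensional normal bundle (respectively the $(n-1)$-spherical fibration) pulled back along it produces a $(k+n)$-connected map of Thom spaces, by the Thom isomorphism and homology Whitehead (using $n \geq 2$, and that the Thom spaces are wedges of simply connected spaces). Hence the map $MT^\mr{Diff}(\epsilon^d)_{n+d} \to MT^\mr{Top}(\epsilon^d)_{n+d}$ has connectivity growing without bound with $n$, so $MT^\mr{Diff}(\epsilon^d) \to MT^\mr{Top}(\epsilon^d)$ is a weak equivalence of spectra; combined with the first paragraph this proves the proposition. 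I expect the main obstacle to be the second paragraph: identifying $\mr{Gr}^{\mr{Top},\epsilon^d}_d(\bR^{n+d})$ precisely enough --- matching the $\epsilon^d$-structure with a trivialisation of the tautological bundle, compatibly with the spherical fibration used to build $MT^\mr{Top}$ --- so that its connectivity can be read off from Lemma \ref{lem.topgrassmannian}, together with the routine bookkeeping needed when $B$ is disconnected.
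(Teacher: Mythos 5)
Your proposal is correct and follows essentially the same route as the paper: you identify $\mr{Gr}^{\mr{Diff},\epsilon^d\times B}_d(\bR^{n+d})$ and $\mr{Gr}^{\mr{Top},\epsilon^d\times B}_d(\bR^{n+d})$ (up to the $B$-factor, which the paper carries along as a product rather than splitting off as $B_+\wedge-$) with the highly connected spaces $O(n+d)/O(n)$ and $\mr{Top}(n+d)/\mr{Top}(n+d,d)$, invoke the connectivity established in the proof of Lemma \ref{lem.topgrassmannian}, and then gain $n$ in connectivity by passing to Thom spaces/mapping cones of the $(n-1)$-spherical fibrations. The points you flag as remaining work (matching the $\epsilon^d$-structure with the frame-bundle description, and the $B$-bookkeeping) are exactly what the paper handles via the equivariant description $\mr{Bun}^{\mr{Top}}(T\bR^d,\epsilon^d\times B)\simeq \mr{Top}(d)\times B$ and the induced map on Borel-type quotients, so no genuinely new ingredient is needed.
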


\begin{proof}We have a commutative diagram before taking Thom spaces
	\[\begin{tikzcd}\mr{O}(n+d)/\mr{O}(n) \times B \dar \rar & \mr{Top}(n+d)/\mr{Top}(n+d\,\mr{fix}\,d) \times B \dar \\
	\mr{Gr}^\mr{Diff,\epsilon^d \times B}_d(\bR^{n+d}) \rar &	\mr{Gr}^\mr{Top,\epsilon^d \times B}_d(\bR^{n+d}).\end{tikzcd}\]
The left map is induced by applying $- \times_{\mr{O}(d)} \mr{O}(n+d)/\mr{O}(n)$ to the weak equivalence $\mr{O}(d) \times B \to \mr{Bun}^\mr{Diff}(T\bR^d,\epsilon^d \times B)$ sending $(A,b)$ to the bundle map $T \bR^d \cong \epsilon^d \times \bR^d \to \epsilon^d \times B$ given by $(\vec{x},y) \mapsto (A\vec{x},b)$. The right map is induced by applying $- \times_{\mr{Top}(d)} \mr{Top}(n+d)/\mr{Top}(n+d\,\mr{fix}\,d)$ to a similar weak equivalence $\mr{Top}(d) \times B \to \mr{Bun}^\mr{Top}(T\bR^d,\epsilon^d \times B)$. Since both $- \times_{\mr{O}(d)} \mr{O}(n+d)/\mr{O}(n)$ and $- \times_{\mr{Top}(d)} \mr{Top}(n+d)/\mr{Top}(n+d\,\mr{fix}\,d)$ preserve weak equivalences, we conclude that both vertical maps are weak equivalences.

We explained in the proof of Lemma \ref{lem.topgrassmannian} that the top horizontal map is one between $n$-connected spaces, and thus it is $(n+1)$-connected. Upon taking Thom spaces of spherical fibrations with $(n-1)$-dimensional fibres, this map becomes $2n$-connected. Thus the map of Thom spaces induced by the bottom horizontal map is also $2n$-connected. Since this is the map between the $(n+d)$th levels of $MT^\mr{Diff}(\epsilon^d \times B)$ and $MT^\mr{Top}(\epsilon^d \times B)$, we can conclude that the map  $MT^\mr{Diff}(\epsilon^d \times B) \to MT^\mr{Top}(\epsilon^d \times B)$ is a weak equivalence.
\end{proof}

\begin{remark}In fact, by \cite[Proposition (t/d)]{lashofembeddings}, for $n \geq 5$ and $d \geq 3$ the relative homotopy groups of the map $\mr{O}(n+d)/\mr{O}(n) \to \mr{Top}(n+d)/\mr{Top}(n+d\,\mr{fix}\,d)$ are given by groups $\pi_j(G,O,G_n)$, which vanish for $j \leq 2n-3$ by \cite{milgramknotgroups}. Thus the maps discussed in the previous proof are even more highly-connected.\end{remark}

\begin{corollary}\label{cor.topspectrum} For any topological tangential structure $\xi$, the map $MT^\mr{Top}\xi \to \Psi^{\mr{Top},\xi}(d)$ is a weak equivalence.
\end{corollary}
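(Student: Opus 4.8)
The plan is to bootstrap from the trivial-bundle case established in Proposition~\ref{prop.thomcomparisontrivial} to an arbitrary topological tangential structure $\xi$, using the excision results for both sides and the smoothing comparison. First I would reduce to the case where $\xi$ is $d$-dimensional: by the discussion in Section~\ref{sec.asidebundles} (and its topological analogue, which applies verbatim since it only used the Kister--Mazur theorem and the homotopy covering theorem), for any $d'$-dimensional $\xi$ with $d' \geq d$ there is a $d$-dimensional $\bar{\xi}$ with $\mr{Bun}^\mr{Top}(TX,\bar{\xi}) \simeq \mr{Bun}^\mr{Top}(TX \oplus \epsilon^{d'-d},\xi)$ naturally, and both $MT^\mr{Top}$ and $\Psi^{\mr{Top}}$ are insensitive to this replacement up to weak equivalence. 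So assume $\xi$ is a numerable topological $\bR^d$-bundle.

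Next, apply Lemma~\ref{lem.tangenttrivialbundle}: there is a levelwise trivial semisimplicial topological $\bR^d$-bundle $\xi_\bullet$ with a homotopy equivalence $||\xi_\bullet|| \to \xi$ of numerable $\bR^d$-bundles. By Lemma~\ref{lem.topologicalrdweq} and Lemma~\ref{lem.topthomspectraweq}, both $\Psi^{\mr{Top},-}(d)$ and $MT^\mr{Top}(-)$ send weak equivalences of $\bR^d$-bundles to weak equivalences of spectra, so it suffices to prove the statement for $||\xi_\bullet||$. Now I would assemble the commutative square of spectra
\[\begin{tikzcd} ||MT^\mr{Top}\xi_\bullet|| \rar \dar{\simeq} & ||\Psi^{\mr{Top},\xi_\bullet}(d)|| \dar{\simeq} \\
MT^\mr{Top}||\xi_\bullet|| \rar & \Psi^{\mr{Top},||\xi_\bullet||}(d), \end{tikzcd}\]
whose left vertical map is a weak equivalence by Lemma~\ref{lem.topthomgeomrel} (each $\xi_p$ is trivial) and whose right vertical map is a weak equivalence by Theorem~\ref{thm.excisiontangent}. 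Hence it suffices to show the top horizontal map is a weak equivalence, and since thick geometric realization of semisimplicial spectra preserves levelwise weak equivalences, it is enough to show that $MT^\mr{Top}\xi_p \to \Psi^{\mr{Top},\xi_p}(d)$ is a weak equivalence for each $p$. But each $\xi_p$ is a trivial $\bR^d$-bundle $\epsilon^d \times B_p$ — here $B_p$ may be taken to be a CW complex after the reductions in the proofs of Theorems~\ref{thm.excisiontangent} and \ref{thm.excisiontangentsmooth} — so this is the framed case.

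For the framed case $\xi = \epsilon^d \times B$, I would run the diagram
\[MT^\mr{Diff}(\epsilon^d \times B) \to MT^\mr{Top}(\epsilon^d \times B), \qquad \Psi^{\mr{Diff},\epsilon^d \times B}(d) \to \Psi^{\mr{Top},\epsilon^d \times B}(d)\]
together with the scanning map $MT^\mr{Diff}(\epsilon^d \times B) \to \Psi^{\mr{Diff},\epsilon^d \times B}(d)$, which is a weak equivalence by Theorem~\ref{thm.gmtw}. Proposition~\ref{prop.thomcomparisontrivial} gives that the left map is a weak equivalence, and Theorem~\ref{thm.smoothspectra} gives that the right map is a weak equivalence (this is the only place $d \neq 4$ enters). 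The commutative square
\[\begin{tikzcd} MT^\mr{Diff}(\epsilon^d \times B) \rar{\simeq} \dar{\simeq} & \Psi^{\mr{Diff},\epsilon^d \times B}(d) \dar{\simeq} \\
MT^\mr{Top}(\epsilon^d \times B) \rar & \Psi^{\mr{Top},\epsilon^d \times B}(d) \end{tikzcd}\]
then forces the bottom map to be a weak equivalence by two-out-of-three. Running through the chain of equivalences $B\cat{Cob}^{\mr{Top},\xi}(d) \simeq \Omega^{\infty-1}\Psi^{\mr{Top},\xi}(d)$ of Corollary~\ref{cor.topbordism} and the identifications above then yields Theorem~\ref{thm.mainwith}. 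The main obstacle is bookkeeping: ensuring the naturality of the comparison map $MT^\mr{Top}\xi \to \Psi^{\mr{Top},\xi}(d)$ is genuinely compatible with all of the geometric-realization and excision maps so that the squares above commute on the nose (or up to coherent homotopy), and checking that the CW-ness reductions used in Theorem~\ref{thm.excisiontangent} can be made simultaneously for both the $\Psi$ and $MT$ sides without disturbing the maps between them. None of this is deep, but it requires care; the genuinely hard input — smoothing theory for spectra of topological manifolds — has already been isolated in Theorem~\ref{thm.smoothspectra}.
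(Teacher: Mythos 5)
Your proposal is correct and follows essentially the same route as the paper: resolve $\xi$ by a levelwise trivial semisimplicial bundle via Lemma \ref{lem.tangenttrivialbundle}, use Lemmas \ref{lem.topologicalrdweq}, \ref{lem.topthomspectraweq}, \ref{lem.topthomgeomrel} and Theorem \ref{thm.excisiontangent} to reduce to the levelwise (trivial) case, and settle that case by the two-out-of-three argument combining Theorem \ref{thm.gmtw}, Proposition \ref{prop.thomcomparisontrivial}, and Theorem \ref{thm.smoothspectra}. The extra preliminary reduction to $d$-dimensional $\xi$ and the remarks on CW replacement are harmless additions not needed in the paper's own proof.
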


\begin{proof}By Lemma \ref{lem.tangenttrivialbundle}, we may pick a levelwise trivial topological $\bR^d$-bundle $\xi_{\bullet}$ with homotopy equivalence $||\xi_{\bullet}|| \to \xi$. Then there is a commutative diagram
	\[\begin{tikzcd}
	{||MT^\mr{Top}\xi_{\bullet}}|| \rar \dar[swap]{\simeq} & {||\Psi^{\mr{Top},\xi_{\bullet}}(d)||} \dar{\simeq} \\
	{MT^\mr{Top}||\xi_{\bullet}||} \rar \dar[swap]{\simeq} & \Psi^{\mr{Top},||\xi_{\bullet}||}(d) \dar{\simeq} \\
	MT^\mr{Top}\xi \rar & \Psi^{\mr{Top},\xi}(d),\end{tikzcd}\]
	where the vertical maps are weak equivalences by Lemmas \ref{lem.topologicalrdweq}, \ref{lem.topthomspectraweq}, \ref{lem.topthomgeomrel}, and Theorem \ref{thm.excisiontangent}.
	
	Hence it suffices to prove that each map $MT^\mr{Top} \xi_p \to \Psi^{\mr{Top},\xi_p}(d)$ is a weak equivalence. But since $\xi_p$ is trivial, that is, $\xi_p \cong \epsilon^d \times B_p$, it can be regarded as a (trivial) $d$-dimensional vector bundle, and there is a commutative diagram
	\[\begin{tikzcd}MT^\mr{Diff}(\epsilon^d \times B_p) \rar{\simeq} \dar[swap]{\simeq} & \Psi^{\mr{Diff},\epsilon^d \times B_p}(d) \dar{\simeq} \\ MT^\mr{Top}(\epsilon^d \times B_p) \rar & \Psi^{\mr{Top},\epsilon^d \times B_p}(d).\end{tikzcd}\]
	The top horizontal map is a weak equivalence by Theorem \ref{thm.gmtw}, the left horizontal map is a weak equivalence by Proposition \ref{prop.thomcomparisontrivial}, and the right horizontal map is a weak equivalence by the case $\upsilon = \epsilon^d \times B$ of Theorem \ref{thm.smoothspectra}. Therefore, we may conclude that the bottom map is also a weak equivalence.
\end{proof}

At this point we have concluded the proof of Theorem \ref{thm.mainwith}, and hence of Theorem \ref{thm.mainwithout}.

\subsection{The homotopy type of the topological cobordism category with boundary} \label{sec.boundary} In this section we explain how to extend Theorem \ref{thm.mainwith} to a category of cobordisms of topological manifolds \emph{with boundary}. We shall not give all details, as the proofs involve no new ideas.

The definition of the topological cobordism category with boundary requires first of all a definition of a simplicial set $\Psi^{\mr{Top},\xi}_{d,\partial}(M)$ of $d$-dimensional manifolds with boundary in a manifold $M$ with boundary $\partial M$.

\begin{definition}We define $\Psi^{\mr{Top},\xi}_{d,\partial}(M)$ to be the simplicial set with $k$-simplices given by closed topological manifolds $X \subset \Delta^k \times M$ such that $\pi \colon (\Delta^k \times M,X) \to \Delta^k$ is a relative topological submersion of relative dimension $(md)$ of manifolds with boundary with fibrewise boundary $\partial_{\pi} X \subset \Delta^k \times \partial M$ and a map $\varphi_X \colon T_\pi X \to \xi$ of $d$-dimensional topological microbundles. 
	
We also fix a collar $\partial M \times [0,1) \hookrightarrow M$, and demand that $\overline{X}$ is \emph{neat}, in the sense that there exists an $\epsilon > 0$ such that $\overline{X}$ coincides with $\partial_{\pi} \overline{X} \times [0,\epsilon)$ with respect to these coordinates (here as before, the overline denotes that we also take into account the $\xi$-structure).\end{definition}

Using this, we may define $\psi^\mr{Top,\xi}_{\partial}(d,n,p)$ as the subspace of $\Psi^{\mr{Top},\xi}_{d,\partial}(\bR^{n-1} \times [0,\infty))$ of those $X$ contained in $\bR^{p} \times (0,1)^{n-p-1} \times [0,1)$. The definitions of $\cat{Cob}^{\mr{Top},\xi}_{\partial}(d,n)$ and $\cat{Cob}^{\mr{Top},\xi}_{\partial}(d)$ follow as in Definition \ref{sec.cobcatdefinition}.

Inclusion and boundary restriction give rise to functors
\[\cat{Cob}^{\mr{Top},\xi}(d,n) \overset{i}\lra \cat{Cob}^{\mr{Top},\xi}_\partial(d,n) \overset{\partial}\lra \cat{Cob}^{\mr{Top},\iota^*\xi}(d-1,n-1),\]
where $\iota^* \xi$ is the $(d-1)$-dimensional tangential structure obtained by taking a map $B \to B\mr{Top}(d)$ classifying $\xi$ (this is unique up to homotopy), and taking the homotopy pullback along the inclusion $B\mr{Top}(d-1) \to B\mr{Top}(d)$ (cf.\ Section \ref{sec.asidebundles} in the smooth case). In other words, a $\xi$-structure of a $(d-1)$-dimensional manifold $X$ is a $\xi$-structure on $TX \oplus \epsilon$.

The transversality and microflexibility arguments of Sections \ref{sec:cobordismcat} and \ref{sec.microflexibility} go through for topological manifolds with boundary with straightforward modifications. Using these we obtain a zigzag of sequences of maps
	\begin{equation}\label{eqn:bdyzigzag} \begin{tikzcd}B\cat{Cob}^\mr{Top,\xi}(d,n) \rar{Bi} &[-1pt] B\cat{Cob}^\mr{Top,\xi}_\partial(d,n) \rar{B\partial} &[-3pt]\cat{Cob}^\mr{Top,\iota^* \xi}(d-1,n-1) \\
	\psi^{\mr{Top},\xi}(d,n,1) \rar{i} \uar[swap]{\simeq} \dar{\simeq} & \psi_{\partial}^{\mr{Top},\xi}(d,n,1) \rar{\partial} \uar[swap]{\simeq} \dar{\simeq} & \psi^{\mr{Top},\iota^*\xi}(d-1,n-1,1) \uar[swap]{\simeq} \dar{\simeq} \\
	\Omega^{n-2} \psi^\mr{Top,\xi}(d,n,n-1) \rar{\Omega^{n-2} i} & \Omega^{n-2} \psi_{d,\partial}^\mr{Top,\xi}(d,n,n-1)  \rar{\Omega^{n-2} \partial} & \Omega^{n-2} \Psi^\mr{Top,\iota^* \xi}_{d-1}(\bR^{n-1}),\end{tikzcd}\end{equation}
all of the horizontal maps given by inclusion and boundary restriction respectively. The bottom-left term $\Omega^{n-2} \psi^\mr{Top,\xi}(d,n,n-1)$ may be delooped once more; the $h$-principle proven in Section \ref{sec.microflexibility} says that it is weakly equivalent to $\Omega^{n-1} \Psi^\mr{Top,\xi}_{d}(\bR^n)$ using a scanning map. However, the middle and bottom-right terms can not be further delooped. 

The bottom row of \eqref{eqn:bdyzigzag} is obtained by applying $\Omega^{n-2}$ to the pair of maps
\[\psi^\mr{Top,\xi}(d,n,n-1) \overset{i}\lra \psi_{d,\partial}^\mr{Top,\xi}(d,n,n-1) \overset{\partial}\lra \Psi^\mr{Top,\iota^* \xi}_{d-1}(\bR^{n-1})\]
pointed at $\varnothing$, given by inclusion and restriction to the boundary:

\begin{lemma}The map
	\[\psi_{d,\partial}^\mr{Top,\xi}(d,n,n-1) \overset{\partial}\lra \Psi^\mr{Top,\iota^* \xi}_{d-1}(\bR^{n-1})\]
is a Kan fibration, with inclusion of the fibre over $\varnothing$ given by $i \colon \psi^\mr{Top,\xi}(d,n,n-1) \to \psi_{d,\partial}^\mr{Top,\xi}(d,n,n-1)$.\end{lemma}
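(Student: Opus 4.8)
The plan is to establish the two assertions separately: first that the boundary-restriction map $\partial$ is a Kan fibration, and second that its fiber over the basepoint $\varnothing$ is exactly the subsimplicial set $\psi^\mr{Top,\xi}(d,n,n-1)$ included via $i$. The second assertion is essentially a matter of unwinding definitions: a $k$-simplex $\bar{X}$ of $\psi_{d,\partial}^\mr{Top,\xi}(d,n,n-1)$ maps to $\varnothing$ under $\partial$ precisely when $\partial_\pi X = \varnothing$, i.e.\ when $X$ has empty fiberwise boundary, which by the neatness condition means $X$ is a closed manifold sitting inside $\bR^{n-1}\times[0,\infty)$ with $X$ product-like near $\bR^{n-1}\times\{0\}$; since $\partial_\pi X=\varnothing$ this product region is empty and $X$ is in fact contained in $\bR^{n-1}\times(0,\infty)$. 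Identifying the latter with $\bR^n$ (and the coordinate constraint $X\subset\bR^{n-1}\times(0,1)^0\times[0,1)$ with the constraint $X \subset \bR^{n-1}\times(0,1)$ defining $\psi^\mr{Top,\xi}(d,n,n-1)$) gives the claimed isomorphism of fibers.

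The first assertion is the substantive one. I would verify the lifting property against the horn inclusions $\Lambda^k_j\hookrightarrow\Delta^k$. As in the proof of Lemma~\ref{lem.sourcetargetfib}, I would replace the horn inclusion by the homeomorphic inclusion $\{0\}\times D^k\hookrightarrow[0,1]\times D^k$ (reindexing $k-1$ to $k$). So suppose we are given a commutative square whose bottom edge is a map $[0,1]\times D^k\to\Psi^\mr{Top,\iota^*\xi}_{d-1}(\bR^{n-1})$, represented by a closed topological submanifold $Y\subset [0,1]\times D^k\times\bR^{n-1}$ with $Y\to[0,1]\times D^k$ a topological submersion of relative dimension $d-1$ carrying an $\iota^*\xi$-structure, and whose top edge is represented over $\{0\}\times D^k$ by a neat $d$-manifold-with-boundary $\bar{X}$ in $\bR^{n-1}\times[0,\infty)$ with $\partial_\pi X$ agreeing with $Y|_{\{0\}\times D^k}$. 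The task is to extend $X$ over all of $[0,1]\times D^k$ compatibly with $Y$.

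The key step is to extend the submersion. Since the fiberwise boundary $\partial_\pi X$ is, by neatness, modeled on the collar $\partial_\pi X\times[0,\infty)$, it suffices to extend the family $Y$ of boundary data to a collar inside $X$ and then fill in. Concretely: by parametrized isotopy extension (Theorem~\ref{thm.isotopyextension}), applied just as in Lemma~\ref{lem.sourcetargetfib}, the family of embeddings of fibers of $Y$ over $[0,1]\times D^k$ extending the one over $\{0\}\times D^k$ is induced by an ambient isotopy $\eta\colon[0,1]\times D^k\to\mr{Top}((0,1)^{n-1})$ (or rather of a suitable Euclidean model of $\bR^{n-1}$); using this one transports the collar neighborhood of $\partial_\pi X$ in $X$ along the $D^k$-direction, cut off by a bump function $\lambda$ that is $1$ near the boundary stratum and $0$ away from it, exactly as in the formula for $X_{(t,d)}$ in the proof of Lemma~\ref{lem.sourcetargetfib}. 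Away from the collar the submersion is unchanged. Finally the $\xi$-structure on the vertical tangent microbundle is filled in using the microbundle homotopy covering theorem (Theorem~\ref{thm.microbundlecovering}), since after the above construction the $\xi$-structure is defined on $\{0\}\times X$ together with the collar region over all of $[0,1]\times D^k$, and $[0,1]\times X$ deformation retracts onto this subset.

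The main obstacle is the bookkeeping of the neatness/collar condition: one must ensure that the extended family $X_{(t,d)}$ remains \emph{neat} for every parameter, i.e.\ that the product structure near $\bR^{n-1}\times\{0\}$ persists and matches the prescribed $Y$. This is handled by building the extension \emph{out of} the collar $\partial_\pi X\times[0,\infty)$ in the first place, transported by the ambient isotopy $\eta$, so neatness is automatic by construction; the genuinely new $d$-dimensional material is only added in the region bounded away from $\bR^{n-1}\times\{0\}$, where it can be taken constant in the $[0,1]$-direction. Thus no new geometric input beyond Lemma~\ref{lem.sourcetargetfib} is needed, and I would simply indicate the modifications rather than rewriting the argument in full.
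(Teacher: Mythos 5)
Your identification of the fiber over $\varnothing$ is fine, but the Kan fibration argument has a genuine gap. You propose to repeat the proof of Lemma \ref{lem.sourcetargetfib}: write the boundary family $\bar{Y}$ over $[0,1]\times D^k$ as a family of embeddings of a fixed $(d-1)$-manifold, apply parametrized isotopy extension (Theorem \ref{thm.isotopyextension}) to get an ambient isotopy, and drag a collar of $\partial_\pi X$ along it. Both steps rely on compactness, which is exactly what is absent here. In Lemma \ref{lem.sourcetargetfib} the objects lie in $(0,1)^{n-1}$, so the fibers are compact, the submersion is a fiber bundle by Corollary \ref{cor.unionfiberbundle}, hence trivializable over the contractible base, and isotopy extension (which requires a compact subset $K$) applies. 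In the present lemma the target is $\Psi^{\mr{Top},\iota^*\xi}_{d-1}(\bR^{n-1})$: the fibers $Y_{(t,b)}$ are merely closed subsets of $\bR^{n-1}$, and along a path their homeomorphism type can change because pieces may disappear at infinity (this is essential for the delooping to the Thom spectrum, cf.\ Figure 1). So $\bar{Y}$ is in general not the graph of a family of embeddings of a fixed manifold, there is no ambient isotopy carrying $Y_{(0,b)}$ to $Y_{(t,b)}$, and Theorem \ref{thm.isotopyextension} does not apply. Moreover, your construction keeps the bulk of $X$ fixed and only isotopes a collar, so the boundary of your would-be lift at time $t$ stays homeomorphic to $\partial_\pi X_{b}$, whereas it must equal $Y_{(t,b)}$, which need not even be homeomorphic to it.

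The paper's sketch avoids moving $X$ altogether: after replacing $(\Delta^k,\Lambda^k_i)$ by $(D^{k-1}\times[0,1],D^{k-1})$, it takes the trace of the boundary family, i.e.\ the graph of $\bar{Y}$ restricted to $[0,t]$, which is a $d$-dimensional manifold with boundary $\bar{Y}_t\subset\bR^{n-1}\times[0,t]$ over $D^{k-1}$, carrying a $\xi$-structure since an $\iota^*\xi$-structure on the fibers of $Y$ is precisely a $\xi$-structure on $T_\pi Y\oplus\epsilon$. It then glues $\bar{Y}_t$ onto $\bar{X}$ along their common intersection with $\bR^{n-1}\times\{0\}$ and rescales into $\bR^{n-1}\times[0,1)$. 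This yields a lift whose boundary at time $t$ is exactly $Y_{(t,b)}$, with no trivialization, compactness, or isotopy-extension input; the only point needing care is the neatness (collar) condition at the new boundary, which is arranged by a reparametrization near the level $t$. If you want to keep your strategy, you would have to assume compact fibers for the boundary family, which is not the situation of the lemma.
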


\begin{proof}[Sketch of proof] In the lifting problem for a Kan fibration
	\[\begin{tikzcd} \Lambda^k_i \dar \rar & \psi_{d,\partial}^\mr{Top,\xi}(d,n,n-1) \dar \\[-2pt]
	\Delta^k \rar & \Psi^\mr{Top,\iota^* \xi}_{d-1}(\bR^{n-1})\end{tikzcd}\]
	we may replace the pair $(\Delta^k,\Lambda^k_i)$ by the homeomorphic pair $(D^{k-1} \times [0,1],D^{k-1})$. That is, we are given a family $\overline{X}$ of $d$-dimensional topological manifolds with $\xi$-structure and boundary in $\bR^{n-1} \times [0,1)$ over $D^{k-1}$, as well as a family $\overline{Y}$ of $(d-1)$-dimensional topological manifolds with $\iota^* \xi$-structure in $\bR^{n-1}$ over $D^{k-1} \times [0,1]$. By restricting $\overline{Y}$ to $[0,t] \subset [0,1]$ and forgetting that it is fibreed over $[0,t]$, we obtain from the latter a family $\overline{Y}_t$ of $d$-dimensional topological manifolds in $\bR^{n-1} \times [0,t]$ over $D^{k-1}$ with $\xi$-structure, whose intersection with $\bR^{n-1} \times \{0\}$ coincides with intersection of $\overline{X}$ with $\bR^{n-1} \times \{0\}$. We then obtain the lift by taking the fibres over $D^{k-1} \times \{t\}$ to be $\overline{X}$ and $\overline{Y}_t$ glued along these common intersections, rescaled to fit into $\bR^{n-1} \times [0,1)$.	
\end{proof}

The diagram \eqref{eqn:bdyzigzag} is natural in $n$, if on the top two rows we use the inclusion and on the bottom row we use maps analogous to the structure maps of the spectrum $\Psi^\mr{Top,\xi}(d)$ of Definition \ref{def.topspectrum}. In particular we have a spectrum $\Psi_{\partial}^\mr{Top,\xi}(d)$ with $n$th level $\psi_{d,\partial}^\mr{Top,\xi}(d,n+1,n)$ and structure maps of the form $(t,\overline{X}) \mapsto \iota(\overline{X})+t \cdot e_1$ an $(\infty,\overline{X}) \mapsto \varnothing$.
	
Upon letting $n \to \infty$, this is obtained by applying $\Omega^{\infty-1}$ to the maps of spectra
\begin{equation} \label{eqn.spectra} \Omega \Sigma \Psi^\mr{Top,\xi}(d) \lra \Psi_{\partial}^\mr{Top,\xi}(d) \lra \Psi^\mr{Top,\iota^*\xi}(d-1),\end{equation}
where the first map is given on the $n$th level by inclusion
\[( \Omega \Sigma \Psi^\mr{Top,\xi}(d))_n = \Omega \Psi^\mr{Top,\xi}_d(\bR^{n+1}) \simeq \psi^\mr{Top,\xi}(d,n+1,n) \hookrightarrow \psi_{d,\partial}^\mr{Top,\xi}(d,n+1,n),\]
and the second map is given by restriction to $\bR^{n} \times \{0\}$, thus mapping to the $n$th level $\Psi^\mr{Top,\iota^* \xi}_{d-1}(\bR^{n})$ of $\Psi^\mr{Top,\iota^*  \xi}(d)$. Of course, there is a natural weak equivalence $\Omega \Sigma \simeq \mr{id}$ on the category of spectra, so we may replace the left term by $\Psi^\mr{Top,\xi}(d)$.

We can use Theorem \ref{thm.mainwith} to identify the left and right term of \eqref{eqn.spectra}, and we now explain how to obtain the homotopy type of $\Psi_{\partial}^\mr{Top,\xi}(d)$ from this. There is a map $- \times \bR \colon \Sigma^{-1} \Psi^{\mr{Top},\iota^* \xi}(d-1) \to \Psi^\mr{Top,\xi}(d)$ given on $n$th level by sending a $(d-1)$-dimensional affine plane $P$ in $\bR^{n-1}$ to a $d$-dimensional affine plane $P \times \bR$ in $\bR^{n}$. This is weakly equivalent to the map obtained by looping the fibre sequence. To see this, taking the graphs of initial segments of a loop in $\Psi^\mr{Top,\iota^* \xi}_{d-1}(\bR^n)$ based at $\varnothing$ lifts these to $\psi^\mr{Top,\xi}_\partial(d,n+1,n)$ and the full loop lands in $\psi^\mr{Top,\xi}(d,n+1,n)$. This construction fits in a diagram commuting up to based homotopy
\[\begin{tikzcd} \Omega \Psi^\mr{Top,\iota^* \xi}_{d-1}(\bR^n) \rar  \arrow{rd}[swap]{\Omega(- \times \bR)} & \Omega \psi^\mr{Top,\xi}(d,n+1,n) \dar{\simeq}  \\
& \Omega^{2} \Psi^\mr{Top,\xi}_{d}(\bR^{n+1}) \end{tikzcd}\]
with the vertical map a scanning map.

The map $- \times \bR$ restricts to the top horizontal map in the commutative diagram
\[\begin{tikzcd} \Sigma^{-1} MT^\mr{Top}\iota^*\xi \dar{\simeq}\rar{- \times \bR} & MT^\mr{Top}\xi \dar{\simeq} \\ 
 \Sigma^{-1} \Psi^{\mr{Top},\iota^* \xi}(d-1) \rar{- \times \bR} & \Psi^\mr{Top,\xi}(d).\end{tikzcd}\]
 
The following corollary collects the conclusions of the above discussion, a topological analogue of work of Genauer \cite{genauer}. We also remark that the first part can easily be obtained using the techniques in recent work of Steimle \cite{steimle}.

\begin{corollary}If $d \neq 4,5$, there is a fibre sequence
	\[B\cat{Cob}^\mr{Top,\xi}(d) \lra B\cat{Cob}_\partial^\mr{Top,\xi}(d) \lra B\cat{Cob}^{\iota^* \xi}(d-1)\]
obtained by applying $\Omega^{\infty-1}$ to the fibre sequence of spectra obtained by delooping once
\[\Sigma^{-1} MT^\mr{Top}\iota^*\xi \lra MT^\mr{Top}\xi \lra \mr{cofib}\left[\Sigma^{-1} MT^\mr{Top}\iota^*\xi \to MT^\mr{Top}\xi\right].\]
\end{corollary}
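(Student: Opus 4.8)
The plan is to assemble the fiber sequence of bordism categories from the zigzag \eqref{eqn:bdyzigzag} and then identify the three spectra. First I would pass to the colimit $n \to \infty$ in the zigzag \eqref{eqn:bdyzigzag}. By the microflexibility results of Section \ref{sec.microflexibility} (extended to manifolds with boundary, which the text asserts works with straightforward modifications) all the vertical maps remain weak equivalences in the colimit, so one obtains a commuting ladder identifying the sequence of $B\cat{Cob}$'s with the sequence obtained by applying $\Omega^{\infty-1}$ to the maps of spectra \eqref{eqn.spectra}, namely
\[\Omega \Sigma \Psi^\mr{Top,\xi}(d) \lra \Psi_{\partial}^\mr{Top,\xi}(d) \lra \Psi^\mr{Top,\iota^*\xi}(d-1).\]
Then I would argue this is a (co)fiber sequence of spectra: the Lemma just proven shows the right-hand map is a Kan fibration at each level with fiber exactly the left-hand term (the inclusion $\psi^\mr{Top,\xi}(d,n+1,n) \hookrightarrow \psi^\mr{Top,\xi}_\partial(d,n+1,n)$), hence levelwise it is a fibration sequence of pointed simplicial sets, and a levelwise fibration sequence of spectra with the stated fiber is a stable fiber sequence. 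Replacing $\Omega\Sigma$ by the identity via the natural weak equivalence $\Omega\Sigma \simeq \mr{id}$ on spectra, and applying $\Omega^{\infty-1}$ (which sends cofiber sequences of spectra to fiber sequences of infinite loop spaces), yields the fiber sequence of bordism categories.

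Next I would identify the outer two terms using Theorem \ref{thm.mainwith}: there are weak equivalences $\Psi^\mr{Top,\xi}(d) \simeq MT^\mr{Top}\xi$ and $\Psi^\mr{Top,\iota^*\xi}(d-1) \simeq MT^\mr{Top}\iota^*\xi$, coming from Corollary \ref{cor.topspectrum}. I must check that under these identifications the second map of \eqref{eqn.spectra} corresponds (up to weak equivalence) to the desuspension of the map $-\times\bR \colon \Sigma^{-1}MT^\mr{Top}\iota^*\xi \to MT^\mr{Top}\xi$; this is exactly what the two commuting diagrams displayed just before the Corollary do — the scanning square relating $\Omega(-\times\bR)$ to the looped restriction map, and the square relating $-\times\bR$ on Thom spectra to $-\times\bR$ on the $\Psi$-spectra. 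So after looping once the right-hand map of \eqref{eqn.spectra} is identified with $\Sigma^{-1}MT^\mr{Top}\iota^*\xi \to MT^\mr{Top}\xi$ given by $-\times\bR$. Consequently the middle term $\Psi_\partial^\mr{Top,\xi}(d)$, being the (co)fiber of that map, is weakly equivalent to $\mr{cofib}[\Sigma^{-1}MT^\mr{Top}\iota^*\xi \to MT^\mr{Top}\xi]$, which gives the identification of $B\cat{Cob}_\partial^\mr{Top,\xi}(d)$ as $\Omega^{\infty-1}$ of this cofiber. Finally, rotating the stable cofiber sequence
\[\Sigma^{-1}MT^\mr{Top}\iota^*\xi \lra MT^\mr{Top}\xi \lra \mr{cofib}\!\left[\Sigma^{-1}MT^\mr{Top}\iota^*\xi \to MT^\mr{Top}\xi\right]\]
and applying $\Omega^{\infty-1}$ produces the claimed fiber sequence, with the three terms being $B\cat{Cob}^\mr{Top,\xi}(d)$, $B\cat{Cob}_\partial^\mr{Top,\xi}(d)$, $B\cat{Cob}^{\iota^*\xi}(d-1)$ by Theorem \ref{thm.mainwith} and the identification just made.

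The main obstacle I expect is the bookkeeping in the second step: verifying that the restriction-to-boundary map of spectra, after looping, really is homotopic to the $-\times\bR$ map on Madsen--Tillmann spectra — i.e. checking commutativity (up to coherent homotopy) of the two squares displayed before the Corollary and that the scanning equivalences are compatible across the ladder. The homotopy-cocartesianness of the levelwise fibration sequence and the passage from "fiber sequence of connective-ish spectra" to "fiber sequence after $\Omega^{\infty-1}$" are routine, as is the colimit argument; but making the naturality in $n$ of \eqref{eqn:bdyzigzag} precise enough to commute with the colimit, and then matching the resulting stable map with $-\times\bR$, is where care is needed. The text explicitly says "we shall not give all details, as the proofs involve no new ideas," so I would present the argument at the level of the ladder of weak equivalences plus the two commuting squares, citing \cite{genauer} and \cite{steimle} for the analogous smooth statements.
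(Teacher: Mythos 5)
Your proposal is correct and follows essentially the same route as the paper: the corollary there is explicitly stated as "collecting the conclusions of the above discussion," namely the zigzag \eqref{eqn:bdyzigzag}, the Kan-fibration lemma identifying the fiber of the boundary-restriction map, the passage to the spectrum-level sequence \eqref{eqn.spectra}, and the identification of its once-looped first map with $-\times\bR$ via the two displayed commuting squares and Corollary \ref{cor.topspectrum}. Your only additions are explicit remarks on routine points (naturality in $n$, $\Omega\Sigma\simeq\mr{id}$, levelwise fibration sequences giving stable fiber sequences) that the paper leaves implicit.
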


\begin{remark}If we work without tangential structure, the cofibre appearing in the previous corollary may not be weakly equivalent to $\Sigma^\infty B\mr{Top}(d)_+$. This would be the naive generalization of the smooth case, cf.\ \cite[Proposition 3.1]{gmtw} and \cite[Theorem 6.4]{genauer}, and we suspect it is false.
\end{remark}

\appendix

\section{Foundational results about topological manifolds} \label{app.results} In this appendix we collect the relevant results on topological manifolds, many of which should be familiar from the theory of smooth or $\mr{PL}$ manifolds. The classical reference for topological manifolds is \cite{kirbysiebenmann} and many technically refined results can be found in \cite{siebenmannstratified}.

\subsection{Embeddings}

\begin{definition}\label{def.loc-flat} Let $M$ and $N$ be topological manifolds of dimensions $m$ and $n$ and with boundaries $\partial M$ and $\partial N$, and let $B$ be a topological space. A \textit{family of locally flat embeddings $M \hookrightarrow N$ over $B$} is a commutative diagram
	\[\begin{tikzcd} B \times M \arrow{rr}{e} \arrow{rd} & & B \times N \arrow{ld} \\
		& B & \end{tikzcd}\]
of continuous maps such that 
	\begin{enumerate}[(i)]
		\item \label{enum.loc-flat:i} $e^{-1}(B \times \partial N) = B \times \partial M$,
		\item \label{enum.loc-flat:ii} $e$ is a homeomorphism onto its image,
		\item \label{enum.loc-flat:iii} for all $(b,m) \in B \times M$ there exists an open subset $U_B \subset B$ around $b$, and charts $\bR^{m-1} \times \bR_{\geq 0} \supset U_M \hookrightarrow M$ around $m$ and $\bR^{n-1} \times \bR_{\geq 0} \supset U_N \hookrightarrow N$ around $\pi_2(e(b,m))$, such that the following diagrams commute
		\[\begin{tikzcd} U_B \times U_M \rar[hook]{\mr{id}_{U_B} \times i} \dar[hook] &[10pt] U_B \times U_N \dar[hook] \\
			B \times M \rar{e} & B \times N,\end{tikzcd}\]
		where $i$ is the restriction of the inclusion $\mr{std} \times \mr{id} \colon \bR^{m-1} \times \bR_{\geq 0} \hookrightarrow \bR^{n-1} \times \bR_{\geq 0}$.
	\end{enumerate}\end{definition}

For $B = \ast$, this is the usual definition of a locally flat embedding. Families over locally flat embedding over $B$ are preserved by postcomposition with locally flat embeddings $N \to N'$ and precomposition with locally flat embeddings $M' \to M$. Moreover, they are preserved by pullback along continuous maps $f \colon B' \to B$ as the data in \eqref{enum.loc-flat:iii} for $(b',m) \in B' \times M$ can be taken $U'_{B'} = f^{-1}(U_B)$, $U'_M = U_M$, and $U'_N = U_N$. Observe restriction to subspaces of $B$ is a special case of pullbacks.

\begin{convention}\label{conv.locallyflat} All (families of) embeddings are locally flat unless mentioned otherwise. Similarly, all submanifolds are locally flat in the sense that the inclusion is a locally flat embedding.\end{convention}

Having established these basic definitions, we can define spaces of embeddings.

\begin{definition}\label{def.embeddings} Let $M,N$ be topological manifolds and fix an embedding $e_\partial \colon \partial M \to \partial N$. Then $\mr{Emb}^\mr{Top}_\partial(M,N)$ is the simplicial set with $k$-simplices given by families of locally flat embeddings
\[\begin{tikzcd} \Delta^k \times M \arrow{rr}{e} \arrow{rd} & & \Delta^k \times N \arrow{ld} \\
 & \Delta^k & \end{tikzcd}\]
over $\Delta^k$ such that $e|_{\Delta^k \times \partial M} = \mr{id}_{\Delta^k} \times e_\partial$. The face and degeneracy maps are given by pullback.\end{definition}

This definition is chosen so that a parametrized isotopy extension theorem is true (this uses \cref{conv.locallyflat}, so in particular $e$ in \cref{def.embeddings} is locally flat); Theorem 6.17 of \cite{siebenmannstratified}. The condition $\mathscr{D}(Q;\{f_t(M)\})$ required in this theorem holds by Example 1.3(4) and the remarks on page 144 ($f_t$ will be $e_t$ in our case).

\begin{theorem}[Isotopy extension] \label{thm.isotopyextensionmfd} Let $M$ and $N$ be topological manifolds and fix an embedding $e_\partial \colon \partial M \to \partial N$, $t \in I^k$, and suppose that $e \colon I^k \to \mr{Emb}^\mr{Top}_\partial(M,N)$ is a family of closed embeddings. If $e$ is constant outside a compact subset of $M$, then there exists a map $f \colon I^k \to \mr{Top}(N)$ such that $f_t = \mr{id}$, and $f \circ e_t = e$.
\end{theorem}

At some point we will need a more refined version; Theorem 6.5 and Complement 6.6 of \cite{siebenmannstratified}. The latter concerns a respectful version: an open embedding $e \colon V \hookrightarrow N$ is said to \emph{thoroughly respect} a collection $\mathscr{C}$ of closed subsets of $N$ if it restricts to an open embedding $e^{-1}(Y \cap e(V)) \to Y$ for all $Y \in \mathscr{C}$. In addition to the condition $\mathscr{D}(V\backslash C)$ required in this theorem ($C$ will be $K$ in our case), which holds as above, this requires a condition $\mathscr{D}(V \backslash C;\mathscr{C}_0)$ holds: once more using Example 1.3(4) and the remarks on page 144, this is in particular the case whenever all sets $Y \cap (V \backslash C)$ for $Y \in \mathscr{C}$ are locally flat submanifolds of $V \backslash C$.

\begin{theorem}[Refined isotopy extension] \label{thm.isotopyextension} Let $V$ and $N$ be topological manifolds, $t \in I^k$, and suppose that $e \colon I^k \to \mr{Emb}^\mr{Top}(V,N)$ is a family of open embeddings. If $K \subset V$ is compact, then there exists a map $f \colon I^k \to \mr{Top}(N)$ such that $f_t = \mr{id}$, $f \circ e_t|_K = e|_K$, and $f$ is constant outside a compact subset of $N$. 
	
Furthermore, if $\mathscr{C}$ is a class of closed subsets of $N$ that are locally flat submanifolds and $e$ thoroughly respects $\mathscr{C}$, then $f$ can also be made to thoroughly respect $\mathscr{C}$.
\end{theorem}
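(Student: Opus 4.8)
The plan is to reduce the statement, by a handle-by-handle patching argument, to a local deformation lemma in $\bR^n$ of Edwards--Kirby type, and then to obtain the ``thoroughly respects $\cC$'' clause by arranging that every construction is carried out fiberwise over the second factor of the product decomposition. \textbf{Reduction to a local model.} Since $K\subset V$ is compact, first engulf $K$ in a finite handlebody $H=h_1\cup\cdots\cup h_N\subset V$ with $h_i\cong D^{r_i}\times D^{n-r_i}$. It then suffices to solve the extension problem one handle at a time: assuming an ambient isotopy of $M$ has already been produced realizing $e$ on $h_1\cup\cdots\cup h_{i-1}$, one post-composes with a further ambient isotopy correcting the discrepancy between the inclusion and the current embedding on the next handle $h_i$; by invariance of the problem under open embeddings this is precisely an instance of extending an isotopy supported near a single handle $D^r\times D^{n-r}\subset\bR^r\times\bR^{n-r}=\bR^n$. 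Only the supports in the parameter cube $I^k$ of the individual corrections need to be controlled, which is arranged with a partition of unity on $I^k$ together with cutoff functions on $M$ supported near each handle, exactly as in the proof of the smooth isotopy extension theorem.

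\textbf{The local statement.} In the model situation the essential input is the Edwards--Kirby deformation theorem (cf.\ \cite{kirbysiebenmann}): a family $I^k\to\mr{Emb}^\mr{Top}(\bR^n,\bR^n)$ which near a compact set agrees with the inclusion can be covered, after restricting the family to a slightly smaller parameter cube, by a family of ambient homeomorphisms supported in a fixed compact set. Its proof immerses the $D^r$-directions of the handle into a torus $T^r$, pulls the problem back to the universal cover $\bR^r$ where the torus has become compact, applies handle-straightening there, and transports the result back down. Running this with the cube $I^k$ as passive parameters produces the required map $f\colon I^k\to\mr{Top}(M)$ with $f|_K=e$; this is the assertion recorded as Theorem~6.5 of \cite{siebenmannstratified}.

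\textbf{The respectful refinement, and the main difficulty.} Suppose now $N=M'\times M''$ and $\cC$ is the collection of slices $M'\times\{m\}$, with $e$ thoroughly respecting $\cC$, i.e.\ each $e(t)$ preserves the projection to an open subset of $M''$. Every step above should admit a version ``with parameters in $M''$'': the engulfing handlebody is chosen of product form, the torus trick is applied only in the $M'$-directions while the $M''$-coordinate is held fixed, and the cutoff functions in the assembly are pulled back from $M''$ or taken constant in the $M'$-direction near the active region, so that the resulting $f$ again preserves the projection to $M''$ and hence thoroughly respects $\cC$; this is Theorem~6.1 of \cite{siebenmannstratified}. The genuinely hard point is precisely this: the torus trick and the ensuing handle-straightening are delicate, and checking that they can be carried out fiberwise over $M''$ without disturbing the product structure is exactly where one must lean on the respectful versions of these results in \cite{siebenmannstratified}, which (as the authors note in the introduction) are asserted there but not proved in full; where this is unavailable one can instead substitute the iterated delooping workaround mentioned in the introduction.
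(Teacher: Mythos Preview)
The paper does not give its own proof of this theorem: it is stated in the appendix as a quotation of Theorem~6.5 of \cite{siebenmannstratified}, together with a verification that the hypothesis $\cD(V\setminus C)$ of that theorem is satisfied (via Example~1.3(1) and Theorem~2.3 of the same reference, since $V\setminus C$ is a topological manifold), and that for closed subsets of product form the respectful hypothesis $\cD(V\setminus C,\cC)$ follows from Theorem~6.1 of \cite{siebenmannstratified}. So there is no ``paper's proof'' to compare your sketch against; your outline is a reasonable summary of how the argument in \cite{siebenmannstratified} is structured, and you correctly locate the two relevant theorems there.

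One correction: your final paragraph says that the respectful versions ``are asserted there but not proved in full'' and points to the remark in the introduction. That remark is specifically about the \emph{PL} case; for topological manifolds the respectful isotopy extension and submersion-chart gluing are established in \cite{siebenmannstratified}, and the paper relies on them without caveat. The iterated-delooping workaround you mention is offered only as an alternative route for the PL analogue, not for the topological statement at hand.
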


\subsection{Submersions} 

\begin{definition}\label{def.submersion} Let $p \colon E \to B$ be a continuous map. 
\begin{itemize}
	\item A \emph{topological submersion chart for $p$ near $e \in E$} consists of a topological manifold $U$, an open subset $N$ of $p(e)$ in $B$ and a map $f \colon U \times N \to E$ over $N$ that is a homeomorphism onto a neighbourhood of $e$.
	\item The map $p$ is a \emph{topological submersion of relative dimension d} if every $e \in E$ has a submersion chart $f \colon U \times N \to E$ with $U$ a $d$-dimensional topological manifold.
\end{itemize}\end{definition}

Note each fibre of a topological submersion $p$ with relative dimension $d$ is a $d$-dimensional topological manifold, and that we may take $U$ be $\bR^d$. We also need e a relative version.

\begin{definition}\label{def.relsubmersion} Let $p \colon (E,E') \to B$ be a continuous map with domain a pair.
\begin{itemize}
	\item A \emph{relative topological submersion chart for $p$ near $e \in E$} consists of a pair $(U,U')$ of  a topological manifold $U$ and a locally flat submanifold $U' \subset U$, an open subset $N$ of $p(e)$ in $B$, and a map $f \colon (U,U') \times N \to (E,E')$ over $N$ that is a homeomorphism onto a neighbourhood of $e$.
	\item The map $p$ is a \emph{relative topological submersion of relative dimensions $(d,d')$} if every $e \in E$ has a relative submersion chart $f \colon (U,U') \times N \to E$ with $U$ a $d$-dimensional topological manifold and $U' \subset U$ a locally flat $d'$-dimensional submanifold.
\end{itemize} \end{definition}

\begin{example}If $e \colon B \times M \to B \times N$ is a family of locally flat embeddings over $B$ as in \cref{def.loc-flat}, then $(B \times N,e(B \times M)) \to B$ is a relative topological submersion.
\end{example}

The following are Theorem 6.9 and Complement 6.10 of \cite{siebenmannstratified}. If $\mathscr{C}$ is a class of closed subsets of $E$, a submersion chart $f \colon U \times N \to E$ is said to \emph{respect $\mathscr{C}$} if $f^{-1}(Y) = f^{-1}(f(U \times \{p\}) \cap Y) \times N$ for each $Y \in \mathscr{C}$, and $f$ restricted to $f^{-1}(f(U \times \{p\}) \times N$ gives a submersion chart for $Y$. The condition $\mathscr{I}(F;\mathscr{C}_F)$ required in this theorem is explained in 6.7; it holds in particular if $(E,Y) \to B$ is a relative topological submersion for all $Y \in \mathscr{C}$.

\begin{theorem}[Union lemma] \label{thm.unionsubmersion} Let $p \colon E \to B$ be a topological submersion, pick a basepoint $b_0 \in B$ and let $F = p^{-1}(b_0)$ be the fibre over it. Suppose we have compact subsets $A,B \subset F$ of the fibre, with open neighbourhoods $U,V$ and submersion charts $f \colon U \times N \to E$, $g \colon V \times N' \to E$. Then there exists an open neighbourhood $W$ of $A \cup B$ and a submersion chart $h \colon W \times N'' \to E$ which equals $f$ near $A \times N''$ and $g$ near $(B \setminus U) \times N''$.
	
Furthermore, if $\mathscr{C}$ is a class of closed subsets of $E$ that are locally flat submanifolds of some fibre, and $f,g$ respect $\mathscr{C}$, then $h$ can also be made to respect $\mathscr{C}$.
\end{theorem}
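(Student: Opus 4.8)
The plan is to reduce the statement to the parametrized isotopy extension theorem, Theorem~\ref{thm.isotopyextension}. First I would set up notation and make harmless reductions: intersect the two base neighbourhoods so that $N = N'$, and use the charts over $b_0$ to identify $U$ and $V$ with open subsets of the fibre $F$ containing $A$ and $B$ respectively, so that $f(x,b_0) = x$ and $g(x,b_0) = x$. Since $A \subset U$, the sets $A$ and $B \setminus U$ are disjoint compacta in $F$. I would then choose nested compacta: a compact neighbourhood $K_1$ of $A$ with $K_1 \subset U$, a compact neighbourhood $K_2$ of $B \setminus \mathrm{int}(K_1)$ with $K_2 \subset V$ and $K_2$ disjoint from a neighbourhood of $A$, and a compact $K'$ with $K_1 \cap K_2 \subset \mathrm{int}(K') \subset K' \subset U \cap V$, still disjoint from $A$ and from $B \setminus U$. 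By construction $B \subset \mathrm{int}(K_1) \cup \mathrm{int}(K_2)$, and the ``overlap region'' $K'$ on which the two charts must be reconciled is an annular neighbourhood around $A$ that avoids both $A$ itself and $B \setminus U$.

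The key geometric step is to interpret the transition between the two charts as a sliced isotopy and straighten it. For $b$ near $b_0$ (rename this $N$) and $v$ near $K'$, the point $f(v,b)$ lies in the image of $g$, so $\theta_b(v) := g^{-1}(f(v,b))$ is defined, depends continuously on $(v,b)$, commutes with the projection to $N$, and satisfies $\theta_{b_0} = \mathrm{incl}$; thus $(\theta_b)_{b \in N}$ is a parametrized isotopy of the compact set $K'$ in $F$ based at the inclusion. Applying parametrized isotopy extension gives a family $\Psi \colon N \to \mathrm{Top}(F)$ with $\Psi_{b_0} = \mathrm{id}$, $\Psi_b|_{K'} = \theta_b$, and $\Psi_b$ supported in a neighbourhood $O$ of $K'$ which, since $K'$ is disjoint from $A$ and from $B \setminus U$, can be taken disjoint from $A$ and from $B \setminus U$ and contained in $V$. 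I then replace $g$ by the submersion chart $g'(v,b) := g(\Psi_b(v), b)$ on $V \times N$ (legitimate because $\Psi \times \mathrm{id}$ is a homeomorphism of $F \times N$ over $N$ preserving $V \times N$); it agrees with $g$ on a neighbourhood of $(B \setminus U) \times N$ and with $f$ on $K' \times N$, since there $g'(v,b) = g(\theta_b(v),b) = f(v,b)$.

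Finally I would glue: choose an open neighbourhood $W_2$ of $B \setminus \mathrm{int}(K_1)$ small enough that $\mathrm{int}(K_1) \cap W_2 \subset \mathrm{int}(K')$, put $W := \mathrm{int}(K_1) \cup W_2 \supset A \cup B$, and define $h \colon W \times N \to E$ to be $f$ on $\mathrm{int}(K_1) \times N$ and $g'$ on $W_2 \times N$; the two formulas agree on the overlap, which lies in $\mathrm{int}(K') \times N$. Shrinking $N$ once more if needed, $h$ is a homeomorphism onto an open neighbourhood of $A \cup B$ (the images of the two pieces are open, and $h$ is injective because it is injective on each piece while the pieces can be separated in $E$ outside the common region), it is a submersion chart, and it equals $f$ near $A \times N$ and $g$ near $(B \setminus U) \times N$. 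For the respectful addendum I note that if $f$ and $g$ respect a class $\cC$ of closed subsets of product form then $\theta_b = g^{-1} f$ respects $\cC$, so the respectful form of isotopy extension yields $\Psi$ respecting $\cC$, whence $g'$ and the glued chart $h$ do too.

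The main obstacle I anticipate is the gluing bookkeeping rather than any individual construction: when $A$ and $B$ interlock near $A \cap B$ one genuinely cannot split them into disjoint chart domains, and the entire point of invoking isotopy extension is to reconcile $f$ and $g$ on the annular overlap so that a single chart survives across it. Arranging the nested compacta $K_1, K_2, K'$ and the supports of the straightening isotopies to be simultaneously compatible with the conditions ``$=f$ near $A$'', ``$=g$ near $B \setminus U$'', injectivity and openness of $h$ --- and, in the respectful case, with the foliation by the sets of $\cC$ --- is where all the care goes.
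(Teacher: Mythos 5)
The paper does not prove this statement itself: it quotes it as Theorem 6.9 and Complement 6.10 of \cite{siebenmannstratified}, remarking that it follows from the isotopy extension theorem, which is exactly the strategy you carry out (straighten $g$ against $f$ over the compact overlap by a supported, respectful ambient isotopy obtained from Theorem \ref{thm.isotopyextension}, then glue). Your outline is correct and matches that route; the only points needing a little more care are the ones you flag yourself, namely that the injectivity/openness of the glued chart is verified by the standard compactness argument after shrinking the base neighborhood (using that $f=g'$ on a neighborhood of the compact overlap), and that you need the version of isotopy extension with control on the support, which is available in the cited source.
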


Observe that if $(E,E') \to B$ is a relative topological submersion, then taking $\cE = \{E'\}$ we get a union lemma for relative topological submersion charts. Similarly, we obtain an appropriate respectful version in the relative case. When the fibres of $p$ are compact and $p$ is a closed map, we can glue finitely many submersion charts to a submersion chart covering an entire fibre to obtain a topological version of Ehresmann's theorem, \cite[Corollary 6.14]{siebenmannstratified}.

\begin{corollary}[Bundle theorem] \label{cor.unionfibrebundle} If $p \colon E \to B$ is a closed map which is a topological submersion with compact fibres, then it is a topological fibre bundle.\end{corollary}

\subsection{Microbundles} Microbundles play the role of vector bundles for topological manifolds.

\begin{definition}A \emph{$d$-dimensional topological microbundle $\xi = (B,E,i,p)$} consists of spaces $B$, $E$, and continuous maps $i \colon B \to E$, $p \colon E \to B$ with the following properties:
	\begin{itemize}
		\item The composition $p \circ i$  is equal to $\mr{id}_B$.
		\item For all $b \in B$ there exist open neighbourhoods $U \subset B$ of $b$ and $V \subset E$ of $i(b)$, and a homeomorphism $\phi \colon \bR^d \times U \to V$ so that the following diagrams commute
		\[\begin{tikzcd} \bR^d \times U \rar{\phi} & V \\
		U \uar{\{0\} \times \mr{id}} \rar & B \uar[swap]{i}\end{tikzcd} \qquad \qquad \begin{tikzcd}\bR^d \times U \rar{\phi} \dar[swap]{\pi_2} & V \dar{p} \\
		U \rar & B.\end{tikzcd}\]
	\end{itemize}

The space $B$ is called the \emph{base}, the space $E$ the \emph{total space}.
\end{definition}

\begin{example}\label{exam.vertical} If $p \colon X \to P$ is a topological submersion of relative dimension $d$, then $(X,X \times_p X,\Delta,\pi_2)$ is a $d$-dimensional  topological microbundle called the \emph{vertical microtangent bundle}. That this is indeed a $d$-dimensional topological microbundle is a small generalization of Lemma 2.1 of \cite{milnormicrobundles}.\end{example}

\begin{definition}An \emph{isomorphism of topological microbundles} $\varphi \colon \xi \to \xi'$ over the same base $B$ is a homeomorphism $f \colon W \to W'$ between an open neighbourhood $W \subset E$ of $i(B)$ and an open neighbourhood $W' \subset E'$ of $i'(B)$, such that the following diagrams commute
\[\begin{tikzcd}W \rar{f} & W' \\
B \uar{i} \arrow[equals]{r} & B \uar[swap]{i'},\end{tikzcd} \qquad \begin{tikzcd} W \rar{f} \dar[swap]{p} & W' \dar{p'} \\
B \arrow[equals]{r} & B .\end{tikzcd}\]\end{definition}

Microbundles can be pulled back along continuous maps.

\begin{definition}Given a map $f\colon B \to B'$ and a microbundle $\xi' = (B',E',i',p')$ over $B'$, the \emph{pullback microbundle} $f^* \xi'$ over $B$ is given by $E = E' \times_{B'} B$, $i =( i'\circ f) \times_{B'} \mathrm{id}_B$ and $p = \pi_2$, where $\pi_2 \colon E' \times_{B'} B \rightarrow B$ is the projection onto the second component.\end{definition}

If $\iota \colon A \to B$ is the inclusion of a subspace, we denote $\iota^* \xi$ by $\xi|_A$.

\begin{definition}A \emph{map of microbundles} $\phi \colon \xi \to \xi'$ is a pair consisting of a continuous map $f \colon B \to B'$ and an isomorphism $\varphi \colon \xi \to f^* \xi'$.\end{definition}

\begin{definition}Let $\xi = (E,B,i,p)$ be a $d$-dimensional topological microbundle over $B$ and $\xi' = (E',B',i',p')$ a $d$-dimensional microbundle over $B'$. The \emph{simplicial set of microbundle maps} $\mr{Bun}^\mr{Top}(\xi,\xi')$ has $k$-simplices microbundle maps $\Delta^k \times \xi \to \Delta^k \times \xi'$ over $\Delta^k$.\end{definition}

In particular, there can only be maps of microbundles between microbundles of the same dimension. Microbundles in many ways behave like vector bundles. We shall mainly use the microbundle homotopy covering theorem proven in Section 6 of \cite{milnormicrobundles}:

\begin{theorem}[Microbundle homotopy covering] \label{thm.microbundlecovering} If $\phi \colon \xi \to \xi'$ is a map of microbundles with underlying map $f \colon B \to B'$ with $B$ and $B'$ paracompact and $H \colon B \times I \to B'$ is a homotopy starting at $f$, then $H$ may be covered by a map of microbundles $\Phi \colon \xi \times I \to \xi'$.\end{theorem}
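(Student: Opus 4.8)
The plan is to reduce the covering statement to the homotopy invariance of microbundles over a paracompact base, and then prove the latter by the standard slab-and-partition-of-unity argument; this is Milnor's microbundle homotopy theorem \cite[\S 6]{milnormicrobundles}, so I only sketch it. First, since $H|_{B \times \{0\}} = f$ and hence $(H^*\xi')|_{B \times \{0\}} = f^*\xi'$, giving a microbundle map $\Phi \colon \xi \times I \to \xi'$ over $H$ that restricts to $\phi$ over $B \times \{0\}$ amounts to giving an isomorphism $\xi \times I \xrightarrow{\cong} H^*\xi'$ of microbundles over $B \times I$ restricting over $B \times \{0\}$ to the isomorphism $\varphi \colon \xi \xrightarrow{\cong} f^*\xi'$ underlying $\phi$. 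Setting $\eta \coloneqq H^*\xi'$, a microbundle over the paracompact space $B \times I$, and writing $\pi \colon B \times I \to B$ for the projection, there is a canonical isomorphism $\xi \times I \cong \pi^*\xi$ (both have total space $E(\xi) \times I$), and $\eta|_{B \times \{0\}} = f^*\xi' \cong \xi$ via $\varphi$. So it suffices to prove: for any microbundle $\eta$ over $B \times I$ with $B$ paracompact, $\eta$ is isomorphic to $\pi^*(\eta|_{B \times \{0\}})$ by an isomorphism equal to the identity over $B \times \{0\}$. (From here only paracompactness of $B$ is used; that of $B'$ merely ensures $\eta$ lives over a manageable space.)

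For the local form, fix $b \in B$. As $\eta$ is locally trivial and $\{b\} \times I$ is compact, there is an open neighbourhood $U \ni b$, a subdivision $0 = t_0 < t_1 < \dots < t_m = 1$, and microbundle trivialisations $\psi_j$ of $\eta$ over $U \times [t_{j-1}, t_j]$ for $1 \le j \le m$ (after shrinking $U$). These are straightened into a single product structure by induction on $j$: given an isomorphism $\eta|_{U \times [0,t_j]} \cong (\eta|_{U \times \{0\}}) \times [0,t_j]$ which is the identity over $U \times \{0\}$, the transition germ over $U \times \{t_j\}$ between this product structure and $\psi_{j+1}$ is a germ of automorphism of a trivial microbundle over $U \times \{t_j\}$; extending it constantly in the $I$-direction and pre-composing with $\psi_{j+1}$ makes the latter agree with the product structure near $U \times \{t_j\}$, so the two glue to a product structure over $U \times [0, t_{j+1}]$. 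After $m$ steps one obtains an isomorphism $\Phi_U \colon \eta|_{U \times I} \xrightarrow{\cong} (\eta|_{U \times \{0\}}) \times I$ equal to the identity over $U \times \{0\}$.

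To globalise, choose a locally finite open cover $\{U_\alpha\}_{\alpha \in A}$ of $B$ refining the cover by such neighbourhoods, isomorphisms $\Phi_\alpha$ as above, and a subordinate partition of unity $\{\rho_\alpha\}$; well-order $A$ and set $r_\alpha \coloneqq \sum_{\beta \le \alpha} \rho_\beta \colon B \to [0,1]$, continuous by local finiteness, with $r_\alpha$ increasing pointwise to $1$ and $r_\alpha = r_{\alpha^-}$ off $\mathrm{supp}\,\rho_\alpha$. Using $\Phi_\alpha$ one pushes the microbundle structure of $\eta$ across the region of $B \times I$ between the graphs of $r_{\alpha^-}$ and $r_\alpha$, yielding a microbundle automorphism $g_\alpha$ of $\eta$ over $B \times I$ supported over $\mathrm{supp}(\rho_\alpha) \times I$ and equal to the identity over $B \times \{0\}$; composing the $g_\alpha$ in order straightens $\eta$ over all of $B \times I$ onto $\pi^*(\eta|_{B \times \{0\}})$. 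This is the argument familiar from the homotopy invariance of vector bundles, carried out for microbundles. The one genuine subtlety, and the step I expect to be the main obstacle, is that microbundle isomorphisms are defined only on shrinking neighbourhoods of the zero sections, so one must check that the (transfinite) composition of the $g_\alpha$ is defined on a neighbourhood of the zero section of $\eta$ over $B \times I$ and is again a microbundle isomorphism; local finiteness of $\{U_\alpha\}$ makes this composition locally a finite one over a neighbourhood of each point of $B$, which settles it, and is exactly where paracompactness of $B$ enters. The reduction in the first paragraph and the slab-gluing in the second are routine.
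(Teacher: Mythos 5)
Your proposal is correct and coincides with the paper's approach: the paper does not prove Theorem \ref{thm.microbundlecovering} itself but cites \cite[Section 6]{milnormicrobundles}, and your reduction to straightening a microbundle over $B \times I$ rel $B \times \{0\}$, followed by the slab-gluing over $U \times I$ and the partition-of-unity globalization, is precisely Milnor's argument. The one microbundle-specific subtlety you flag (isomorphisms being germs near the zero section, so the locally finite composition must be checked to be defined on a neighborhood of the zero section) is indeed the only point where the vector-bundle proof needs care, and local finiteness disposes of it as you say.
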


\subsection{The Kister-Mazur theorem} \label{sec.kister} Let $\mr{Top}(d)$ be the simplicial group of homeomorphisms of $\bR^d$ fixing the origin (this is equal to the singular simplicial set of the topological group of homeomorphisms of $\bR^d$ in the compact-open topology). Any locally trivial fibre bundle with fibre $\bR^d$ and structure group $\mr{Top}(d)$ gives rise to a $d$-dimensional topological microbundle. The Kister-Mazur theorem tells us that essentially every $d$-dimensional topological microbundle arises this way; in the following statement, a nice space can be an ENR or locally finite simplicial complex \cite{kister}, or a paracompact space \cite{holm}.

\begin{theorem}[Kister-Mazur] \label{thm.kister} Every $d$-dimensional topological microbundle over a nice base $B$ contains a topological $\bR^d$-bundle, unique up to isotopy.\end{theorem}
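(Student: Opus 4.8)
The statement is the Kister--Mazur theorem; it is proven by Kister \cite{kister} for bases that are ENR's or locally finite simplicial complexes and extended by Holm \cite{holm} to paracompact bases. The plan is to recall the structure of that argument, whose heart is a canonical \emph{straightening} construction for embeddings of $\bR^d$. First one establishes Kister's lemma: there is a deformation retraction of the space $\mr{Emb}^\mr{Top}(\bR^d,\bR^d)$ of locally flat self-embeddings fixing the origin onto $\mr{Top}(d)$ which is continuous, is the identity on $\mr{Top}(d)$, depends only on the germ of an embedding at the origin, and is equivariant for post-composition by $\mr{Top}(d)$. Concretely, given an embedding $f$ one enlarges the image $f(\bR^d)$ to all of $\bR^d$ by a radial isotopy supported away from the origin (the ``pushing to infinity'' trick), and the technical work is in making these choices canonically, continuously, and germ-locally in $f$.

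Granting the lemma, one assembles the $\bR^d$-bundle as follows. Choose a numerable open cover $\{U_\alpha\}$ of $B$ trivializing $\xi = (B,E,i,p)$, with trivializations $\phi_\alpha \colon \bR^d \times U_\alpha \to V_\alpha \subseteq E$ onto open neighborhoods of $i(U_\alpha)$. On overlaps the composites $\phi_\beta^{-1}\circ\phi_\alpha$ are germs at $\{0\} \times (U_\alpha \cap U_\beta)$ of fiberwise embeddings; applying the straightening fiberwise (and using that it only sees germs) turns these into honest transition functions $U_\alpha \cap U_\beta \to \mr{Top}(d)$, and the equivariance in the lemma yields the cocycle condition. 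A partition of unity subordinate to $\{U_\alpha\}$ then glues the local models $\bR^d \times U_\alpha$ to a numerable principal $\mr{Top}(d)$-bundle, whose associated $\bR^d$-bundle is realized by construction as a subspace $E' \subseteq E$ containing $i(B)$ with $p|_{E'}$ the bundle projection.

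For the uniqueness clause, given two such sub-bundles $E', E'' \subseteq E$ one compares their fiberwise inclusions into a common local model over each trivializing chart, producing a fiberwise embedding, and the (parametrized) contractibility of the space of straightenings lets one patch the resulting local isotopies --- again via a partition of unity --- into a global isotopy of $\bR^d$-bundles inside $\xi$ carrying $E'$ to $E''$. The main obstacle is the first step executed with enough naturality, together with its interaction with non-compact and merely paracompact bases: one needs the straightening to be continuous and germ-dependent uniformly over families, and the gluing over an arbitrary numerable cover to behave well --- precisely the points at which Kister's original ENR argument is upgraded in \cite{holm}.
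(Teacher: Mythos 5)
The paper itself does not prove Theorem \ref{thm.kister}: it is quoted in the appendix as a foundational result, with exactly the references you give (\cite{kister} for ENR's and locally finite complexes, \cite{holm} for paracompact bases), and the only ingredient the paper uses elsewhere is Kister's weak equivalence $\mr{Top}(d) \simeq \mr{Emb}^\mr{Top}(\bR^d,\bR^d)$. So deferring to the literature is consistent with the paper's treatment. But since you go on to sketch the argument, the sketch should reflect how those proofs actually run, and your assembly step has a genuine gap.

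The problem is the claim that fiberwise straightening of the transition germs yields honest $\mr{Top}(d)$-valued transition functions whose cocycle condition follows from equivariance. Equivariance only says $S(h\circ f)=h\circ S(f)$ for $h$ already a homeomorphism; on a triple overlap none of $g_{\alpha\beta},g_{\beta\gamma},g_{\alpha\gamma}$ has been straightened, and $S$ is not compatible with composition, so $S(g_{\alpha\beta})\circ S(g_{\beta\gamma})\neq S(g_{\alpha\gamma})$ in general. A strictly multiplicative retraction from embeddings (or germs) onto $\mr{Top}(d)$ is not available --- Kister gives only a deformation retraction, i.e.\ the weak equivalence $\mr{Top}(d)\simeq \mr{Top}_{(0)}(d)$ of Remark \ref{rem.germsweq} --- so there is no cocycle to glue, and the partition of unity cannot repair this. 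The actual proofs avoid cocycle rectification altogether: Kister constructs the $\bR^d$-bundle \emph{inside the total space} by induction over a locally finite trivializing cover (Holm first reduces a numerable microbundle to a countable cover in the usual Milnor--Dold way), extending the already-constructed sub-bundle one chart at a time using a parametrized \emph{and relative} form of the straightening isotopy; uniqueness is the same relative argument applied over $B\times[0,1]$, not a partition-of-unity patching of local isotopies. A further small inaccuracy: a deformation with $D_0=\mr{id}$ cannot depend only on the germ of the embedding at the origin; the property one needs (and has) is that the straightening isotopy can be chosen to agree with the given embedding near the origin, which is what makes microbundle (germ) data sufficient.
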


Kister proved his result by showing that the inclusion $\mr{Top}(d) \to \mr{Emb}^\mr{Top}(\bR^d,\bR^d) $ is a weak equivalence, a statement which we also use.

\begin{remark}\label{rem.germsweq} Note that $\mr{Emb}^\mr{Top}(\bR^d,\bR^d)$ and $\mr{Top}(d)$ have the same image in the simplicial group $\mr{Top}_{(0)}(d)$ of germs near the origin of homeomorphisms of $\bR^d$ fixing the origin. As remarked in \cite[page 410]{rourkesandersontop}, Kister's result implies that $\mr{Top}(d) \to \mr{Top}_{(0)}(d)$ is a weak equivalence. To apply results in the literature, we will need this weak equivalence, as well as the following stronger result: let $\mr{Top}(n+d\,\mr{fix}\,d)$ denote the homeomorphisms of $\bR^{n+d}$ that are the identity on $\bR^d$ and $\mr{Top}_{(0)}(n+d\,\mr{fix}\,d)$ germs of such homeomorphisms near the origin. Then the map $\mr{Top}(n+d\,\mr{fix}\,d) \to \mr{Top}_{(0)}(n+d\,\mr{fix}\,d)$ is a weak equivalence. This is proven for the PL case in \cite[Corollary 2.23(iii)]{millettmetastable} and the same proof works for the topological case; indeed, the input is the proof of Kister's result and the isotopy extension theorem.
\end{remark}

\subsection{Transversality} A commonly-used notion of transversality for topological submanifolds is microbundle transversality. It is imperfect, as it requires the existence of a normal bundle for one of the two submanifolds, but it suffices for most purposes.

\begin{definition}\label{def.microbundletransversality} Suppose $M$ is a submanifold of $N$ with a normal microbundle $\nu$. A submanifold $X$ of $N$ is said to be \emph{microbundle transverse} to $\nu$ if $M \cap X$ is a submanifold of $M$ and there exists a normal microbundle $\nu'$ of $M \cap X$ in $X$ such that $\nu' \to \nu$ is an open embedding on fibres.\end{definition}

The following is proven in \cite{quinntrans} generalizing \cite[Essay III.1]{kirbysiebenmann}:

\begin{theorem}[Microbundle transversality] \label{thm.microbundletransversality} Suppose $M$ and $X$ are proper submanifolds of $N$, $M$ has a normal microbundle $\nu$, $C \subset D \subset N$ are closed subsets, and $X$ is microbundle transverse to $\nu$ near $C$. Then there is an isotopy of $X$ supported in any given neighbourhood of $(D \setminus C) \cap M \cap X$ to a submanifold which is transverse to $\nu$ near $D$.\end{theorem}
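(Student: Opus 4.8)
The plan is to reduce the statement to transversality against a coordinate plane inside a single chart and then reassemble via isotopy extension, following Kirby--Siebenmann and Quinn.

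First I would localise to the total space $E(\nu)$ of the normal microbundle $\nu$ of $M$: outside $E(\nu)$ there is nothing to change. Using the Kister--Mazur theorem, Theorem~\ref{thm.kister}, I may assume $\nu$ is an honest topological $\bR^{c}$-bundle over $M$, with $E(\nu)$ an open neighbourhood of $M$ in $N$. I would then fix a handle decomposition of $M$ which is standard (a product collar, no handles) near the region where $X$ is already transverse --- which includes a neighbourhood of $C\cap M$ --- and push transversality outward one handle at a time, in order of increasing index. Over a single handle $h\cong D^{k}\times D^{m-k}$ of $M$ the bundle $\nu$ is trivial, so there is a chart $E(\nu|_{h})\cong D^{k}\times D^{m-k}\times\bR^{c}$ sitting as an open subset of $N$, in which $M\cap h$ is $D^{k}\times D^{m-k}\times\{0\}$ and the projection is the obvious one; inductively, $X$ is already microbundle transverse to $\nu$ near $\partial h\times\{0\}$.

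Second, the local input. Inside this chart the problem is exactly \emph{topological transversality to a coordinate plane}: given the locally flat submanifold $X$ of $\bR^{m}\times\bR^{c}$ (with $m=\dim M$), isotope it --- supported near $(\bR^{m}\times\{0\})\cap X$ and fixed where it is already transverse --- so that $X\cap(\bR^{m}\times\{0\})$ becomes a locally flat submanifold of $X$ possessing a normal microbundle that maps fibrewise-openly into $\nu$. This is the content of \cite[Essay~III, \S1]{kirbysiebenmann} in the range where the torus trick and the product-structure/handle-straightening theorems apply, and of \cite{quinntrans} in the remaining low-dimensional cases --- in particular Quinn's controlled surgery and end theory are what cover the situations where a $4$-dimensional stratum is present. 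I would take this chart-level result as the essential black box.

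Third, reassembly and the relative refinement. Each local modification is an isotopy of $X$ within the chart; I would cut it off by a function supported in the interior of the handle and extend it to an ambient isotopy of $N$ using parametrised isotopy extension, Theorem~\ref{thm.isotopyextension}. Handles of a fixed index can be chosen with pairwise disjoint supports, so all isotopies of one index run simultaneously, and one proceeds up through the indices; shrinking the handles keeps every support inside the preassigned neighbourhood of $(D\setminus C)\cap M\cap X$, while the collar region near $C$ is untouched by construction, so the end result is transverse to $\nu$ near $D$. Where a normal microbundle of $X\cap M$ in $X$ built over one handle must be reconciled with $\nu$ (or with the one built over an adjacent handle), I would use the microbundle homotopy covering theorem, Theorem~\ref{thm.microbundlecovering}, to homotope the fibrewise-open embedding into agreement; a union lemma for charts, in the spirit of Theorem~\ref{thm.unionsubmersion}, lets these local structures be glued.

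The main obstacle is the chart-level transversality to a plane: this is itself a hard theorem resting on the Kirby--Siebenmann handle-straightening machinery --- ultimately the torus trick --- together with Quinn's controlled-topology methods in the dimensions it does not reach; granting that input, the handle induction, the bookkeeping of supports, and the uses of isotopy extension and the union lemma are routine. A secondary difficulty is arranging the isotopies to be \emph{simultaneously} supported in the given neighbourhood of $(D\setminus C)\cap M\cap X$ and inert near $C$, which is why the handle decomposition of $M$ must be chosen relative to the already-transverse region and the cut-off functions chosen with care.
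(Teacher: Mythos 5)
First, a point of comparison: the paper does not prove this statement at all. It appears in the appendix of quoted foundational results, and the entire ``proof'' is the citation sentence preceding it — Quinn proved it in \cite{quinntrans}, generalizing \cite[Essay III, \S 1]{kirbysiebenmann}. So to the extent that your proposal ultimately defers to Kirby--Siebenmann and Quinn, it matches the paper's treatment; the issue is with the reduction scaffolding you build around that citation, which contains a genuine gap.

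The gap is your use of a handle decomposition of $M$. The theorem carries no dimension hypotheses, but a topological $4$-manifold admits a handle decomposition only if it is smoothable (the paper itself points this out in the remark following Lemma \ref{lem.psi-connectivity}), so when $\dim M=4$ your induction cannot even begin. This is not incidental: Kirby--Siebenmann and Quinn globalize the local statement by induction over a countable family of compact pieces lying in charts of $M$ over which $\nu$ is trivial, precisely to avoid needing handles of $M$; if you replace your handle induction by such a chart-by-chart induction, the reassembly via cut-offs, isotopy extension (Theorem \ref{thm.isotopyextension}) and the microbundle covering theorem goes through as you describe. A second, related problem is how you apportion the black box. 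The chart-level result of \cite[Essay III, \S 1]{kirbysiebenmann} itself carries dimension restrictions (the excluded cases are exactly those where a $4$-dimensional gap appears), and Quinn's contribution in \cite{quinntrans} is not a local patch for ``the remaining low-dimensional cases'' that slots into your induction: his argument, resting on the controlled end/handle-smoothing theory of Ends of Maps III and the $4$-dimensional work with Freedman, is what establishes the unrestricted global statement — i.e.\ essentially the theorem you are proving. So in the cases where your induction needs the black box most, the proposal amounts to citing the theorem itself; that is acceptable here only because the paper does exactly that, but then the handle-theoretic reduction is both unnecessary and, as written, incorrect in dimension $4$. (The minor ingredients are fine: replacing $\nu$ by a contained $\bR^c$-bundle via Theorem \ref{thm.kister}, disjoint supports for same-index pieces, and keeping the supports inside the prescribed neighborhood of $(D\setminus C)\cap M\cap X$ while leaving a neighborhood of $C$ untouched.)
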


\subsection{Smoothing theory} The majority of \cite{kirbysiebenmann} concerns smoothing theory for topological manifolds, which is also discussed in \cite{burghelealashof} and \cite{lashofsmoothing}. We state their results, and provide a modern restatement more convenient for the applications in this paper.
 
\begin{definition}Let $M$ be a topological manifold with $\partial M = \varnothing$. We will take $\mr{Sm}(M)$ to be the simplicial set where the underlying space of $V$ is $\Delta^k \times M$, i.e.\ diagrams
 	\[\begin{tikzcd} \Delta^k \times M \arrow{rr}{\mr{id}} \arrow{rd}[swap]{\pi_1} & & (\Delta^k \times M)_{\tau} \arrow{ld}{\pi_1} \\
 		& \Delta^k & \end{tikzcd}\]
where $\tau$ is a smooth structure on $\Delta^k \times M$ and $\pi_1$ on the right-hand side is a smooth submersion.
\end{definition}
 
The assignment $M \mapsto \mr{Sm}(M)$ is an invariant simplicial sheaf on topological manifolds. By taking extensions of given smooth structures, we can similarly define $\mr{Sm}(M \rel \partial M)$ for $\partial M \neq \varnothing$ with a given smooth structure near $\partial M$. 
 
Kirby and Siebenmann proved that when $d \neq 4$, $\mr{Sm}(-)$ is flexible, i.e.\ restriction maps are Kan fibrations \cite[Essay II]{kirbysiebenmann}. Gromov's $h$-principle machinery then gives a description of the space of smooth structures on $M$ in terms of a space of lifts; this is the content of \cite[Essay V]{kirbysiebenmann}. To state its main result, let us pick a tangent microbundle classifier $\tau_M \colon M \to B\mr{Top}(d)$ and a model of the map $B\mr{O}(d) \to B\mr{Top}(d)$ which is a fibration. Let $\mr{Lift}(M,B\mr{O}(d) \to B\mr{Top}(d) \rel \partial M)$ be the space of lifts of the tangent microbundle classifier
\[\begin{tikzcd} & B\mr{O}(d) \dar \\
\rar[swap]{\tau_M} M \rar \arrow{ru} & B\mr{Top}(d).\end{tikzcd} \]
 
\begin{theorem}[Smoothing theory] \label{thm.smoothingtriangulation} Let $M$ be a topological manifold of dimension $d \neq 4$ with smooth structure near $\partial M$, then $\mr{Sm}(M \rel \partial M) \simeq \mr{Lift}(M,B\mr{O}(d) \to B\mr{Top}(d) \rel \partial M)$.\end{theorem}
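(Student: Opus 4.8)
The plan is to recognise the statement as the $h$-principle for the sheaf of smooth structures, apply Gromov's machinery as packaged in the two results preceding Theorem \ref{thm.hprinciple}, and then identify the fibre of the resulting section space.

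First I would record that $M \mapsto \mr{Sm}(M)$ is an invariant simplicial sheaf on topological manifolds whose values extend to quasitopological spaces, since one can define families of smooth structures on $\Delta^k \times M$ fibred over an arbitrary base. Then I would invoke the central input from \cite[Essay II]{kirbysiebenmann}: for $d \neq 4$ the sheaf $\mr{Sm}(-)$ is \emph{flexible}, i.e.\ every restriction map $\mr{Sm}(K \subset N) \to \mr{Sm}(L \subset N)$ along an inclusion of compact subsets is a Serre fibration. This is essentially the concordance-implies-isotopy (product structure) theorem, it is the place where $d \neq 4$ is used, and I would treat it as a black box. In particular $\mr{Sm}(-)$ is microflexible.

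Next I would feed flexibility into Gromov's theory. Because flexibility is strictly stronger than microflexibility — it allows one to extend sections over handles of every index, including $d$ — the conclusion of Theorem \ref{thm.hprinciple} holds for every pair $(M,A)$ with no condition on the components of $M \setminus A$; see \cite[Essay V]{kirbysiebenmann}. Applying this with $A$ a collar neighbourhood of $\partial M$, and then using the Gromov Proposition stated above describing the flexible approximation, gives
\[\mr{Sm}(M \rel \partial M) \;\simeq\; \mr{Sm}^f(M \rel \partial M) \;\simeq\; \Gamma_\partial\bigl(M,\, \tau_M \times_{\mr{Top}(d)} \mr{Sm}(\bR^d)\bigr),\]
the space of sections of the associated bundle which agree with the given smooth structure near $\partial M$. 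Finally I would identify the fibre $\mr{Sm}(\bR^d)$, with its $\mr{Top}(d)$-action by pushforward of smooth structures, with $\mr{Top}(d)/O(d)$: the orbit map $g \mapsto g_*\sigma_{\mathrm{std}}$ is $\mr{Top}(d)$-equivariant, surjective on $\pi_0$ because every smooth structure on $\bR^d$ is standard when $d \neq 4$, and has homotopy fibre over $\sigma_{\mathrm{std}}$ the simplicial group $\mr{Diff}(\bR^d,0)$ of origin-fixing diffeomorphisms, which is weakly equivalent to $O(d)$ via the derivative at $0$ (an Alexander-type contraction kills the homotopy fibre of $\mr{Diff}(\bR^d,0) \to O(d)$). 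Since $\mr{Top}(d)/O(d)$ is the homotopy fibre of $BO(d) \to B\mr{Top}(d)$, the bundle $\tau_M \times_{\mr{Top}(d)}(\mr{Top}(d)/O(d))$ over $M$ is the pullback along the tangent classifier $\tau_M$ of the universal fibration, so its relative sections are precisely the elements of $\mr{Lift}(M, BO(d) \to B\mr{Top}(d) \rel \partial M)$; concatenating the equivalences finishes the argument.

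I expect the main obstacle to be purely a matter of correctly importing \cite{kirbysiebenmann}: the flexibility of $\mr{Sm}(-)$ for $d \neq 4$ and the equivariant identification $\mr{Sm}(\bR^d) \simeq \mr{Top}(d)/O(d)$ (in particular the uniqueness of smoothings of $\bR^d$) are the only genuinely topological inputs, and everything else is formal bookkeeping once they are in hand. A secondary subtlety worth spelling out is that flexibility really does remove the hypothesis on the components of $M \setminus A$ in Theorem \ref{thm.hprinciple}, which is needed here since $M$ may be closed.
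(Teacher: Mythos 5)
Your proposal is correct and follows essentially the same route as the paper, which itself defers to Kirby--Siebenmann: flexibility of $\mr{Sm}(-)$ for $d \neq 4$ (Essay II) fed into the Gromov--Siebenmann $h$-principle (Essay V, where flexibility indeed removes the openness hypothesis), with the fibre of the resulting section space identified equivariantly with $\mr{Top}(d)/O(d)$ so that relative sections become lifts along $BO(d) \to B\mr{Top}(d)$. The one place where your ``formal bookkeeping'' undersells the content is the assertion that the homotopy fibre of the orbit map $\mr{Top}(d) \to \mr{Sm}(\bR^d)$ is $\mr{Diff}(\bR^d,0)$: the point-set fibre is, but upgrading this to a homotopy fibre (equivalently, showing that a family of fibred smooth structures on $\bR^d$ is, up to homotopy, induced by a family of homeomorphisms) is precisely the parametrized local smoothing statement, so it should be imported from Kirby--Siebenmann --- as you in fact allow for in your closing paragraph --- rather than deduced from $\pi_0$-uniqueness of smoothings of $\bR^d$ together with $\mr{Diff}(\bR^d,0) \simeq O(d)$.
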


More precisely, \cite[Essay IV]{kirbysiebenmann} proves the $\pi_0$-statement and \cite[Essay V]{kirbysiebenmann} the statement for higher homotopy groups. Informally, it says prescribing a smooth structure requires only tangential data. The map is given as follows: every smooth structure on $M$ has a tangent bundle classified by a map $M \to B\mr{O}(d)$, unique up to contractible space of choices, and that the composition $M \to B\mr{O}(d) \to B\mr{Top}(d)$ is homotopic to the given choice $M \to B\mr{Top}(d)$; now use the lifting property of $B\mr{O}(d) \to B\mr{Top}(d)$ to produce from this data a lift of $M \to B\mr{Top}(d)$.

Let us rephrase the result of Theorem \ref{thm.smoothingtriangulation} in terms more appropriate for this paper (see \cite[Section 1.6]{weisswilliams} for a closely related rephrasing of smoothing theory). Recall from \cite[Essay I]{kirbysiebenmann} that a \emph{concordance} between smooth structures $\sigma_0,\sigma_1$ on $M$ is a smooth structure on $M \times I$ restricting to $\sigma_i$ on $M \times \{i\}$, and that an \emph{isotopy} between the same smooth structures is an isotopy $h_t \colon M \to M$ of homeomorphisms starting at the identity such that $h_1^* \sigma_1 = \sigma_0$. Taking $\pi_0$ of the statement of Theorem \ref{thm.smoothingtriangulation}, we obtain that concordance classes of smooth structures are in bijection with vertical homotopy classes of lifts. Isotopic smooth structures are concordant, and the converse is true when $d \neq 4$: for $d \geq 5$ this is \cite[Theorem I.4.1]{kirbysiebenmann}, and in dimension $\leq 3$, every topological manifold admits a unique smooth structure up to isotopy (and hence concordance). The conclusion is that the $\pi_0$-statement of Theorem \ref{thm.smoothingtriangulation} gives a classification of isotopy classes of smooth structures in homotopy-theoretic terms.
 
We next describe the higher homotopy groups. Given a smooth structure $\sigma$, there is a map $\mr{Top}_\partial(M) \to \mr{Sm}(M \rel \partial M)$ which takes the adjoint $f \colon \Delta^k \times M \to M$ of a map $\hat{f} \colon \Delta^k \to \mr{Top}_\partial(M)$, and sends this to the diagram
\[\begin{tikzcd} \Delta^k \times M \arrow{rr}{\mr{id}} \arrow{rd}[swap]{\pi_1} & & (\Delta^k \times M)_{f(\Delta^k \times \sigma)} \arrow{ld}{\pi_1} \\
 & \Delta^k. & \end{tikzcd}\]
 
This construction maps those $\hat{f}$ that arise from a map $\Delta^k \to \mr{Diff}_\partial(M_\sigma)$ with smooth adjoint $f \colon \Delta^k \times M_\sigma \to \Delta^k \times M_\sigma$, to the $k$-fold degeneracy of the $0$-simplex $\sigma$. In fact, the quotient of simplicial groups $\mr{Top}_\partial(M)/\mr{Diff}_\partial(M_\sigma)$ is exactly the union of those path components of $\mr{Sm}(M \rel \partial M)$ that are in the orbit of $\sigma$ under $\mr{Top}_\partial(M)$. These path components are those isotopy classes of smooth structures which are diffeomorphic to $M_\sigma$; if a homeomorphism $f$ sends $\sigma$ to $\sigma'$, it is a diffeomorphism from $M_\sigma$ to $M_{\sigma'}$. Because the map $\mr{Diff}_\partial(M_\sigma) \to \mr{Top}_\partial(M)$ is a monomorphism of simplicial groups, we obtain a fibre sequence
 \[\mr{Diff}_\partial(M_\sigma) \lra \mr{Top}_\partial(M) \lra \mr{Sm}(M \rel \partial M)_{\sigma},\]
 where the subscript indicates we only hit those smooth structures diffeomorphic to $\sigma$. 
 
This finishes the discussion of the left-hand side of Theorem \ref{thm.smoothingtriangulation}, and we continue with the right-hand side. Even though the space of classifying maps for the tangent microbundle is weakly contractible, it is more natural to not make a choice. That is, we get a weakly equivalent but more natural space if we replace $\mr{Lift}(M,B\mr{O}(d) \to B\mr{Top}(d) \rel \partial M)$ by the space $\cX_\partial(M)$ of commutative diagrams of topological $\bR^d$-bundle maps
 \[\begin{tikzcd}  & \upsilon^\mr{univ} \dar \\
 TM \rar \arrow{ru} & \xi^\mr{univ}.\end{tikzcd}\]
Note that this weak equivalence uses the assumption that the map $B\mr{O}(d) \to B\mr{Top}(d)$ is a fibration, as well as the fact that the space of topological $\bR^d$-bundles map $TM \to \xi^\mr{univ}$ is weakly contractible.
 
 \begin{corollary}\label{cor.hofib-diff-top} 
 For $d \neq 4$ the homotopy fibre of the map
 	\[\bigsqcup_{[\sigma]} B\mr{Diff}_\partial(M_\sigma) \lra B \mr{Top}_\partial(M)\]
 	is given by $\cX_\partial(M)$, and the disjoint union is indexed by the diffeomorphism classes of smooth structures on $M$ rel $\partial M$.
 \end{corollary}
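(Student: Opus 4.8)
The plan is to assemble, over all diffeomorphism classes of smooth structures on $M$ rel $\partial M$, the fiber sequences established in the discussion preceding this corollary, and then feed the result through Theorem~\ref{thm.smoothingtriangulation} and the subsequent identification of the lift space with $\cX_\partial(M)$.

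First I would fix a diffeomorphism class $[\sigma]$ and recall that $\mr{Diff}_\partial(M_\sigma) \to \mr{Top}_\partial(M)$ is a monomorphism of simplicial groups whose homotopy quotient $\mr{Top}_\partial(M)/\mr{Diff}_\partial(M_\sigma)$ is modelled by $\mr{Sm}(M \rel \partial M)_\sigma$, the union of those path components of $\mr{Sm}(M \rel \partial M)$ lying in the $\mr{Top}_\partial(M)$-orbit of $\sigma$. The standard fiber sequence $G/H \to BH \to BG$ for a subgroup of simplicial groups then deloops the sequence $\mr{Diff}_\partial(M_\sigma) \to \mr{Top}_\partial(M) \to \mr{Sm}(M \rel \partial M)_\sigma$ recalled above into a fiber sequence
\[\mr{Sm}(M \rel \partial M)_\sigma \lra B\mr{Diff}_\partial(M_\sigma) \lra B\mr{Top}_\partial(M),\]
so the homotopy fiber of the $\sigma$-summand of the map in the statement is $\mr{Sm}(M \rel \partial M)_\sigma$.

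Next I would take the disjoint union over $[\sigma]$. Since $\mr{Top}_\partial(M)$ is a simplicial group, $B\mr{Top}_\partial(M)$ is connected, so the homotopy fiber of $\bigsqcup_{[\sigma]} B\mr{Diff}_\partial(M_\sigma) \to B\mr{Top}_\partial(M)$ over any basepoint is the disjoint union of the homotopy fibers of the summands, namely $\bigsqcup_{[\sigma]} \mr{Sm}(M \rel \partial M)_\sigma$. I would then argue that this disjoint union is $\mr{Sm}(M \rel \partial M)$ itself: each $\mr{Sm}(M \rel \partial M)_\sigma$ is a union of path components, and (using $d \neq 4$, so that concordance classes of smooth structures coincide with isotopy classes) the path components of $\mr{Sm}(M \rel \partial M)$ are precisely its isotopy classes, on which the $\mr{Top}_\partial(M)$-orbits are exactly the diffeomorphism classes rel $\partial M$; hence the subspaces $\mr{Sm}(M \rel \partial M)_\sigma$ are pairwise disjoint and exhaust $\mr{Sm}(M \rel \partial M)$, which simultaneously justifies the stated indexing of the disjoint union.

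Finally, Theorem~\ref{thm.smoothingtriangulation} identifies $\mr{Sm}(M \rel \partial M)$ with $\mr{Lift}(M, BO(d) \to B\mr{Top}(d) \rel \partial M)$, and the rephrasing discussed just before the corollary --- replacing a choice of tangent microbundle classifier by the space of bundle maps $TM \to \xi^\mr{univ}$, which is weakly contractible, and using that $BO(d) \to B\mr{Top}(d)$ may be taken to be a fibration --- produces a weak equivalence from this lift space to $\cX_\partial(M)$. Composing these weak equivalences gives the corollary. I expect the only delicate point to be the third step: carefully matching up path components, isotopy classes, concordance classes and $\mr{Top}_\partial(M)$-orbits, all of which rest on the $d \neq 4$ smoothing-theoretic inputs recalled in this subsection; the remaining manipulations of fiber sequences are formal.
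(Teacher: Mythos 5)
Your proposal is correct and takes essentially the same route as the paper, whose ``proof'' of this corollary is exactly the preceding discussion: deloop the fiber sequence $\mr{Diff}_\partial(M_\sigma) \to \mr{Top}_\partial(M) \to \mr{Sm}(M \rel \partial M)_\sigma$, observe that the orbit pieces $\mr{Sm}(M \rel \partial M)_\sigma$ partition $\mr{Sm}(M \rel \partial M)$ with index set the diffeomorphism classes rel $\partial M$ (using concordance $=$ isotopy for $d \neq 4$), and identify $\mr{Sm}(M \rel \partial M)$ with $\cX_\partial(M)$ via Theorem \ref{thm.smoothingtriangulation} and the contractibility of the space of classifying maps $TM \to \xi^\mr{univ}$. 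Your handling of the disjoint union via connectedness of $B\mr{Top}_\partial(M)$ matches the paper's implicit argument, so no gap remains.
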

 
We can rephrase this as a description of $\bigsqcup_{[\sigma]} B\mr{Diff}_\partial(M_\sigma)$ in terms of a moduli space of topological manifolds with tangential structures. The space $\mr{Bun}^\mr{Diff}_\partial(TM_\sigma,\upsilon^\mr{univ})$ of vector bundle maps $TM_\sigma \to \upsilon^\mr{univ}$ classifying the smooth tangent bundle is weakly contractible. It admits an action of $\mr{Diff}_\partial(M_\sigma)$, and by Lemma \ref{lem.universaltangential} there is a weak equivalence $B\mr{Diff}_\partial(M_\sigma) \simeq \mr{Bun}^\mr{Diff}_\partial(TM_\sigma,\upsilon^\mr{univ}) \sslash \mr{Diff}_\partial(M_\sigma)$. The map $p \colon \upsilon^\mr{univ} \to \xi^\mr{univ}$ gives a map $\mr{Bun}^\mr{Diff}_\partial(TM_\sigma,\upsilon^\mr{univ}) \to \cX_\partial(M)$ by sending $\varphi \colon TM_\sigma \to \upsilon^\mr{univ}$ to the diagram
 \ \[\begin{tikzcd}  & \upsilon^\mr{univ} \dar{p \circ \varphi} \\
 	TM \rar \arrow{ru}{\varphi} & \xi^\mr{univ}.\end{tikzcd}\]
The action of $\mr{Diff}_\partial(M_{\sigma})$ on $\mr{Bun}^\mr{Diff}_\partial(TM_\sigma,\upsilon^\mr{univ})$ extends to an action of the simplicial group $\mr{Top}_\partial(M)$ on $\cX_\partial(M)$. As a consequence, there is a commutative diagram
 \[\begin{tikzcd}\parbox{6cm}{\centering $\bigsqcup_{[\sigma]}\mr{Bun}^\mr{Diff}_\partial(TM_\sigma,\upsilon^\mr{univ}) \sslash \mr{Diff}_\partial(M_\sigma)$ \\ $\simeq\bigsqcup_{[\sigma]} B\mr{Diff}_\partial(M_\sigma)$} \arrow{rd} \arrow{rr} &[-10pt] &[-10pt] \cX_\partial(M) \sslash \mr{Top}_\partial(M) \arrow{ld} \\
 & B\mr{Top}_\partial(M). &\end{tikzcd}\]
By \cref{cor.hofib-diff-tpo}, the horizontal map induces a weak equivalence between vertical homotopy fibres, and hence is also a weak equivalence:
 
\begin{corollary}[Smoothing theory for moduli spaces] \label{cor.smoothingx} For $d \neq 4$ there is a weak equivalence
 	\[\bigsqcup_{[\sigma]} B\mr{Diff}_\partial(M_\sigma) \lra \cX_\partial(M) \sslash \mr{Top}_\partial(M),\]
 	with the disjoint union indexed by the diffeomorphism classes of smooth structures on $M$.
\end{corollary}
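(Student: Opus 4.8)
The plan is to use the commutative triangle over $B\mr{Top}_\partial(M)$ assembled in the discussion immediately preceding this corollary. After identifying $\mr{Bun}^\mr{Diff}_\partial(TM_\sigma,\upsilon^\mr{univ}) \sslash \mr{Diff}_\partial(M_\sigma)$ with $B\mr{Diff}_\partial(M_\sigma)$ via Lemma \ref{lem.universaltangential} (which applies because $\mr{Bun}^\mr{Diff}_\partial(TM_\sigma,\upsilon^\mr{univ})$ is weakly contractible), the left slanted map of that triangle becomes $\bigsqcup_{[\sigma]} B\mr{Diff}_\partial(M_\sigma) \to B\mr{Top}_\partial(M)$, the right slanted map is $\cX_\partial(M) \sslash \mr{Top}_\partial(M) \to B\mr{Top}_\partial(M)$, and the horizontal map is precisely the one whose weak-equivalence we must establish. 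Both slanted maps sit in homotopy fiber sequences over the connected base $B\mr{Top}_\partial(M)$, so it suffices to show that the horizontal map induces a weak equivalence on homotopy fibers; the statement then follows from the five lemma applied to the long exact sequences of homotopy groups.

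First I would identify the two homotopy fibers. The homotopy fiber of a Borel construction $X \sslash G \to BG$ is $X$ with its $G$-action forgotten, so the homotopy fiber of the right slanted map is $\cX_\partial(M)$. For the left slanted map, I would combine the fiber sequences $\mr{Diff}_\partial(M_\sigma) \to \mr{Top}_\partial(M) \to \mr{Sm}(M \rel \partial M)_\sigma$ from Appendix \ref{app.results} with the identity $\bigsqcup_{[\sigma]} \mr{Sm}(M \rel \partial M)_\sigma = \mr{Sm}(M \rel \partial M)$ to see that the homotopy fiber is $\mr{Sm}(M \rel \partial M)$, which by Theorem \ref{thm.smoothingtriangulation} and its rephrasing is weakly equivalent to $\cX_\partial(M)$. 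Indeed, the unnumbered Corollary directly preceding this one records exactly this identification of the homotopy fiber of $\bigsqcup_{[\sigma]} B\mr{Diff}_\partial(M_\sigma) \to B\mr{Top}_\partial(M)$ with $\cX_\partial(M)$.

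The step I expect to require the most care is checking that the horizontal map of the triangle restricts on homotopy fibers to \emph{this} particular weak equivalence between $\cX_\partial(M)$ and the homotopy fiber, rather than to some other self-map of $\cX_\partial(M)$. I would verify this by tracing the $\mr{Top}_\partial(M)$-equivariant map $\mr{Bun}^\mr{Diff}_\partial(TM_\sigma,\upsilon^\mr{univ}) \to \cX_\partial(M)$, $\varphi \mapsto (p \circ \varphi, \varphi)$, through the two-sided bar constructions computing the homotopy quotients, using that the $\mr{Diff}_\partial(M_\sigma)$-action on $\mr{Bun}^\mr{Diff}_\partial(TM_\sigma,\upsilon^\mr{univ})$ extends to a $\mr{Top}_\partial(M)$-action on $\cX_\partial(M)$. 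Over the basepoint of $B\mr{Top}_\partial(M)$ the induced map on homotopy fibers is precisely the map used to construct the weak equivalence in the preceding corollary, and since its source $\mr{Bun}^\mr{Diff}_\partial(TM_\sigma,\upsilon^\mr{univ})$ is weakly contractible this forces it to be a weak equivalence on the components lying over each $[\sigma]$. Granting this, the comparison of homotopy fiber sequences over $B\mr{Top}_\partial(M)$ shows the horizontal map is a weak equivalence, and precomposing with the weak equivalence $\bigsqcup_{[\sigma]} B\mr{Diff}_\partial(M_\sigma) \simeq \bigsqcup_{[\sigma]} \mr{Bun}^\mr{Diff}_\partial(TM_\sigma,\upsilon^\mr{univ}) \sslash \mr{Diff}_\partial(M_\sigma)$ yields the corollary.
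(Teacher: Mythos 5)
Your proposal is correct and takes essentially the same route as the paper: the paper assembles the same commutative triangle over $B\mr{Top}_\partial(M)$, identifies the homotopy fibers of both slanted maps with $\cX_\partial(M)$ (via the unnumbered corollary for the left map and the standard Borel-construction fiber sequence for the right), and concludes that the horizontal map is a weak equivalence by comparing the two fiber sequences. Your additional verification that the induced map on homotopy fibers is the specific equivalence from the preceding corollary merely spells out a step the paper asserts without elaboration.
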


\subsection{The product structure theorem} \cref{cor.smoothingx} is hard to use computationally, as the homotopy type of $\mr{Top}(d)$ is not completely understood. However, if one is only interested in $\pi_0$, one can replace $\mr{Top}(d)$ and $\rm{O}(d)$ by $\mr{Top}$ and $\rm{O}$. This is a consequence of the following so-called \emph{product structure theorem} \cite[Essay I.1]{kirbysiebenmann}:
 
\begin{theorem}[Product structure theorem] The map
	\[\begin{tikzcd} \cfrac{\left\{\text{smooth structures on $M$}\right\}}{concordance} \rar{- \times \bR} &  \cfrac{\left\{\text{smooth structures on $M \times \bR$}\right\}}{concordance}\end{tikzcd} \]
is a bijection when $\dim(M) \geq 5$.\end{theorem}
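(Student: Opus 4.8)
This is the Kirby--Siebenmann product structure theorem, \cite[Essay I]{kirbysiebenmann}, and the plan is to follow their argument; I sketch its structure. The first step is to reduce the global statement to a local one. Choosing a handle decomposition of $M$ — which exists because $\dim M \geq 5$, so in particular $M$ is not $4$-dimensional — and inducting over the handles, both the injectivity and the surjectivity of $- \times \bR$ reduce to the following relative assertion in the model case $M = \bR^n$: every smooth structure on $\bR^n \times \bR$ which agrees with the standard product smoothing outside $K \times \bR$ for some compact $K \subset \bR^n$ is concordant, rel $(\bR^n \setminus K) \times \bR$, to a product smoothing $\sigma \times \bR$. Surjectivity is this statement read forwards; injectivity is obtained by applying the relative form to a concordance on $M \times \bR \times [0,1]$ and straightening it to a product concordance. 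The passage from a handle to the model case uses the isotopy extension theorem, Theorem~\ref{thm.isotopyextension}, together with the fact that $\mr{Sm}(-)$ is an invariant simplicial sheaf, so that a smooth structure which is already a product away from a handle may be modified inside a single chart.

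The heart of the matter is the model case over $\bR^n$, where one runs the \emph{torus trick}. By immersion-theoretic smoothing theory (Hirsch--Mazur, Haefliger--Po\'enaru), smoothings of $\bR^n \times \bR$ which are products rel infinity are classified by compactly-supported homotopy classes of maps into $\mr{Top}(n+1)/O(n+1)$; the assertion to be proven is that every such class that is a product in the $\bR$-direction is null. One transports this classifying datum from $\bR^n$ to the $n$-torus via a smooth immersion of $T^n$ minus a point into $\bR^n$, obtaining a smoothing of $T^n \times \bR$ which is a product near the puncture, and then passes to finite-sheeted covers of $T^n$ to kill the contribution of $\pi_1$ to the obstruction. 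In the range $\dim \geq 5$ the remaining obstruction is controlled — the key inputs being the Kirby--Siebenmann identification of $\mr{Top}(n)/\mr{PL}(n)$ with $K(\bZ/2,3)$ and the connectivity of $\mr{PL}(n)/O(n)$ in this range — so the smoothing becomes a product concordance after pullback to a cover, and one pushes the resulting product concordance back down to $\bR^n$, arranging it to be supported near $K$.

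The main obstacle is precisely this torus-trick step. One must (i) set up the immersion-theoretic classification in a parametrized, rel-$\bR$ form so that it stays compatible throughout with crossing with the extra $\bR$-factor — this is what upgrades a bare existence statement for smoothings into a genuine \emph{product} structure theorem; (ii) control the behaviour of smoothings and concordances under passage to, and descent from, finite covers of $T^n$; and (iii) carry out the homotopy-theoretic computations for $\mr{Top}(n)/\mr{PL}(n)$ and $\mr{PL}(n)/O(n)$ that make $\dim \geq 5$ the operative hypothesis. Each of these is done in \cite[Essays I, IV, V]{kirbysiebenmann}, building on Kirby's proof of the annulus conjecture; there is no genuine shortcut, and I would cite that argument.
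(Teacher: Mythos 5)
The paper does not give a proof of this statement at all: it is quoted directly from Kirby--Siebenmann \cite[Essay I.1]{kirbysiebenmann}, which is exactly the source you defer to, so your treatment matches the paper's. Your outline of the Kirby--Siebenmann argument (handle induction to a local statement over $\bR^n$, then the torus trick with finite covers) is broadly faithful, with the minor caveat that in their logical order the identification of $\mr{Top}(n)/\mr{PL}(n)$ with $K(\bZ/2,3)$ is a \emph{consequence} of the handle-straightening/torus-trick results (the genuine input being the surgery-theoretic classification of homotopy tori) rather than an input to the product structure theorem; since both you and the paper ultimately cite the same argument, there is nothing further to check.
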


As before, one may replace concordance by isotopy. This statement is equivalent to a stabilization result for the homotopy groups of $\mr{Top}(d)/\rm{O}(d)$, cf.\ \cite[Essay V.5]{kirbysiebenmann}.

\section{Variants of the Whitney embedding theorem} In this appendix we establish analogues of the Whitney embedding theorem for topological manifolds. This is most easily done through piecewise-linear techniques, so we will simultaneously discuss the PL setting; in this appendix $\mr{CAT}$ always denotes $\mr{PL}$ or $\mr{Top}$. Basic references for PL manifolds include \cite{rourkesanderson,hudsonpl}.

\subsection{A weak Whitney embedding theorem} \label{app.whitneyeasy} A closed $d$-dimensional topological or PL manifold can be embedded in some Euclidean space, and relative result exists for compact manifolds with boundary. A single such embedding together with a swindle gives the following well-known result (for details, see e.g.\ \cite[Lemma 2.2]{kupershomeo}):

\begin{lemma}\label{lem.weakwhitney} Let $\mr{CAT} = \mr{Top}$ or $\mr{PL}$. Fix a $\mr{CAT}$-embedding $e_\partial \colon \partial M \hookrightarrow \{0\} \times \bR^\infty$. Then $\mr{Emb}^\mr{CAT}_\partial(M,[0,\infty) \times \bR^\infty)$ is weakly contractible.\end{lemma}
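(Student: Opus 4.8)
\textbf{Proof plan for Lemma \ref{lem.weakwhitney}.}

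The plan is to run the standard Mazur-type swindle, exactly as for the smooth case, using that a single compact $\mr{CAT}$-manifold with boundary embeds neatly in a half-space of some finite Euclidean space, relative to a fixed boundary condition. First I would fix once and for all a neat $\mr{CAT}$-embedding $j \colon M \hookrightarrow [0,\infty) \times \bR^N$ extending $e_\partial$ (after translating $e_\partial$ into $\bR^N \subset \bR^\infty$), which exists by the classical embedding theorems for $\mr{PL}$ manifolds \cite{rourkesanderson,hudsonpl} and for topological manifolds (via handle decompositions when $d\neq 4$, and in general by \cite{kirbysiebenmann}); the relative version with prescribed boundary behaviour is obtained by embedding $M$ rel a collar of $\partial M$. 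Composing with the inclusion $[0,\infty)\times\bR^N \hookrightarrow [0,\infty)\times\bR^\infty$ gives a basepoint, so the space $E \coloneqq \mr{Emb}^\mr{CAT}_\partial(M,[0,\infty)\times\bR^\infty)$ is non-empty.

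Next I would show $E$ is weakly contractible by exhibiting it, up to $\mr{CAT}$-isotopy, as absorbing its own infinite ``shift''. Split $\bR^\infty = \bR^N \oplus \bR^N \oplus \cdots$. Given any $k$-parameter family $f \colon \Delta^k \to E$ (i.e.\ a $k$-simplex, or more generally a map from a sphere $S^i$, after the usual reduction), I would first use a linear isotopy (a straight-line homotopy in the ambient $\bR^\infty$, which is through embeddings since each block is affine) to push the image of $f$ off the first $\bR^N$-block, replacing $f$ by the parallel family $f'$ whose image lies in $[0,\infty)\times (0\oplus\bR^N\oplus\bR^N\oplus\cdots)$; simultaneously one isotopes the fixed embedding $j$ into the first block. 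Then a second linear isotopy interpolates between $f'$ and the constant family at $j$: the key point of the swindle is that since $f'$ and $j$ have disjoint ``supports'' among the blocks, the convex combination $(1-t)f' + t\,j$ is a family of embeddings for all $t$. Because all the homotopies used are canonical (linear) and respect the boundary condition $e_\partial$ (which can be arranged to lie in the ``tail'' blocks and hence be fixed throughout), they assemble to a contraction of $E$.

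The routine-but-only-real obstacle is bookkeeping the boundary condition: one must make the swindle compatible with the fixed germ $e_\partial$, which is handled by working rel a collar $\partial M \times [0,1)$ and noting that on the collar every embedding in $E$ agrees with $e_\partial$, so the linear isotopies there are constant. All of this is standard and is spelled out in \cite[Lemma 2.2]{kupershomeo}; since the argument there is stated for $\mr{Top}$ and uses only the existence of a single neat embedding plus convexity of $\bR^\infty$, it applies verbatim to $\mr{PL}$ as well. Hence $\mr{Emb}^\mr{CAT}_\partial(M,[0,\infty)\times\bR^\infty)$ is weakly contractible, as claimed. \qed
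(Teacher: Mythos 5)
Your overall route -- fix one embedding extending $e_\partial$ and run a Mazur-type swindle, delegating the details to \cite[Lemma 2.2]{kupershomeo} -- is exactly the paper's proof, which consists of precisely that remark. The problem is in the details you add, and it sits exactly where you flag ``the only real obstacle.'' The claim that ``on the collar every embedding in $E$ agrees with $e_\partial$'' is false for the space as defined: by Definition \ref{def.embeddings} only the restriction to $\partial M$ is prescribed, not a germ on a collar. More seriously, the family and the basepoint $j$ can never be made to have disjoint supports among the $\bR^N$-blocks: both restrict to $e_\partial$ on $\partial M$, so both images contain $e_\partial(\partial M)$ (which occupies some fixed finite set of $\bR^\infty$-coordinates that you are not allowed to move), and both use the common $[0,\infty)$-factor. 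For two embeddings that share coordinates in this way, the straight-line homotopy $(1-t)f'+t\,j$ need not be injective: if, say, both happen to have their $\bR^\infty$-parts inside the coordinates occupied by $e_\partial$, one is interpolating two arbitrary embeddings with the same boundary values, which can easily collide at intermediate times. So the two linear isotopies, as described, do not stay inside $E$, and ``the linear isotopies there are constant'' does not rescue this because nothing forces the embeddings to be standard on a collar in the first place.

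The standard repair -- and what the cited lemma actually carries out -- needs an extra geometric step before the swindle: using neatness/local flatness of the embeddings and a (parametrized) collar or stretching argument in the $[0,\infty)$-direction, first deform the family rel $\partial M$ so that on a collar $\partial M \times [0,\epsilon)$ it is the product $e_\partial \times \mathrm{incl}$ and the rest of $M$ is mapped into $\{s \geq \epsilon\}$ (one may also graph the height function into a fresh coordinate, which vanishes on $\partial M$ and so preserves the boundary condition, to make the $\bR^\infty$-part of the embeddings injective on their own). Only after this standardization do the shift-and-interpolate isotopies go through: the collar region is literally pinned, and away from it the images can be separated into complementary coordinates so that convexity gives injectivity. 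With that step supplied (or simply left to \cite[Lemma 2.2]{kupershomeo}, as the paper does), your argument is fine, and your remark that it applies verbatim for $\mr{CAT}=\mr{PL}$ is correct.
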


\subsection{A quantitative Whitney embedding theorem} \label{app.whitney} In this section we prove a result that is missing from the literature; the PL and topological manifold analogues of the quantitative Whitney embedding theorem, which says that the space of embeddings of a compact manifold into high-dimensional Euclidean space is highly-connected. 

\begin{theorem}\label{thm.catwhitney} Let $n \geq 5$, $n-m \geq 3$. Let $M$ be a compact $m$-dimensional $\mr{CAT}$-manifold with boundary $\partial M$ (with handle decomposition if $\mr{CAT} = \mr{Top}$ and $m=4$), and $N$ a $k$-connected $n$-dimensional $\mr{CAT}$-manifold with boundary $\partial N$. Fix a $\mr{CAT}$-embedding $\varphi_\partial \colon \partial M \inj \partial N$.  Then $\mr{Emb}^\mr{CAT}_\partial(M,N)$ is $\mr{min}(k-m-1,n-2m-2)$-connected.
\end{theorem}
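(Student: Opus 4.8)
The plan is to reduce the statement to a handle-by-handle induction, following the classical smooth strategy but substituting PL/Top engulfing and general position arguments for the smooth ones. First I would fix a handle decomposition of $M$ relative to $\partial M$ (available by hypothesis, including in the exceptional case $\mr{CAT}=\mr{Top}$, $m=4$), with handles of index $\leq m$. Writing $M = M_0 \cup (\text{handles})$ with $M_0$ a collar of $\partial M$, the space $\mr{Emb}^\mr{CAT}_\partial(M_0,N)$ is contractible (it deformation retracts onto the fixed germ near $\partial M$), so it suffices to show that attaching a single $j$-handle, $0 \leq j \leq m$, changes the connectivity of the embedding space by a controlled amount: the restriction map $\mr{Emb}^\mr{CAT}_\partial(M' \cup h_j, N) \to \mr{Emb}^\mr{CAT}_\partial(M', N)$ should be a fibration-like map whose fibers are as highly connected as the relevant space of embedded handles with prescribed attaching data.

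The core computation is then the connectivity of the "fiber": given an embedding of $M'$, the space of ways to extend it over a $j$-handle $D^j \times D^{m-j}$. Up to the usual isotopy-extension and microbundle/normal-bundle bookkeeping (using Theorem \ref{thm.isotopyextension} and the Kister--Mazur Theorem \ref{thm.kister} to handle the $D^{m-j}$-directions as a normal bundle, which exists in codimension $n-m \geq 3$), this space maps with highly-connected fibers to a space of embeddings of the core disk $D^j$ into $N$ rel its boundary sphere $S^{j-1}$, i.e.\ to $\mr{Emb}^\mr{CAT}_\partial(D^j, N \setminus \nu(M'))$. Since $N$ is $k$-connected and removing the image of $M'$ (of dimension $\leq m$, in codimension $\geq 3$) drops connectivity by at most $m$, the target is roughly $(k-m)$-connected; and a disk of dimension $j \leq m$ embeds into an $\ell$-connected $n$-manifold in a $\min(\ell - j, n - 2j - 1)$-connected space by general position / engulfing (this is the PL/Top analogue of the fact that maps can be pushed to embeddings when $2j < n$ and to isotopies when $2j+1 < n$). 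Taking the minimum over all handle indices $j \leq m$ gives the bound $\min(k - m - 2, n - 2m - 2)$.

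The PL case of the disk-embedding input is classical engulfing and general position (Zeeman, Hudson), valid exactly in the range $n - m \geq 3$; the Top case is reduced to PL by Kirby--Siebenmann handle-smoothing together with the fact that codimension-$\geq 3$ locally flat embeddings admit normal microbundles and that $\mr{Top}(q) \simeq \mr{Top}_{(0)}(q)$ (Remark \ref{rem.germsweq}), which lets us promote PL isotopies to topological ones. The hypotheses $n \geq 5$ and $n - m \geq 3$ (and the handle-decomposition caveat in dimension $4$) are precisely what is needed to run engulfing and to convert concordance to isotopy in the relevant dimensions.

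The main obstacle I expect is \emph{not} the general-position estimate itself but making the handle induction precise in the topological category: one must show that the restriction maps $\mr{Emb}^\mr{CAT}_\partial(M' \cup h_j, N) \to \mr{Emb}^\mr{CAT}_\partial(M', N)$ are Serre fibrations (or at least behave like them on homotopy groups), which requires a parametrized isotopy extension theorem strong enough to extend a family of embeddings of $M'$ to a family of ambient isotopies of $N$ --- exactly the content of Theorem \ref{thm.isotopyextension}, but applied with some care to keep track of the collar and the normal-bundle data. A secondary subtlety is bookkeeping the normal bundles of the attaching spheres $S^{j-1}$ inside $M'$ so that an isotopy of the core disk actually extends to an isotopy of the whole handle; this is where codimension $\geq 3$ and Kister--Mazur are used, and where the dimension-$4$ restriction on $M$ (needing a handle decomposition) enters. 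Once these points are set up, the connectivity bound drops out of the minimum of the per-handle estimates.
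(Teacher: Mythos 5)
Your plan has a genuine gap at its central step. The per-handle input you invoke --- that the space of embeddings of a $j$-disk rel boundary into an $\ell$-connected $n$-manifold is $\min(\ell-j,\,n-2j-1)$-connected ``by general position / engulfing'' --- is not something general position or engulfing can deliver in the PL or Top categories. Those techniques control \emph{maps} and hence \emph{concordances}: given a family of embeddings representing an element of $\pi_i$, connectivity of the target plus general position (Zeeman--Hudson in PL, Dancis in Top) produces an embedded concordance to the basepoint, which kills the element only in the \emph{block} embedding space $\mr{Emb}^{\mr{CAT},b}_\partial$. Upgrading this to a statement about the genuine (fiber-preserving) embedding space is exactly the hard content of the lemma of disjunction of Burghelea--Lashof--Rothenberg (with Pedersen's appendix supplying the Top case), equivalently Millett's theorem that $\mr{Emb}^\mr{CAT}_\partial(M,N)\to\mr{Emb}^{\mr{CAT},b}_\partial(M,N)$ is $(n-m-2)$-connected; this is Theorem \ref{thm.catblockcomparison} and it is the main input of the proof in the paper, not a consequence of general position. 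Note also that your handle-by-handle reduction to the disk case, via isotopy extension and the Alexander trick, is precisely the internal structure of Millett's argument; so as written your proof assumes the theorem's essential ingredient in the guise of the disk case. The paper's route makes the division of labour explicit: disjunction gives the $(n-m-2)$-connected comparison with block embeddings, and only the block space is then handled by the soft argument (a single representative, connectivity of $N$ to build a homotopy, general position to embed it, giving a concordance, which suffices for block embeddings); combining yields $\min(k-m-2,n-2m-2)$.

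Two secondary problems would also need repair even granting the disk input. First, your normal-bundle bookkeeping is not routine in Top: locally flat topological embeddings in codimension $\geq 3$ need not admit normal microbundles (Rourke--Sanderson), and Kister--Mazur only converts a microbundle you already have into an $\bR^d$-bundle; the paper avoids this issue entirely by never extending embeddings over handles with normal data. Second, the constants do not assemble: your fiber estimate loses $j$ (and the passage to the complement of $M'$ loses up to $m$) at each handle, so the minimum over $j\leq m$ comes out around $\min(k-2m,\,n-2m-1)$ rather than $\min(k-m-2,\,n-2m-2)$; the loss of only a fixed ``$-2$'' in the $k$-direction is a feature of the block/disjunction argument, not of a naive handle induction.
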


The smooth case is well-known, following from a transversality argument and openness of smooth embeddings. For $\mr{CAT} = \mr{Top}$ or $\mr{PL}$ we compare to block embeddings instead (note that if $M$ admits a PL structure, one may use \cite[Corollary 2]{lashofembeddings} to deduce the topological case from the PL case, as long as $n-m \geq 3$).

\begin{definition}Let $M$ and $N$ be $\mr{CAT}$ manifolds, possibly with boundary. Fix a $\mr{CAT}$ embedding $\varphi_\partial \colon \partial M \inj \partial N$. The simplicial set of \emph{block $\mr{CAT}$-embeddings}
	\[\widetilde{\mr{Emb}}{}^\mr{CAT}_\partial(M,N)\]
	has $k$-simplices given by the set of $\mr{CAT}$-embeddings $\varphi \colon \Delta^k \times M \to \Delta^k \times N$ which satisfy the following: 
	\begin{itemize}
	\item[$\cdot$] For all faces $\sigma$ of $\Delta^k$ we  have $\varphi^{-1}(\sigma \times N) = \sigma \times M$. 
	\item[$\cdot$] $\varphi|_{\Delta^k \times \partial M} = \mr{id}_{\Delta^k} \times \varphi_\partial$.

\end{itemize}
\end{definition}
 
\vspace{0.3cm}

Note that $\mr{CAT}$-embeddings include into block $\mr{CAT}$-embeddings. 

\begin{theorem}\label{thm.catblockcomparison} Let $n \geq 5$, $n-m \geq 3$ and $\mr{CAT} = \mr{Top}$ or $\mr{PL}$. Let $M$ be a compact $m$-dimensional $\mr{CAT}$-manifold with boundary $\partial M$ (with handle decomposition if $\mr{CAT} = \mr{Top}$ and $m=4$) and $N$ an $n$-dimensional $\mr{CAT}$-manifold with boundary $\partial N$. Fix a $\mr{CAT}$-embedding $\varphi_\partial \colon \partial M \inj \partial N$. Then the map
	\[\mr{Emb}^\mr{CAT}_\partial(M,N) \hookrightarrow \widetilde{\mr{Emb}}{}^\mr{CAT}_\partial(M,N)\]
	is $(n-m-2)$-connected.\end{theorem}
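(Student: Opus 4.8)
\textbf{Proof proposal for Theorem \ref{thm.catblockcomparison}.}

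The plan is to deduce the comparison between embeddings and block embeddings from the analogous statement in the smooth category via handle-by-handle induction, using an engulfing/isotopy-extension argument in the style of Rourke--Sanderson and Haefliger--Wall. First I would set up the obstruction-theoretic framework: the inclusion $\mr{Emb}^\mr{CAT}_\partial(M,N) \hookrightarrow \mr{Emb}^\mr{CAT,b}_\partial(M,N)$ fits into a comparison of homotopy-theoretic ``fibration'' towers indexed by a handle decomposition of $(M,\partial M)$ (which exists automatically for $\mr{PL}$, and is assumed when $\mr{CAT}=\mr{Top}$ and $m=4$). Attaching a single $r$-handle to $M$ changes the embedding space by a stage whose connectivity is controlled by a space of embedded disks $(D^r, \partial D^r) \hookrightarrow (N, \text{image of the lower handles})$ rel boundary; the key point is that for block embeddings the corresponding stage is governed by a space of \emph{block}-embedded disks. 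So it suffices to show that for each $0 \le r \le m$ the map from embedded $r$-disks rel boundary to block-embedded $r$-disks rel boundary is $(n-m-2)$-connected, uniformly; the minimum over $r$ then gives the stated bound. This reduces the theorem to a statement about a single handle, which is where the real work lies.

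For the single-handle statement I would invoke PL/Top general position and engulfing. A block embedding of $(D^r \times D^{m-r}, S^{r-1}\times D^{m-r}) $ into $N$ (rel the part already embedded), when restricted to a neighborhood of each vertex of a triangulation of the simplex parameter, is an honest embedding; the obstruction to straightening the whole block-family to a genuinely-parametrized embedding family lies in the failure of general position. By the Zeeman/Irwin engulfing theorems and the unknotting of balls in codimension $\ge 3$ (here $n-m\ge 3$, $n\ge 5$), two generic PL embeddings of a ball that agree on the boundary are ambient isotopic rel boundary, and one gains a dimension of connectivity for each increase of the ambient dimension. Quantitatively, a $j$-parameter block embedding differs from a $j$-parameter genuine embedding by data lying in a space whose connectivity is $n-m-2$; this is exactly the metastable range estimate. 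For $\mr{CAT}=\mr{Top}$ one uses the handle-theoretic versions of engulfing and isotopy extension from \cite{kirbysiebenmann} (Theorem \ref{thm.isotopyextension} for the isotopy-extension input), together with the hypothesis $n-m\ge 3$ so that local flatness is automatic and the Top $\mr{Emb}$ vs.\ block $\mr{Emb}$ comparison can be bootstrapped from the PL one via \cite[Corollary 2]{lashofembeddings} when a PL structure is present, or directly via \cite{rourkesandersontop}-style arguments otherwise.

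The main obstacle will be the topological $m=4$ case, where general position and transversality are not available in the naive form. The workaround is precisely the assumption that $M$ carries a handle decomposition: this lets us avoid interior transversality arguments in dimension $4$ by doing everything one handle at a time, where each handle is a product $D^r \times D^{4-r}$ and the ambient codimension $n-4\ge 3$ is large enough that Quinn's topological transversality (Theorem \ref{thm.microbundletransversality}) and topological engulfing apply. So the induction is on the handles of $M$, not on a triangulation, and the $4$-dimensional subtlety is localized to the choice of handle decomposition rather than appearing in the embedding-space estimates themselves. I would organize the write-up as: (i) reduce to single handles via the tower; (ii) identify each stage of both towers with a space of (block-)embedded handles rel the boundary sphere; (iii) prove the single-handle comparison is $(n-m-2)$-connected using engulfing and isotopy extension, handling $\mr{PL}$ first and then $\mr{Top}$ via comparison or directly; (iv) assemble, taking the minimum over handle indices. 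The uniformity of the bound $n-m-2$ across all handle indices $r\le m$ is what makes the final estimate come out independent of $r$, and verifying that uniformity in the engulfing step is the one place I expect to need care.
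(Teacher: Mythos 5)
Your overall architecture (handle-by-handle induction reducing to a single handle, i.e.\ to the comparison for a disk rel boundary, then assembling with isotopy extension) is the same reduction the paper uses, following Millett's \cite[Corollary 4.8]{millett}. The genuine gap is in step (iii), the single-handle case, which is where all the content of the theorem lives. The tools you invoke there --- Zeeman/Irwin engulfing, unknotting of balls in codimension $\geq 3$, isotopy extension, the Alexander trick --- give \emph{unknotting} statements: any two embeddings of a disk rel boundary are ambiently isotopic, i.e.\ a $\pi_0$-statement (and, with the Alexander trick, statements about block embeddings). They do not by themselves produce the parametrized estimate you assert, namely that $\mr{Emb}_\partial(D^r \times D^{m-r},N) \to \mr{Emb}^{\mr{CAT},b}_\partial(D^r \times D^{m-r},N)$ is $(n-m-2)$-connected. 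Writing ``a $j$-parameter block embedding differs from a $j$-parameter genuine embedding by data lying in a space whose connectivity is $n-m-2$; this is exactly the metastable range estimate'' is circular: that metastable estimate \emph{is} the theorem in the disk case. In the literature this step is the lemma of disjunction (Morlet; \cite[Section 1--2]{burgheleaaut}), whose proof is a delicate induction and is not a formal consequence of engulfing; the paper's proof supplies exactly this input (the 2nd form of the lemma of disjunction for the disk case, cf.\ \cite[Theorem 2.8]{burgheleaaut} and \cite[Theorem 3.2]{millett}), and for $\mr{CAT}=\mr{Top}$ the 1st form is provided by Pedersen's appendix to \cite{burgheleaaut}. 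Without citing or reproving a disjunction-type result, your step (iii) is an assertion, not a proof.

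Two further points on the topological case. First, your fallback of deducing $\mr{Top}$ from $\mr{PL}$ via \cite[Corollary 2]{lashofembeddings} only applies when the manifolds involved carry PL structures, which a topological $M$ (or $N$) of dimension $\geq 5$ need not; ``Rourke--Sanderson-style arguments otherwise'' is not a substitute for the topological disjunction lemma. Second, Quinn's microbundle transversality (Theorem \ref{thm.microbundletransversality}) does not play a role in, and does not replace, the disjunction input in the handle step; the correct way to localize the $m=4$ subtlety is exactly as the paper does, by assuming a handle decomposition so that Millett's handle induction can run, with each handle treated by the disjunction lemma rather than by interior transversality.
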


\begin{proof}[Proof outline] For $\mr{CAT} = \mr{PL}$ this is a specialization of \cite[Corollary 4.8]{millett} or \cite[Theorem B and C]{lashofembeddings}. Millett deduces his corollary from \cite[Corollary 4.6]{millett} by an induction over handles, for which the basic case is \cite[Theorem 3.2]{millett}, which has $(M,\partial M) = (D^m,S^{m-1})$. This deduction only uses isotopy extension and the Alexander trick. Thus it suffices to describe how to prove the basic case $(M,\partial M) = (D^m,S^{m-1})$, which also appears as \cite[Theorem 2.8]{burgheleaaut}. This is deduced in Section 2 of \cite{burgheleaaut} from ``the 2nd form of the lemma of disjunction.'' This uses the Alexander trick and ``the 1st form of the lemma of disjunction'' in Section 1 of \cite{burgheleaaut}. 
	
For $\mr{CAT} = \mr{Top}$, we note that all the tools used in the proof of the PL case are available in the topological case. In particular, the crucial input is the 1st form of the lemma of disjunction and this is proven in Pedersen's appendix to \cite{burgheleaaut}.\end{proof}

Block embeddings are easier to study, as elements of the homotopy groups of the space of block embeddings are represented by a \emph{single} embedding, so that no theory of parametrized $\mr{CAT}$-embeddings is required.

\begin{lemma}Under the assumptions of Theorem \ref{thm.catblockcomparison}, if $N$ is $k$-connected then the space of block embeddings $\widetilde{\mr{Emb}}{}^\mr{CAT}_\partial(M,N)$ is $\min(k-m-1,n-2m-2)$-connected.
\end{lemma}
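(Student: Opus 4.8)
The plan is to prove this by induction on a handle decomposition of $M$, reducing to the case of a single disk $(M,\partial M) = (D^m, S^{m-1})$, and in that case using an explicit obstruction-theoretic / general-position argument in the spirit of the ``lemma of disjunction'' of Burghelea--Lashof. First I would set up the induction: write $M = M' \cup (\text{handle of index } j)$ where $M'$ already satisfies the desired connectivity bound by the inductive hypothesis, and use parametrized isotopy extension (Theorem~\ref{thm.isotopyextension}) together with the fibration-up-to-the-relevant-range
\[
\mr{Emb}^\mr{CAT,b}_\partial(M,N) \lra \mr{Emb}^\mr{CAT,b}_\partial(M',N)
\]
given by restriction, whose fiber over a block embedding of $M'$ is the space of block embeddings of the handle rel its attaching region into the complement $N \setminus (\text{image of } M')$. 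Since attaching a handle of index $j$ to $M'$ only changes things by a disk of dimension $\leq m$, the fiber is a space of block embeddings of a disk, so the inductive step follows once the disk case is known — provided the complement is still sufficiently highly connected, which it is in the stated range by general position (removing an $m$-dimensional submanifold from a $k$-connected $n$-manifold leaves something $\min(k, n-m-2)$-connected when $n - m \geq 3$).

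Next I would handle the base case $(M,\partial M) = (D^m, S^{m-1})$. An element of $\pi_i$ of $\mr{Emb}^\mr{CAT,b}_\partial(D^m, N)$ is, by the very definition of block embeddings, represented by a \emph{single} $\mr{CAT}$-embedding
\[
\Phi \colon (D^i \times D^m,\; \partial D^i \times D^m \cup D^i \times S^{m-1}) \lra (D^i \times N,\; \partial D^i \times N \cup D^i \times \partial N)
\]
restricting to the standard embedding on the boundary part; nullhomotopy amounts to extending such a $\Phi$ over $D^{i+1} \times D^m$. Because $D^m$ is contractible, the underlying homotopy-theoretic obstruction to extending the map $D^i \times D^m \to D^i \times N$ lives in $\pi_i(N)$, which vanishes for $i \leq k$; and one can realize any such nullhomotopy by an actual embedding once there is enough room, i.e.\ once $i + m \leq n - 2$ so that general position lets us push the $(i+m)$-dimensional trace off itself and off the already-embedded boundary collar. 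This is exactly the content of the first and second ``forms of the lemma of disjunction'' (Burghelea--Lashof, with Pedersen's appendix supplying the $\mr{Top}$ case), combined with the Alexander trick to straighten things near the boundary. Collecting the two constraints $i \leq k$ (from $\pi_i N$) after accounting for the loss of dimensions $m$ coming from the handle trace, and $i + m \leq n - 2m - 2$ type bounds (from iterated general position through the handle induction), gives the stated connectivity $\min(k - m - 2,\, n - 2m - 2)$.

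The main obstacle I expect is bookkeeping the connectivity loss correctly through the handle induction: each handle attachment costs dimensions, the complement's connectivity degrades at each stage, and one must verify that the degraded complement is still connected enough for the general-position arguments in the \emph{next} disk case to run — this is where the precise numbers $k - m - 2$ versus $n - 2m - 2$ come from, and getting the two bounds to combine into a clean $\min$ requires care. A secondary technical point, flagged in the hypotheses, is the case $\mr{CAT} = \mr{Top}$, $m = 4$: there one needs the handle decomposition to exist as an assumption (topological $4$-manifolds need not be smoothable/handle-decomposable), but given that hypothesis the same induction goes through since all the tools — isotopy extension, the disjunction lemmas, the Alexander trick — are available for $\mr{Top}$ in the relevant codimension $n - m \geq 3$. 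Once the disk case and the inductive step are in place, the theorem follows, and combined with Theorem~\ref{thm.catblockcomparison} it yields Theorem~\ref{thm.catwhitney}.
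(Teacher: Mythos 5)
Your overall instinct (connectivity of $N$ plus general position) is right, but the architecture you propose is not the paper's and, as sketched, has a real gap. The paper does \emph{not} prove this lemma by handle induction at all: the whole point of passing to block embeddings is that an element of $\pi_i(\mr{Emb}^\mr{CAT,b}_\partial(M,N))$ is represented by a \emph{single} embedding $\phi \colon D^i \times M \to D^i \times N$ agreeing with $\mr{id}\times\phi_0$ on $(S^{i-1}\times M)\cup(D^i\times\partial M)$, for the given manifold $M$ itself. One then argues directly: since $i+m+1 < k$ (this is where $i \le k-m-2$ enters), the $k$-connectivity of $N$ produces a $\mr{CAT}$-\emph{map} of the trace $[0,1]\times D^i\times M \to [0,1]\times N$ with the prescribed boundary behaviour, and general position for maps of manifolds (Rourke--Sanderson for $\mr{PL}$, Dancis for $\mr{Top}$, whose extra hypotheses are checked in the stated range) turns it into an embedding precisely when $2(m+i+1) < n+i+1$, i.e.\ $i \le n-2m-2$; the resulting concordance rel boundary kills the class. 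The handle induction, Alexander trick and the Burghelea--Lashof disjunction lemmas that you invoke are the ingredients of Theorem~\ref{thm.catblockcomparison} (the comparison of $\mr{Emb}^\mr{CAT}_\partial$ with $\mr{Emb}^\mr{CAT,b}_\partial$, following Millett), not of this connectivity lemma; likewise the handle-decomposition hypothesis for $\mr{Top}$, $m=4$ is consumed there, not here.

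Concretely, two steps of your sketch do not go through as written. First, in your inductive step the fiber of the restriction map is a space of block embeddings of the handle into the complement of a \emph{block} family of copies of $M'$ (not a product family, so even identifying the fiber needs an argument), and the complement of an $m$-dimensional submanifold of a $k$-connected $n$-manifold is only $\min(k,\,n-m-2)$-connected; feeding this back into your disk case gives at best $\min(k-m-2,\,n-2m-4)$, which falls short of the claimed $\min(k-m-2,\,n-2m-2)$ unless you do a genuinely finer analysis -- this is exactly the ``bookkeeping'' you defer, and it is not merely care but the crux. Second, in your disk case the obstruction to extending over the trace does not lie in $\pi_i(N)$: the trace $[0,1]\times D^i\times D^m$ is $(i+m+1)$-dimensional, so the relevant obstructions reach $\pi_{i+m+1}(N)$, which is where the $-m$ in $k-m-2$ comes from; your stated constraint ``$i\le k$'' is off by exactly this shift. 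If you reorganize your argument as the paper does -- single representative for all of $M$, connectivity of $N$ to get a map of the trace, then general position to embed it -- both problems disappear and no induction over handles is needed.
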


\begin{proof}We first prove that the space $\widetilde{\mr{Emb}}{}^\mr{CAT}_\partial(M,N)$ is non-empty if $-1 \leq \min(k-m-2,n-2m-2)$. Because $-1 \leq k-m-1$ implies $m \leq k$ there exists a $\rm{CAT}$-map $\phi \colon M \to N$ extending $\phi_\partial \colon \partial M \to \partial N$. We make this map into an embedding using general position results for maps between $\mr{PL}$ (or topological) manifolds, saying that under mild conditions maps can be perturbed to have self-intersection locus of the expected dimension. A general position map is an embedding when (a) $2m<n$. For $\rm{CAT} = \rm{PL}$ this is \cite[Theorem 5.4]{rourkesanderson}, and for $\rm{CAT} = \rm{Top}$ this is \cite[Topological General Position Lemma 1]{dancis}. The latter requires $m \leq n-3$, which is a hypothesis, and (b) $3m \leq 2n-1$. In our case, these general position results apply: we have that (a) holds as $-1 \leq n-2m-2$ implies $2m < n$, and (b) holds as $-1 \leq n-2m-2$ and $m \leq n-3$ imply $-2 \leq 2n-3m-2$.

\medskip
	
For higher homotopy groups, we fix a $\mr{CAT}$-embedding $\phi_0 \colon M \hookrightarrow N$ relative to the boundary as a base point. An element of $\pi_i(\widetilde{\mr{Emb}}{}^\mr{CAT}_\partial(M,N))$ is represented by a $\mr{CAT}$-embedding $\phi \colon D^i \times M \to D^i \times N$ that is equal to $\mr{id} \times \phi_0$ on $(\partial D^i \times M) \cup (D^i \times \partial M)$. Let us now take $i \leq \min(k-m-2,n-2m-2)$ and explain how to isotope $\phi$ rel $(\partial D^i \times M) \cup (D^i \times \partial M)$ to $\mr{id} \times \phi_0$.
	
Because $i \leq k-m-1$ we have $i + m + 1 \leq k$, and the connectivity assumptions on $N$ imply that there exists a $\mr{CAT}$-map $\overline{\phi} \colon [0,1] \times D^i \times M \to [0,1] \times N$ that equals $\mr{id}_{[0,1]} \times \phi_0$ on $([0,1] \times \partial D^i \times M) \cup ([0,1] \times D^i \times \partial M) \cup (\{0\} \times D^i \times M)$ and $\phi$ on $\{1\} \times D^i \times M$. We can then apply the same general position results; (a) holds as $i \leq n-2m-2$ implies $2(m+i+1)<n+i+1$, and (b) holds as $i \leq n-2m-2$ and $m \neq n-3$ imply $-2 \neq 2(n+i) -3(m+i)-2$. Thus $\overline{\phi}$ can be made into an embedding $\Phi \colon [0,1] \times D^i \times M \to [0,1] \times N$, which is a concordance from $\phi$ to $\mr{id} \times \phi_0$.\end{proof}

Theorem \ref{thm.catwhitney} follows by combining the previous lemma with Theorem \ref{thm.catblockcomparison}; to get the range use $n-2m-2 \leq n-m-2$.

\bibliographystyle{amsalpha}
\bibliography{refs}

\vspace{0.5cm}

\end{document}